\newcommand{\mf}{\mathfrak}
\newcommand{\bb}{\mathbb}
\newcommand{\cal}{\mathcal}
\newcommand{\ch}{\mathsf{CH}}
\newcommand{\zfc}{\mathsf{ZFC}}
\newcommand{\Add}{\operatorname{Add}}
\newcommand{\lra}{\longrightarrow}
\newcommand{\lb}{\left\{}
\newcommand{\rb}{\right\}}
\newcommand{\la}{\langle}
\newcommand{\ra}{\rangle}
\newcommand{\res}{\restriction}
\newcommand{\bsl}{\backslash}
\newcommand{\seq}{\subseteq}
\newcommand{\es}{\emptyset}
\newcommand{\we}{\,\wedge\,}
\newcommand{\ps}{\mathbb{P}}
\newcommand{\R}{\mathbb{R}}
\newcommand{\Q}{\mathbb{Q}}
\newcommand{\ran}{\operatorname{ran}}
\newcommand{\dom}{\operatorname{dom}}
\newcommand{\base}{\operatorname{base}}
\newcommand{\htt}{\operatorname{ht}}
\newcommand{\ind}{\operatorname{index}}
\newcommand{\al}{\alpha}
\newcommand{\be}{\beta}
\newcommand{\ga}{\gamma}
\newcommand{\de}{\delta}
\newcommand{\De}{\Delta}
\newcommand{\ka}{\kappa}
\newcommand{\si}{\sigma}
\newcommand{\om}{\omega}
\newcommand{\vp}{\varphi}
\newenvironment{assumptiona}[1][]{\par\medskip
\noindent \textbf{Assumption~\thesection.1(a) #1} \rmfamily}{\medskip}
\newenvironment{assumptionb}[1][]{\refstepcounter{definition}\par\medskip
\noindent \textbf{Assumption~\thedefinition(b) #1} \rmfamily}{\medskip}
\theoremstyle{definition}
\newtheorem{definition}{Definition}[section]
\newtheorem{remark}[definition]{Remark}
\newtheorem{example}[definition]{Example}
\theoremstyle{plain}
\newtheorem{theorem}[definition]{Theorem}
\newtheorem{corollary}[definition]{Corollary}
\newtheorem{lemma}[definition]{Lemma}
\newtheorem{proposition}[definition]{Proposition}
\newtheorem{claim}[definition]{Claim}
\newtheorem{assumption}[definition]{Assumption}
\begin{document}

\title{Abraham-Rubin-Shelah Open Colorings and a Large Continuum}

\author{Thomas Gilton and Itay Neeman}
\thanks{This material is based upon work supported by the National Science Foundation under grant No. DMS-1764029.}

\date{March 2020}

\address{Department of Mathematics, University of California, Los Angeles, Box 951555, Los Angeles, CA 90095-1555}

\begin{abstract}
We show that the Abraham-Rubin-Shelah Open Coloring Axiom is consistent with a large continuum, in particular, consistent with $2^{\aleph_0}=\aleph_3$. This answers one of the main open questions from \cite{ARS}. As in \cite{ARS}, we need to construct names for so-called \emph{preassignments of colors} in order to add the necessary homogeneous sets. However, the known constructions of preassignments (ours in particular) only work assuming the $\ch$. In order to address this difficulty, we show how to construct such names with very strong symmetry conditions. This symmetry allows us to combine them in many different ways, using a new type of poset called a \emph{partition product}. Partition products may be thought of as a restricted memory iteration with stringent isomorphism and coherent-overlap conditions on the memories. We finally construct, in $L$, the partition product which gives us a model of $\mathsf{OCA}_{ARS}$ in which $2^{\aleph_0}=\aleph_3$.
\end{abstract}

\keywords{Abraham-Rubin-Shelah OCA, Large Continuum, Preassignments}

%\ccode{Mathematics Subject Classification 2000: 14N35, 14J45}

\maketitle

%\tableofcontents

\section{Introduction}

Ramsey's Theorem, regarding colorings of tuples of $\om$, is a fundamental result in combinatorics. Naturally, set theorists have studied generalizations of this theorem which concern colorings of pairs of countable ordinals, that is to say, colorings on $\om_1$. The most straightforward generalization of this theorem is the assertion that any coloring of pairs of countable ordinals has an uncountable homogeneous set. However,  this naive generalization is provably false, at least in $\zfc$ (see \cite{SierpinskiNoRamsey}). One way to obtain consistent generalizations of Ramsey's Theorem to uncountable sets, $\om_1$ in particular, is to place various topological constraints on the colorings. This results in principles  known as Open Coloring Axioms, which we discuss presently. In what follows, we will use the notation $[A]^2$ to denote all two-element subsets of $A$.

\begin{definition} Let $A$ be a set and $\chi:[A]^2\lra\lb 0,1\rb$ a function. $H\seq A$ is said to be \emph{0-homogeneous (respectively 1-homogeneous) with respect to $\chi$} if $\chi$ takes the constant value 0 (respectively 1) on $[H]^2$. $H$ is said to be \emph{$\chi$-homogeneous} if it is either 0 or 1 homogeneous with respect to $\chi$.

 A function $\chi:[\om_1]^2\lra\lb 0,1\rb$ is said to be an \emph{open coloring} if $\chi^{-1}(\lb 0\rb)$ and $\chi^{-1}(\lb 1\rb)$ are both open in the product topology with respect to some second countable, Hausdorff topology on $\om_1$.

The \emph{Abraham-Rubin-Shelah Open Coloring Axiom}, abbreviated $\mathsf{OCA}_{ARS}$, states that for any open coloring $\chi$ on $\om_1$, there exists a partition $\om_1=\bigcup_{n<\om}A_n$ such that each $A_n$ is $\chi$-homogeneous.
\end{definition}

Abraham and Shelah (\cite{AS}) first studied the restriction of this axiom to colorings arising from injective functions $f:A\lra\R$, where $|A|=\aleph_1$ and used these ideas to show that Martin's Axiom does not imply Baumgartner's Axiom (\cite{Baumgartner}). The full version made its debut in \cite{ARS}, where the authors  studied it alongside a number of other axioms about $\aleph_1$-sized sets of reals. In particular, they showed that $\mathsf{OCA}_{ARS}$ is consistent with $\zfc$.

A little later, Todor{\v c}evi{\' c} isolated the following axiom (\cite{Todorcevic}):

\begin{definition} The \emph{Todor{\v c}evi{\' c} Open Coloring Axiom}, abbreviated $\mathsf{OCA}_T$, states the following: let $A$ be a set of reals, and suppose that $\chi:[A]^2\lra\lb 0,1\rb$ is a coloring so that $\chi^{-1}(\lb 0\rb)$ is open in $A\times A$. Then either there is an uncountable $A_0\seq A$ so that $A_0$ is 0-homogeneous with respect to $\chi$ or there exists a partition $A=\bigcup_{n<\om}B_n$ so that each $B_n$ is 1-homogeneous with respect to $\chi$.

If we restrict our attention to sets of reals $A$ with size $\aleph_1$, we denote this axiom by $\mathsf{OCA}_T(\aleph_1)$.
\end{definition}

Both of these versions of open coloring axioms imply that the $\ch$ is false. Indeed, $\mathsf{OCA}_{ARS}$ implies that if $A\seq\R$ has size $\aleph_1$ and $f:A\lra\R$ is injective, then $f$ is a union of countably-many monotonic subfunctions. However, under the $\ch$, there exists a function $f:\R\lra\R$ which is not continuous on any uncountable set, and hence not monotonic on any uncountable set (see $C_{62}$ of \cite{Sierpinski}). In fact, under the $\ch$, there is even an uncountable, injective, partial $f:\R\lra\R$ with no uncountable monotonic subfunction (see \cite{DM}), and thus even continuous colorings may fail to have large homogeneous sets under the $\ch$. With regards to $\mathsf{OCA}_T$, this axiom implies that the bounding number $\mf{b}$ is $\aleph_2$.

It is therefore of interest whether or not these axioms, individually or jointly, actually decide the value of the continuum. In the case of $\mathsf{OCA}_T$, Farah has shown in an unpublished note that $\mathsf{OCA}_T(\aleph_1)$ is consistent with an arbitrarily large value of the continuum, though it is not known whether the full $\mathsf{OCA}_T$ is consistent with larger values of the continuum than $\aleph_2$.  On the other hand, Moore has shown in \cite{Moore} that $\mathsf{OCA}_T + \mathsf{OCA}_{ARS}$ does decide that the continuum is exactly $\aleph_2$.

The question of whether $\mathsf{OCA}_{ARS}$ is powerful enough to decide the value of the continuum on its own, first asked in \cite{ARS}, has remained open. There are a number of difficulties in obtaining a model of $\mathsf{OCA}_{ARS}$ with a ``large continuum," i.e., with $2^{\aleph_0}>\aleph_2$. Chief among these difficulties is to construct so-called \emph{preassignments of colors}. The authors of \cite{AS} first discovered the technique of preassigning colors.  As used in \cite{ARS}, a preassignment of colors is a function which decides, in the ground model, whether the forcing will place a countable ordinal $\al$ inside \emph{some} 0-homogeneous or \emph{some} 1-homogeneous set, with respect to a fixed coloring. The key to the consistency of $\mathsf{OCA}_{ARS}$ is to construct preassignments in such a way that the posets which add the requisite homogeneous sets, as guided by the preassignments, are c.c.c.

However, the known constructions of ``good" preassignments (ours in particular) only work under the $\ch$. The construction of a preassignment involves diagonalizing out of closed sets in finite products of the space, ensuring that no such closed set can be the closure of an uncountable antichain of conditions in the poset (see the discussion preceding Remark \ref{remark:blerg}). Under the $\ch$, this task can be completed in $\om_1$-many steps, since the spaces we consider are second countable (and so there are at most $\aleph_1$-many such closed sets). Since forcing iterations whose strict initial segments satisfy the $\ch$ can only lead to a model where the continuum is at most $\aleph_2$, this creates considerable difficulties for obtaining models of $\mathsf{OCA}_{ARS}$ in which the continuum is, say,  $\aleph_3$.

In this paper, we prove the following theorem, thereby providing an answer to this question:

\begin{theorem}\label{theorem:main} If $\zfc$ is consistent, then so is $\zfc+\mathsf{OCA}_{ARS}+2^{\aleph_0}=\aleph_3$.
\end{theorem}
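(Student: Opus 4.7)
The plan is to force over $L$ with an $\om_3$-length generalized iteration that adds a countable $\chi$-homogeneous partition for every open coloring $\chi$ appearing along the way, while blowing up the continuum to $\aleph_3$. The standard ARS strategy fails in this setting for exactly the reason flagged in the excerpt: the preassignments of colors that guide the homogenizing poset are constructed by a $2^{\aleph_0}$-length diagonalization against closed sets, so each coordinate appears to demand $\ch$ in its ground model, which would cap the final continuum at $\aleph_2$. To bypass this, I would follow the abstract and introduce a new poset architecture, the \emph{partition product}, in which each coordinate is equipped with a prescribed countable \emph{memory} of earlier coordinates, and different coordinates are allowed (indeed required) to share overlapping memories in an isomorphism-rigid, coherent way. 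The point is that the memory of any single coordinate is countable, so the local preassignment at that coordinate can still be built using $\ch$ in the intermediate model determined by that memory, even though the global poset has size $\aleph_3$ and produces $2^{\aleph_0}=\aleph_3$.

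The construction would proceed in three layers. First, I would develop the basic theory of partition products: define the memory structure, the ordering, and prove an amalgamation/coherence lemma stating that two partition products agreeing on their common memory glue to a single partition product extending both, up to the prescribed isomorphism. Second, following the plan of \cite{ARS} but relativized to memories, I would isolate the homogenizing poset $Q(\chi,\pi)$ which, given a coloring $\chi$ and a ``good'' preassignment $\pi$, adds a countable $\chi$-homogeneous decomposition of $\om_1$ by finite approximations, and verify its c.c.c.\ from the goodness of $\pi$. Third, and this is the real new content the paper must provide, I would construct nice names for preassignments over partition-product conditions with \emph{very strong symmetry}: any isomorphism of memory data must lift to an isomorphism of the preassignment name, so that overlapping coordinates automatically produce compatible preassignments. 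With the memory at each coordinate countable, each such name is built inside an intermediate model satisfying $\ch$, which is how $\ch$ is exploited locally despite being false globally.

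With these pieces in place, the main construction runs in $L$ along $\om_3$ using a $\diamondsuit_{\om_3}$-style bookkeeping that anticipates all nice names for open colorings of $\om_1$ and all nice names for reals in intermediate extensions; at stage $\al$, select a coloring $\chi_\al$ from the bookkeeping, choose a symmetric preassignment name $\pi_\al$ over the current memory, and append $Q(\chi_\al,\pi_\al)$ with a carefully chosen memory. The $\aleph_2$-c.c.\ (hence c.c.c.) of the whole partition product would be proved by a $\De$-system argument: given $\aleph_1$-many conditions, thin to conditions whose memory data are pairwise isomorphic, apply the coherence lemma and the symmetry of preassignment names to amalgamate, and conclude compatibility from the c.c.c.\ of the amalgamated piece. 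A standard nice-name count against the size and chain condition of the partition product gives $2^{\aleph_0}\le\aleph_3$, while the bookkeeping forces $2^{\aleph_0}\ge\aleph_3$ and ensures every open coloring is eventually homogenized. The main obstacle, by a wide margin, will be step three: engineering the preassignment names to be simultaneously good (so that $Q(\chi,\pi)$ is c.c.c.), sufficiently symmetric that the coherence lemma applies across all pairs of overlapping memories, and constructible inside a countable-memory intermediate model; everything else is a matter of pushing through $\De$-system combinatorics and $L$-style bookkeeping on the new framework.
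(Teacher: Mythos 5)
Your proposal tracks the paper's high-level architecture: a partition product (restricted-memory iteration with coherently overlapping memories, indexed by an alphabet $\underline{\ps},\underline{\dot{\Q}}$) is built in $L$, with $\ch$ exploited locally to construct strongly symmetric preassignment names. But two points need correction, and one of them is a genuine gap. First, memories are not countable; they have size up to $\aleph_1$. Each $\ps_\delta$ in the alphabet has domain of length $\rho_\delta<\om_2$, and what the construction exploits is that a c.c.c.\ poset of size $\aleph_1$ over a $\ch$ ground model preserves $\ch$; literally countable memories would not add the $\aleph_1$-many reals needed for the topologies $\dot{S}_\kappa$ and colorings $\dot{\chi}_\kappa$ on $\om_1$ that have to be anticipated.

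Second, and this is the real gap, your sketch of step three omits the counting argument that makes the $\ch$-based diagonalization work. The preassignment name over $\ps_\kappa$ must simultaneously handle every way that $\ps_\kappa$ embeds into a larger partition product over the alphabet up to $\kappa$; each embedding gives a different interpretation of the coloring and a different name for a closed-set obstruction. The diagonalization can be carried out in $\om_1$ steps only because (Lemmas \ref{lemma:M0} and \ref{lemma:M}) there are just $\aleph_1$-many isomorphism types of \emph{finitely generated} partition products over $\underline{\ps}\res\kappa$. This is precisely where the coherent-collapse conditions on overlapping memories earn their keep: they restrict the combinatorics to something $\ch$ can enumerate. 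Without this, ``constructible inside an intermediate model satisfying $\ch$'' has no force. Finally, a slip: $\aleph_2$-c.c.\ does \emph{not} imply c.c.c.\ (it is weaker); what you need and what the paper proves is c.c.c., obtained not by a global $\De$-system argument on $\ps_{\om_2}$ but inductively along the alphabet (Assumption 4.1(b), Corollary \ref{cor:name}): any partition product over $\underline{\ps}\res(\kappa+1)$ factors as $(\R\res\hat{X})\ast\prod_\xi\dot{\Q}_\kappa[\cdots]$, and the product's c.c.c.\ in the intermediate extension is exactly the symmetric preassignment property of Proposition \ref{prop:name}.
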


The general theme of the paper is the following: short iterations are necessary to preserve the $\ch$ and thereby construct effective preassignments; longer iterations, built out of these smaller ones in specific ways, can be used to obtain models with a large continuum. 

One can check using cardinality arguments that executing this theme requires the construction of names for preassignments with substantial symmetry. Let us briefly explain what we mean by this. For the purposes of the introduction, if $\chi$ is a coloring as in the definition of $\mathsf{OCA}_{ARS}$ and $f:\om_1\lra 2$ is an arbitrary function (any such is called a preassignment), then we use $\Q(\chi,f)$ to denote the poset to decompose $\om_1$ into countably-many $\chi$-homogeneous sets as guided by $f$ (see Section 4.2 for a precise description). In \cite{ARS}, the authors show that if $\ps$ is a sufficiently nice c.c.c.\ poset preserving the $\ch$ and $\dot{\chi}$ is a $\ps$-name for an open coloring, then there exists a $\ps$-name $\dot{f}$ for a preassignment so that $\ps$ outright forces that $\Q(\dot{\chi},\dot{f})$ is c.c.c. In our case, we are able to construct a \emph{single} $\ps$-name $\dot{f}$ so that the following (and much more besides) holds: if $G_L$ and $G_R$ are mutually generic filters for $\ps$, then the poset
$$
\Q(\dot{\chi}[G_L],\dot{f}[G_L])\times\Q(\dot{\chi}[G_R],\dot{f}[G_R])
$$
is c.c.c.\ in $V[G_L\times G_R]$. Constructing names for symmetric preassignments takes us beyond the techniques of \cite{ARS}. Since there are only $\aleph_2$ names for preassignments named in short iteration forcings, many of them must be reused more than once in a long iteration, and therefore this kind of symmetry proves to be necessary for executing our theme. Determining exactly how much ``symmetry" the names for preassignments need to satisfy leads us to the notion of a \emph{Partition Product}, which is a type of restricted memory iteration with various isomorphism and coherent overlap conditions on the memories.

Our method for proving Theorem \ref{theorem:main} is general enough that it can be adapted  to incorporate posets of size $\leq\aleph_1$ with the Knaster property, thereby adding the forcing axiom $\mathsf{FA}(\aleph_2,\text{Knaster}(\aleph_1))$ to the conclusion. This forcing axiom asserts that for any poset $\ps$ of size $\leq\aleph_1$ which has the Knaster property and any sequence $\la D_i:i<\om_2\ra$ of $\aleph_2$ dense subsets of $\ps$, there is a filter for $\ps$ which meets each of the $D_i$. Thus we also obtain the following theorem:

\begin{theorem}\label{theorem:FA} If $\zfc$ is consistent, then so is $\zfc+\mathsf{OCA}_{ARS}+2^{\aleph_0}=\aleph_3+\mathsf{FA}(\aleph_2,\emph{Knaster}(\aleph_1))$.
\end{theorem}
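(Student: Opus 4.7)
The plan is to augment the partition product construction of Theorem \ref{theorem:main} by interleaving Knaster posets of size at most $\aleph_1$, together with bookkeeping ensuring that every such poset appearing in the final model is caught cofinally often. Since the ambient iteration has length $\aleph_3$, while $\mathsf{FA}(\aleph_2,\emph{Knaster}(\aleph_1))$ only demands filters meeting $\aleph_2$-many dense subsets of each $\aleph_1$-sized Knaster poset, a standard bookkeeping argument based on the $\diamondsuit$-like sequences available in $L$ should suffice, provided the full iteration is c.c.c.\ so that any $\aleph_2$ dense sets appearing in the final model are already present at some intermediate stage.

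First, I would extend the $L$-enumeration that drives the construction so that it predicts, alongside the coloring names already used in Theorem \ref{theorem:main}, all nice names for Knaster posets of size $\leq\aleph_1$ that might appear in intermediate models. At the designated stages of the partition product, the atomic piece would consist of such a Knaster poset in place of, or alongside, a $\Q(\dot\chi,\dot f)$ piece; the generic filter obtained automatically meets every dense subset of the Knaster poset from the relevant intermediate model.

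Second, I would verify that the new atomic pieces integrate into the partition product framework without disturbing its key properties. Since Knaster is preserved under products with c.c.c.\ forcing, and Knaster posets of size $\leq\aleph_1$ preserve $\ch$ over $\ch$-models, the short initial segments in which symmetric preassignments are constructed continue to satisfy $\ch$, and the preassignment diagonalization proceeds exactly as in Theorem \ref{theorem:main}. Likewise, the global c.c.c.\ arguments for the partition product absorb Knaster atomic factors without additional effort, and the $\aleph_3$-c.c.\ of the full iteration (needed to guarantee that $2^{\aleph_0}=\aleph_3$ is preserved and that any $\aleph_2$ dense sets are captured at a bounded stage) carries over.

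The main obstacle will be to check that the strong symmetry condition on preassignment names, namely that $\Q(\dot\chi[G_L],\dot f[G_L])\times\Q(\dot\chi[G_R],\dot f[G_R])$ remains c.c.c.\ under mutually generic filters, survives the presence of Knaster atomic pieces on each side. This reduces to extending the isomorphism and coherent-overlap conditions of the partition product to a setting in which the atomic pieces are of two distinct types, and then observing that a Knaster piece on each side contributes only a Knaster factor to the two-sided product, which is absorbed into the c.c.c.\ of the whole by the product preservation of Knaster with c.c.c. Once this compatibility is established, the resulting partition product, built in $L$, yields a model of $\zfc+\mathsf{OCA}_{ARS}+2^{\aleph_0}=\aleph_3+\mathsf{FA}(\aleph_2,\emph{Knaster}(\aleph_1))$.
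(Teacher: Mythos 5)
Your proposal follows essentially the same approach as the paper: interleave names for Knaster posets of size $\leq\aleph_1$ into the $L$-bookkeeping that drives the choice of iterand $\dot{\Q}_\kappa$ in Case 1 (the paper takes $\dot{\Q}_\kappa$ to be the Knaster poset coded by $(\gamma)_0$ when that ordinal codes one, in place of a $\Q(\dot\chi,\dot f)$), verify via Lemmas \ref{lemma:products} and \ref{lemma:RL} and the Knaster$\times$c.c.c. preservation that Assumption 4.1(b) still holds so the preassignment construction is unaffected, and then use the condensation/hull argument of Section 6 to show that the stage-$\gamma$ generic $G_\gamma$ is generic for $\dot{\mathbb K}[G\res\gamma]$ and hence meets all of the required dense sets. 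Your worry about the symmetric preassignment surviving Knaster pieces is in fact handled automatically, since the preassignment construction only depends on Assumption 4.1, and that assumption is preserved by exactly the Knaster-absorption observation you make.
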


The remainder of the paper is structured as follows: in Section 2, we introduce the axiomatic definition of a \emph{partition product} and prove a number of general facts about this type of poset. In Section 3, we develop the machinery to combine partition products in a variety of ways. The main goal of Section 4 is to isolate exactly what we need our names for preassignments to satisfy. To do so, we introduce and develop the notion of a \emph{finitely generated} partition product. We also use the machinery developed so far to count these; the counting arguments will be crucial for the diagonalization arguments involved in constructing preassignments. Section 5 includes the actual construction of our highly symmetric names for preassignments. Finally, in section 6, we show how to construct partition products in $L$. It would be helpful, though not necessary, for the reader to be familiar with the first few sections of the paper \cite{ARS}, in particular, their construction of preassignments and the role which preassignments play in showing the consistency of $\mathsf{OCA}_{ARS}$.

We would like to thank the referee for investing a considerable amount of time in providing extensive comments which have helped, we believe, to improve  the exposition of this paper.

\section{Partition Products}

\subsection{Notation and Conventions}

We begin with some remarks about notation and conventions. First, if $f$ is a function and $A\seq\dom(f)$, then we will in general use $f[A]$ to denote $\lb f(x):x\in A\rb$, the pointwise image of $A$ under $f$.

Second, given two sets $X$ and $Y$, we will use $X \biguplus Y$ to denote their disjoint union, and if $X$ and $Y$ are also topological spaces with respective topologies $\tau_X$ and $\tau_Y$, we take the topology on $X\biguplus Y$ to be the disjoint union of the respective topologies on $X$ and $Y$, denoted $\tau_X\biguplus\tau_Y$. Similarly, if $f_X$ and $f_Y$ are functions with domains $X$ and $Y$, we take $f_X\biguplus f_Y$ to be the disjoint union of these functions defined in the natural way on $X\biguplus Y$. 

Additionally, we will often be working in the context of a poset $\R$ as well as various other posets related to it; these other posets will have notational decorations, for example, $\R^*$. If $\dot{G}$ is the canonical $\R$-name for a generic filter, we use the corresponding decorations, such as $\dot{G}^*$, to denote the related names.

Finally, regarding iterations, we will want to consider forcing iterations where the domain of the iteration is not necessarily an ordinal, but possibly a non-transitive set of ordinals. This will help smooth over various technicalities later. We view elements in an iteration as partial functions where each value of the function is forced by the restriction of the function to be a condition in the appropriate name for a poset.

\subsection{Definition and Basic Facts} 

Our first main goal in this section is to define the notion of a \emph{partition product} and prove a few basic lemmas. Before giving the definition, we have a few paragraphs of remarks which will help provide motivation. 

Roughly speaking, the class of partition products consists of finite support iterations with restricted memories which are built in very specific ways, but which is rich enough to be closed under products, closed under products of iterations taken over a common initial segment, and closed under more general ``partitioned products'' of segments of the iterations taken over common earlier segments. 

Iterations with restricted memory first appeared in Shelah's work (see \cite{ShelahNullCov}; see also \cite{ShelahNullSat} for further applications to the null ideal and \cite{MildenbergerShelah} for applications to cardinal characteristics). The following definition captures restricted memory in a convenient way for us. The memory limitation is in condition (\ref{restricted-memory-U}) of the definition. 

\begin{definition}
\label{restricted-memory}
$\R$ is a finite support \emph{restricted memory iteration} on $X$, of iterand names $\la \dot{\mathbb U}_\xi\mid \xi\in X\ra$, with \emph{memory function} $\xi\mapsto b_\R(\xi)$ ($\xi\in X$), if the following conditions hold:
\begin{enumerate}
\item 
\label{restricted-memory-fnct}
for each $\xi\in X$, $b_\R(\xi)\subseteq X\cap\xi$. If $\zeta\in b_\R(\xi)$ then $b_\R(\zeta)\subseteq b_\R(\xi)$;
\item
\label{restricted-memory-U}
$\dot{\mathbb U}_\xi$ is an $\R\res b_\R(\xi)$-name;
\item
\label{restricted-memory-R}
conditions in $\R$ are finite partial functions $p$ on $X$. For each $\xi\in \dom(p)$, $p(\xi)$ is a canonical $\R\res b_\R(\xi)$-name for an element of $\dot{\mathbb U}_\xi$;
\item 
\label{restricted-memory-order}
$q\leq p$ in $\R$ if $\dom(q)\supseteq \dom(p)$ and $q\res b_\R(\xi)\Vdash_{\R\res b_\R(\xi)} q(\xi)\leq p(\xi)$ for each $\xi\in \dom(p)$.
\end{enumerate}
$X$ is the \emph{domain} of the iteration, denoted $\dom(\R)$. A function satisfying condition (\ref{restricted-memory-fnct}) is a \emph{memory function} on $X$. The poset $\R\res b_\R(\xi)$ in conditions (\ref{restricted-memory-U})--(\ref{restricted-memory-order}) is the restriction of $\R$ to conditions $p$ with $\dom(p)\subseteq b_\R(\xi)$. Using condition (\ref{restricted-memory-fnct}) one can check that $\R\res b_\R(\mu)$ is a restricted memory iteration of $\la \dot{\mathbb U}_\xi \mid \xi\in b_\R(\mu)\ra$. More generally, $Y\subseteq X$ is \emph{memory closed}, also called \emph{base closed}, if for all $\xi\in Y$, $b_\R(\xi)\subseteq Y$. Then $\R\res Y$ is the restriction of $\R$ to conditions $p$ with $\dom(p)\subseteq Y$. One can check this is exactly the restricted memory iteration of $\la \dot{\mathbb U}_\xi\mid \xi\in Y\ra$ with memory function $b_\R\res Y$. 
\end{definition}

\begin{remark}
\label{remark:densesubset}
Strictly speaking $\R$ as in Definition \ref{restricted-memory} is not actually an iteration, even if $X$ is transitive. This is because we take $p(\xi)$ to be an $\R\res b_\R(\xi)$-name rather than an $\R\res \xi$-name. But, because $\dot{\mathbb U}_\xi$ is an $\R\res b_\R(\xi)$-name, every $\R\res\xi$-name for an element of $\dot{\mathbb U}_\xi$ can be forced in $\R\res\xi$ to be equal to an $\R\res b_\R(\xi)$-name. Using this and the finiteness of the supports one can check that $\R$ is isomorphic to a dense subset of an iteration. Working with $\R$, instead of the actual iteration, simplifies our definitions.  
\end{remark}

\begin{definition}
\label{acceptable-rearrangement}
Let $b$ be a memory function on $X$. A bijection $\sigma\colon X \lra X^*$ is an \emph{acceptable rearrangement} of $X,b$ if for all $\zeta,\xi\in X$, $\zeta\in b(\xi)$ implies $\sigma(\zeta)<\sigma(\xi)$. We then define $\sigma(b)$, the \emph{$\sigma$-rearrangement} of $b$, to be the function on $X^*$ given by $\sigma(b)(\sigma(\xi))=\sigma[b(\xi)]$, and we note that $\sigma(b)$ is a memory function on $X^*$. 
\end{definition}

\begin{definition}
\label{rearrangement-restricted-memory}
Suppose $\R$ is a restricted memory iteration on $X$, of $\la \dot{\mathbb U}_\xi\mid \xi\in X\ra$, with memory function $b_\R$. Let $\sigma\colon X\lra X^*$ be an acceptable rearrangement of $X,b_\R$. Then $\sigma$ induces an isomorphism, which we also denote $\sigma$, between $\R$ and a restricted memory iteration $\R^*$ on $X^*$. This isomorphism in turn extends to act on $\R$-names $\dot{u}$ and maps them to $\R^*$-names $\sigma(\dot{u})$ so that $\dot{u}[G]=\sigma(\dot{u})[\sigma[G]]$. These liftings are determined uniquely by the following conditions:
\begin{enumerate}
\item 
$\R^*$ is a restricted memory iteration of $\la \dot{\mathbb U}^*_\xi\mid \xi\in X^*\ra$ with memory function $\sigma(b_\R)$;
\item
for $p\in \R$, $\dom(\sigma(p))=\sigma[\dom(p)]$ and $\sigma(p)(\sigma(\xi))=\sigma(p(\xi))$; 
\item
for an $\R$-name $\dot{u}$, $\sigma(\dot{u})$ is the $\R^*$-name $\{\la \sigma(\dot{z}),\sigma(p)\ra\mid \la\dot{z},p\ra\in \dot{u}\}$;
\item
$\dot{\mathbb U}^*_{\si(\mu)}=\sigma(\dot{\mathbb U}_\mu)$.
\end{enumerate}
The last three conditions are recursive. Knowledge of $\dot{\mathbb U}^*_{\si(\xi)}$ for $\xi<\mu$ allows determining $\sigma(p)$ for $p\in \R\res \mu$, $\sigma(\dot{u})$ for $\R\res\mu$-names $\dot{u}$, and $\dot{\mathbb U}^*_{\si(\mu)}$. We refer to $\R^*$ as the \emph{$\sigma$-rearrangement} of $\R$, denoted $\sigma(\R)$. We also refer to $\sigma(\dot{u})$ as the \emph{$\sigma$-rearrangement} of $\dot{u}$, and we say that $\sigma$ is a rearrangement of $\R$.  
\end{definition}

A partition product is a restricted memory iteration with two additional structural requirements. The first requirement is that the iterand names (up to isomorphisms induced by rearrangements) are all taken from a restricted alphabet. This is phrased precisely in Definition \ref{simplified-partition-product}. The second requirement is that the memory sets $b_\R(\xi)$, when not disjoint, intersect in very specific ways. This is phrased precisely in Definitions \ref{coherent-collapse} and \ref{partition-product}. The first requirement captures the sense in which we construct long iterations from a small set of short building blocks. The addition of the second requirement helps us bound the class of partition products of size $\aleph_1$, up to isomorphism.

\begin{definition}
\label{restricted-memory-alphabet}
Let $C\subseteq\omega_2\setminus\omega$. A \emph{restricted memory alphabet} on $C$ is a pair of sequences $\underline{\ps}=\la\ps_\de:\de\in C\ra$ and  $\underline{\dot{\Q}}=\la\dot{\Q}_\de:\de\in C\ra$, where each $\ps_\delta$ is a restricted memory iteration, and each $\dot{\Q}_\de$ is a $\ps_\de$-name for a poset.
\end{definition}

\begin{definition}
\label{simplified-partition-product}
Let $\underline{\ps}$ and $\dot{\underline{\Q}}$ be a restricted memory alphabet on $C$. A \emph{simplified partition product} based upon $\underline{\ps}$ and $\underline{\dot{\Q}}$ is a restricted memory iteration $\R$ of iterands $\la \dot{\mathbb U}_\xi\mid \xi\in X\ra$, with memory function $b_\R$, and additional functions $\ind_\R$ on $X$ and $\pi^\R_\xi$ for each $\xi\in X$, so that: 
\begin{enumerate}
\item
\label{simplified-partition-product-ind} 
$\ind_\R(\xi)\in C$; 
\item
\label{simplified-partition-product-pi}
$\pi^\R_\xi\colon \dom(\ps_{\ind_\R(\xi)}) \lra b_\R(\xi)$ is an acceptable rearrangement of $\ps_{\ind_\R(\xi)}$;
\item
\label{simplified-partition-product-restriction}
$\R\res b_\R(\xi)$ is exactly equal to the $\pi^\R_\xi$-rearrangement of $\ps_{\ind_\R(\xi)}$;
\item
\label{simplified-partition-product-iterand}
$\dot{\mathbb U}_\xi=\pi^\R_\xi(\dot{\Q}_{\ind_\R(\xi)})$. 
\end{enumerate}
We set $\base_\R(\xi)=\la b_\R(\xi),\pi^\R_\xi\ra$, referring to $\base_\R$ as the \emph{base function} of $\R$. $\ind_\R$ is the \emph{index function} of $\R$, and the $\pi^\R_\xi$ are the \emph{rearrangement functions}.

For any base closed $Y\subseteq X$, the restriction of the above simplified partition product to $Y$ consists of the restriction of $\R$ to conditions with domain contained in $Y$, the restriction of $b_\R$ and $\ind_\R$ to $Y$, and the functions $\pi^\R_\xi$ for $\xi\in Y$. One can check that  this restriction is itself a simplified partition product. 
\end{definition}

\begin{definition}
\label{rearrangement-simplified-partition-product}
Let $\R$ be a simplified partition product. Let $\sigma$ be an acceptable rearrangement of $\R$ viewed as a restricted memory iteration. Let $\R^*$ be the $\sigma$-rearrangement of $\R$ viewed as a restricted memory iteration. Then $\R^*$ can be enriched to a simplified partition product, based upon the same alphabet as $\R$, by setting $\ind_{\R^*}(\sigma(\xi))=\ind_\R(\xi)$, and $\pi^{\R^*}_{\sigma(\xi)}=\sigma\circ \pi^\R_\xi$, so that $\base_{\R^*}(\sigma(\xi))=\la \sigma[b_\R(\xi)],\sigma\circ \pi^\R_\xi\ra$. We refer to this simplified partition product as the \emph{$\sigma$-rearrangement} of $\R$. We denote $\ind_{\R^*}$ and $\base_{\R^*}$ by $\sigma(\ind_\R)$ and $\sigma(\base_\R)$. 
\end{definition}

\begin{remark}
\label{rearrangement-pullback}
An analogue of Definition \ref{rearrangement-simplified-partition-product} works also for $\sigma^{-1}$: Suppose $\R$ is a restricted memory iteration, $\sigma$ an acceptable rearrangement, and $\R^*$ the $\sigma$-rearrangement of $\R$. If $\R^*$ enriches to a simplified partition product, with functions $\ind_{\R^*}$ and $\pi^{\R^*}_\xi$ say, then pulling these functions back by $\sigma^{-1}$ gives an enrichment of $\R$ to a simplified partition product. The $\sigma$-rearrangement of this enrichment of $\R$ is exactly the enrichment of $\R^*$ we started from.   
\end{remark}

\begin{example}
\label{restricted-memory-example}
Before proceeding to the full definition of partition products, we give a few examples of simplified partition products. 
\begin{enumerate}
\item 
\label{restricted-memory-example-1}
First, let $C\subseteq \omega_2\setminus \omega$ be non-empty, and let $\al_0<\om_2$ be the least element of $C$. Let $\ps_{\al_0}$ be the trivial forcing (viewed as a restricted memory iteration), and let $\dot{\Q}_{\al_0}$ be a $\ps_{\al_0}$-name for the poset $\Add(\om,1)$ to add a single Cohen real. Then any finite support power of $\ps_{\al_0}\ast \dot{\Q}_{\al_0}$ (which we may identify with Cohen forcing) is (enrichable to) a simplified partition product. Indeed, let $X$ be a set of ordinals, and for $\xi\in X$, set $b_\xi=\es$, $\pi_\xi=\es$, $\ind(\xi)=\al_0$, and $\dot{\mathbb U}_\xi=\dot{\Q}_{\al_0}$. These assignments generate a simplified partition product $\R$, and it is clear that $\R$ is just the finite support $X$-power of Cohen forcing. 
\item 
\label{restricted-memory-example-2}
Let $\al_1>\al_0$ be the next element of $C$ above $\al_0$. Let $\ps_{\al_1}$ be the simplified partition product of part (\ref{restricted-memory-example-1}) of the example, with $X=\omega_1$, viewed as a restricted memory iteration. This adds $\omega_1$ Cohen reals. Let $\dot{\Q}_{\al_1}$ be a $\ps_{\al_1}$-name for the poset adding a real almost disjoint from the $\omega_1$ Cohen reals added by $\ps_{\al_1}$.

For $\eta\in\om_2\bsl\om_1$, consider the poset that adds $\eta$ Cohen reals and then a real almost disjoint from them. This is (enrichable to) a simplified partition product over the alphabet so far. To see this, let $X=\eta+1$. For $\xi<\eta$ set $\ind(\xi)=\al_0$, $\dot{\mathbb U}_\xi=\dot{\Q}_{\al_0}$, $b(\xi)=\emptyset$, and $\pi(\xi)=\emptyset$. Set $\ind(\eta)=\al_1$, $b(\eta)=\eta$, $\pi_\eta\colon \omega_1\lra \eta$ a bijection, and $\dot{\mathbb U}_\eta=\pi_\eta(\dot{\Q}_{\al_1})$. 

\item 
\label{restricted-memory-example-3}
Finally, we show how to build a partition product with many different copies of $\dot{\Q}_{\al_1}$. Fix a sequence $\vec{B}=\la B_\gamma\mid \gamma<\omega_2\ra$ where each $B_\ga\seq\om_2$ has size $\omega_1$, and fix bijections $\tau_\gamma\colon\omega_1\lra B_\gamma$. Consider the poset that adds $\omega_2$ Cohen reals, and then for each $\gamma<\omega_2$, adds a real almost disjoint from the Cohen reals added at coordinates in $B_\gamma$. We check that this is (enrichable to) a simplified partition product. To see this, set $X=\omega_2+\omega_2$. For $\xi<\omega_2$ set $\ind(\xi)=\al_0$, $\dot{\mathbb U}_\xi=\dot{\Q}_{\al_0}$, $b(\xi)=\emptyset$, and $\pi(\xi)=\emptyset$. For $\xi=\omega_2+\gamma\in[\omega_2,\omega_2+\omega_2)$, set $\ind(\xi)=\al_1$, $b(\xi)=B_\gamma$, $\pi_\xi=\tau_\gamma$, and $\dot{\mathbb U}_\xi=\pi_\xi(\dot{\Q}_{\al_1})$. 

\item 
\label{restricted-memory-example-4}
Let $\R$ be the poset of the simplified partition product of the previous item. We can make quite a bit of product-like behavior appear in $\R$, both pure product and product over a common initial part, by tailoring how the $B_\ga$ overlap. For instance, if $B_\ga\cap B_\de=\es$, then $\R$ will contain an isomorphic copy of the product $(\ps_{\al_1}\ast\dot{\Q}_{\al_1})\times (\ps_{\al_1}\ast\dot{\Q}_{\al_1})$. On the other hand, if $B_\ga=B_\de$, then $\R$ will contain a copy of $\ps_{\al_1}\ast(\dot{\Q}_{\al_1}\times\dot{\Q}_{\al_1})$. In between the extremes of disjointness and equality of $B_\ga$ and $B_\de$, we can also have non-trivial overlaps of any kind. 
\end{enumerate}
\end{example}

Partition products are simplified partition products satisfying additional constraints on how the memory sets $b_\R(\xi)$ overlap and how the bijections $\pi^\R_\xi$ interact in cases of overlap. These constraints will help us limit the number of partition products in a certain class over any fixed alphabet, up to rearrangement of course. This is done in Subsection \ref{sec.counting} and is an important part of our construction of preassignments of colors. At the same time, these constraints are not so bad as to prevent us from constructing the partition product posets we need for proving the consistency of $\mathsf{OCA}_{ARS}$ with large continuum.

\begin{definition}
\label{simplified-partition-product-alphabet}
A \emph{basic alphabet} on a set $C\subseteq \omega_2\setminus \omega$ is a pair of sequences  $\underline{\ps}=\la\ps_\de:\de\in C\ra$ and  $\underline{\dot{\Q}}=\la\dot{\Q}_\de:\de\in C\ra$ so that:  
\begin{enumerate}
\item 
each $\ps_\delta$ is a simplified partition product based upon $\underline{\ps}\res\delta$ and $\underline{\dot{\Q}}\res\delta$, and $\dot{\Q}_\de$ is a $\ps_\de$-name for a poset; and
\item
for each $\delta\in C$, $\dom(\ps_\delta)$ is an ordinal $\rho_\delta\leq\delta^+$.
\end{enumerate}
A \emph{collapsing system} for the basic alphabet is a sequence $\vec{\varphi}=\la\varphi_{\delta,\mu}\mid \delta\in C, \mu<\rho_\delta\ra$ so that $\varphi_{\delta,\mu}\colon \delta\lra \mu$ is surjective.  
\end{definition}

Compared with the restricted memory alphabet in Definition \ref{restricted-memory-alphabet}, here each $\ps_\delta$ carries not only a memory function, but also an index function and a rearrangement function, that build $\ps_\delta$ from the components in the alphabet up to $\delta$.

\begin{definition}
\label{coherent-collapse}
Let $\underline{\ps}$ and $\underline{\dot{\Q}}$ be a basic alphabet on $C$, and let $\vec{\varphi}$ be a collapsing system. Let $\bar{\delta}\leq\delta$ both belong to $C$, let $\bar{\mu}<\rho_{\bar{\delta}}$, and $\mu<\rho_\delta$. We say that $A\subseteq \mu$ \emph{coherently collapses} $\la\delta,\mu\ra$ to $\la\bar{\delta},\bar{\mu}\ra$ if:
\begin{enumerate}
\item 
\label{coherent-collapse-critical}
{(Hull)} $A$ is of the form $\varphi_{\delta,\mu}[\bar{\delta}]$;
\item
\label{coherent-collapse-closure}
{(Closure)} $A$ is countably closed in $\mu$, meaning that any limit point of $A$ below $\mu$ of cofinality $\omega$ belongs to $A$; 
\item
\label{coherent-collapse-comp}
{(Collapse)} 
letting $j$ denote the transitive collapse map of $A$, we have that 
$$
j\circ \varphi_{\delta,\mu} \res \bar{\delta} = \varphi_{\bar{\delta},\bar{\mu}}.
$$
\end{enumerate}
\end{definition}

The existence of coherently collapsing sets $A$ with 
$\delta>\bar{\delta}$ depends on a reasonably coherent choice of the collapsing system $\vec{\varphi}$, since condition (\ref{coherent-collapse-comp}) of Definition \ref{coherent-collapse} requires $\varphi_{\delta,\mu}$ to collapse to $\varphi_{\bar{\delta},\bar{\mu}}$. If the iteration lengths $\rho_\delta$ are sufficiently small that $\varphi_{\delta,\mu}$ can be constructed through some recipe that is uniform in $\delta$, then the coherence is easy to achieve. Alternatively, the coherence can be achieved in a universe that satisfies condensation, for example in $L$. This is what we will do in Section \ref{sec.constructing}. A sequence picked generically by initial segments will also satisfy enough coherence for the existence of some coherently collapsing sets with 
$\delta>\bar{\delta}$. No special choice is needed for the existence of coherently collapsing sets with 
$\delta=\bar{\delta}$. 

{The Hull and Closure conditions in Definition \ref{coherent-collapse} are used to identify initial segments of the coherently collapsing set using the system $\vec{\varphi}$ of surjections (for example see Corollary \ref{cor:definable}), and to identify how parts of one base set can sit inside another (see Lemma \ref{lemma:sborder}). Among other things, this leads to a counting argument of certain partition products, up to isomorphism, in Lemma \ref{lemma:M}. The Collapse condition is used in a certain triangle lemma for coherence (Lemma \ref{lemma:cohere}), which is essential later on, for the base case of the construction in Section \ref{sec.preassign}, through its use in Lemma \ref{lemma:productcase}.}

\begin{definition}
\label{partition-product}
Let $\underline{\ps}$ and $\dot{\underline{\Q}}$ be a basic alphabet on $C$, and let $\vec{\varphi}$ be a collapsing system for the alphabet. A \emph{partition product} based upon $\underline{\ps}$ and $\underline{\dot{\Q}}$ (with respect to $\vec{\varphi}$) is a simplified partition product $\R$ (based on the reduction of $\underline{\ps},\underline{\dot{\Q}}$ to a restricted memory alphabet), with memory function $b$, index function $\ind$, and rearrangement functions $\pi_\xi$ say, satisfying the following additional properties:  
\begin{enumerate}
\item
\label{partition-product-pi}
$\pi_\xi$ is an acceptable rearrangement of $\ps_{\ind(\xi)}$ and $\R\res b(\xi)$ is exactly equal to the $\pi_\xi$-rearrangement of $\ps_{\ind(\xi)}$, not only as restricted memory iterations, but as simplified partition products;
\item
\label{partition-product-coherent}
let $\xi_1,\xi_2\in\dom(\R)$. Suppose $\ind(\xi_1)\leq\ind(\xi_2)$, $b(\xi_1)\cap b(\xi_2)\not=\emptyset$, and $\zeta\in b(\xi_1)\cap b(\xi_2)$. Set $b_1=b(\xi_1)$, $b_2=b(\xi_2)$, $\delta_1=\ind(\xi_1)$, $\delta_2=\ind(\xi_2)$, $\mu_1=\pi^{-1}_{\xi_1}(\zeta)$, and $\mu_2=\pi^{-1}_{\xi_2}(\zeta)$. Set $A_1=\pi_{\xi_1}[\mu_1]$ and $A_2=\pi_{\xi_2}[\mu_2]$. We view these as ``initial segments'' up to $\zeta$ of $b_1$ and $b_2$ respectively. Then $A_1\subseteq A_2$, and $\pi_{\xi_2}^{-1}[A_1]$ coherently collapses $\la\delta_2,\mu_2\ra$ to $\la\delta_1,\mu_1\ra$. 
\end{enumerate}

For any base closed $Y\subseteq X$, the restriction of the above partition product to $Y$ is obtained as in Definition \ref{simplified-partition-product}. One can check that this restriction is itself a partition product. 
\end{definition}

\begin{example}
The simplified partition products of items (\ref{restricted-memory-example-1})-(\ref{restricted-memory-example-2}) in Example \ref{restricted-memory-example} are in fact partition products, over the alphabet defined in the example with ${\mathbb P}_{\al_0}$ and ${\mathbb P}_{\al_1}$ viewed as simplified partition products rather than reduced to restricted memory iterations. This is easy to check; the coherent collapse condition is trivial since the various memory sets have empty intersection. In item (\ref{restricted-memory-example-3}) of the example one can obtain a partition product by making sure that the sets $B_\gamma$ intersect at initial segments. Specifically, select $B_\gamma$ and $\tau_\gamma$ so that for every $\gamma,\delta<\omega_2$, there is a $\mu$ so that $B_\gamma\cap B_\delta=\tau_\gamma[\mu]=\tau_\delta[\mu]$. It is then easy to check the coherent collapse condition for item (\ref{restricted-memory-example-3}) of the example.
\end{example}

\begin{remark}
\label{rearrangements-partition-products}
Suppose $\sigma$ is an acceptable rearrangement of a simplified partition product $\R$, and let $\R^*$ be the $\sigma$-rearrangement of $\R$. If $\R$ is a partition product, then so is $\R^*$. Similarly, if $\R^*$ is a partition product, then so is $\R$. 
\end{remark}

It follows from Remark \ref{rearrangements-partition-products} that if a coordinate $\delta$ is actually used as an index in a partition product $\R$, say as $\ind_\R(\xi)$, then $\ps_\delta$ is itself a partition product. Thus there is no loss of generality, in the sense that no partition product posets are lost, if we make the following additional demands on $\underline{\ps}$ and $\underline{\dot{\Q}}$:

\begin{definition}
An \emph{alphabet} $\underline{\ps},\underline{\dot{\Q}}$ is a basic alphabet with the additional property that each $\ps_\delta$ is a partition product (not merely a simplified partition product) based upon $\underline{\ps}\res \delta, \underline{\dot{\Q}}\res\delta$. 
\end{definition}

Working over an alphabet, the definition and remark below summarize all the conditions that go into the definition of partition products:

\begin{definition}
\label{def:support}
Let $\underline{\ps},\underline{\dot{\Q}}$ be an alphabet on a set $C\subseteq\omega_2\setminus\omega$. Let $\vec{\varphi}$ be a collapsing system for the alphabet. Let $\base_\delta$, $\pi^\delta_\mu$, $b_\delta$, and $\ind_\delta$ denote the corresponding functions in the partition product $\ps_\delta$. 

We say that functions $\base$ and $\ind$ \emph{support a partition product on} $X$ \emph{based upon} $\underline{\ps}$ \emph{and} $\underline{\dot{\Q}}$ if:
\begin{enumerate}
\item 
\label{def:support-1}
for each $\xi\in X$, $\ind(\xi)\in C$ and $\base(\xi)$ is a pair $(b(\xi),\pi_\xi)$, where $b(\xi)\seq X\cap\xi$ and $\pi_\xi:\rho_{\ind(\xi)}\lra b(\xi)$ is an acceptable rearrangement of $\ps_{\ind(\xi)}$;
\item 
\label{def:support-2}
let $\xi\in X$, and set $\de=\ind(\xi)$. Let $\mu\in \rho_\delta$ and let $\zeta=\pi_\xi(\mu)\in b(\xi)$. Then $b(\zeta)=\pi_\xi[b_\de(\mu)]$, $\pi_\zeta=\pi_\xi\circ\pi^\de_{\mu}$ and $\ind(\zeta)=\ind_{\de}(\mu)$;
\item 
\label{def:support-3}
let $\xi_1,\xi_2\in X$. Suppose $\ind(\xi_1)\leq\ind(\xi_2)$, $b(\xi_1)\cap b(\xi_2)\not=\emptyset$, and $\zeta\in b(\xi_1)\cap b(\xi_2)$. Set $b_1=b(\xi_1)$, $b_2=b(\xi_2)$, $\delta_1=\ind(\xi_1)$, $\delta_2=\ind(\xi_2)$, $\mu_1=\pi^{-1}_{\xi_1}(\zeta)$, and $\mu_2=\pi^{-1}_{\xi_2}(\zeta)$. Set $A_1=\pi_{\xi_1}[\mu_1]$ and $A_2=\pi_{\xi_2}[\mu_2]$. Then $A_1\subseteq A_2$, and $\pi_{\xi_2}^{-1}[A_1]$ coherently collapses $\la\delta_2,\mu_2\ra$ to $\la\delta_1,\mu_1\ra$. 
\end{enumerate}
\end{definition}

\begin{remark}
\label{partition-product-reformulation}
Let $\underline{\ps},\underline{\dot{\Q}}$ be an alphabet on a set $C\subseteq\omega_2\setminus\omega$. Let $\vec{\varphi}$ be a collapsing system for the alphabet. Then $\R$ is a partition product with domain $X$, based upon $\underline{\ps},\underline{\dot{\Q}}$, with $\base$ and $\ind$ as its base and index functions, iff: 
\begin{enumerate}
\item
$\base$ and $\ind$ support a partition product on $X$ based upon $\underline{\ps}$ and $\underline{\dot{\Q}}$;
\item 
$\R$ consists of all finite partial functions $p$ with $\dom(p)\subseteq X$ so that for all $\xi\in\dom(p)$, $p(\xi)$ is a canonical $\pi_\xi(\ps_{\ind(\xi)})$-name for an element of $\dot{\mathbb U}_\xi=\pi_\xi(\dot{\Q}_{\ind(\xi)})$;
\item
\label{partition-product-reformulation-order}
$q\leq p$ iff $\dom(q)\supseteq \dom(p)$ and for all $\xi\in\dom(p)$, $q\res b(\xi)\Vdash_{\pi_\xi(\ps_{\ind(\xi)})} q(\xi)\leq_{\dot{\mathbb U}_\xi} p(\xi)$. 
\end{enumerate}
Note that $\R$ is determined uniquely from (the alphabet and) the functions $\base$ and $\ind$. The forcing requirement in condition (\ref{partition-product-reformulation-order}) is equivalent to the requirement that $q\res b(\xi) \Vdash_{\R\res b(\xi)} q(\xi)\leq_{\dot{\mathbb U}_\xi} p(\xi)$; indeed one can check inductively that $\R\res b(\xi)$ is exactly $\pi_\xi(\ps_{\ind(\xi)})$. This uses the assumption that $\base$ and $\ind$ support a partition product. 
\end{remark}

The definition of a partition product refers to $C$, the alphabet sequences $\underline{\ps}$ and $\underline{\dot{\Q}}$, and the collapsing system $\vec{\varphi}$. We suppress some or all of these objects when they are understood from the context.

\begin{lemma}\label{lemma:basic} Let $\base$ and $\ind$ support a partition product with domain $X$, and let $\xi\in X$. Then $b(\xi)$ is base-closed, and for each $\zeta\in b(\xi)$, $\ind(\zeta)<\ind(\xi)$.
\end{lemma}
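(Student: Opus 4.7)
The plan is to unpack the recursive structure captured in Definition \ref{def:support} and read off both conclusions directly from condition (2) there together with the standing assumption that we are working over an alphabet (as opposed to merely a basic alphabet).

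Fix $\xi\in X$ and $\zeta\in b(\xi)$. Set $\delta=\ind(\xi)$ and $\mu=\pi_\xi^{-1}(\zeta)\in\rho_\delta$, so that $\zeta=\pi_\xi(\mu)$. Clause (2) of Definition \ref{def:support} is the crucial input: it tells us that
$$b(\zeta)=\pi_\xi[b_\delta(\mu)],\qquad \ind(\zeta)=\ind_\delta(\mu).$$
This is the whole point of the clause: base and index values at points inside $b(\xi)$ are dictated by the base and index functions of $\ps_\delta$, transferred via $\pi_\xi$.

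For base-closedness, I want to show $b(\zeta)\subseteq b(\xi)$. Since $\ps_\delta$ is a partition product, it is in particular a restricted memory iteration (Definition \ref{restricted-memory}, clause (1)), so $b_\delta(\mu)\subseteq \rho_\delta\cap\mu$. Applying $\pi_\xi$ to both sides and using that $\pi_\xi$ has domain $\rho_\delta$ and image $b(\xi)$, I conclude
$$b(\zeta)=\pi_\xi[b_\delta(\mu)]\subseteq \pi_\xi[\rho_\delta]=b(\xi),$$
which is exactly the base-closedness of $b(\xi)$.

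For the index inequality, I use that $\underline{\ps},\underline{\dot{\Q}}$ is an alphabet, not merely a basic alphabet: each $\ps_\delta$ is a partition product based upon $\underline{\ps}\res\delta$ and $\underline{\dot{\Q}}\res\delta$. Hence the index function $\ind_\delta$ of $\ps_\delta$ takes values in $C\cap\delta$, so in particular $\ind_\delta(\mu)<\delta$. Combining this with $\ind(\zeta)=\ind_\delta(\mu)$ yields $\ind(\zeta)<\delta=\ind(\xi)$, as desired. There is no real obstacle here; the entire argument is a one-step unfolding of the definitions, and the main conceptual point is recognizing that it is precisely clause (2) of Definition \ref{def:support} that propagates structural features of the alphabet components $\ps_\delta$ down to the global base and index functions of $\R$.
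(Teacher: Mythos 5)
Your proof is correct and takes essentially the same approach as the paper's, which cites conditions (1) and (2) of Definition \ref{def:support} plus the fact that $\ps_{\ind(\xi)}$ is based on $\underline{\ps}\res\ind(\xi)$; you have simply unpacked those one-liners into the explicit chain $b(\zeta)=\pi_\xi[b_\delta(\mu)]\subseteq\pi_\xi[\rho_\delta]=b(\xi)$ and $\ind(\zeta)=\ind_\delta(\mu)<\delta=\ind(\xi)$.
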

\begin{proof} 
Immediate from conditions (\ref{def:support-1}) and (\ref{def:support-2}) in Definition \ref{def:support}. To get $\ind(\zeta)<\ind(\xi)$ we use the fact that $\ps_{\ind(\xi)}$ is based upon $\underline{\ps}\res\ind(\xi),\underline{\dot{\Q}}\res\ind(\xi)$. 
\end{proof}

\begin{lemma}\label{lemma:baseclosedisgreat} Suppose that $\R$ is a partition product with domain $X$ and that $B\seq X$ is base-closed. Then $\base\res B$ and $\ind\res B$ support a partition product on $B$, and this partition product is exactly $\R\res B$. Moreover, if there is a $\be\in C$ such that $\lb\emph{index}(\xi):\xi\in B\rb\seq\be$, then $\R\res B$ is a partition product based upon $\underline{\ps}\res\be$ and $\underline{\dot{\Q}}\res\be$. Finally, $\R\res B$ is a complete subposet of $\R$. 
\end{lemma}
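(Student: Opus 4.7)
The plan is to unwind Definitions 2.11--2.12. First, since $B$ is base-closed, $b(\xi)\seq B$ for every $\xi\in B$, so the three conditions in Definition \ref{def:support} on $\base$ and $\ind$, each being a statement about points $\xi$ together with coordinates lying in $b(\xi)$ or pairs $(\xi_1,\xi_2)$ via their memory sets, are inherited verbatim when restricted to $B$. Thus $\base\res B$ and $\ind\res B$ support a partition product on $B$. By the uniqueness clause of Remark \ref{partition-product-reformulation}, this partition product is determined entirely by these two functions and agrees on each coordinate with the restriction of $\R$ to conditions with domain in $B$, so it equals $\R\res B$ as a poset. The alphabet reduction claim is then immediate: if all values of $\ind\res B$ lie below $\be$, then only the segment $\underline{\ps}\res\be,\underline{\dot{\Q}}\res\be$ is ever referenced through condition (2) of Definition \ref{def:support} (applied inductively inside each $\ps_{\ind(\xi)}$), so $\R\res B$ is based upon $\underline{\ps}\res\be$ and $\underline{\dot{\Q}}\res\be$.

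The main content is the completeness claim. The plan is to show that for every $p\in \R$, the natural restriction $p\res B$ is a reduction to $\R\res B$; this is the standard criterion for complete embedding and simultaneously gives that $\R\res B$ inherits its order and incompatibility from $\R$. First, $p\res B\in\R\res B$: by base-closure of $B$, for each $\xi\in\dom(p)\cap B$ the name $p(\xi)$ is a canonical $\R\res b(\xi)$-name, and since $b(\xi)\seq B$ the posets $\R\res b(\xi)$ and $(\R\res B)\res b(\xi)$ coincide by the first part of the lemma applied recursively. So $p(\xi)$ is a legitimate name in $\R\res B$.

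Now, given any $q\leq p\res B$ in $\R\res B$, define $r$ on $\dom(q)\cup \dom(p)$ by $r(\xi)=q(\xi)$ for $\xi\in \dom(q)$, and $r(\xi)=p(\xi)$ for $\xi\in\dom(p)\bsl B$. I claim $r\in \R$ and $r\leq p,q$. Membership in $\R$ is immediate as each coordinate value is a canonical name of the required form. To see $r\leq p$: for $\xi\in\dom(p)\bsl B$, $r(\xi)=p(\xi)$, trivially; for $\xi\in\dom(p)\cap B$, we have $b(\xi)\seq B$, so $r\res b(\xi)=q\res b(\xi)$, and the fact that $q\leq p\res B$ in $\R\res B$ gives $q\res b(\xi)\Vdash_{(\R\res B)\res b(\xi)} q(\xi)\leq p(\xi)$, which, by the identification $(\R\res B)\res b(\xi) = \R\res b(\xi)$, is exactly what is required for $r\leq p$ at $\xi$. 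The inequality $r\leq q$ is immediate coordinate-by-coordinate. Hence $p,q$ are compatible in $\R$ via $r$, witnessing that $p\res B$ is a reduction of $p$.

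The subtle point, and the one I expect to require the most care, is the recursive identification $\R\res b(\xi)=(\R\res B)\res b(\xi)$ used both to place $p\res B$ into $\R\res B$ and to translate forcing statements. This is precisely where base-closure of $B$ is used non-trivially: it ensures $b(\xi)$ itself is base-closed inside $B$ (so the first part of the lemma applies by induction on $\ind(\xi)$, using Lemma \ref{lemma:basic}), so one can quote the recursively established agreement of the two restricted posets. Everything else is a routine verification.
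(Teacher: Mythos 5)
Your proof is correct and follows essentially the same approach as the paper: the paper states the first parts are clear from the definitions, and for completeness it gives exactly the amalgamation $p^*$ (your $r$) that maps $\xi\in B$ to $q(\xi)$ and $\xi\notin B$ to $p(\xi)$. The only minor remark is that the identification $\R\res b(\xi)=(\R\res B)\res b(\xi)$ that you flag as requiring recursion is actually immediate once $b(\xi)\seq B$, since both posets consist of exactly the conditions with domain contained in $b(\xi)$.
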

\begin{proof} 
Clear from the definitions. For the final part, prove that if $p\in \R$, $q\in \R\res B$, and $q\leq p\res B$, then $p^*$ with domain $\dom(p)\cup \dom(q)$, mapping $\xi\in B$ to $q(\xi)$ and $\xi\not\in B$ to $p(\xi)$, is a condition in $\R$ and extends both $p$ and $q$. 
\end{proof}

If $\R$, $X$, and $B$ are as in the previous lemma, and if $G$ is $V$-generic for $\R$, we use $G\res B$ to denote $\lb p\res B:p\in G\rb$, which is $V$-generic for $\R\res B$.

\subsection{Rearranging Partition Products}

It will be convenient later on to rearrange partition products in ways that make specific base closed sets into initial segments of the partition product, and ways that isolate the use of the largest index (if there is one) as a product over the restriction to smaller indexes. This will be done in Lemma \ref{rearrange1}, Corollary \ref{cor:omegashuffle}, and Lemma \ref{lemma:products}. 

We begin with a couple of obvious results, and connections between rearrangements and Mostowski collapses.

\begin{lemma}\label{lemma:RL}\emph{(Rearrangement Lemma)} Suppose that $\R$ is a partition product with domain $X$ and that $\si:X\lra X^*$ is an acceptable rearrangement of $\R$. Recall from Definitions \ref{acceptable-rearrangement} and \ref{rearrangement-simplified-partition-product} that $\sigma(b_\R)(\sigma(\xi))=\sigma[b_\R(\xi)]$, and $\sigma(\base_\R)(\sigma(\xi))=\la \sigma[b_\R(\xi)],\sigma\circ \pi^\R_\xi\ra$. 

Then $\si(\base_\R)$ and $\si(\ind_\R)$ support a partition product, denoted $\si(\R)$, on $X^*$. Moreover, $\si$ lifts to act on conditions in $\R$ and on $\R$-names through the inductive equations that $\sigma(p)(\sigma(\xi))=\sigma(p(\xi))$ and $\sigma(\dot{u})=\{\la \sigma(\dot{v}),\sigma(p)\ra \mid \la \dot{v},p\ra\in \dot{u}\}$. This gives an isomorphism from $\R$ to $\si(\R)$, $\sigma$ lifts to act on $\R$ generics $G$ by $\sigma(G)=\sigma[G]$, and for any $\R$-name $\dot{u}$ we have $\sigma(\dot{u})[\sigma(G)]=\dot{u}[G]$. 
\end{lemma}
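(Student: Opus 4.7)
The plan is to verify directly that $(\sigma(\base_\R), \sigma(\ind_\R))$ satisfy the three clauses of Definition \ref{def:support} on $X^*$, and then reduce the remaining claims (isomorphism, action on names, preservation of generics) to the analogous facts about rearrangements of restricted memory iterations already set up in Definitions \ref{rearrangement-restricted-memory} and \ref{rearrangement-simplified-partition-product} together with Remark \ref{rearrangements-partition-products}.

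I will begin with clause (\ref{def:support-1}) of Definition \ref{def:support}. For $\eta=\sigma(\xi)\in X^*$, by construction $\sigma(\ind_\R)(\eta)=\ind_\R(\xi)\in C$, and $\sigma(\base_\R)(\eta) = \la \sigma[b_\R(\xi)], \sigma\circ \pi^\R_\xi\ra$. Acceptability of $\sigma$ gives $\sigma[b_\R(\xi)]\subseteq X^*\cap \sigma(\xi)$, and composing the bijection $\pi^\R_\xi\colon \rho_{\ind_\R(\xi)}\lra b_\R(\xi)$ with $\sigma$ yields a bijection onto $\sigma[b_\R(\xi)]$. That this composition is an acceptable rearrangement of $\ps_{\ind_\R(\xi)}$ follows immediately from the acceptability of $\pi^\R_\xi$ together with the fact that $\sigma$ is order-preserving on chains in $b_\R$.

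Clause (\ref{def:support-2}) is obtained by applying $\sigma$ to the corresponding equations for $\R$. Fix $\xi\in X$ with $\delta=\ind_\R(\xi)$, let $\mu<\rho_\delta$, set $\zeta=\pi^\R_\xi(\mu)$ and $\nu=\sigma(\zeta)=(\sigma\circ \pi^\R_\xi)(\mu)$. Condition (\ref{def:support-2}) for $\R$ at $\xi,\mu$ gives $b_\R(\zeta)=\pi^\R_\xi[b_\delta(\mu)]$, $\pi^\R_\zeta=\pi^\R_\xi\circ \pi^\delta_\mu$, and $\ind_\R(\zeta)=\ind_\delta(\mu)$. Pushing forward by $\sigma$ gives $\sigma[b_\R(\zeta)]=(\sigma\circ\pi^\R_\xi)[b_\delta(\mu)]$, $\sigma\circ\pi^\R_\zeta=(\sigma\circ\pi^\R_\xi)\circ\pi^\delta_\mu$, and $\sigma(\ind_\R)(\nu)=\ind_\delta(\mu)$, which is exactly clause (\ref{def:support-2}) for the rearranged data at $\eta=\sigma(\xi)$ and $\mu$.

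For clause (\ref{def:support-3}), fix $\eta_1,\eta_2\in X^*$, write $\eta_i=\sigma(\xi_i)$, and suppose $\sigma(\ind_\R)(\eta_1)\leq\sigma(\ind_\R)(\eta_2)$ and $\sigma[b_\R(\xi_1)]\cap \sigma[b_\R(\xi_2)]\neq\es$. Then $\ind_\R(\xi_1)\leq\ind_\R(\xi_2)$ and $b_\R(\xi_1)\cap b_\R(\xi_2)\neq\es$. For any witness $\nu=\sigma(\zeta)$ in the intersection, the values $\mu_1=(\sigma\circ\pi^\R_{\xi_1})^{-1}(\nu)$ and $\mu_2=(\sigma\circ\pi^\R_{\xi_2})^{-1}(\nu)$ equal the corresponding preimages under $\pi^\R_{\xi_1}$ and $\pi^\R_{\xi_2}$ of $\zeta$. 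The candidate ``initial segment'' sets are $A_i^* = (\sigma\circ\pi^\R_{\xi_i})[\mu_i] = \sigma\bigl[\pi^\R_{\xi_i}[\mu_i]\bigr]=\sigma[A_i]$, where $A_i$ is the corresponding set for $\R$. Since $A_1\seq A_2$ by clause (\ref{def:support-3}) for $\R$, also $A_1^*\seq A_2^*$. Finally, $(\sigma\circ\pi^\R_{\xi_2})^{-1}[A_1^*] = (\pi^\R_{\xi_2})^{-1}[\sigma^{-1}[\sigma[A_1]]] = (\pi^\R_{\xi_2})^{-1}[A_1]$, which by hypothesis coherently collapses $\la\delta_2,\mu_2\ra$ to $\la\delta_1,\mu_1\ra$. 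Since the notion of coherent collapse (Definition \ref{coherent-collapse}) depends only on the subset of $\mu_2$, not on $X$ or $X^*$, this completes clause (\ref{def:support-3}).

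For the remaining claims, Definitions \ref{rearrangement-restricted-memory} and \ref{rearrangement-simplified-partition-product} already set up the action of $\sigma$ on conditions and names by the inductive equations displayed in the lemma, and give that these equations define an isomorphism from $\R$ to $\si(\R)$ as restricted memory (hence simplified partition product) iterations. The present lemma adds only that $\si(\R)$ is actually a partition product, which I verified above, and this verification is compatible with the isomorphism since the underlying poset structure is unchanged. The identity $\sigma(\dot u)[\sigma[G]]=\dot u[G]$ is then proved by induction on the rank of $\dot u$, using the inductive defining equation $\sigma(\dot u)=\{\la\sigma(\dot v),\sigma(p)\ra\mid\la\dot v,p\ra\in\dot u\}$; since the isomorphism sends $G$ to $\sigma[G]$ and preserves forcing relations, the interpretations agree.

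The main bookkeeping obstacle is the verification of clause (\ref{def:support-3}): one must be careful that the transitive collapse map witnessing coherent collapse really is indifferent to the ambient rearrangement, which is the content of the identity $(\sigma\circ\pi^\R_{\xi_2})^{-1}[\sigma[A_1]]=(\pi^\R_{\xi_2})^{-1}[A_1]$ above. Everything else is a routine push-forward.
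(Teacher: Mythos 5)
Your proposal is correct and takes essentially the same approach as the paper, which dispatches the lemma as ``summarizing facts from the previous subsection'' (in particular Definitions \ref{rearrangement-restricted-memory}, \ref{rearrangement-simplified-partition-product} and Remark \ref{rearrangements-partition-products}); you simply spell out the routine clause-by-clause verification of Definition \ref{def:support}---with the one small caveat that your justification of clause (\ref{def:support-1}) should make explicit that the acceptability of $\sigma\circ\pi^\R_\xi$ uses clause (\ref{def:support-2}) for $\R$ (to convert $\mu'\in b_\delta(\mu)$ into $\pi^\R_\xi(\mu')\in b_\R(\pi^\R_\xi(\mu))$) before appealing to the acceptability of $\sigma$, rather than a vague ``order-preserving on chains.''
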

\begin{proof} 
This summarizes facts from the previous subsection. 
\end{proof}

\begin{remark}\label{remark:ambiguous} Suppose that $M$ and $M^*$ are transitive, satisfy enough of $\zfc-\mathsf{Powerset}$, and that $\si:M\lra M^*$ is an elementary embedding. Also, suppose that $\R\in M$ is a partition product, say with domain $X$, and that $\R$ is based upon $\underline{\ps}\res\ka$ and $\underline{\dot{\Q}}\res\ka$. Since $\sigma$ is an elementary embedding, the ordinal function $\pi:=\si\res X$ is order-preserving and therefore provides an acceptable rearrangement of $\R$. 

There is now a potential conflict between the $\pi$-rearrangements of conditions in $\R$ and the images of these conditions under the embedding $\si$. However, these two notions are the same if the embedding $\si$ does not move any members of the alphabet $\underline{\ps}\res\ka$ and $\underline{\dot{\Q}}\res\ka$. The next lemma summarizes what we need about this situation and will be used crucially in the final proof of Theorem \ref{theorem:main} in Section 6. For the next lemma, we will continue to use $\pi$ to denote the ordinal function which lifts to act, for instance, on conditions in $\R$, and we will keep $\si$ as the elementary embedding.
\end{remark}

\begin{lemma}\label{lemma:ambiguous} Let $\si:M\lra M^*$, $\R$, $X$,  $\ka$, and $\pi$ be as in Remark \ref{remark:ambiguous}. Further suppose that for each $\de\in C\cap\ka$, $\si$ is the identity on every element of $\ps_\de\ast\dot{\Q}_\de\cup\lb\ps_\de,\dot{\Q}_\de\rb$. Then for each $p\in\R$, $\pi(p)=\si(p)$.

Furthermore, setting $\R^*:=\si(\R)$, $\si[X]$ is a base-closed subset of $\dom(\R^*)$, and $\R^*\res\si[X]$ equals $\pi(\R)$, the $\pi$-rearrangement of $\R$.

Additionally, suppose that $G$ is $V$-generic for $\R$, $G^*$ is $V$-generic for $\R^*$, and $\si$ extends to an elementary embedding $\si^*:M[G]\lra M^*[G^*]$.  Suppose also that $\dot{\tau}$ is an $\R$-name (not necessarily in $M$) and $\pi(\dot{\tau})$ is the $\pi$-rearrangement of $\dot{\tau}$. Then $\pi(\dot{\tau})$ is an $\R^*$-name, and $\dot{\tau}[G]=\pi(\dot{\tau})[G^*]$. Finally, if $\dot{\Q}$ is an $\R$-name in $M$ of $M$-cardinality $<\operatorname{crit}(\si)$ and names a poset contained in $\operatorname{crit}(\si)$, then $\si(\dot{\Q})=\pi(\dot{\Q})$.
\end{lemma}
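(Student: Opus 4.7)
The plan is to establish the four claims in sequence. For claim (1), I would begin with two preliminary observations drawn from the hypothesis that $\sigma$ fixes each element of $\ps_\delta\ast\dot{\Q}_\delta\cup\{\ps_\delta,\dot{\Q}_\delta\}$ for $\delta\in C\cap\kappa$: first, since $\sigma$ fixes each condition in $\ps_\delta$ pointwise and ordinals below $\rho_\delta$ appear as domain points of such conditions, $\sigma$ is the identity on $\rho_\delta$; second, since $\sigma$ fixes every pair $\langle p,\dot{r}\rangle\in\ps_\delta\ast\dot{\Q}_\delta$, $\sigma$ fixes every $\ps_\delta$-name $\dot{r}$ for an element of $\dot{\Q}_\delta$. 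Given $p\in\R$ and $\xi\in\dom(p)$, write $p(\xi)=\pi^\R_\xi(\dot{q})$ for a $\ps_{\ind(\xi)}$-name $\dot{q}$ of an element of $\dot{\Q}_{\ind(\xi)}$. Then $\sigma(\dot{q})=\dot{q}$, and since $\pi^\R_\xi\colon\rho_{\ind(\xi)}\to X$ with $\sigma$ identity on $\rho_{\ind(\xi)}$, elementarity gives $\sigma(\pi^\R_\xi)(\mu)=\sigma(\pi^\R_\xi(\mu))=\pi(\pi^\R_\xi(\mu))$ for each $\mu<\rho_{\ind(\xi)}$, so $\sigma(\pi^\R_\xi)=\pi\circ\pi^\R_\xi$. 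Applying elementarity to the definable operation of rearranging a name yields $\sigma(\pi^\R_\xi(\dot{q}))=(\pi\circ\pi^\R_\xi)(\dot{q})$, which by a direct unwrapping of the recursive rearrangement formula equals $\pi(\pi^\R_\xi(\dot{q}))$. Finally, since $p$ is finite, $\sigma(p)=\{\langle\sigma(\xi),\sigma(p(\xi))\rangle:\xi\in\dom(p)\}$; combined with $\sigma\res X=\pi$ and the above, this matches $\pi(p)$.

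For claim (2), the same computation gives $\sigma(b_\R(\xi))=\sigma[b_\R(\xi)]\seq\sigma[X]$ (since $\pi^\R_\xi$ is a bijection from $\rho_{\ind(\xi)}\leq\operatorname{crit}(\sigma)$ onto $b_\R(\xi)$), so $\sigma[X]$ is base-closed in $\R^*$; a direct comparison of base, index, and rearrangement functions then shows $\R^*\res\sigma[X]$ and $\pi(\R)$ coincide. Claim (3) follows from (1), (2), and Lemma \ref{lemma:RL}: the latter gives $\dot{\tau}[G]=\pi(\dot{\tau})[\pi[G]]$, with $\pi[G]$ being $V$-generic for $\pi(\R)$; by (1), $\pi[G]=\sigma[G]$, and since $\sigma^*$ extends $\sigma$ we have $\sigma[G]\seq G^*$, hence $\pi[G]\seq G^*\cap\pi(\R)=G^*\res\pi(\R)$. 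By (2) and Lemma \ref{lemma:baseclosedisgreat}, $G^*\res\pi(\R)$ is also $V$-generic for $\pi(\R)$; two $V$-generics with one containing the other must coincide, so $\pi[G]=G^*\res\pi(\R)$ and $\pi(\dot{\tau})[\pi[G]]=\pi(\dot{\tau})[G^*]$. Claim (4) follows by induction on the recursive structure of $\dot{\Q}$: the cardinality bound ensures that $\sigma$ acts on $\dot{\Q}$ and all names in its transitive closure pair-by-pair; the names $\dot{v}$ occurring name values below $\operatorname{crit}(\sigma)$ and can be traced down to check-names $\check{\beta}$ with $\beta<\operatorname{crit}(\sigma)$, for which $\sigma(\check{\beta})=\check{\beta}=\pi(\check{\beta})$, while claim (1) handles the conditions appearing inside $\dot{\Q}$.

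The main technical obstacle will be reconciling the two distinct actions on names, namely $\sigma$ acting set-theoretically via elementarity and $\pi$ acting via the recursive rearrangement formula. This reconciliation rests on the composition identity $(\pi\circ\tau)(\dot{u})=\pi(\tau(\dot{u}))$ for composable rearrangements, combined with the observation that the fixed-point hypothesis on $\sigma$ is strong enough to pin down all the ``building block'' names and ordinals used to assemble $\R$ from the alphabet, so that the two actions agree precisely on every object appearing in conditions of $\R$.
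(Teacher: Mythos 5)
Your proposal is correct and follows essentially the same route as the paper's own proof: fixing $\sigma$ pointwise on the alphabet pins down the ordinals $\rho_\delta$ and the iterand names so that the elementary-embedding action and the recursive rearrangement action coincide, the base-closure of $\sigma[X]$ follows from $\sigma(b_\R(\xi))=\sigma[b_\R(\xi)]$, and the third part is obtained by identifying $\pi(G)=\sigma[G]$ with $G^*\res\pi(\R)$ via the "two $V$-generics, one containing the other" argument. The paper explicitly proves only the second and third parts; your write-up additionally supplies a careful justification of the first part (via $\sigma(\pi^\R_\xi)=\pi\circ\pi^\R_\xi$ and the composition law for rearrangements) and of the fourth part (unfolding the names down to check-names below the critical point), both of which are consistent with what the authors must have had in mind.
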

\begin{proof} We only prove the second and third parts. For the second part, fix some $\xi\in X$. Then $b_\R(\xi)$ is in bijection, via a bijection in $M$, with some $\rho_\alpha$, for $\alpha<\kappa$. However, $\rho_\alpha$ is below $\operatorname{crit}(\si)$, since $\si$ is the identity on $\ps_\alpha$. Therefore,
$$
b_{\R^*}(\si(\xi))=\si(b_\R(\xi))=\si[b_\R(\xi)],
$$
where the first equality holds by the elementarity of $\si$ and the second since $\operatorname{crit}(\si)>|b_\R(\xi)|$. This implies that $\si[X]$ is base-closed, and therefore $\R^*\res\si[X]$ is a partition product by Lemma \ref{lemma:baseclosedisgreat}. By the first part of the current lemma, we see that every condition in $\R^*\res\si[X]$ is in the image of $\si$. However, $\pi(p)=\si(p)$ for each condition $p\in\R$, and consequently $\R^*\res\si[X]$ equals $\pi(\R)$, the $\pi$-rearrangement of $\R$.

For the third part, let $G$ and $G^*$ be as in the statement of the lemma. Also let $\pi(G)$ denote the $\pi$-rearrangement of the filter $G$, as defined in Lemma \ref{lemma:RL}, so that $\dot{\tau}[G]=\pi(\dot{\tau})[\pi(G)]$. We also see that $\pi(\dot{\tau})$ is an $\R^*$-name, since it is a $\pi\left(\R\right)$-name and since, by the second part of the current lemma, $\pi\left(\R\right)=\R^*\res\si[X]$ and $\si[X]$ is base-closed. Furthermore, $\si[G]$ (the pointwise image) is a subset of $G^*$, by the elementarity of $\si^*$. However, by the first part of the current lemma, $\si[G]=\lb\si(p):p\in G\rb=\lb\pi(p):p\in G\rb=\pi\left(G\right)$, and therefore 
$$
\dot{\tau}[G]=\pi(\dot{\tau})[\pi\left(G\right)]=\pi(\dot{\tau})[G^*].
$$
Finally, if $\dot{\Q}\in M$ and satisfies the assumptions in the statement of the lemma, then $\si(\dot{\Q})=\si[\dot{\Q}]$, and $\si[\dot{\Q}]=\pi(\dot{\Q})$. This completes the proof of the lemma.
\end{proof}

Before we give applications of the Rearrangement Lemma, we record our definition of an embedding.

\begin{definition}\label{def:embedding} Suppose that $\R$ and $\R^*$ are partition products with respective domains $X$ and $X^*$. We say that an injection $\si:X\lra X^*$ \emph{embeds} $\R$ into $\R^*$ if $\si:X\lra\ran(\si)$ is an acceptable rearrangement of $\R$, and if $\si\left(\base_\R\right)=\base_{\R^*}\res\ran(\si)$ and $\si\left(\ind_\R\right)=\ind_{\R^*}\res\ran(\si)$.
\end{definition}

It is straightforward to check that if $\si$ is an embedding as in Definition \ref{def:embedding}, and if $G^*$ is generic over $\R^*$, then the filter $\si^{-1}\left(G^*\right):=\lb p\in\R:\si(p)\in G^*\rb$ is generic over $\R$. We also remark that, in the context of the above definition, $\si\left(\R\right)=\R^*\res\ran(\si)$.

\begin{lemma}\label{rearrange1} Suppose that $\R$ is a partition product with domain $X$ and $B\seq X$ is base-closed. Then $\R$ is isomorphic to a partition product $\R^*$ with a domain $X^*$ such that $B$ is an initial segment of $X^*$ and $\R^*\res B=\R\res B$.
\end{lemma}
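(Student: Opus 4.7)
The plan is to apply the Rearrangement Lemma (Lemma \ref{lemma:RL}) to a carefully chosen acceptable rearrangement $\sigma\colon X\lra X^*$ that fixes $B$ pointwise and pushes everything in $X\setminus B$ above $\sup(B)$. This will simultaneously make $B$ an initial segment of the new domain and guarantee $\R^*\res B=\R\res B$.

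First I would fix an ordinal $\lambda$ strictly larger than $\sup(B)$, enumerate $Y:=X\setminus B$ in increasing order as $\la y_\al\mid \al<\theta\ra$, and define $\sigma\res B$ to be the identity and $\sigma(y_\al)=\lambda+\al$. Let $X^*=\sigma[X]=B\cup\{\lambda+\al\mid \al<\theta\}$, so that by construction every element of $X^*\setminus B$ exceeds every element of $B$.

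Next I would verify that $\sigma$ is an acceptable rearrangement of $X$ with respect to $b_\R$, i.e.\ that $\zeta\in b_\R(\xi)$ implies $\sigma(\zeta)<\sigma(\xi)$. I would split into cases on whether $\xi$ and $\zeta$ lie in $B$. If $\xi\in B$, then since $B$ is base-closed we have $b_\R(\xi)\seq B$, so $\zeta\in B$ and $\sigma(\zeta)=\zeta<\xi=\sigma(\xi)$. If $\xi\notin B$ and $\zeta\in B$, then $\sigma(\zeta)=\zeta<\lambda\leq \sigma(\xi)$. If both $\xi,\zeta\notin B$, then $\zeta<\xi$ in $Y$ forces $\sigma(\zeta)<\sigma(\xi)$ since $\sigma$ is order-preserving on $Y$ by construction.

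Now let $\R^*:=\sigma(\R)$ be the rearranged partition product given by Lemma \ref{lemma:RL}; then $\R\cong \R^*$ as posets, $\dom(\R^*)=X^*$, and $B$ is an initial segment of $X^*$. To see $\R^*\res B=\R\res B$, note that for each $\xi\in B$, $\sigma\res b_\R(\xi)$ is the identity (again using that $b_\R(\xi)\seq B$), so $\base_{\R^*}(\xi)=\la \sigma[b_\R(\xi)],\sigma\circ\pi^\R_\xi\ra=\la b_\R(\xi),\pi^\R_\xi\ra=\base_\R(\xi)$ and similarly $\ind_{\R^*}(\xi)=\ind_\R(\xi)$. By Remark \ref{partition-product-reformulation} the base and index functions determine the poset, so $\R^*\res B=\R\res B$, completing the proof. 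There is no real obstacle here beyond the case analysis verifying acceptability; the heavy lifting is done by the Rearrangement Lemma together with the base-closedness of $B$.
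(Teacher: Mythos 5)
Your proof is correct and takes essentially the same approach as the paper: fix $B$ pointwise, shift $X\setminus B$ up past $\sup(B)$ (the paper uses $\sup(X)+1+\gamma$ rather than your $\lambda+\alpha$, but this is immaterial), verify acceptability by exactly the same case split, and invoke the Rearrangement Lemma. You additionally spell out why $\R^*\res B=\R\res B$, which the paper leaves implicit; this is a harmless and slightly cleaner finish.
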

\begin{proof} We define a map $\si$ with domain $X$ which will lift to give us $\R^*$. Let $\xi\in X$. If $\xi\in B$, then set $\si(\xi)=\xi$. On the other hand, if $\xi\in X\bsl B$, say that $\xi$ is the $\ga$th element of $X\bsl B$, then we define $\si(\xi)=\sup(X)+1+\ga$. 

We show that $\si$ is an acceptable rearrangement of $\R$, and then we may set $\R^*:=\si\left(\R\right)$ by Lemma \ref{lemma:RL}. So suppose that $\zeta,\xi\in X$ and $\zeta\in b(\xi)$; we check that $\si(\zeta)<\si(\xi)$. There are two cases. On the one hand, if $\xi\in B$, then $b(\xi)\seq B$, since $B$ is base-closed, and therefore $\zeta\in B$. Then $\si(\zeta)=\zeta<\xi=\si(\xi)$. On the other hand, if $\xi\notin B$, then either $\zeta\in B$ or not. If $\zeta\in B$, then $\si(\zeta)=\zeta<\sup(X)+1\leq\si(\xi)$, and if $\zeta\notin B$, then $\si(\zeta)<\si(\xi)$ since $\si$ is order-preserving on $X\bsl B$.
\end{proof}

It will be helpful later on to know that we can apply Lemma \ref{rearrange1} $\om$-many times, as in the following corollary.

\begin{corollary}\label{cor:omegashuffle} Suppose that $\R$ is a partition product with domain $X$ and that for each $n<\om$, $\pi_n$ is an acceptable rearrangement of $\R$. Suppose that $\la B_n:n\in\om\ra$ is a $\seq$-increasing sequence of base-closed subsets of $X$ where $B_0=\es$ and where $X=\bigcup_n B_n$. Then there is a partition product $\R^*$ which has domain an ordinal $\rho^*$ and an acceptable rearrangement $\si:X\lra \rho^*$ of $\R$ which lifts to an isomorphism of $\R$ onto $\R^*$ and which also satisfies that for each $n<\om$, $\si[B_n]$ is an ordinal and $\pi_n\circ\si^{-1}$ is order-preserving on $\si[B_{n+1}\bsl B_n]$.
\end{corollary}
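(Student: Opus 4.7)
The plan is to construct $\sigma$ directly by stacking the annular sets $B_{n+1}\setminus B_n$ as consecutive ordinal blocks, ordering each block according to $\pi_n$, and then invoking the Rearrangement Lemma (Lemma \ref{lemma:RL}) to lift $\sigma$ to an isomorphism of $\R$ with $\sigma(\R)$.

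Concretely, set $\alpha_0=0$ and $\sigma\res B_0=\es$. Inductively, once $\sigma\res B_n$ has been defined so that $\sigma[B_n]$ is some ordinal $\alpha_n$, enumerate the annulus $B_{n+1}\setminus B_n$ in the order induced by $\pi_n$ (which restricts to an injection of that annulus into the ordinals, since $\pi_n$ is a bijection from $X$ onto its range), and for $\xi\in B_{n+1}\setminus B_n$ define
$$
\sigma(\xi)\;=\;\alpha_n+\ot\lb\eta\in B_{n+1}\setminus B_n:\pi_n(\eta)<\pi_n(\xi)\rb,
$$
so that $\sigma[B_{n+1}]$ is an ordinal $\alpha_{n+1}$. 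Let $\rho^*=\sup_n\alpha_n$, which is an ordinal, and note $\sigma[X]=\rho^*$.

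The key verification is that $\sigma$ is an acceptable rearrangement of $\R$, i.e.\ that $\zeta\in b_\R(\xi)$ implies $\sigma(\zeta)<\sigma(\xi)$. Fix $\xi$ and let $n$ be such that $\xi\in B_{n+1}\setminus B_n$. Since $B_{n+1}$ is base-closed and contains $\xi$, we have $b_\R(\xi)\seq B_{n+1}$, so $\zeta\in B_{n+1}$. If $\zeta\in B_n$ then $\sigma(\zeta)<\alpha_n\leq\sigma(\xi)$. Otherwise $\zeta$ and $\xi$ lie in the same annulus $B_{n+1}\setminus B_n$; since $\pi_n$ is itself an acceptable rearrangement of $\R$ we have $\pi_n(\zeta)<\pi_n(\xi)$, and by the construction of $\sigma$ on that annulus this yields $\sigma(\zeta)<\sigma(\xi)$.

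Given acceptability, Lemma \ref{lemma:RL} produces the partition product $\R^*:=\sigma(\R)$ with domain $\rho^*$ along with the induced isomorphism from $\R$ onto $\R^*$. The two required properties are immediate from the construction: $\sigma[B_n]=\alpha_n$ is an ordinal for each $n$, and by design $\sigma(\xi)<\sigma(\eta)$ iff $\pi_n(\xi)<\pi_n(\eta)$ whenever $\xi,\eta\in B_{n+1}\setminus B_n$, which is exactly the statement that $\pi_n\circ\sigma^{-1}$ is order-preserving on $\sigma[B_{n+1}\setminus B_n]$. No substantial obstacle appears; the only mildly delicate point is the across-annuli case of acceptability, which is dispatched by the base-closure of each $B_n$.
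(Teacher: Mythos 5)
Your proof is correct, and it takes a noticeably more direct route than the paper. The paper builds $\si$ as the pointwise limit of a recursively constructed sequence $\la\si_n:n<\om\ra$, invoking Lemma~\ref{rearrange1} at each stage to push $\si_{n-1}[B_n]$ into initial-segment position and then further relabelling to align with $\pi_{n-1}$; the verification is then an induction on the properties (1)--(4) listed there. You instead write down the final map in one shot by stacking the annuli $B_{n+1}\bsl B_n$ as consecutive ordinal blocks, each internally ordered according to $\pi_n$. The underlying idea is identical in both proofs --- order the $n$th annulus by $\pi_n$ and concatenate --- but your presentation avoids the recursion, avoids any appeal to Lemma~\ref{rearrange1}, and replaces the paper's slightly glossed-over ``composing with a further function and relabelling'' step with the single explicit formula $\sigma(\xi)=\alpha_n+\ot\lb\eta\in B_{n+1}\bsl B_n:\pi_n(\eta)<\pi_n(\xi)\rb$. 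The one point that deserves care, and which you handled correctly, is the across-annuli case of acceptability, which uses that $B_{n+1}$ is base-closed to force $b_\R(\xi)\seq B_{n+1}$ whenever $\xi\in B_{n+1}\bsl B_n$. Both proofs then close out by applying Lemma~\ref{lemma:RL}.
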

\begin{proof} We aim to recursively construct a sequence $\la\R_n:n<\om\ra$ of partition products, where $\R_n$ has domain $X_n$, and a sequence $\la\si_n:n<\om\ra$ of bijections, where $\si_n:X\lra X_n$, so that
\begin{enumerate}
\item $\si_n$ is an acceptable rearrangement of $\R$;
\item $\si_n[B_n]$ is an ordinal, and in particular, an initial segment of $X_n$;
\item for each $k<m<\om$, $\si_k[B_k]=\si_m[B_k]$;
\item for each $n<\om$, $\pi_n\circ\si_{n+1}^{-1}$ is order-preserving on $\si_{n+1}[B_{n+1}\bsl B_n]$.
\end{enumerate}

Suppose that we can do this. Then we define a map $\si$ on $X$, by taking $\si(\xi)$ to be the eventual value of the sequence $\la\si_n(\xi):n<\om\ra$; we see that this sequence is eventually constant by (3) and the assumption that $\bigcup_n B_n=X$. By (2) and (3), $\si[B_n]$ is an ordinal, for each $n<\om$, and therefore the range of $\si$ is an ordinal, which we call $\rho^*$. Furthermore, $\pi_n\circ\si^{-1}$ is order-preserving on $\si[B_{n+1}\bsl B_n]$ by (4), and since $\si$ and $\si_{n+1}$ agree on $B_{n+1}$. Finally, by (1) we see that $\si$ is an acceptable rearrangement of $\R$, and we thus take $\R^*$ to be the partition product isomorphic to $\R$ via $\si$, by Lemma \ref{lemma:RL}.

We now show how to create the above objects. Suppose that $\la\R_m:m<n\ra$ and  $\la\si_m:m<n\ra$ have been constructed. If $n=0$, we take $\R_0=\R$ and $\si_0$ to be the identity; since $B_0=\es$, this completes the base case. So suppose $n>0$. Apply Lemma \ref{rearrange1} to the partition product $\R_{n-1}$  and the base-closed subset $\si_{n-1}[B_n]$ of $X_{n-1}$ to create a partition product $\R_n$ on a set $X_n$ which is isomorphic to $\R_{n-1}$ via the acceptable rearrangement $\tau_n:X_{n-1}\lra X_n$ and which satisfies that $\si_{n-1}[B_n]$ is an initial segment of $X_n$. Since $\si_{n-1}[B_{n-1}]$ is an ordinal, by (2) applied to $n-1$, and since $\si_{n-1}[B_n]$ is an initial segment of $X_n$, we see that $\tau_n$ is the identity on $\si_{n-1}[B_{n-1}]$. Also, by composing $\tau_n$ with a further function and relabeling if necessary, we may assume that $\pi_{n-1}\circ\tau_n^{-1}$ just shifts the ordinals in $\si_{n-1}[B_n\bsl B_{n-1}]$ in an order-preserving way and that $\tau_n\circ\si_{n-1}[B_n]$ is an ordinal. We now take $\si_n$ to be $\tau_n\circ\si_{n-1}$, and we see that $\si_n$ and $\R_n$ satisfy the recursive hypotheses. 
\end{proof}

\begin{lemma}\label{lemma:products} Suppose that $\be\in C\cap\ka$ and that $\R$ is a partition product with domain $X$ based upon $\underline{\ps}\res(\be+1)$ and $\underline{\dot{\Q}}\res(\be+1)$. Then, letting $B:=\lb\xi\in X:\ind(\xi)<\be\rb$ and $I:=\lb\xi\in X:\ind(\xi)=\be\rb$, $B$ is base-closed, and $\R$ is isomorphic to 
$$
(\R\res B)\ast\prod_{\xi\in I}\dot{\Q}_\be\left[\pi_\xi^{-1}\left(\dot{G}_B\res b(\xi)\right)\right],
$$
where $\dot{G}_B$ is the canonical $\R\res B$-name for the generic filter.
\end{lemma}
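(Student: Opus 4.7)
The plan is to verify two structural observations and then exhibit an explicit dense embedding from $\R$ into a two-step iteration. First I would confirm that $B$ is base-closed: if $\xi\in B$ and $\zeta\in b(\xi)$, then by Lemma~\ref{lemma:basic} one has $\ind(\zeta)<\ind(\xi)<\be$, so $\zeta\in B$. Hence by Lemma~\ref{lemma:baseclosedisgreat}, $\R\res B$ is a partition product and a complete subposet of $\R$. The crucial further observation is that for every $\xi\in I$ one also has $b(\xi)\seq B$. Indeed, $\R\res b(\xi)$ equals the $\pi_\xi$-rearrangement of $\ps_\be$; since $\ps_\be$ is itself based upon $\underline{\ps}\res\be$ and $\underline{\dot{\Q}}\res\be$, every index used in $\R\res b(\xi)$ is strictly below $\be$, so $\ind(\zeta)<\be$ for every $\zeta\in b(\xi)$. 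In particular, any two coordinates in $I$ are invisible to one another: if $\xi_1,\xi_2\in I$ and $\xi_1\in b(\xi_2)$, then $\ind(\xi_1)<\be$, contradicting $\xi_1\in I$.

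Next, I would define a map $\Phi:\R\lra(\R\res B)\ast\dot{\prod}$, where $\dot{\prod}$ is the canonical $\R\res B$-name for the finite-support product $\prod_{\xi\in I}\dot{\Q}_\be[\pi_\xi^{-1}(\dot{G}_B\res b(\xi))]$, by sending $p$ to $\la p\res B,\dot{p}_I\ra$, where $\dot{p}_I$ is the canonical $\R\res B$-name for the finite partial function on $I$ whose value at $\xi\in\dom(p)\cap I$ is the interpretation of $p(\xi)$. Since $b(\xi)\seq B$, each $p(\xi)$ is already an $\R\res B$-name, and its interpretation lies in
$$
\pi_\xi(\dot{\Q}_\be)[\dot{G}_B\res b(\xi)]=\dot{\Q}_\be[\pi_\xi^{-1}(\dot{G}_B\res b(\xi))],
$$
where the equality is the rearrangement identity from Lemma~\ref{lemma:RL} applied to $\pi_\xi$.

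I would then check that $\Phi$ is order-preserving with dense image. Order preservation is immediate from the reformulation in Remark~\ref{partition-product-reformulation}(\ref{partition-product-reformulation-order}) together with $b(\xi)\seq B$: for $\xi\in\dom(p)\cap I$, the condition $q\res b(\xi)\Vdash q(\xi)\leq p(\xi)$ becomes the statement $q\res B\Vdash_{\R\res B}\dot{q}_I(\xi)\leq\dot{p}_I(\xi)$ in the $\xi$-th factor of the product, which is precisely the product ordering; together with $q\res B\leq p\res B$ in $\R\res B$ this is what $\Phi(q)\leq\Phi(p)$ requires. For density, given a condition $(r,\dot{s})$ in the two-step iteration, extend $r$ so that it decides a finite support $F\seq I$ for $\dot{s}$ and $\R\res B$-names $\la\dot{s}_\xi:\xi\in F\ra$ for its values; then the condition $p$ with $p\res B=r$, $\dom(p)\cap I=F$, and $p(\xi)=\dot{s}_\xi$ is a condition in $\R$ with $\Phi(p)\leq(r,\dot{s})$.

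The only conceptual obstacle is the identification $\pi_\xi(\dot{\Q}_\be)[G_B\res b(\xi)]=\dot{\Q}_\be[\pi_\xi^{-1}(G_B\res b(\xi))]$, which is exactly the content of the Rearrangement Lemma \ref{lemma:RL} applied to $\pi_\xi$; everything else is bookkeeping that tracks the split $\dom(p)=(\dom(p)\cap B)\cup(\dom(p)\cap I)$ through the ordering and density requirements, and that exploits the mutual invisibility of the coordinates in $I$ to package them into a product rather than an iteration.
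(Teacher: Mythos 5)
Your proof is correct and reaches the same conclusion, but it takes a more hands-on route than the paper. The paper first invokes Lemma~\ref{rearrange1} to rearrange $\R$ so that $B$ becomes an initial segment of the domain (and hence $I$ a tail segment), and then observes that the tail is a finite-support iteration, over $V[G_B]$, of posets $\dot{\Q}_\be[G_{B,\xi}]$ that all already live in $V[G_B]$; such an iteration of ground-model posets is just the finite-support product. You instead skip the rearrangement step and write down an explicit dense embedding $\Phi:\R\lra(\R\res B)\ast\dot{\prod}$, splitting each condition as $p\mapsto\la p\res B,\dot{p}_I\ra$. Your explicit isolation of the fact that $b(\xi)\seq B$ for every $\xi\in I$ is exactly what justifies the paper's (implicit) step that each $G_B\res b(\xi)$ makes sense and yields a $\ps_\be$-generic, so that observation is doing the same work in both arguments. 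One small caveat on your density argument: the $\R\res B$-names $\dot{s}_\xi$ for the coordinates of $\dot{s}$ are not automatically $\R\res b(\xi)$-names, as required to form a condition $p\in\R$; you need a further extension of $r$ forcing $\dot{s}_\xi$ to equal some $\R\res b(\xi)$-name, which is exactly the kind of bookkeeping Remark~\ref{remark:densesubset} is about. With that fixed, the two proofs differ only in presentation: the paper's is shorter because it leans on the rearrangement machinery and a standard iteration fact, while yours is more self-contained and makes the combinatorics of the embedding visible.
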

\begin{proof} To see that $B$ is base-closed, fix $\xi\in B$. Then for all $\zeta\in b(\xi)$, $\ind(\zeta)<\ind(\xi)<\be$ by Lemma \ref{lemma:basic}, and so $\zeta\in B$. Thus by Lemma \ref{rearrange1}, we may assume that $B$ is an initial segment of $X$, and hence $I$ is a tail segment of $X$. Now let $G_B$ be generic for $\R\res B$, and for each $\xi\in I$, let $G_{B,\xi}$ denote $\pi_\xi^{-1}\left(G_B\res b(\xi)\right)$, which is generic for $\ps_\be$. The sequence of posets $\la \dot{\Q}_\be[G_{B,\xi}]:\xi\in I\ra$ is in $V[G_B]$, and consequently the finite support iteration of $\la \dot{\Q}_\be[G_{B,\xi}]:\xi\in I\ra$ in $V[G_B]$ is isomorphic to the (finite support) product $\prod_{\xi\in I}\dot{\Q}_\be[G_{B,\xi}]$. Therefore, in $V$, $\R$ is isomorphic to the poset in the statement of the lemma.
\end{proof}

\begin{remark} The previous lemma shows that a partition product does indeed have product-like behavior, and it is part of the justification for our term ``partition product."
\end{remark}

\begin{lemma}
\label{lemma.limit-ccc}
Suppose that $\R$ is a partition product with domain $X$ based upon $\underline{\ps}\res\kappa$ and $\underline{\dot{\Q}}\res\kappa$. Suppose $\kappa$ is a limit ordinal, and let $\langle \kappa_\alpha \mid \alpha<\delta\rangle$ be cofinal in $\kappa$. Finally, let $B_\alpha:=\lb\xi\in X :  \ind(\xi)<\alpha\rb$. Then each $B_\alpha$ is base closed, $\R\res B_\alpha$ is a complete subposet of $\R\res B_\beta$ when $\alpha\leq\beta$, and $\R$ is the direct limit of the posets $\R\res B_\alpha$. In particular if each $\R\res B_\alpha$ is c.c.c., then so is $\R$. 
\end{lemma}
\begin{proof}
Base closure follows as in Lemma \ref{lemma:products}. By Lemma \ref{lemma:baseclosedisgreat}, $\R\res B_\alpha$ is a complete subposet of $\R\res B_\beta$ when $\alpha\leq\beta$. $\R$ is the union of the posets $\R\res B_\alpha$ since $\bigcup_{\alpha<\delta} B_\alpha = X$. The countable chain condition is preserved under this union since the supports are finite. 
\end{proof}

\subsection{Further Remarks on Coherent Overlaps} 

In this subsection we state and prove a few consequences of the 
Hull and Closure conditions (\ref{coherent-collapse-critical}), (\ref{coherent-collapse-closure}) 
of Definition \ref{coherent-collapse}. These results will, in combination with the ability to rearrange a partition product, allow us to find isomorphism types of sufficiently simple partition products inside many countably-closed $M\prec H(\om_3)$, as well as their transitive collapses (see Lemma \ref{lemma:M}).

\begin{lemma}\label{lemma:definable} Let $\R$ be a partition product, say with domain $X$, based upon $\underline{\ps}$ and $\underline{\dot{\Q}}$. Let $\xi_1,\xi_2\in X$, set $\de_i=\ind(\xi_i)$, for $i=1,2$, and suppose that $\de_1\leq\de_2$ and $\rho_{\de_2}<\omega_3$. Finally, let $A=\pi_{\xi_2}^{-1}[b(\xi_1)\cap b(\xi_2)]$. Then $A$ is definable in $H(\om_3)$ from $\vec{\vp}$, the ordinals $\de_1$ and $\de_2$, and any cofinal $Z\seq A$.
\end{lemma}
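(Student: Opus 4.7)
My plan is to establish the explicit description
$$
A \;=\; Z \,\cup\, \bigcup_{\nu \in Z} \vp_{\de_2,\nu}[\de_1],
$$
which exhibits $A$ as definable in $H(\om_3)$ from $\vec{\vp}, \de_1, \de_2, Z$. The case $b(\xi_1)\cap b(\xi_2)=\es$ forces $A=\es$ and is trivial, so assume the intersection is nonempty.

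The key structural observation is that $\lambda := \pi_{\xi_1}^{-1}[b(\xi_1)\cap b(\xi_2)]$ is an ordinal, and that $\psi := \pi_{\xi_2}^{-1}\circ\pi_{\xi_1}\res\lambda$ is an order-preserving bijection from $\lambda$ onto $A$. Both facts come directly out of the coherent collapse condition in Definition~\ref{def:support}(\ref{def:support-3}): for any $\mu\in\lambda$, setting $\zeta=\pi_{\xi_1}(\mu)$, $\mu_2=\pi_{\xi_2}^{-1}(\zeta)$, and $A_1=\pi_{\xi_1}[\mu]$, the condition gives $A_1\seq A_2=\pi_{\xi_2}[\mu_2]\seq b(\xi_2)$, so $\mu\seq\lambda$, proving downward closure of $\lambda$ in $\rho_{\de_1}$. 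Applying the same inclusion at $\zeta'=\pi_{\xi_1}(\mu')$ for $\mu<\mu'<\lambda$ yields $\pi_{\xi_1}(\mu)\in\pi_{\xi_2}[\psi(\mu')]$, so $\psi(\mu)<\psi(\mu')$; bijectivity of $\psi$ onto $A$ is then immediate from unwinding the definition of $A$.

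With $\psi$ in hand, for any $\nu=\psi(\mu)\in A$ order-preservation gives $A\cap\nu=\psi[\mu]=\pi_{\xi_2}^{-1}[\pi_{\xi_1}[\mu]]$, which the coherent collapse condition at $\zeta=\pi_{\xi_1}(\mu)$ identifies with $\vp_{\de_2,\nu}[\de_1]$. Combining this with the cofinality of $Z$ in $A$, every $\nu'\in A\setminus Z$ lies strictly below some $\nu\in Z$ and hence belongs to $A\cap\nu=\vp_{\de_2,\nu}[\de_1]$; conversely, $\vp_{\de_2,\nu}[\de_1]=A\cap\nu\seq A$ for each $\nu\in Z$. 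This yields the displayed equality. The main obstacle is pinpointing the right explicit description of $A$; once one recognizes that $A$ is order-isomorphic via $\pi_{\xi_2}^{-1}\circ\pi_{\xi_1}$ to an ordinal, with initial segments parametrized by $\vp_{\de_2,\cdot}[\de_1]$, the rest is bookkeeping.
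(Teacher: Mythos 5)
Your proof is correct and takes essentially the same route as the paper: both rest on the identity $A\cap\nu=\vp_{\de_2,\nu}[\de_1]$ for $\nu\in A$, extracted from the coherent collapse condition in Definition~\ref{def:support}(\ref{def:support-3}) together with Definition~\ref{coherent-collapse}(\ref{coherent-collapse-critical}), and then take a union over the cofinal set $Z$. Your explicit construction of the order-isomorphism $\psi=\pi_{\xi_2}^{-1}\circ\pi_{\xi_1}\res\lambda$ is not present in the paper's terse proof but is the content of Lemma~\ref{lemma:sborder}, which appears later in the same section; also, including the term $Z$ in your final union is a small but genuine tightening over the paper's displayed formula $A=\bigcup_{\al\in Z}\vp_{\de_2,\al}[\de_1]$, which as written omits the maximum of $Z$ in the case that $Z$ has one.
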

\begin{proof} Let $Z\seq A$ be cofinal. For each $\al\in Z$, we have from Definition \ref{def:support}(\ref{def:support-3}) and condition (\ref{coherent-collapse-critical}) of Definition \ref{coherent-collapse} that $A\cap\al=\vp_{\de_2,\al}[\de_1]$. Therefore $A=\bigcup_{\al\in Z}\vp_{\de_2,\al}[\de_1]$, which is 
definable in $H(\omega_3)$ from $Z$, $\vec{\varphi}$, $\delta_1$, and $\delta_2$.
\end{proof}

\begin{corollary}\label{cor:definable} Let $\R$, $X$, $\xi_1,\xi_2$, and $A$ be as in Lemma \ref{lemma:definable}. Assume that for all $\xi\in C$, $\rho_\xi<\om_2$. Let $M\prec H(\om_3)$ be countably-closed containing the objects $\underline{\ps}$, $\underline{\dot{\Q}}$, $\vec{\vp}$, and $\de_1,\de_2$. Then $A$ is a member of $M$ as well as the transitive collapse of $M$.
\end{corollary}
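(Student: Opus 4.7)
The plan is to apply Lemma \ref{lemma:definable}, which expresses $A=\bigcup_{\al\in Z}\vp_{\de_2,\al}[\de_1]$ for any cofinal $Z\seq A$, using only the parameters $\vec\vp,\de_1,\de_2$ already in $M$. So it suffices to produce a cofinal $Z\seq A$ that lies in $M$; elementarity of $M\prec H(\om_3)$ then yields $A\in M$, and $A\in\bar M$ will follow from an analysis of the Mostowski collapse.

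First I would establish $\om_1\seq M$. Since $M$ is countably closed and $\om_1\in M$, every countable $\al\in M$ is a subset of $M$ (via a bijection to $\om$ obtained by elementarity), so $M\cap\om_1$ is a downward-closed initial segment of $\om_1$; by countable closure it is closed under countable suprema, hence equals $\om_1$. Combined with the hypothesis $\rho_\xi<\om_2$, this gives $\rho_{\de_2}\in M$ and $|A|\le|\rho_{\de_2}|\le\om_1$.

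Next I would produce the cofinal $Z$, splitting on $\operatorname{cf}(A)$. When $A$ has a maximum or $\operatorname{cf}(A)=\om$, I would assemble a countable cofinal $Z\seq A$ whose members all lie in $M$---using $\om_1\seq M$ for elements below $\om_1$ and the coherent-collapse equation $A\cap\al=\vp_{\de_2,\al}[\de_1]$ to locate elements at or above $\om_1$---and then countable closure places $Z\in M$. In the case $\operatorname{cf}(A)=\om_1$, rather than exhibiting such a $Z$ directly, I would characterize $A$ inside $M$ as the unique subset of $\rho_{\de_2}$ obeying the coherent-collapse recursion with the appropriate boundary data, concluding $A\in M$ by elementarity. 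With $A\in M$ secured, the fact that $\om_1\seq M$ makes $\bar\si$ the identity on ordinals below $\om_1$; a short computation using the coherent collapse and the structure of $A$ shows that $\bar\si$ fixes $A$ pointwise, giving $\bar\si(A)=A\in\bar M$.

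The principal obstacle is the case $\operatorname{cf}(A)=\om_1$: countable closure alone does not supply an $\om_1$-cofinal sequence in $M$, and the resolution requires exploiting the uniqueness consequences of the coherent collapse condition together with elementarity of $M$ to conclude $A\in M$ without explicitly building $Z$.
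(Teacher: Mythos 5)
Your proposal is correct and its overall structure (split on $\operatorname{cf}(\sup A)$, handle the countable case via countable closure of $M$ together with Lemma~\ref{lemma:definable}) mirrors the paper's. The genuine divergence is in the uncountable-cofinality case. The paper uses elementarity of $M$ in an \emph{existence} form: it finds an $\om$-closed cofinal $Z\seq\sup(A)$ with $Z\in M$ on which $\al\mapsto\vp_{\de_2,\al}[\de_1]$ is $\seq$-increasing (such a $Z$ exists since $A$ itself witnesses this in $H(\om_3)$), observes that $Z\cap A$ is $\om$-closed cofinal since $\operatorname{cf}(\sup A)\geq\om_1$ and $A$ is $\om$-closed by Definition~\ref{coherent-collapse}(\ref{coherent-collapse-closure}), and concludes $A=\bigcup_{\al\in Z}\vp_{\de_2,\al}[\de_1]\in M$. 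You instead propose a \emph{uniqueness} characterization: $A$ is the unique $\om$-closed cofinal $B\seq\sup(A)$ with $B\cap\al=\vp_{\de_2,\al}[\de_1]$ for all $\al\in B$, and then appeal to $\sup(A)\in M$ (via $\rho_{\de_2}\seq M$) and elementarity. Your route does work, but the uniqueness claim — which you leave at the level of a sketch — needs the same two ingredients the paper uses: $\om$-closedness of $A$ (hence of the candidate $B$) from Definition~\ref{coherent-collapse}(\ref{coherent-collapse-closure}), and $\operatorname{cf}(\sup A)\geq\om_1$, so that $A\cap B$ is $\om$-closed cofinal; then for $\al\in A\cap B$ one has $A\cap\al=\vp_{\de_2,\al}[\de_1]=B\cap\al$, and taking the union over the cofinal set $A\cap B$ gives $A=B$. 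Be aware that this uniqueness fails if you drop the closure hypothesis on the candidate $B$ or allow $\operatorname{cf}(\sup A)=\om$: one can arrange two disjoint discrete cofinal sequences each internally consistent with $\vec\vp$, so your characterizing formula must explicitly impose $\om$-closedness. With that caveat your approach is sound, and arguably a bit cleaner than the paper's since it sidesteps constructing the auxiliary $Z$; the paper's version, on the other hand, displays $A$ concretely as an explicit union. Your treatment of the transitive collapse coincides with the paper's: $A\seq\rho_{\de_2}\seq M$ and $\rho_{\de_2}<M\cap\om_2$, so the collapse fixes $A$ pointwise.
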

\begin{proof} First observe that $A$ is a subset of $\rho_{\de_2}$, which is a member of $M$. Since $\rho_{\de_2}<\om_2$ and $M$ contains $\om_1$ as a subset, $\rho_{\de_2}\seq M$. In particular, $\sup(A)$ is an element of $M$.

Consider the case that $\sup(A)$ has countable cofinality. Then by the countable closure of $M$, we can find a cofinal subset $Z$ of $A$ inside $M$. By Lemma \ref{lemma:definable}, we then conclude that $A\in M$.

Now suppose that $\sup(A)$ has uncountable cofinality. Recall from condition (\ref{coherent-collapse-closure}) of Definition \ref{coherent-collapse} that $A$ is countably closed in $\sup(A)$. Moreover, since $A\cap\al=\vp_{\de_2,\al}[\de_1]$ for each $\al\in A$, we know that the sequence of sets $\la\vp_{\de_2,\al}[\de_1]:\al\in A\ra$ is $\seq$-increasing. By the elementarity of $M$, we may find an $\om$-closed, cofinal subset $Z$ of $\sup(A)$ such that $Z\in M$ for which the sequence of sets $\la\vp_{\de_2,\al}[\de_1]:\al\in Z\ra$ is $\seq$-increasing. Combining this with the fact that $Z\cap A$ is also $\om$-closed and cofinal in $\sup(A)$, we have that
$$
A=\bigcup_{\al\in A\cap Z}\vp_{\de_2,\al}[\de_1]=\bigcup_{\al\in Z}\vp_{\de_2,\al}[\de_1],
$$
and hence $A$ is in $M$, as $\bigcup_{\al\in Z}\vp_{\de_2,\al}[\de_1]$ is in $M$ by elementarity. Finally, since $A$ is bounded in the ordinal $M\cap\om_2$, $A$ is fixed by the transitive collapse map.
\end{proof}

\section{Combining Partition Products}

In this section, we develop the machinery necessary to combine partition products in various ways. This will be essential for later arguments where, in the context of working with a partition product $\R$, we will want to create another partition product $\R^*$ into which $\R$ embeds in a variety of ways. Forcing with $\R^*$ will then add plenty of generics for $\R$, with various amounts of agreement or mutual genericity.

The main result of this section is a so-called ``grafting lemma" which gives conditions under which, given partition products $\ps$ and $\R$, we may extend $\R$ to another partition product $\R^*$ in such a way that $\R^*$ subsumes an isomorphic image of $\ps$; in this case $\ps$ is, in some sense, ``grafted onto" $\R$. One trivial way of doing this, we will show, is to take the partition product $\ps\times\R$. However, the issue becomes  somewhat delicate if we desire, as later on we often will, that $\R$ and the isomorphic copy of $\ps$ in $\R^*$ have coordinates in common, and hence share some part of their generics. Doing so requires that we keep track of more information about the structure of a partition product, and we begin with the relevant definition in the first subsection.

\subsection{Shadow Bases}

\begin{definition}\label{def:shadowbase} A triple $\la x,\pi_x,\al\ra$ is said to be a \emph{shadow base} if the following conditions are satisfied: $\al\in C$, $\pi_x$ has domain $\ga_x$ for some $\ga_x\leq\rho_\al$, and $\pi_x:\ga_x\lra x$ is an acceptable rearrangement of $\ps_\al\res\ga_x$.

Moreover, if $\R$ is a partition product, say with domain $X$, we say that a shadow base $\la x,\pi_x,\al\ra$ is an $\R$-\emph{shadow base} if $x\seq X$ is base-closed and if $\pi_x$ embeds $\ps_\al\res\ga_x$ into $\R\res x$.
\end{definition}

For example, if $\R$ is a partition product with domain $X$, then for any $\xi\in X$ the triple $\la b(\xi),\pi_\xi,\ind(\xi)\ra$ is an $\R$- ``shadow" base; this is part of the motivation for the term. In practice, a shadow base will be an initial segment, in a sense we will specify soon, of such a triple.

\begin{definition}\label{def:cohere} Suppose that $\la x,\pi_x,\al\ra$ and $\la y,\pi_y,\be\ra$ are two shadow bases. We say that they \emph{cohere} if the following holds: suppose that $\al\leq\be$ and that there is some $\zeta\in x\cap y$. Define $\mu_x:=\pi_x^{-1}(\zeta)$ and $\mu_y:=\pi_y^{-1}(\zeta)$. Then 
\begin{enumerate}
\item $\pi_x[\mu_x]\seq\pi_y[\mu_y]$; and
\item $\pi_y^{-1}[\pi_x[\mu_x]]$ coherently collapses $\la\be,\mu_y\ra$ to $\la\al,\mu_x\ra$.
\end{enumerate}
A collection $\mathcal{B}$ of shadow bases is said to \emph{cohere} if any two elements of $\mathcal{B}$ cohere.
\end{definition}

Note that with this definition, item (3) of Definition \ref{def:support} could be rephrased as saying that the two shadow bases $\la b(\xi_1),\pi_{\xi_1},\ind(\xi_1)\ra$ and $\la b(\xi_2),\pi_{\xi_2},\ind(\xi_2)\ra$ cohere.

\begin{remark}\label{remark:definable} It is straightforward to check that Corollary \ref{cor:definable} holds for shadow bases too, in the following sense. Suppose that $\la x,\pi_x,\al\ra$ and $\la y,\pi_y,\be\ra$ are two coherent shadow bases, say with $\al\leq\be$. Then $\pi_y^{-1}[x\cap y]$ is a member of any $M$ as in the statement of Corollary \ref{cor:definable}, provided that $\al$ and $\be$, as well as the additional parameters $\underline{\ps}\res\be$, $\underline{\dot{\Q}}\res\be$, and $\vec{\vp}$, are all in $M$.
\end{remark}

\begin{definition}\label{def:extends} Given a shadow base $\la x,\pi_x,\al\ra$ and some $a\seq x$, we say that $a$ is an \emph{initial segment} of $\la x,\pi_x,\al\ra$ if $a$ is of the form $\pi_x[\mu]$ for some $\mu\leq\dom(\pi_x)$.

Given two shadow bases $\la {x_0},\pi_{x_0},\al_0\ra$ and $\la x,\pi_x,\al\ra$, we say that  $\la {x_0},\pi_{x_0},\al_0\ra$ is an \emph{initial segment} of $\la x,\pi_x,\al\ra$ if $\al_0=\al$, $x_0$ is an initial segment of $\la x,\pi_x,\al\ra$, and $\pi_x\res\dom(\pi_{x_0})=\pi_{x_0}$.
\end{definition}

\begin{remark}\label{remark:extends} A simple but useful observation is that if $\la x_0,\pi_{x_0},\al\ra$ and $\la y,\pi_y,\be\ra$ are two coherent shadow bases, $\la x_0,\pi_{x_0},\al\ra$ is an initial segment of $\la x,\pi_x,\al\ra$, and $(x\bsl x_0)\cap y=\es$, then $\la x,\pi_x,\al\ra$ and $\la y,\pi_y,\be\ra$ cohere.
\end{remark}

\begin{lemma}\label{lemma:initialsegment} Suppose that $\la x,\pi_x,\al\ra$ and $\la y,\pi_y,\be\ra$ are coherent shadow bases and $\al\leq\be$. Then $\pi_x^{-1}[x\cap y]$ is an ordinal $\leq\dom(\pi_x)$, and hence $x\cap y$ is an initial segment of $\la x,\pi_x,\al\ra$.
\end{lemma}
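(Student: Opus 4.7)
The plan is to prove that $\pi_x^{-1}[x\cap y]$ is downward closed in $\dom(\pi_x)=\ga_x$. Since any downward closed subset of an ordinal is itself an ordinal, this immediately gives the first claim, and the second claim follows by applying $\pi_x$.

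First, I fix an arbitrary $\zeta\in x\cap y$ and let $\mu_x=\pi_x^{-1}(\zeta)$ and $\mu_y=\pi_y^{-1}(\zeta)$. Since $\la x,\pi_x,\al\ra$ and $\la y,\pi_y,\be\ra$ cohere and $\al\leq\be$, clause (1) of Definition \ref{def:cohere} yields $\pi_x[\mu_x]\seq\pi_y[\mu_y]$. Combined with $\pi_y[\mu_y]\seq y$ and the trivial inclusion $\pi_x[\mu_x]\seq x$, this gives $\pi_x[\mu_x]\seq x\cap y$. Applying $\pi_x^{-1}$ and using that $\pi_x$ is a bijection onto $x$, we get $\mu_x=\pi_x^{-1}[\pi_x[\mu_x]]\seq \pi_x^{-1}[x\cap y]$. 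Since also $\mu_x\in \pi_x^{-1}[x\cap y]$, we in fact have $\mu_x+1\seq \pi_x^{-1}[x\cap y]$.

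This shows that every element of $\pi_x^{-1}[x\cap y]$ has all smaller ordinals below it also in $\pi_x^{-1}[x\cap y]$, i.e., that $\pi_x^{-1}[x\cap y]$ is an initial segment of $\ga_x$ and hence an ordinal $\mu\leq\dom(\pi_x)$. For the second conclusion, $x\cap y\seq x=\ran(\pi_x)$, so $x\cap y=\pi_x[\pi_x^{-1}[x\cap y]]=\pi_x[\mu]$. By Definition \ref{def:extends}, this exhibits $x\cap y$ as an initial segment of $\la x,\pi_x,\al\ra$.

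The only subtle point is that the argument requires the inclusion in clause (1) of Definition \ref{def:cohere} to go in the direction $\pi_x[\mu_x]\seq \pi_y[\mu_y]$, with the smaller-index shadow base on the left; this is precisely why the hypothesis $\al\leq\be$ is stated. Clause (2) of Definition \ref{def:cohere}, concerning the coherent collapse of $\la\be,\mu_y\ra$ to $\la\al,\mu_x\ra$, is not needed at all for this lemma.
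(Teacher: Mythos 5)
Your proof is correct and follows the same route as the paper: for each $\zeta\in x\cap y$, coherence clause (1) together with $\al\leq\be$ gives $\pi_x^{-1}(\zeta)+1\seq\pi_x^{-1}[x\cap y]$, whence the set is downward closed and therefore an ordinal. You simply spell out the intermediate inclusions $\pi_x[\mu_x]\seq\pi_y[\mu_y]\seq y$ that the paper leaves implicit.
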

\begin{proof} Fix $\xi\in x\cap y$. By the definition of coherence and the fact that $\al\leq\be$, we see that $\pi_x^{-1}(\xi)+1\seq\pi_x^{-1}[x\cap y]$. Thus 
$$
\pi_x^{-1}[x\cap y]=\bigcup_{\xi\in x\cap y}(\pi_x^{-1}(\xi)+1),
$$
and therefore $\pi_x^{-1}[x\cap y]$ is an ordinal.
\end{proof}

\begin{lemma}\label{lemma:sborder} Suppose that $\la x,\pi_x,\al\ra$ and $\la y,\pi_y,\be\ra$ are two coherent shadow bases, where $\al\leq\be$. Let $\zeta\in x\cap y$, and define $\mu_x:=\pi_x^{-1}(\zeta)$ and $\mu_y:=\pi_y^{-1}(\zeta)$. Then $\pi_y^{-1}\circ\pi_x$ is an order preserving map from $\mu_x$ into $\mu_y$. In particular, $\mu_x\leq\mu_y$, and $\pi_x^{-1}\circ\pi_y$ is the transitive collapse of $\pi_y^{-1}[\pi_x[\mu_x]]$.
\end{lemma}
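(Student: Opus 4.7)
The plan is to extract all the required information directly from the coherence condition of Definition \ref{def:cohere}, applied not just at $\zeta$ but also at auxiliary points of $x\cap y$. From coherence at $\zeta$ we immediately obtain $\pi_x[\mu_x]\seq \pi_y[\mu_y]$, so the composition $\pi_y^{-1}\circ \pi_x$ is well-defined on $\mu_x$ and takes values in $\mu_y$; its image is precisely $A:=\pi_y^{-1}[\pi_x[\mu_x]]$.

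To establish the order-preservation claim, I will pick any $\eta_1<\eta_2<\mu_x$ and invoke coherence at the new point $\zeta_2:=\pi_x(\eta_2)$, which lies in $x\cap y$ by the previous paragraph. Since $\pi_x^{-1}(\zeta_2)=\eta_2$, coherence applied at $\zeta_2$ yields $\pi_x[\eta_2]\seq \pi_y[\pi_y^{-1}(\pi_x(\eta_2))]$. As $\pi_x(\eta_1)\in \pi_x[\eta_2]$, applying $\pi_y^{-1}$ gives $\pi_y^{-1}(\pi_x(\eta_1))<\pi_y^{-1}(\pi_x(\eta_2))$, as desired.

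Because $\pi_y^{-1}\circ \pi_x$ is then an order-preserving bijection from the ordinal $\mu_x$ onto $A\seq \mu_y$, the set $A$ has order type $\mu_x$, which forces $\mu_x\leq \mu_y$ since $\mu_y$ is an ordinal. The inverse map $\pi_x^{-1}\circ \pi_y$ restricted to $A$ is the inverse of this order isomorphism, hence an order-isomorphism from $A$ onto $\mu_x$; as such, it coincides with the Mostowski transitive collapse map of $A$.

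I expect no serious obstacle here. The only subtlety worth flagging is that order information on the $y$-side is not visible from coherence invoked only at $\zeta$; one must separately invoke coherence at each point $\pi_x(\eta)\in \pi_x[\mu_x]\seq x\cap y$ to obtain it. Note also that the ``coherently collapses'' clause of Definition \ref{def:cohere} is never needed in the argument above -- the weaker inclusion clause suffices, since ordertype considerations produce the transitive collapse for free.
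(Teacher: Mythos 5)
Your proposal is correct and takes essentially the same approach as the paper's proof: both obtain $\pi_x[\mu_x]\subseteq\pi_y[\mu_y]$ from coherence at $\zeta$, and then prove order preservation by invoking Definition~\ref{def:cohere}(1) again at the intermediate point $\pi_x(\eta_2)\in x\cap y$. The paper's written proof stops after order preservation, leaving the ``in particular'' clauses implicit, whereas you supply the (routine) deduction of $\mu_x\leq\mu_y$ and the transitive-collapse identification; your side remark that clause (2) of coherence is never used is also accurate.
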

\begin{proof} By Definition \ref{def:cohere} (1), we know that $\pi_x[\mu_x]$ is a subset of $\pi_y[\mu_y]$, and so $\pi_y^{-1}\circ\pi_x$ is indeed a map from $\mu_x$ into $\mu_y$. Let us abbreviate $\pi_y^{-1}\circ\pi_x$ by $j$. Suppose that $\zeta<\eta<\mu_x$, and we show $j(\zeta)<j(\eta)$. Set $\zeta_y=j(\zeta)$ and $\eta_y=j(\eta)$. Since $\pi_x(\eta)=\pi_y(\eta_y)\in x\cap y$, Definition \ref{def:cohere} (1) implies that $\pi_x[\eta]\seq\pi_y[\eta_y]$. Next, as $\zeta<\eta$, $\pi_x(\zeta)\in\pi_x[\eta]$, and so $\pi_y(\zeta_y)\in\pi_y[\eta_y]$. Finally, since $\pi_y$ is a bijection we conclude that $\zeta_y\in\eta_y$, i.e., $j(\zeta)<j(\eta)$.
\end{proof}

As a result of the previous lemma, if two coherent shadow bases have the same ``index", then their intersection is an initial segment of both.

\begin{corollary}\label{cor:sbsameindex} Suppose that $\la x,\pi_x,\al\ra$ and $\la y,\pi_y,\al\ra$ are two coherent shadow bases and that $\zeta\in x\cap y$. Then $\zeta_0:=\pi_x^{-1}(\zeta)=\pi_y^{-1}(\zeta)$, and in fact, $\pi_x\res(\zeta_0+1)=\pi_y\res(\zeta_0+1)$. 
\end{corollary}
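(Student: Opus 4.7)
The plan is to exploit the hypothesis that both shadow bases have the same index $\al$ by applying the coherence condition in both orderings. Writing $\mu_x := \pi_x^{-1}(\zeta)$ and $\mu_y := \pi_y^{-1}(\zeta)$, Definition \ref{def:cohere}(1) applied with the pair ordered as $(x,y)$ yields $\pi_x[\mu_x] \seq \pi_y[\mu_y]$; since the indices agree, we are free to swap the roles and obtain the reverse containment as well. Hence $\pi_x[\mu_x] = \pi_y[\mu_y]$.

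Next I would invoke Lemma \ref{lemma:sborder} in both orderings. This produces order-preserving maps $\pi_y^{-1}\circ\pi_x : \mu_x \lra \mu_y$ and $\pi_x^{-1}\circ\pi_y : \mu_y \lra \mu_x$, forcing $\mu_x \leq \mu_y$ and $\mu_y \leq \mu_x$; thus $\mu_x = \mu_y$, and we set $\zeta_0$ equal to this common value. Since $\pi_x[\zeta_0] = \pi_y[\zeta_0]$, the composition $\pi_y^{-1}\circ\pi_x$ is an order-preserving bijection of $\zeta_0$ onto itself, and so must be the identity map. This yields $\pi_x\res \zeta_0 = \pi_y\res \zeta_0$, and combining with the extra point $\pi_x(\zeta_0) = \zeta = \pi_y(\zeta_0)$ completes the required agreement on $\zeta_0 + 1$.

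I do not anticipate any serious obstacles: the statement is essentially a symmetric specialization of Lemma \ref{lemma:sborder}, and the only step requiring mild care is remembering to apply the coherence condition in both directions in order to obtain equality rather than mere inclusion of the images.
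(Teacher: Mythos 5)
Your proof is correct and takes essentially the same route as the paper: exploit the equality of the indices to apply Lemma \ref{lemma:sborder} (and Definition \ref{def:cohere}(1)) symmetrically to get $\mu_x=\mu_y$ and equality of images, then conclude the two rearrangement functions agree on $\zeta_0+1$. The paper's proof is just a more compressed version of this argument — it notes the symmetry gives $\pi_x^{-1}(\eta)=\pi_y^{-1}(\eta)$ for every $\eta\in x\cap y$ and leaves the rest implicit; you have simply spelled out the final step.
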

\begin{proof} Fix $\eta\in x\cap y$. Since both shadow bases have index $\al$, we know from Lemma \ref{lemma:sborder} that $\pi_x^{-1}(\eta)=\pi_y^{-1}(\eta)$. Since this holds for any $\eta\in x\cap y$, the result follows.
\end{proof}

\begin{remark}\label{remark:ord} In the context of Corollary \ref{cor:sbsameindex}, we note that $\pi_x^{-1}[x\cap y]=\pi_y^{-1}[x\cap y]$ is an ordinal $\leq\rho_\al$, and if $x\neq y$, then this ordinal is strictly less than $\rho_\al$.
\end{remark}

We conclude this subsection with a very useful lemma.

\begin{lemma}\label{lemma:cohere} Suppose that $\la x,\pi_x,\al\ra$, $\la y,\pi_y,\be\ra$, and $\la z,\pi_z,\ga\ra$ are shadow bases such that $\al,\be\leq\ga$. Suppose further that $x\cap y\seq z$, that $\la x,\pi_x,\al\ra$ and $\la z,\pi_z,\ga\ra$ cohere, and that $\la y,\pi_y,\be\ra$ and $\la z,\pi_z,\ga\ra$ cohere. Then $\la x,\pi_x,\al\ra$ and $\la y,\pi_y,\be\ra$ cohere.
\end{lemma}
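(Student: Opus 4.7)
The plan is to verify the two clauses of Definition \ref{def:cohere} directly. We may assume $\al\leq\be$ by symmetry, and that $x\cap y\neq\es$, else the claim is vacuous. Fix any $\zeta\in x\cap y$; by the hypothesis $x\cap y\seq z$, also $\zeta\in z$, so set $\mu_x:=\pi_x^{-1}(\zeta)$, $\mu_y:=\pi_y^{-1}(\zeta)$, $\mu_z:=\pi_z^{-1}(\zeta)$, and let $A_x:=\pi_z^{-1}[\pi_x[\mu_x]]$ and $A_y:=\pi_z^{-1}[\pi_y[\mu_y]]$.

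For the first clause of coherence, the coherence of $\la x,\pi_x,\al\ra$ and of $\la y,\pi_y,\be\ra$ each with $\la z,\pi_z,\ga\ra$ yields via condition (\ref{coherent-collapse-critical}) of Definition \ref{coherent-collapse} that $A_x=\vp_{\ga,\mu_z}[\al]$ and $A_y=\vp_{\ga,\mu_z}[\be]$. Since $\al\leq\be$, $A_x\seq A_y$, so applying $\pi_z$ yields $\pi_x[\mu_x]=\pi_z[A_x]\seq\pi_z[A_y]=\pi_y[\mu_y]$.

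For the second clause, set $B:=\pi_y^{-1}[\pi_x[\mu_x]]$; we must verify that $B$ coherently collapses $\la\be,\mu_y\ra$ to $\la\al,\mu_x\ra$. Lemma \ref{lemma:sborder} applied to $\la y,\pi_y,\be\ra$ and $\la z,\pi_z,\ga\ra$ identifies $j_y:=\pi_y^{-1}\circ\pi_z\res A_y$ as the transitive collapse of $A_y$ onto $\mu_y$, so $B=j_y[A_x]$; analogously, $j_x:=\pi_x^{-1}\circ\pi_z\res A_x$ is the transitive collapse of $A_x$ onto $\mu_x$, and $j_x\circ j_y^{-1}\res B$ is the transitive collapse of $B$. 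In the principal case $\mu_z>\be$, condition (\ref{coherent-collapse-comp}) for $A_y$ yields $j_y\circ\vp_{\ga,\mu_z}\res\be=\vp_{\be,\mu_y}$, and restricting to $\al$ and applying $j_y$ to $A_x=\vp_{\ga,\mu_z}[\al]$ gives $B=j_y[A_x]=\vp_{\be,\mu_y}[\al]$, establishing condition (\ref{coherent-collapse-critical}) for $B$. Countable closure of $B$ in $\mu_y$ follows from countable closure of $A_x$ and $A_y$ in $\mu_z$ combined with the order-preservation of $j_y$. When additionally $\mu_y>\al$ (so that condition (\ref{coherent-collapse-comp}) for $B$ is non-vacuous), condition (\ref{coherent-collapse-comp}) for both $A_x$ and $A_y$ gives $\ga\in A_x\cap A_y$, $A_x\cap\ga=\al$, and $A_y\cap\ga=\be$; since $\al\seq\be\seq A_y$, the transitive collapse $j_y$ is the identity on $\al$, so $\be=j_y(\ga)\in B$ and $B\cap\be=j_y[A_x\cap\ga]=j_y[\al]=\al$. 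The required commutativity $(j_x\circ j_y^{-1})\circ\vp_{\be,\mu_y}\res\al=\vp_{\al,\mu_x}$ then follows by composing the two instances of condition (\ref{coherent-collapse-comp}).

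The main obstacle will be the case analysis over whether condition (\ref{coherent-collapse-comp}) of Definition \ref{coherent-collapse} is active or vacuous for each of $A_x$, $A_y$, and $B$. In the degenerate subcase $\mu_z\leq\be$, condition (\ref{coherent-collapse-comp}) for $A_y$ becomes vacuous, and one must argue condition (\ref{coherent-collapse-critical}) for $B$ from the restrictions imposed on $\vp_{\ga,\mu_z}$ and $\vp_{\be,\mu_y}$ by the pair of coherent collapses $A_x\seq A_y\seq\mu_z$, using Lemma \ref{lemma:sborder}, the countable closure clause (\ref{coherent-collapse-closure}), and the structure of the transitive collapse $j_y$; in the subcase $\mu_y\leq\al$, condition (\ref{coherent-collapse-comp}) for $B$ is itself vacuous and requires no further verification.
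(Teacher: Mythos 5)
Your argument follows the paper's proof essentially step for step: both write $A_x = \vp_{\ga,\mu_z}[\al]\seq A_y = \vp_{\ga,\mu_z}[\be]$ via clause (\ref{coherent-collapse-critical}) of Definition \ref{coherent-collapse} to get the inclusion $\pi_x[\mu_x]\seq\pi_y[\mu_y]$, then use Lemma \ref{lemma:sborder} to identify $j_y=\pi_y^{-1}\circ\pi_z$ as the transitive collapse of $A_y$, push the identity $j_y\circ\vp_{\ga,\mu_z}\res\be=\vp_{\be,\mu_y}$ from clause (\ref{coherent-collapse-comp}) through to get $B=\vp_{\be,\mu_y}[\al]$, and compose a second collapse for the final commutativity. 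If anything you are a bit more complete than the printed proof, which records only clause (\ref{coherent-collapse-critical}) and the displayed equation of (\ref{coherent-collapse-comp}) and leaves countable closure and the ``$\be\in B$, $B\cap\be=\al$'' part of (\ref{coherent-collapse-comp}) implicit; you verify these, and do so correctly. The residual case $\mu_z\leq\be$ you flag at the end is likewise passed over without comment in the paper, so noticing it is fair, but you can close it off more cleanly than your sketch suggests: if $\mu_z\leq\be$ and $\mu_z>\al$, then clause (\ref{coherent-collapse-comp}) of Definition \ref{coherent-collapse} applied to $A_x$ gives $\ga\in A_x\seq\mu_z\leq\be\leq\ga$, which is impossible, so $\mu_z\leq\al$, and hence by Lemma \ref{lemma:sborder} also $\mu_y\leq\mu_z\leq\al$, making (\ref{coherent-collapse-comp}) for $B$ vacuous as well. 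What remains in that degenerate configuration is just clause (\ref{coherent-collapse-critical}) for $B$, which both your sketch and the paper's proof leave tacit.
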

\begin{proof} By relabeling if necessary, we assume that $\al\leq\be$. Let $\zeta\in x\cap y$, and we will show that (1) and (2) of Definition \ref{def:cohere} hold. Define $\mu_x:=\pi_x^{-1}(\zeta)$ and $\mu_y:=\pi^{-1}_y(\zeta)$. As $x\cap y\seq z$, $\zeta\in z$, and therefore we may also define $\mu_z:=\pi^{-1}_z(\zeta)$. Applying the coherence assumptions in the statement of the lemma, we conclude that 
$$
\pi_z^{-1}\left[\pi_x[\mu_x]\right]=\vp_{\ga,\mu_z}[\al]\,\text{ and }\,\pi_z^{-1}[\pi_y[\mu_y]]=\vp_{\ga,\mu_z}[\be].
$$
Since $\al\leq\be$, it then follows that $\pi_x[\mu_x]\seq\pi_y[\mu_y]$.

We next show that $\pi_y^{-1}[\pi_x[\mu_x]]=\vp_{\be,\mu_y}[\al]$. By Lemma \ref{lemma:sborder} applied to the shadow bases $\la y,\pi_y,\be\ra$ and $\la z,\pi_z,\ga\ra$, we conclude that $\pi^{-1}_y\circ\pi_z$, which we abbreviate as $j_{z,y}$, is the transitive collapse of $\pi_z^{-1}[\pi_y[\mu_y]]$. Furthermore, the definition of coherence also implies that $j_{z,y}\circ\vp_{\ga,\mu_z}\res\be=\vp_{\be,\mu_y}$. Since $\al\leq\be$ and since $\pi_z^{-1}[\pi_x[\mu_x]]=\vp_{\ga,\mu_z}[\al]$, we apply $j_{z,y}$ to conclude that $\pi_y^{-1}[\pi_x[\mu_x]]=\vp_{\be,\mu_y}[\al]$.

Now let $j_{y,x}$ denote the transitive collapse of $\pi_y^{-1}[\pi_x[\mu_x]]$; we check that $j_{y,x}\circ\vp_{\be,\mu_y}\res\al=\vp_{\al,\mu_x}$. We also let $j_{z,x}$ be the transitive collapse of $\pi_z^{-1}[\pi_x[\mu_x]]$. From Lemma \ref{lemma:sborder}, we know that $j_{y,x}=\pi_x^{-1}\circ\pi_y$ and $j_{z,x}=\pi_x^{-1}\circ\pi_z$. Thus $j_{z,x}=j_{y,x}\circ j_{z,y}$. Since $j_{z,y}\circ\vp_{\ga,\mu_z}\res\be=\vp_{\be,\mu_y}$ and $\al\leq\be$, we conclude that $\vp_{\al,\mu_x}=j_{y,x}\circ\vp_{\be,\mu_y}\res\al$, completing the proof.
\end{proof}

Note that the proof of Lemma \ref{lemma:cohere} uses the Collapse condition (\ref{coherent-collapse-comp}) of Definition \ref{coherent-collapse} for $y$ and $z$, in order to prove one of the {\em other} conditions, namely the Hull condition (\ref{coherent-collapse-critical}), for $x$ and $y$.

\subsection{Enriched Partition Products} 

In this subsection, we will consider in greater detail how shadow bases interact with partition products. We begin with the following definition.

\begin{definition} Let $\R$ be a partition product with domain $X$. A collection $\mathcal{B}$ of $\R$-shadow bases is said to be \emph{$\R$-full} if for all $\xi\in X$, $\la b(\xi),\pi_\xi,\ind(\xi)\ra\in\mathcal{B}$. $\mathcal{B}$ is said to be an $\R$\emph{-enrichment} if $\mathcal{B}$ is both coherent and $\R$-full.

An \emph{enriched partition product} is a pair $(\R,\mathcal{B})$ where $\mathcal{B}$ is an enrichment of $\R$.
\end{definition}

The next definition is a strengthening of the notion of a  base-closed subset which allows us to restrict an enrichment.

\begin{definition} Let $(\R,\mathcal{B})$ be an enriched partition product with domain $X$. A base-closed subset $B\seq X$ is said to \emph{cohere} with $(\R,\mathcal{B})$ if for all triples $\la x,\pi_x,\al\ra$ in $\mathcal{B}$ and for every $\zeta\in B\cap x$, if $\zeta=\pi_x(\zeta_0)$, say, then $\pi_x[\zeta_0]\seq B$.
\end{definition}

\begin{lemma}\label{lemma:restrictenrichment} Suppose that $(\R,\mathcal{B})$ is an enriched partition product with domain $X$ and that $B\seq X$ coheres with $(\R,\mathcal{B})$. Let $\la x,\pi_x,\al\ra\in\mathcal{B}$, and define $\pi_{x\cap B}$ to be the restriction of $\pi_x$ mapping onto $x\cap B$. Then $\la x\cap B,\pi_{x\cap B},\al\ra$ is a shadow base.

Additionally, if we define 
$$
\mathcal{B}\res B:=\lb\la x\cap B,\pi_{x\cap B},\al\ra:\la x,\pi_x,\al\ra\in\mathcal{B}\rb,
$$
then $(\R\res B,\mathcal{B}\res B)$ is an enriched partition product.
\end{lemma}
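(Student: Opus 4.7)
The plan is to carefully unpack the coherence hypothesis on $B$ to see, first, that each restriction is a shadow base, and then that the restrictions continue to form an enrichment of $\R\res B$.

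First I would establish the key preliminary observation: for any $\la x,\pi_x,\al\ra \in \mathcal{B}$, the set $\pi_x^{-1}[x\cap B]$ is an ordinal $\ga_{x\cap B}\leq\ga_x$. Indeed, if $\mu\in\pi_x^{-1}[x\cap B]$ and $\mu'<\mu$, then coherence of $B$ with $(\R,\mathcal{B})$ applied to $\la x,\pi_x,\al\ra$ at $\pi_x(\mu)\in B\cap x$ gives $\pi_x[\mu]\seq B$, so $\pi_x(\mu')\in x\cap B$, hence $\mu'\in\pi_x^{-1}[x\cap B]$. Thus $\pi_x^{-1}[x\cap B]$ is downward closed, i.e., an ordinal. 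Now $\pi_{x\cap B}=\pi_x\res\ga_{x\cap B}$, and since $b_\al(\mu)\seq\mu$ (restricted memory condition), the ordinal $\ga_{x\cap B}$ is automatically base-closed in $\dom(\ps_\al)$; the acceptable rearrangement property of $\pi_{x\cap B}$ is then inherited from that of $\pi_x$. This yields claim (1).

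For claim (2), I would verify the three conditions defining an enriched partition product. For $\R\res B$-shadow base: $x\cap B$ is base-closed in $X$ as the intersection of two base-closed sets, and the inherited rearrangement $\pi_{x\cap B}$ embeds $\ps_\al\res\ga_{x\cap B}$ into $\R\res(x\cap B)$ by restriction of $\pi_x$ (using Lemma \ref{lemma:baseclosedisgreat}). For $\R\res B$-fullness: for any $\xi\in B$, base-closedness of $B$ gives $b(\xi)\seq B$, so $b(\xi)\cap B=b(\xi)$ and $\pi_\xi^{-1}[b(\xi)\cap B]=\dom(\pi_\xi)$, whence the restricted triple is exactly $\la b_{\R\res B}(\xi),\pi^{\R\res B}_\xi,\ind_{\R\res B}(\xi)\ra$.

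The main thing to check, and where I expect the bookkeeping to be most delicate, is coherence of $\mathcal{B}\res B$. Given $\la x\cap B,\pi_{x\cap B},\al\ra$ and $\la y\cap B,\pi_{y\cap B},\be\ra$ in $\mathcal{B}\res B$ with $\al\leq\be$, and $\zeta\in(x\cap B)\cap(y\cap B)$, let $\mu_1=\pi_x^{-1}(\zeta)<\ga_{x\cap B}$ and $\mu_2=\pi_y^{-1}(\zeta)<\ga_{y\cap B}$. The key point is that every element of $\pi_x[\mu_1]$ lies in $x\cap B$ (by the downward-closure argument above applied to $\mu_1<\ga_{x\cap B}$), and similarly $\pi_y[\mu_2]\seq y\cap B$. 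Hence $\pi_{x\cap B}[\mu_1]=\pi_x[\mu_1]$ and $\pi_{y\cap B}[\mu_2]=\pi_y[\mu_2]$. Moreover, for any $\eta\in\pi_x[\mu_1]$, one has $\eta\in y\cap B$ (using $\pi_x[\mu_1]\seq\pi_y[\mu_2]\seq y\cap B$ from the original coherence), so $\pi_{y\cap B}^{-1}(\eta)=\pi_y^{-1}(\eta)$. Therefore $\pi_{y\cap B}^{-1}[\pi_{x\cap B}[\mu_1]]=\pi_y^{-1}[\pi_x[\mu_1]]$, and both the inclusion $\pi_{x\cap B}[\mu_1]\seq\pi_{y\cap B}[\mu_2]$ and the coherent collapse of $\la\be,\mu_2\ra$ to $\la\al,\mu_1\ra$ are inherited verbatim from the original coherence of $\mathcal{B}$. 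This completes the verification.
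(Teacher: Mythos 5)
Your proposal is correct and takes essentially the same route as the paper's own proof: both argue that $\pi_x^{-1}[x\cap B]$ is downward closed (hence an ordinal) via the coherence of $B$ with $(\R,\mathcal{B})$, both deduce fullness from base-closedness of $B$, and both get coherence of $\mathcal{B}\res B$ by observing that $\pi_{x\cap B}$ agrees with $\pi_x$ on $\mu_x+1$ (and likewise for $y$) so that conditions (1) and (2) of the coherence definition are inherited verbatim. You unpack a few of the paper's "straightforward" claims (the $\R\res B$-shadow-base and fullness verifications) in more detail, but the argument is the same.
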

\begin{proof} To see that $\la x\cap B,\pi_{x\cap B},\al\ra$ is a shadow base, it suffices to show that $\pi^{-1}_x[x\cap B]$ is an ordinal. This holds since for each $\xi\in x\cap B$, by the coherence of $B$ with $(\R,\mathcal{B})$, $\pi_x^{-1}(\xi)+1\seq \pi_x^{-1}[x\cap B]$.

Now we need to verify that $(\R\res B,\mathcal{B}\res B)$ is an enriched partition product. It is straightforward to see that $\mathcal{B}\res B$ is $(\R\res B)$-full, since $B$ is base-closed and since the base and index functions for $\R\res B$ are exactly the restrictions of those for $\R$. Similarly, we see that each shadow base in $\mathcal{B}\res B$ is in fact an $(\R\res B)$-shadow base. Thus we need to check that any two elements of $\mathcal{B}\res B$ cohere. Fix $\la x,\pi_x,\al\ra$ and $\la y,\pi_y,\be\ra$ in $\mathcal{B}$, and suppose that there exists $\zeta\in (x\cap B)\cap (y\cap B)$. Let $\mu_x<\rho_\al$ be such that $\zeta=\pi_{x\cap B}(\mu_x)$, and let $\mu_y<\rho_\be$ be such that $\zeta=\pi_{y\cap B}(\mu_y)$. Then since $B$ coheres with $(\R,\mathcal{B})$, $\pi_x\res(\mu_x+1)=\pi_{x\cap B}\res(\mu_x+1)$, and similarly $\pi_y\res(\mu_y+1)=\pi_{y\cap B}\res(\mu_y+1)$. Therefore conditions (1) and (2) of Definition \ref{def:cohere} at $\zeta$ follow from their applications to $\la x,\pi_x,\al\ra$ and $\la y,\pi_y,\be\ra$ at $\zeta$.
\end{proof}

\begin{definition} Suppose that $\ps$ and $\R$ are partition products and $\si$ embeds $\ps$ into $\R$. If $\la x,\pi_x,\al\ra$ is a $\ps$-shadow base, we define $\si(\la x,\pi_x,\al\ra)$ to be the triple 
$$
\la\si[x],\si\circ\pi_x,\al\ra.
$$ 
If $\mathcal{B}$ is a collection of $\ps$-shadow bases, we define $\si\left(\mathcal{B}\right):=\lb\si(t):t\in\mathcal{B}\rb$.
\end{definition}

The proof of the following lemma is routine.

\begin{lemma}\label{lemma:mapsshadowbases} Suppose that $\ps$ and $\R$ are partition products, $\si$ embeds $\ps$ into $\R$, and $\mathcal{B}$ is a collection of $\ps$-shadow bases. Then $\si\left(\mathcal{B}\right)$ is a collection of $\R$-shadow bases.
\end{lemma}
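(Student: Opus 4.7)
The plan is to unpack the definition of an $\R$-shadow base and verify each clause by pushing the corresponding clause for $\ps$-shadow bases through the embedding $\si$. Fix an arbitrary $\la x,\pi_x,\al\ra \in \mathcal{B}$; we must check that $\la \si[x], \si\circ\pi_x,\al\ra$ satisfies Definition \ref{def:shadowbase} relative to $\R$.

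First, that $\al\in C$ and $\dom(\si\circ\pi_x)=\dom(\pi_x)=\ga_x\leq \rho_\al$ are immediate. To see that $\si\circ\pi_x$ is an acceptable rearrangement of $\ps_\al\res \ga_x$, recall from Definition \ref{def:embedding} that $\si$, as an embedding of $\ps$ into $\R$, is in particular an acceptable rearrangement of $\ps$ onto $\ran(\si)$. In particular $\si$ is order-preserving on $x$ (as $\si$ respects the memory function of $\ps$, and any acceptable rearrangement of a partition product is, by Definition \ref{acceptable-rearrangement}, a bijection preserving the transitive-chain order induced by $b_\ps$; on ordinals this is literally order-preservation). Composing with the acceptable rearrangement $\pi_x\colon \ga_x\to x$ therefore yields an acceptable rearrangement of $\ps_\al\res\ga_x$ onto $\si[x]$.

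Next, we verify that $\si[x]\seq \dom(\R)$ is base-closed. Fix $\eta\in\si[x]$, say $\eta=\si(\xi)$ with $\xi\in x$. Since $\si$ embeds $\ps$ into $\R$, Definition \ref{def:embedding} gives $\base_\R(\eta)=\si\bigl(\base_\ps(\xi)\bigr)$, so in particular $b_\R(\eta)=\si[b_\ps(\xi)]$. Because $x$ is base-closed in $\ps$, $b_\ps(\xi)\seq x$, hence $b_\R(\eta)\seq \si[x]$, as required.

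Finally, we must show $\si\circ\pi_x$ embeds $\ps_\al\res\ga_x$ into $\R\res\si[x]$ in the sense of Definition \ref{def:embedding}. Since $\la x,\pi_x,\al\ra$ is a $\ps$-shadow base, $\pi_x$ embeds $\ps_\al\res\ga_x$ into $\ps\res x$; and by hypothesis $\si$ embeds $\ps$ into $\R$, hence its restriction embeds $\ps\res x$ into $\R\res\si[x]$ (using that $\si$ respects $\base$ and $\ind$, and that $\R\res \si[x]$ is exactly $\si(\ps\res x)$ by the final remark after Definition \ref{def:embedding} together with Lemma \ref{lemma:baseclosedisgreat}). Composition of embeddings is an embedding, so $\si\circ\pi_x$ embeds $\ps_\al\res\ga_x$ into $\R\res\si[x]$. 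This completes the verification for a single shadow base, and running this argument for every element of $\mathcal{B}$ yields $\si(\mathcal{B})$ as a collection of $\R$-shadow bases. The only potentially delicate point is the clean agreement of $\si(\base_\ps)$ with $\base_\R\res\ran(\si)$, but this is exactly the content of Definition \ref{def:embedding}, so nothing more is needed.
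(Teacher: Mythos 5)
Your overall approach — unpack Definition \ref{def:shadowbase} and check each clause for $\la\si[x],\si\circ\pi_x,\al\ra$ — is the right route, and your second and third paragraphs (base-closedness of $\si[x]$; $\si\circ\pi_x$ is a composition of embeddings) are correct. The paper gives no proof, calling it routine, and this is essentially that routine proof.

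However, your first paragraph rests on a genuine misconception: an acceptable rearrangement of a partition product is \emph{not} ``literally order-preservation'' on ordinals. By Definition \ref{acceptable-rearrangement}, $\si$ need only send $\zeta$ below $\si(\xi)$ whenever $\zeta\in b_\ps(\xi)$, and this memory relation is in general a proper sub-relation of $<$ on $\dom(\ps)$. For instance, when all memories are empty (e.g.\ a finite-support power of Cohen forcing as in Example \ref{restricted-memory-example}(\ref{restricted-memory-example-1})), \emph{every} bijection of the domain is an acceptable rearrangement, so ``$\si$ is order-preserving on $x$'' does not follow and is false in general. What actually makes $\si\circ\pi_x$ an acceptable rearrangement of $\ps_\al\res\ga_x$ is the stronger fact, already in the $\ps$-shadow-base hypothesis, that $\pi_x$ \emph{embeds} $\ps_\al\res\ga_x$ into $\ps\res x$: then $\zeta\in b_{\ps_\al}(\xi)$ gives $\pi_x(\zeta)\in b_\ps(\pi_x(\xi))$, and $\si$, respecting the memory relation of $\ps$, gives $\si(\pi_x(\zeta))<\si(\pi_x(\xi))$. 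This is exactly what your ``composition of embeddings'' argument in the last paragraph delivers (an embedding is in particular an acceptable rearrangement), so the fix is simply to delete the order-preservation claim in the first paragraph, or replace it with the memory-relation argument just sketched; the rest of the proof stands.
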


The following technical lemma will be of some use later.

\begin{lemma}\label{lemma:meow}Suppose that $\R$ and $\R^*$ are partition products, $\si_1,\si_2$ are embeddings of $\R$ into $\R^*$, and  $\la x,\pi_x,\al\ra$ and $\la y,\pi_y,\be\ra$ are two coherent $\R$-shadow bases, with $\al\leq\be$. Let $a$ be an initial segment of $x$ such that $a\seq y$, $\si_1\res a=\si_2\res a$, and $\si_1[x\bsl a]$ is disjoint from $\si_2[y\bsl a]$. Then $\si_1(\la x,\pi_x,\al\ra)$ and $\si_2(\la y,\pi_y,\be\ra)$ are coherent $\R^*$-shadow bases.
\end{lemma}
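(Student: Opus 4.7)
The plan is to reduce the coherence check for the images to the given coherence of the original shadow bases, using the disjointness hypothesis to force all intersection points to lie in $a$, where $\sigma_1$ and $\sigma_2$ agree.

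First I would observe that by Lemma \ref{lemma:mapsshadowbases}, both $\si_1(\la x,\pi_x,\al\ra) = \la \si_1[x], \si_1\circ\pi_x, \al\ra$ and $\si_2(\la y,\pi_y,\be\ra) = \la \si_2[y], \si_2\circ\pi_y, \be\ra$ are already $\R^*$-shadow bases, so only the coherence condition of Definition \ref{def:cohere} requires checking. Fix any $\zeta\in\si_1[x]\cap\si_2[y]$ and write $\zeta=\si_1(\xi_1)=\si_2(\xi_2)$ with $\xi_1\in x$, $\xi_2\in y$. By hypothesis $\si_1[x\bsl a]\cap\si_2[y\bsl a]=\es$, so if $\xi_1\in x\bsl a$, then $\xi_2\in a$, whence $\si_2(\xi_2)=\si_1(\xi_2)$ (as $\si_1\res a=\si_2\res a$), and injectivity of $\si_1$ gives $\xi_1=\xi_2\in a$, contradicting $\xi_1\in x\bsl a$. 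So $\xi_1\in a$, and symmetrically $\xi_2\in a$; a further use of $\si_1\res a=\si_2\res a$ and injectivity gives $\xi_1=\xi_2$. Call this common element $\xi\in a$.

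Next I would compute the critical ordinals: $\mu_x:=(\si_1\circ\pi_x)^{-1}(\zeta)=\pi_x^{-1}(\xi)$ and $\mu_y:=(\si_2\circ\pi_y)^{-1}(\zeta)=\pi_y^{-1}(\xi)$. Since $a$ is an initial segment of $\la x,\pi_x,\al\ra$, write $a=\pi_x[\nu_x]$; then $\xi\in a$ gives $\mu_x<\nu_x$ and hence $\pi_x[\mu_x]\seq a$. Because $\si_1$ and $\si_2$ agree on $a$, they agree on $\pi_x[\mu_x]$, so $\si_1[\pi_x[\mu_x]]=\si_2[\pi_x[\mu_x]]$. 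Since $a\seq y$ and the original shadow bases cohere at $\xi\in x\cap y$, Definition \ref{def:cohere}(1) gives $\pi_x[\mu_x]\seq\pi_y[\mu_y]$, so
$$
(\si_1\circ\pi_x)[\mu_x]=\si_2[\pi_x[\mu_x]]\seq\si_2[\pi_y[\mu_y]]=(\si_2\circ\pi_y)[\mu_y],
$$
verifying clause (1) of Definition \ref{def:cohere} for the images.

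For clause (2), the same agreement of $\si_1,\si_2$ on $\pi_x[\mu_x]$ together with injectivity of $\si_2$ yields
$$
(\si_2\circ\pi_y)^{-1}\bigl[(\si_1\circ\pi_x)[\mu_x]\bigr]=\pi_y^{-1}\bigl[\si_2^{-1}[\si_2[\pi_x[\mu_x]]]\bigr]=\pi_y^{-1}[\pi_x[\mu_x]],
$$
which by the coherence of $\la x,\pi_x,\al\ra$ and $\la y,\pi_y,\be\ra$ at $\xi$ already coherently collapses $\la\be,\mu_y\ra$ to $\la\al,\mu_x\ra$. This completes the verification. There is no real obstacle: the only subtle point is the use of the disjointness and injectivity assumptions in step one to show that every intersection of $\si_1[x]$ and $\si_2[y]$ comes from the common part $a$; everything else is a mechanical translation via the embeddings.
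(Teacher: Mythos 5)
Your proof is correct and follows essentially the same approach as the paper: show that any point of $\si_1[x]\cap\si_2[y]$ must be the common image of a point of $a$ (using the disjointness hypothesis plus $\si_1\res a=\si_2\res a$ and injectivity), then transfer the coherence of the original shadow bases at that point to the images. The paper compresses the final verification into "the coherence of the original triples at $\zeta$ implies the coherence of their images at $\zeta^*$," whereas you spell out the two clauses of Definition \ref{def:cohere}, including the needed observation that $\pi_x[\mu_x]\seq a$; this is the same content, just made explicit.
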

\begin{proof} From Lemma \ref{lemma:mapsshadowbases}, we see that $\si_1(\la x,\pi_x,\al\ra)$ and $\si_2(\la y,\pi_y,\be\ra)$ are $\R^*$-shadow bases. Furthermore, if $\zeta^*\in\si_1[x]\cap\si_2[y]$, then $\zeta^*$ must be in $\si_1[a]\cap\si_2[a]$, since $\si_1[x\bsl a]\cap\si_2[y\bsl a]=\es$ and since $\si_1\res a=\si_2\res a$. As the injections $\si_1$ and $\si_2$ are equal on $a$, we then have that $\si_1^{-1}(\zeta^*)=\si_2^{-1}(\zeta^*)=:\zeta$. Thus $\zeta\in x\cap y$, and the coherence of the original triples at $\zeta$ implies the coherence of their images at $\zeta^*$. 
\end{proof}

We next define a notion of embedding for enriched partition products.

\begin{definition} Suppose that $(\ps,\mathcal{B})$ is an enriched partition product with domain $X$, $(\R,\mathcal{D})$ is an enriched partition product with domain $Y$, and $\si:X\lra Y$ is a function. We say that $\si$ \emph{embeds} $(\ps,\mathcal{B})$ \emph{into} $(\R,\mathcal{D})$ if $\si$ embeds $\ps$ into $\R$, as in Definition \ref{def:embedding}, and if $\si\left(\mathcal{B}\right)\seq\mathcal{D}$.\end{definition}

We may now state and prove the Grafting Lemma; proving this lemma is one of the main reasons we introduced shadow bases.

\begin{lemma}\label{lemma:graft}\emph{(Grafting Lemma)} Let $(\ps,\mathcal{B})$ and $(\R,\mathcal{D})$  be enriched partition products with respective domains $X$ and $Y$. Suppose that $\hat{X}\seq X$ coheres with $(\ps,\mathcal{B})$ and that there is a map $\si:\hat{X}\lra Y$ which embeds $(\ps\res\hat{X},\mathcal{B}\res\hat{X})$ into $(\R,\mathcal{D})$.Then there is an enriched partition product $(\R^*,\mathcal{D}^*)$ with domain $Y^*$ such that $Y\seq Y^*$, $\R^*\res Y=\R$, $\mathcal{D}\seq\mathcal{D}^*$, and such that there is an extension $\si^*$ of $\si$ which embeds $(\ps,\mathcal{B})$ into $(\R^*,\mathcal{D}^*)$ and which satisfies $\si^*\left[X\bsl\hat{X}\right]=Y^*\bsl Y$.
\end{lemma}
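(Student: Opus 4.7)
The plan is to build $\R^*$ by adding fresh coordinates for $X \setminus \hat{X}$ on top of $\R$, using $\sigma$ to identify $\hat{X}$ with its image in $Y$. Concretely, let $\lambda$ be an ordinal larger than $\sup(Y)$ and fix an order-preserving bijection between $X \setminus \hat{X}$ and a set of ordinals beyond $\lambda$; set $Y^* := Y \cup (\text{image})$, and extend $\sigma$ to $\sigma^* \colon X \to Y^*$ by sending each $\xi \in X \setminus \hat{X}$ to its fresh ordinal. We then define base and index functions on $Y^*$ by $\base_{\R^*}(y) = \base_{\R}(y)$ and $\ind_{\R^*}(y) = \ind_\R(y)$ for $y \in Y$, and for $y = \sigma^*(\xi)$ with $\xi \in X \setminus \hat{X}$ we set $\ind_{\R^*}(y) = \ind_\ps(\xi)$ and $\base_{\R^*}(y) = \la \sigma^*[b_\ps(\xi)], \sigma^* \circ \pi^\ps_\xi \ra$. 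This is well-defined because $\hat{X}$ coheres with $(\ps,\mathcal{B})$, which guarantees that if any coordinate of $b_\ps(\xi)$ lies in $\hat{X}$, then an initial segment does, so the image $\sigma^*[b_\ps(\xi)]$ sits inside $Y$ on that initial segment and outside $Y$ beyond it.

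Next I would set $\mathcal{D}^* := \mathcal{D} \cup \sigma^*(\mathcal{B})$ and verify that $(\base_{\R^*}, \ind_{\R^*})$ support a partition product on $Y^*$ whose canonical enrichment is $\mathcal{D}^*$. Conditions (\ref{def:support-1}) and (\ref{def:support-2}) of Definition \ref{def:support} are immediate from the construction, using that the corresponding conditions hold in $\R$ and in $\ps$, that $\sigma$ embeds $\ps \res \hat{X}$ into $\R$ (so the two definitions of $\base$ and $\ind$ agree on the overlap), and that $\hat{X}$ is $\ps$-base-closed. Similarly, $\mathcal{B}$-fullness of $\sigma^*(\mathcal{B})$ on $Y^* \setminus Y$ together with $\mathcal{D}$-fullness on $Y$ gives that $\mathcal{D}^*$ is $\R^*$-full.

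The main obstacle is verifying condition (\ref{def:support-3}) of Definition \ref{def:support}, equivalently the coherence of $\mathcal{D}^*$. The pairs entirely in $\mathcal{D}$ are coherent by assumption; pairs entirely in $\sigma^*(\mathcal{B})$ are coherent because $\mathcal{B}$ is coherent and the image of a coherent shadow-base pair under an acceptable rearrangement is again coherent (as in Lemma \ref{lemma:mapsshadowbases} and the observation following it). The genuinely new case is a mixed pair $\sigma^*(\la x, \pi_x, \alpha\ra) \in \sigma^*(\mathcal{B})$ and $\la y, \pi_y, \beta\ra \in \mathcal{D}$. Any common point $\zeta^* \in \sigma^*[x] \cap y$ must lie in $Y$ (since $y \subseteq Y$), so $\zeta^* = \sigma(\zeta')$ for some $\zeta' \in x \cap \hat{X}$; the cohering of $\hat{X}$ with $(\ps,\mathcal{B})$ forces the initial segment of $x$ up to $\zeta'$ to lie inside $\hat{X}$. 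Then $\sigma^*$ agrees with $\sigma$ on that initial segment, so $\sigma^*(\la x, \pi_x, \alpha\ra)$ coincides on its initial segment up to $\zeta^*$ with $\sigma(\la x \cap \hat{X}, \pi_{x \cap \hat{X}}, \alpha\ra)$, which is a member of $\mathcal{D}$ by Lemma \ref{lemma:restrictenrichment} and the hypothesis that $\sigma$ embeds $(\ps \res \hat{X}, \mathcal{B} \res \hat{X})$ into $(\R,\mathcal{D})$. Applying Lemma \ref{lemma:cohere} to the three triples in $\mathcal{D}$ consisting of this image, of $\la y, \pi_y, \beta\ra$, and of a large enough triple in $\mathcal{D}$ subsuming them (or, more cleanly, simply noting that the coherence condition at $\zeta^*$ only depends on initial segments and on the index-collapse data), we reduce coherence of the mixed pair at $\zeta^*$ to coherence of two triples already in $\mathcal{D}$, which holds.

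Once coherence is verified, Remark \ref{partition-product-reformulation} gives us the partition product $\R^*$ on $Y^*$ canonically associated with $(\base_{\R^*}, \ind_{\R^*})$, and $\R^* \res Y = \R$ follows since $Y$ is base-closed in $Y^*$ and the restricted base and index functions coincide with those of $\R$, using Lemma \ref{lemma:baseclosedisgreat}. By construction, $\sigma^*$ embeds $(\ps,\mathcal{B})$ into $(\R^*, \mathcal{D}^*)$ and $\sigma^*[X \setminus \hat{X}] = Y^* \setminus Y$, completing the proof.
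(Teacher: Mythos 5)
Your proposal is correct and takes essentially the same route as the paper: extend $\sigma$ by pushing $X\setminus\hat{X}$ onto fresh ordinals above $\sup(Y)$, glue the base/index data, set $\mathcal{D}^*=\mathcal{D}\cup\sigma^*(\mathcal{B})$, and verify support and fullness. The only wrinkle is your appeal to Lemma~\ref{lemma:cohere} in the mixed case, which does not quite fit (that lemma reduces coherence of two shadow bases to their coherence with a third base subsuming their intersection, and here there is no natural third base to invoke); your parenthetical alternative is the correct argument, and is exactly what the paper records as Remark~\ref{remark:extends}: since $\sigma[x\cap\hat{X}]$ is an initial segment of $\sigma^*[x]$ and $\sigma^*[x\setminus\hat{X}]\cap y=\emptyset$, coherence of the initial segment with $\la y,\pi_y,\beta\ra$ propagates to the full shadow base.
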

\begin{proof} We first define the map $\si^*$ extending $\si$: if $\xi\in\hat{X}$, then set $\si^*(\xi):=\si(\xi)$. If $\xi\in X\bsl\hat{X}$, say $\xi$ is the $\ga$th such element, then we set $\si^*(\xi):=\sup(Y)+1+\ga$. Then $\si^*$ is an acceptable rearrangement, since $\hat{X}$ is base-closed. Let $Y^*:=Y\cup\ran(\si^*)$. Recalling that $\si$ embeds $\ps\res\hat{X}$ into $\R$, we know that $\si^*\left(\base_\ps\right)\res\ran(\si)=\base_\R\res\ran(\si)$ and that $\si^*\left(\ind_\ps\right)\res\ran(\si)=\ind_\R\res\ran(\si)$. Thus if we define $\base^*:=\base_\R\cup\,\si^*\left(\base_\ps\right)$ and $\ind^*:=\ind_\R\cup\,\si^*\left(\ind_\ps\right)$, then $\base^*$ and $\ind^*$ are functions.

Before we check that $\base^*$ and $\ind^*$ support a partition product on $Y^*$, we need to check that $\mathcal{D}\cup\si^*\left(\mathcal{B}\right)$ consists of a coherent collection of shadow bases. To facilitate the discussion, we set $\mathcal{B}^*:=\si^*\left(\mathcal{B}\right)$ and $\mathcal{D}^*:=\mathcal{D}\cup\mathcal{B}^*$. So fix $\la x,\pi_x,\al\ra\in\mathcal{B}$ and  $\la y,\pi_y,\be\ra$ in $\mathcal{D}$, and we check that $\la y,\pi_y,\be\ra$ and $\la x^*,\pi_{x^*},\al\ra$ cohere, where $x^*:=\si^*[x]$ and $\pi_{x^*}:=\si^*\circ\pi_x$. By our assumption that $\si$ embeds $(\ps\res\hat{X},\mathcal{B}\res\hat{X})$ into $(\R,\mathcal{D})$, we know that $\la y,\pi_y,\be\ra$ and $\la\si[x\cap\hat{X}],\si\circ\pi_{x\cap\hat{X}},\al\ra$ cohere. However, $\la\si[x\cap\hat{X}],\si\circ\pi_{x\cap\hat{X}},\al\ra$ is an initial segment of $\la x^*,\pi_{x^*},\al\ra$, as in Definition \ref{def:extends}. Therefore by Remark \ref{remark:extends}, since $\si^*[X\bsl\hat{X}]$ is disjoint from $y$, we have that $\la y,\pi_y,\be\ra$ and $\la x^*,\pi_{x^*},\al\ra$ cohere.

We now check that $\base^*$ and $\ind^*$ support a partition product on $Y^*$. Conditions (1) and (2) of Definition \ref{def:support} for $\base^*$ and $\ind^*$ follow because they hold for $\base_\R$ and $\ind_\R$, as well as $\si^*\left(\base_\ps\right)$ and $\si^*\left(\ind_\ps\right)$ individually, and since $\base^*$ and $\ind^*$ are functions. Thus we need to verify condition (3). For this it suffices to check that it holds for $\xi_1\in Y$ and $\xi_2\in Y^*\bsl Y$. Rephrasing, we need to show that the triples $\la b^*(\xi_1),\pi^*_{\xi_1},\ind^*(\xi_1)\ra$ and $\la b^*(\xi_2),\pi^*_{\xi_2},\ind^*(\xi_2)\ra$ cohere. The first triple equals $\la b_\R(\xi_1),\pi^\R_{\xi_1},\ind_\R(\xi_1)\ra$ and so is in $\mathcal{D}$ since $\mathcal{D}$ is $\R$-full. The second triple is in $\mathcal{B}^*$, since it equals $\la\si^*[b_\ps(\hat{\xi}_2)],\si^*\circ\pi^\ps_{\hat{\xi}_2},\ind_\ps(\hat{\xi}_2)\ra$, where $\si^*(\hat{\xi}_2)=\xi_2$. Consequently, both shadow bases are in $\mathcal{D}^*$ and are therefore coherent,  by the previous paragraph. Thus condition (3) of Definition \ref{def:support} is satisfied.

Thus $\base^*$ and $\ind^*$ support a partition product on $Y^*$, which we call $\R^*$. Since the restrictions of $\base^*$ and $\ind^*$ to $Y$ equal $\base_\R$ and $\ind_\R$, respectively, we have that $\R^*\res Y=\R$. Additionally, $\si^*$ embeds $\ps$ into $\R^*$, since $\base^*$ and $\ind^*$ restricted to $\ran(\si^*)$ equal $\si^*\left(\base_\ps\right)$ and $\si^*\left(\ind_\ps\right)$ respectively. Thus it remains to check that $\mathcal{D}^*$ is an enrichment of $\R^*$, and for this, it only remains to check that $\mathcal{D}^*$ is $\R^*$-full. However, $\mathcal{D}$ is $\R$-full, and since $\mathcal{B}$ is $\ps$-full, $\mathcal{B}^*$ is full with respect to $\R^*\res\ran(\si^*)$. Thus $\mathcal{D}^*$ is $\R^*$-full.
\end{proof}

\begin{definition} Let $(\ps,\mathcal{B})$, $(\R,\mathcal{D})$, $(\R^*,\mathcal{D}^*)$, $\hat{X}$, $\si$, and $\si^*$ be as in Lemma \ref{lemma:graft}. We will say in this case that $(\R^*,\mathcal{D}^*)$ is \emph{the extension of} $(\R,\mathcal{D})$ \emph{by grafting} $(\ps,\mathcal{B})$ \emph{over} $\si$, and we will call $\si^*$ the \emph{grafting embedding}.
\end{definition}

Note that as a corollary, we get that the product of two partition products is isomorphic to a partition product; this fact could also be proven directly from the definitions.

\begin{corollary}\label{productgraft} Suppose that $\ps$ and $\R$ are partition products with respective domains $X$ and $Y$. Then $\ps\times\R$ is isomorphic to a partition product $\R^*$. 

In fact, by Lemma \ref{lemma:RL} we may assume that $X\cap Y=\es$, that $\R^*$ is a partition product on $X\cup Y$, and that $\R^*\res X=\ps$ and $\R^*\res Y=\R$. Finally, in this case, if $\mathcal{B}$ and $\mathcal{D}$ are enrichments of $\ps$ and $\R$ respectively, then $\mathcal{B}\cup\mathcal{D}$ is an enrichment of $\R^*$.
\end{corollary}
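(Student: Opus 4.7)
The plan is to apply the Grafting Lemma (Lemma \ref{lemma:graft}) with the trivial choice $\hat{X}=\es$. The empty set vacuously coheres with $(\ps,\mathcal{B})$, and the empty map vacuously embeds the empty enriched restriction into $(\R,\mathcal{D})$, so all hypotheses hold. The lemma produces an enriched partition product $(\R^*,\mathcal{D}^*)$ on a domain $Y^*\supseteq Y$ such that $\R^*\res Y=\R$, $\mathcal{D}\seq\mathcal{D}^*$, and an extension $\si^*$ of the empty map that embeds $(\ps,\mathcal{B})$ into $(\R^*,\mathcal{D}^*)$ with $\si^*[X]=Y^*\bsl Y$. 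Postcomposing $\si^*$ with a further bijection as in Lemma \ref{lemma:RL}, we can rename so that $\si^*[X]$ coincides with any chosen set disjoint from $Y$; taking this set to be $X$ itself (and relabeling $X$ first if needed to make it disjoint from $Y$) gives $Y^*=X\cup Y$, $\R^*\res X=\ps$, $\R^*\res Y=\R$, and $\mathcal{D}^*=\mathcal{B}\cup\mathcal{D}$.

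Next I verify that the poset $\R^*$ so obtained is the product $\ps\times\R$. Because $\si^*$ embeds $\ps$ into $\R^*$ and $\R^*\res Y=\R$, for each $\xi\in X$ we have $b^*(\xi)\seq X$, and for each $\xi\in Y$ we have $b^*(\xi)\seq Y$; thus both $X$ and $Y$ are base-closed, and the memory of any coordinate lies entirely on one side. Given $p\in\ps$ and $q\in\R$, the union $p\cup q$ is then a function whose value at each $\xi\in\dom(p)\cup\dom(q)$ is a canonical name in the appropriate $\R^*\res b^*(\xi)$, so $p\cup q\in\R^*$. The ordering on $\R^*$, restricted to such unions, decouples into the separate orderings on $\ps$ and $\R$ by Remark \ref{partition-product-reformulation}\eqref{partition-product-reformulation-order}, since the forcing in question factors through $\R^*\res X=\ps$ or $\R^*\res Y=\R$. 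Conversely every condition $r\in\R^*$ is the union of $r\res X\in\ps$ and $r\res Y\in\R$ (using Lemma \ref{lemma:baseclosedisgreat}), so the map $(p,q)\mapsto p\cup q$ is the required isomorphism.

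For the enrichment statement, $\mathcal{B}\cup\mathcal{D}$ is $\R^*$-full because the base and index functions of $\R^*$ restricted to $X$ (resp.\ $Y$) are those of $\ps$ (resp.\ $\R$), so every triple $\la b^*(\xi),\pi^*_\xi,\ind^*(\xi)\ra$ lies in $\mathcal{B}$ if $\xi\in X$ and in $\mathcal{D}$ if $\xi\in Y$. Coherence of $\mathcal{B}\cup\mathcal{D}$ holds because any two triples within $\mathcal{B}$ (resp.\ $\mathcal{D}$) already cohere, and a triple from $\mathcal{B}$ has its first coordinate contained in $X$ while a triple from $\mathcal{D}$ has its first coordinate contained in $Y$; since $X\cap Y=\es$, Definition \ref{def:cohere} is satisfied vacuously.

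The only real check is the routine one that disjoint memories in a partition product really do yield product behavior on the combined domain; this is immediate from the reformulation in Remark \ref{partition-product-reformulation}, so there is no substantive obstacle.
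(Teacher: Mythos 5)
Your route via the Grafting Lemma with $\hat{X}=\emptyset$ matches the paper's intended derivation (the paper gives no detailed argument, merely noting the corollary follows from the Grafting Lemma or directly from the definitions). Your elaboration --- the relabeling by Lemma \ref{lemma:RL}, the observation that the memories decouple so that $\R^*\cong\ps\times\R$ via $(p,q)\mapsto p\cup q$, and the check that $\mathcal{B}\cup\mathcal{D}$ is an enrichment --- is correct and makes explicit what the paper leaves to the reader.

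One small imprecision is worth flagging, though the paper glosses over it too: the claim that the empty map ``vacuously'' embeds $(\ps\res\emptyset,\mathcal{B}\res\emptyset)$ into $(\R,\mathcal{D})$ is not literally automatic. Per Lemma \ref{lemma:restrictenrichment}, $\mathcal{B}\res\emptyset$ is the set of degenerate triples $\la\emptyset,\emptyset,\al\ra$ for each $\al$ indexing some member of $\mathcal{B}$, and the definition of an embedding of enriched partition products requires $\si(\mathcal{B}\res\emptyset)\seq\mathcal{D}$, i.e.\ that those degenerate triples already lie in $\mathcal{D}$. Nothing in the definition of an enrichment forces that. The gap is harmless: a degenerate triple coheres with every shadow base (the intersection hypothesis in Definition \ref{def:cohere} is empty), so one may freely adjoin all $\la\emptyset,\emptyset,\al\ra$ to $\mathcal{D}$ without altering $\R$ or fullness; alternatively, one can note that the Grafting Lemma's proof uses this hypothesis only to conclude coherence of $\la y,\pi_y,\be\ra$ with $\la\si[x\cap\hat{X}],\si\circ\pi_{x\cap\hat{X}},\al\ra$, which is trivial when $x\cap\hat{X}=\emptyset$. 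But it needs a sentence rather than the word ``vacuous,'' since the hypothesis as written is a nontrivial inclusion.
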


The following technical lemma gives a situation under which, after creating a single grafting embedding, we may extend a number of other embeddings without further grafting; it will be used later in constructing preassignments (see Lemma \ref{lemma:main}).

\begin{lemma}\label{restoretrim} Let $(\ps,\mathcal{B})$ and $(\R,\mathcal{D})$ be enriched partition products with domains $X$ and $Y$ respectively. Suppose that $X$ can be written as $X= X_0\cup X_1$, where both $ X_0$ and $X_1$ cohere with $(\ps,\mathcal{B})$. Let $\mathcal{F}$ be a finite collection of maps which embed $(\ps\res X_0,\mathcal{B}\res X_0)$ into $(\R,\mathcal{D})$, and suppose that for each $\si_0,\si_1\in\mathcal{F}$, 
$$
\si_0[ X_0\cap X_1]=\si_1[ X_0\cap X_1].
$$
Finally, fix a particular $\si_0\in\mathcal{F}$, let $(\R^*,\mathcal{D}^*)$ be the extension of $(\R,\mathcal{D})$ by grafting $(\ps,\mathcal{B})$ over $\si_0$, and let $\si_0^*$ be the grafting embedding. Then for all $\si\in\mathcal{F}$, the map
$$
\si^*:=\si\cup(\si_0^*\res( X_1\bsl X_0))
$$
embeds $(\ps,\mathcal{B})$ into $(\R^*,\mathcal{D}^*)$.
\end{lemma}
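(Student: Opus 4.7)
The plan is to verify the four conditions that make $\si^*$ an embedding of $(\ps,\mathcal{B})$ into $(\R^*,\mathcal{D}^*)$: well-definedness as a function, injectivity, being an acceptable rearrangement compatible with $\base_\ps$ and $\ind_\ps$, and carrying $\mathcal{B}$ into $\mathcal{D}^*$. Well-definedness is automatic since $\dom(\si)=X_0$ and $\dom(\si_0^*\res(X_1\bsl X_0))=X_1\bsl X_0$ are disjoint. Injectivity follows from the grafting construction in Lemma \ref{lemma:graft}, which places $\si_0^*[X\bsl X_0]$ entirely inside $Y^*\bsl Y$, disjoint from $\si[X_0]\seq Y$.

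For the rearrangement property and the preservation of $\base_\ps,\ind_\ps$, I would split on whether $\xi\in X_0$ or $\xi\in X_1\bsl X_0$. The base-closure of $X_0$ and $X_1$ forces $b_\ps(\xi)\seq X_0$ in the first case and $b_\ps(\xi)\seq X_1$ in the second. If $\xi\in X_0$, then $\si^*(\xi)=\si(\xi)\in Y$, and because $\R^*\res Y=\R$ and $\si$ embeds $\ps\res X_0$ into $\R$, we obtain $\base_{\R^*}(\si(\xi))=\base_\R(\si(\xi))=\si(\base_\ps(\xi))=\si^*(\base_\ps(\xi))$, using that $\si^*$ agrees with $\si$ on $b_\ps(\xi)\seq X_0$. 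If $\xi\in X_1\bsl X_0$, then $\si^*(\xi)=\si_0^*(\xi)$, and because $\si_0^*$ embeds $\ps$ into $\R^*$, the preservation reduces to showing $\si^*\res b_\ps(\xi)=\si_0^*\res b_\ps(\xi)$.

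The last equality is the main obstacle. For $\zeta\in b_\ps(\xi)\cap(X_1\bsl X_0)$ it is immediate, but for $\zeta\in b_\ps(\xi)\cap X_0\cap X_1$ it asks for pointwise agreement $\si(\zeta)=\si_0(\zeta)$, and not merely the image agreement $\si[X_0\cap X_1]=\si_0[X_0\cap X_1]$ given by hypothesis. I would derive pointwise agreement by noting that $\si_0^{-1}\circ\si$ restricts to an automorphism of the enriched restriction of $\ps$ to $X_0\cap X_1$; the preservation of every shadow base in $\mathcal{B}\res(X_0\cap X_1)$, together with the rigidity that coherent shadow bases impose on the possible rearrangements of their codomain, then forces this automorphism to be the identity.

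Finally, for the shadow-base preservation $\si^*(\mathcal{B})\seq\mathcal{D}^*$, the key combinatorial observation is that for any $\la x,\pi_x,\al\ra\in\mathcal{B}$, the sets $\pi_x^{-1}[x\cap X_0]$ and $\pi_x^{-1}[x\cap X_1]$ are both downward-closed in $\dom(\pi_x)$ by the cohering hypothesis and cover $\dom(\pi_x)$; since downward-closed subsets of an ordinal are linearly ordered under inclusion, either $x\seq X_0$ or $x\seq X_1$. In the former case $\si^*(\la x,\pi_x,\al\ra)=\si(\la x,\pi_x,\al\ra)\in\mathcal{D}\seq\mathcal{D}^*$ because $\si$ embeds $(\ps\res X_0,\mathcal{B}\res X_0)$ into $(\R,\mathcal{D})$; in the latter, the pointwise agreement secured in the previous paragraph gives $\si^*\res x=\si_0^*\res x$, whence $\si^*(\la x,\pi_x,\al\ra)=\si_0^*(\la x,\pi_x,\al\ra)\in\mathcal{D}^*$ since $\si_0^*$ embeds $(\ps,\mathcal{B})$ into $(\R^*,\mathcal{D}^*)$.
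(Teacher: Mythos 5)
Your overall plan matches the paper's: reduce the base/index and rearrangement checks to the two embeddings $\si$ and $\si_0^*$ via the agreement of $\si^*$ with $\si_0^*$ on all of $X_1$, and establish $\si^*(\mathcal{B})\seq\mathcal{D}^*$ through the dichotomy that each $x$ with $\la x,\pi_x,\al\ra\in\mathcal{B}$ satisfies $x\seq X_0$ or $x\seq X_1$ (your linearly-ordered downward-closed preimage argument is the paper's argument for that dichotomy). You also correctly flag a real imprecision: the printed hypothesis $\si_0[X_0\cap X_1]=\si_1[X_0\cap X_1]$ is only image equality, while the proof---both yours and the paper's, which simply asserts ``they agree on $X_0\cap X_1$ by assumption''---actually uses pointwise agreement $\si_0\res(X_0\cap X_1)=\si_1\res(X_0\cap X_1)$.

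The rigidity argument you propose to close that gap is, however, unsound. Coherent shadow bases do not rigidify the automorphism group of a partition product. Concretely, let $X_0\cap X_1=\{\zeta_0,\zeta_1\}$ consist of two Cohen coordinates (both of index $\al_0$ and empty memory) which feed separately into two higher-index coordinates $\xi_0\in X_0\bsl X_1$ and $\xi_1\in X_1\bsl X_0$ with $b_\ps(\xi_0)=\{\zeta_0\}$, $b_\ps(\xi_1)=\{\zeta_1\}$. The transposition $\zeta_0\leftrightarrow\zeta_1$ is an acceptable automorphism of $\ps\res(X_0\cap X_1)$ that merely permutes $\la\{\zeta_0\},\cdot,\al_1\ra$ and $\la\{\zeta_1\},\cdot,\al_1\ra$ inside $\mathcal{B}\res(X_0\cap X_1)$, hence preserves the enrichment as a set without being the identity; so $\si_0^{-1}\circ\si$ need not be trivial. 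With $\si_0=\si\circ(\text{swap})$ on $X_0\cap X_1$ and $\R$ chosen so that $\si_0$ remains an embedding of $(\ps\res X_0,\mathcal{B}\res X_0)$, the map $\si^*=\si\cup(\si_0^*\res(X_1\bsl X_0))$ sends $\xi_1$ to $\si_0^*(\xi_1)$, whose memory set in $\R^*$ is $\{\si_0(\zeta_1)\}=\{\si(\zeta_0)\}$, while $\si^*[b_\ps(\xi_1)]=\{\si(\zeta_1)\}$; thus $\si^*(\base_\ps)\ne\base_{\R^*}\res\ran(\si^*)$ and $\si^*$ is not an embedding. So the lemma genuinely requires pointwise agreement in the hypothesis, and no local rigidity argument can derive it from image equality. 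In the paper's only application of this lemma (Lemma \ref{lemma:main}, Case 2), image equality does imply pointwise agreement, but for a special structural reason absent here: there $X_0\cap X_1$ is exactly the range of an initial segment of one $\ka$-suitable shadow base $\psi_{\iota_1}$, and Corollary \ref{cor:sbsameindex} then pins the two images of that single shadow base down pointwise on the relevant initial segment.
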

\begin{proof} Fix $\si\in\mathcal{F}$. Before we continue, we note that $\si^*$ and $\si^*_0$ agree on all of $X_1$, since they agree on $X_0\cap X_1$ by assumption and on $X_1\bsl X_0$ by definition. 

We first verify that $\si^*$ provides an acceptable rearrangement  of $\ps$. So let $\zeta,\xi\in X$ so that $\zeta\in b_\ps(\xi)$. If $\xi\in X_0,$ then $\zeta$ is too, since $X_0$ is base-closed.  Then $\si^*(\zeta)=\si(\zeta)<\si(\xi)=\si^*(\xi)$, since $\si$ is an acceptable rearrangement  of $\ps\res X_0$. On the other hand, if $\xi\in X_1$, then $\zeta\in X_1$. Since $\si^*\res X_1=\si^*_0\res X_1$, and $\si^*_0\res X_1$ is an acceptable rearrangement of $\ps\res X_1$, we get that $\si^*(\zeta)<\si^*(\xi)$.

We may now see that $\si^*$ embeds $\ps$ into $\R^*$, as follows: let $\base^*$ and $\ind^*$ be the functions which support $\R^*$. Then $\si^*\left(\ind_\ps\right)$ and $\si^*\left(\base_\ps\right)$ agree with $\ind^*$ and $\base^*$ on $\ran(\si)$, since $\si$ embeds $\ps\res X_0$ into $\R$. Furthermore, $\si^*\left(\ind_\ps\right)$ and $\si^*\left(\base_\ps\right)$ agree with $\ind^*$ and $\base^*$ on $\si^*[X_1]$, since they are equal, respectively, to $\si_0^*\left(\ind_\ps\right)$ and $\si^*_0\left(\base_\ps\right)$ restricted to $\si^*_0[X_1]$. Thus $\si^*\left(\ind_\ps\right)$ and $\si^*\left(\base_\ps\right)$ are equal to the restriction of $\ind^*$ and $\base^*$ to $\ran(\si^*)$, and consequently, $\si^*$ embeds $\ps$ into $\R^*$.

We finish the proof of the lemma by showing that $\si^*\left(\mathcal{B}\right)\seq\mathcal{D}^*$. To see this, fix some $\la x,\pi_x,\al\ra\in\mathcal{B}$. We first claim that either $x\seq X_0$ or $x\seq X_1$. If this is false, then there exist $\al\in x\bsl X_0$ and $\be\in x\bsl X_1$. Since $X_0\cup X_1=X$, we then have $\al\in X_1$ and $\be\in X_0$. We suppose, by relabeling if necessary, that $\al_0:=\pi_x^{-1}(\al)<\pi_x^{-1}(\be)=:\be_0$. By the coherence of $X_0$ with $(\ps,\mathcal{B})$, we conclude that $\pi_x[\be_0]\seq X_0$. However, $\al=\pi_x(\al_0)\in\pi_x[\be_0]$, and therefore $\al\in X_0$, a contradiction.

We now show that the shadow base $\la x^*,\pi_{x^*},\al\ra$ is in $\mathcal{D}^*$, where $x^*:=\si^*[x]$ and $\pi_{x^*}=\si^*\circ\pi_x$. On the one hand, if $x\seq X_0$, then the shadow base $\la x,\pi_x,\al\ra$ is in $\mathcal{B}\res X_0$, and therefore $\la\si[x],\si\circ\pi_x,\al\ra$ is a member of $\mathcal{D}\seq\mathcal{D}^*$. Since $\si=\si^*\res X_0$, $\la x^*,\pi_{x^*},\al\ra=\la\si[x],\si\circ\pi_x,\al\ra$, completing this subcase. On the other hand, if $x\seq X_1$, then we see that $\la x^*,\pi_{x^*},\al\ra=\la\si_0^*[x],\si_0^*\circ\pi_x,\al\ra$, since $\si^*\res X_1=\si_0^*\res X_1$. It is therefore a member of $\mathcal{D}^*$, which finishes the proof.
\end{proof}

\section{Simplifying the Task: Finitely Generated Partition Products}

In this section we begin the basic inductive step of constructing a single highly symmetric preassignment name. We make the following assumption, which we will secure through an induction later on. 

\begin{assumptiona}\label{assumptiona} The $\ch$ holds. $\ka<\omega_2$ is in $C$, and for each $\xi\in C\cap\ka$, $\rho_\xi$ is below $\om_2$. Additionally, the $\ka$-alphabetical partition product $\ps_\ka$ is defined, and in particular, $\ps_\ka$ is a partition product based upon $\underline{\ps}\res\ka$ and $\underline{\dot{\Q}}\res\ka$. We also assume that the $\ps_\ka$-names $\dot{S}_\ka$ and $\dot{\chi}_\ka$ are defined and satisfy that $\dot{S}_\ka$ names a countable basis for a second countable, Hausdorff topology on $\om_1$ and $\dot{\chi}_\ka$ names a coloring, as in the definition of $\mathsf{OCA}_{ARS}$, which is open with respect to the topology generated by $\dot{S}_\ka$.
\end{assumptiona}

Recall that we want substantial symmetry for the poset $\dot{\mathbb Q}_\kappa$. In the introduction we mentioned, for example, that if $G_L\times G_R$ is generic for ${\mathbb P}_\kappa\times {\mathbb P}_\kappa$, then we need $\dot{\mathbb Q}_\kappa[G_L]\times \dot{\mathbb Q}_\kappa[G_R]$ to be c.c.c.\ in $V[G_L\times G_R]$. In fact we need to handle not just interpretations of $\dot{\mathbb Q}_\kappa$ by generics arising through powers of ${\mathbb P}_\kappa$, but any combination of generics arising through copies of ${\mathbb P}_\kappa$ in partition products based upon $\underline{\ps}\res \kappa$ and $\underline{\dot{\Q}}\res\kappa$. We need to know that any product of such interpretations is c.c.c. In other words, we need to construct $\dot{\mathbb Q}_\kappa$ in such a way that any partition product based upon $\underline{\ps}\res(\kappa+1)$ and $\underline{\dot{\bb{Q}}}\res(\kappa+1)$ is c.c.c.

We will inductively assume:

\begin{assumptionb} Any partition product based upon $\underline{\ps}\res\kappa$ and $\underline{\dot{\bb{Q}}}\res\kappa$ is c.c.c.
\end{assumptionb}

We refer to the conjunction of Assumption 4.1(a) and 4.1(b) as Assumption 4.1. Our basic step goal then is to construct, under Assumption 4.1, a ${\mathbb P}_\kappa$-name $\dot{\mathbb Q}_\kappa$ which secures Assumption 4.1(b) at $\kappa+1$. In this section we make a series of reductions to streamline and simplify this goal. We will complete the basic step construction in Section 5, and in Section 6 we will use this as part of an inductive construction of a sequence $\underline{\mathbb P}$ which provides the right building blocks for our main theorem.

\subsection{$\ka$-Suitable Collections}

We now consider how various copies of $\ps_\ka$ fit into a partition product $\R$, where $\R$ is based upon $\underline{\ps}\res\ka$ and $\underline{\dot{\Q}}\res\ka$. Even though we have yet to construct the name $\dot{\Q}_\ka$, we would still like to isolate the relevant behavior of copies of $\ps_\ka$ inside such an $\R$ which these copies \emph{would} have if $\R$ \emph{were} of the form 
$$
\R=\R^*\res\lb\xi\in\dom(\R^*):\ind(\xi)<\ka\rb,
$$
for some partition product $\R^*$ based upon $\underline{\ps}\res(\ka+1)$ and $\underline{\dot{\Q}}\res(\ka+1)$. Put differently, we need a mechanism to enforce that the copies of $\ps_\ka$ behave like they would after we construct $\dot{\Q}_\ka$. This leads to the following definition.

\begin{definition}\label{def:suitable} Let $\R$ be a partition product with domain $X$ based upon $\underline{\ps}\res\ka$ and $\underline{\dot{\Q}}\res\ka$. Let $\lb\la B_\iota,\psi_\iota\ra:\iota\in I\rb$ be a set of pairs, where each $B_\iota\seq X$ is base-closed and where $\psi_\iota:\rho_\ka\lra B_\iota$ is a bijection which embeds $\ps_\ka$ into $\R$. We say that the collection $\lb \la B_\iota,\psi_\iota\ra:\iota\in I\rb$ is $\ka$-\emph{suitable with respect to} $\R$ if
$$
\lb\la B_\iota,\psi_\iota,\ka\ra:\iota\in I\rb\cup\lb\la b(\xi),\pi_\xi,\ind(\xi)\ra:\xi\in X\rb
$$ 
is a coherent set of $\R$-shadow bases.

Moreover, if $(\R,\mathcal{B})$ is an enriched partition product, we say that $\lb\la B_\iota,\psi_\iota\ra:\iota\in I\rb$ is $\ka$-\emph{suitable with respect to} $(\R,\mathcal{B})$ if $\lb\la B_\iota,\psi_\iota,\ka\ra:\iota\in I\rb\seq\mathcal{B}$ and if $\al\leq\ka$ for all $\la x,\pi_x,\al\ra\in\mathcal{B}$.\end{definition}

The remaining results in this subsection will develop in detail a number of technical properties of $\ka$-suitable collections. The main point to keep in mind is that in a $\ka$-suitable collection, any two copies of $\ps_\ka$ agree on some initial segment of $\ps_\ka$ and are disjoint afterwards. In the later construction of preassignments (see Lemma \ref{lemma:main}) this allows a natural induction on the ``height" of the collection, namely, the maximal such agreement.

As the next lemma shows, $\ka$-suitable collections give us subsets which cohere with the original partition product, since the indices of the triples in the enrichment do not exceed $\ka$.

\begin{lemma}\label{lemma:suitablecohere} Suppose that $\lb\la B_\iota,\psi_\iota\ra:\iota\in I\rb$ is $\ka$-suitable with respect to an enriched partition product $(\R,\mathcal{B})$. Then for any $I_0\seq I$, $\bigcup_{\iota\in I_0}B_\iota$ coheres with $(\R,\mathcal{B})$.
\end{lemma}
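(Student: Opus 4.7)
The plan is to unwind the definition of coherence together with the assumption that every triple in $\mathcal{B}$ has index at most $\ka$. Set $B = \bigcup_{\iota \in I_0} B_\iota$, and to verify that $B$ coheres with $(\R,\mathcal{B})$, fix an arbitrary $\la x, \pi_x, \al\ra \in \mathcal{B}$ together with some $\zeta \in B \cap x$, writing $\zeta_0 = \pi_x^{-1}(\zeta)$. The goal is to show $\pi_x[\zeta_0] \seq B$.

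First, since $\zeta \in B$, pick some $\iota \in I_0$ with $\zeta \in B_\iota$, and let $\mu_\iota = \psi_\iota^{-1}(\zeta)$. By $\ka$-suitability, both $\la x,\pi_x,\al\ra$ and $\la B_\iota,\psi_\iota,\ka\ra$ are shadow bases in $\mathcal{B}$, hence they cohere; moreover $\al \leq \ka$ by the second clause of Definition~\ref{def:suitable}. Applying clause (1) of Definition~\ref{def:cohere} to the common point $\zeta$ (in the case $\al \leq \ka$) yields
$$
\pi_x[\zeta_0] \,\seq\, \psi_\iota[\mu_\iota].
$$

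Since $\psi_\iota[\mu_\iota] \seq B_\iota \seq B$, this gives $\pi_x[\zeta_0] \seq B$, as required. As $\la x,\pi_x,\al\ra$ and $\zeta$ were arbitrary, $B$ coheres with $(\R,\mathcal{B})$.

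There is no real obstacle here: the lemma is essentially a direct bookkeeping consequence of the coherence of shadow bases once one observes that the hypothesis $\al \leq \ka$ (guaranteed by $\ka$-suitability) means the ``initial segment'' direction of Definition~\ref{def:cohere} always runs from $\la x,\pi_x,\al\ra$ into $\la B_\iota,\psi_\iota,\ka\ra$, rather than the reverse. No use of the collapsing system or clause~(2) of Definition~\ref{def:cohere} is needed.
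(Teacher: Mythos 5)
Your proof is correct and follows essentially the same line of reasoning as the paper: fix an arbitrary $\la x,\pi_x,\al\ra\in\mathcal{B}$ and intersection point $\zeta$, use $\ka$-suitability with respect to $(\R,\mathcal{B})$ to get $\la B_\iota,\psi_\iota,\ka\ra\in\mathcal{B}$ and $\al\leq\ka$, then apply clause (1) of Definition~\ref{def:cohere} to conclude $\pi_x[\zeta_0]\seq B_\iota\seq B$. The only step glossed over in both your proof and the paper's is the (trivial) observation that $B$, being a union of base-closed sets, is itself base-closed, as required by the definition of cohering with an enriched partition product.
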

\begin{proof} Let $\la x,\pi_x,\al\ra\in\mathcal{B}$, and suppose that there exists $\zeta\in (\bigcup_{\iota\in I_0}B_\iota)\cap x$. Fix some $\iota\in I_0$ such that $\zeta\in B_\iota\cap x$. Then $\la B_\iota,\psi_\iota,\ka\ra$ is in $\mathcal{B}$. Furthermore, $\al\leq\ka$, by definition of $\ka$-suitability with respect to $(\R,\mathcal{B})$. Since $\mathcal{B}$ is coherent, by definition of an enrichment, and since $\al\leq\ka$, we have by Definition \ref{def:cohere} that
$$
\pi_x[\pi_x^{-1}(\zeta)]\seq\psi_\iota[\psi_\iota^{-1}(\zeta)].
$$
Since ran$(\psi_\iota)=B_\iota$, this finishes the proof.
\end{proof}

We will often be interested in the following strengthening of the notion of an embedding, one which preserves the $\ka$-suitable structure.

\begin{definition}\label{def:suitablemap} Let $\R$ and $\R^*$ be two partition products, and let $\mathcal{S}=\lb\la B_\iota,\psi_\iota\ra:\iota\in I\rb$ and $\mathcal{S}^*=\lb\la B^*_\eta,\psi^*_\eta\ra:\eta\in I^*\rb$ be $\ka$-suitable collections with respect to $\R$ and $\R^*$ respectively. An embedding $\si$ of $\R$ into $\R^*$ is said to be $(\mathcal{S},\mathcal{S}^*)$-\emph{suitable} if for each $\iota\in I$, there is some $\eta\in I^*$ such that $\si\res B_\iota$ isomorphs $\R\res B_\iota$ onto $\R^*\res B^*_\eta$ and $\psi^*_\eta=\si\circ\psi_\iota$. A collection $\mathcal{F}$ of embeddings is said to be $(\mathcal{S},\mathcal{S}^*)$-\emph{suitable} if each $\si\in\mathcal{F}$ is $(\mathcal{S},\mathcal{S}^*)$-suitable.

If $\si$ is $(\mathcal{S},\mathcal{S}^*)$-suitable, we let $h_\si$ denote the 
map from $I$ into $I^*$ such that 
$\eta=h_\sigma(\iota)$ witnesses suitability at $\iota$, for each $\iota\in I$.
\end{definition}

The following technical lemmas will be used in the next section.

\begin{lemma}\label{lemma:uhh} Suppose that $\lb\la B_\iota,\psi_\iota\ra:\iota\in I\rb$ is $\ka$-suitable with respect to an enriched partition product $(\R,\mathcal{B})$ and that the elements of $\lb B_\iota:\iota\in I\rb$ are pairwise disjoint. Then for any $\la x,\pi_x,\al\ra\in\mathcal{B}$, $x\cap(\bigcup_{\iota\in I}B_\iota)=x\cap B_{\iota_0}$ for a unique $\iota_0\in I$.
\end{lemma}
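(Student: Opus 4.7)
The plan is to prove that $x$ can intersect at most one $B_\iota$, which immediately yields the uniqueness statement (and if $x$ meets none of the $B_\iota$'s, then the equation is the trivial $\emptyset=\emptyset$). So I would argue by contradiction: assume there exist distinct $\iota_1,\iota_2\in I$ and points $\zeta_1\in x\cap B_{\iota_1}$ and $\zeta_2\in x\cap B_{\iota_2}$.

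First I would set $\mu_j:=\pi_x^{-1}(\zeta_j)$ for $j=1,2$, and without loss of generality assume $\mu_1\leq \mu_2$ (relabeling $\iota_1,\iota_2$ if necessary). The key input is the coherence of the two shadow bases $\la x,\pi_x,\al\ra$ and $\la B_{\iota_2},\psi_{\iota_2},\ka\ra$, both of which belong to $\mathcal{B}$: the first because of the hypothesis, the second by definition of $\ka$-suitability with respect to $(\R,\mathcal{B})$. Since $\al\leq \ka$ by the definition of $\ka$-suitability with respect to $(\R,\mathcal{B})$, Definition \ref{def:cohere} applied at $\zeta_2$ gives $\pi_x[\mu_2]\seq \psi_{\iota_2}[\psi_{\iota_2}^{-1}(\zeta_2)]\seq B_{\iota_2}$.

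The contradiction then splits into two cases. If $\mu_1<\mu_2$, then $\zeta_1=\pi_x(\mu_1)\in \pi_x[\mu_2]\seq B_{\iota_2}$; together with $\zeta_1\in B_{\iota_1}$, this violates the assumption that $B_{\iota_1}\cap B_{\iota_2}=\es$. If $\mu_1=\mu_2$, then $\zeta_1=\zeta_2$ sits in both $B_{\iota_1}$ and $B_{\iota_2}$, again contradicting pairwise disjointness.

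This argument is essentially immediate once the definitions are unpacked, and I don't expect any real obstacle: the whole point of bundling $\ka$-suitable collections inside $\mathcal{B}$, together with the index bound $\al\leq \ka$, is precisely to guarantee that any shadow base of $\mathcal{B}$ sees the $B_\iota$'s as downward-closed under $\pi_x$, and disjointness of the $B_\iota$'s then does the rest.
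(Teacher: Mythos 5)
Your proof is correct and follows essentially the same route as the paper's: pick witnesses of intersection with two distinct $B_\iota$'s, pull back along $\pi_x$, use coherence of $\la x,\pi_x,\al\ra$ with the later $\la B_\iota,\psi_\iota,\ka\ra$ (which applies since $\al\leq\ka$) to conclude an initial $\pi_x$-segment lies inside one $B_\iota$, and contradict pairwise disjointness. The paper streamlines slightly by noting at the outset that the two witnesses are distinct (since the $B_\iota$'s are disjoint), which dispenses with your $\mu_1=\mu_2$ case, but the substance is identical.
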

\begin{proof} Suppose otherwise, and fix $\la x,\pi_x,\al\ra\in\mathcal{B}$ as well as distinct $\iota_0,\iota_1\in I$ such that $x\cap B_{\iota_0}\neq\es$ and $x\cap B_{\iota_1}\neq\es$. Let $\zeta\in x\cap B_{\iota_0}$ and $\eta\in x\cap B_{\iota_1}$. Then $\zeta\neq\eta$, since $B_{\iota_0}\cap B_{\iota_1}=\es$. Define $\zeta_0:=\pi_x^{-1}(\zeta)$ and $\eta_0:=\pi_x^{-1}(\eta)$. Since $\zeta_0\neq\eta_0$, we suppose, by relabeling if necessary, that $\zeta_0<\eta_0$. By definition of an enrichment, we know that $\la B_{\iota_1},\psi_{\iota_1},\ka\ra$ and $\la x,\pi_x,\al\ra$ cohere, and since $\al\leq\ka$ and $\eta\in B_{\iota_1}\cap x$, we conclude that $\pi_x[\eta_0]\seq B_{\iota_1}$. However, $\zeta_0<\eta_0$, and so $\zeta=\pi_x(\zeta_0)\in\pi_x[\eta_0]$, which implies that $\zeta\in B_{\iota_1}$. This contradicts the fact that $B_{\iota_0}\cap B_{\iota_1}=\es$.
\end{proof}

The next lemma gives a sufficient condition for creating an enrichment. It will be used as part of the base case of the main lemma which constructs preassignments (see Lemma \ref{lemma:main}), in which the $\ka$-suitable collection is isomorphic to a product (i.e., there is no overlap between the various copies of $\ps_\ka$ in the $\ka$-suitable collection).

\begin{lemma}\label{lemma:productcase} Suppose that $\mathcal{S}=\lb\la B_\iota,\psi_\iota\ra:\iota\in I\rb$ is $\ka$-suitable with respect to an enriched partition product $(\R,\mathcal{B})$ and that the elements of $\lb B_\iota:\iota\in I\rb$ are pairwise disjoint. Suppose further that $\R^*$ is a partition product with domain $X^*$ and that $\mathcal{S}^*=\lb\la B^*_\eta,\psi^*_\eta\ra:\eta\in I^*\rb$ is $\ka$-suitable with respect to $\R^*$. Finally, set $\hat{X}:=\bigcup_{\iota\in I}B_\iota$, and suppose that there exists a finite collection $\mathcal{F}$ of $(\mathcal{S},\mathcal{S}^*)$-suitable embeddings of $\R\res\hat{X}$ into $\R^*$  such that for any distinct $\iota_0,\iota_1\in I$ and any (not necessarily distinct) $\pi_0,\pi_1\in\mathcal{F}$, 
$$
B^*_{h_{\pi_0}(\iota_0)}\cap B^*_{h_{\pi_1}(\iota_1)}=\es,
$$
where for each $\pi\in\mathcal{F}$, $h_\pi$ is the associated 
map from Definition \ref{def:suitablemap}. Then
$$
\mathcal{B}^*:=\lb\la b^*(\xi),\pi^*_\xi,\ind^*(\xi)\ra:\xi\in X^*\rb\cup\bigcup_{\pi\in\mathcal{F}}\pi\left(\mathcal{B}\res\hat{X}\right)\cup\lb\la B^*_\eta,\psi^*_\eta,\ka\ra:\eta\in I^*\rb
$$
is an enrichment of $\R^*$ and $\mathcal{S}^*$ is $\ka$-suitable with respect to $(\R^*,\mathcal{B}^*)$.
\end{lemma}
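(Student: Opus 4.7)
The plan is to show $\mathcal{B}^*$ is an enrichment of $\R^*$, from which the $\ka$-suitability of $\mathcal{S}^*$ with respect to $(\R^*,\mathcal{B}^*)$ is immediate. The latter follows because the third piece of $\mathcal{B}^*$ is by construction $\lb\la B^*_\eta,\psi^*_\eta,\ka\ra:\eta\in I^*\rb$, and inspection of each of the three pieces shows that every index appearing in $\mathcal{B}^*$ is $\leq\ka$: the first piece has indices $<\ka$ since $\R^*$ is based upon $\underline{\ps}\res\ka$, the middle union has indices $\leq\ka$ by the $\ka$-suitability of $\mathcal{S}$ with respect to $(\R,\mathcal{B})$, and the third piece has index exactly $\ka$. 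Fullness of $\mathcal{B}^*$ is witnessed by the first piece, and each triple is an $\R^*$-shadow base: trivially so for the first and third pieces, and for the middle union, Lemma \ref{lemma:suitablecohere} yields that $\hat X$ coheres with $(\R,\mathcal{B})$, Lemma \ref{lemma:restrictenrichment} then gives that $(\R\res\hat X,\mathcal{B}\res\hat X)$ is an enriched partition product, and Lemma \ref{lemma:mapsshadowbases} guarantees that $\pi$-images of its elements are $\R^*$-shadow bases for every $\pi\in\mathcal{F}$.

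The core of the proof is the coherence of $\mathcal{B}^*$. Coherence among the triples in the first and third pieces, in all combinations, is just the $\ka$-suitability of $\mathcal{S}^*$ with respect to $\R^*$, together with the fact that $\R^*$ is itself a partition product. The essential observation for handling the middle union is that, by Lemma \ref{lemma:uhh}, every triple $\la x',\pi_{x'},\al\ra\in\mathcal{B}\res\hat X$ is contained in a unique $B_\iota$; and since each $\pi\in\mathcal{F}$ is $(\mathcal{S},\mathcal{S}^*)$-suitable and thus satisfies $\pi\res B_\iota=\psi^*_{h_\pi(\iota)}\circ\psi_\iota^{-1}$, the $\pi$-image of the triple is contained in $B^*_{h_\pi(\iota)}$, and $\pi(\la B_\iota,\psi_\iota,\ka\ra)$ is exactly the third-set triple $\la B^*_{h_\pi(\iota)},\psi^*_{h_\pi(\iota)},\ka\ra$. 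Since coherence is entirely a set-theoretic condition on the underlying bijections and coherent-collapse sets, it transfers through the injection $\pi$; thus the coherence of $\la x',\pi_{x'},\al\ra$ with $\la B_\iota,\psi_\iota,\ka\ra$ in $(\R,\mathcal{B})$ pushes forward to coherence in $\R^*$ between any middle-union triple and its corresponding third-set triple.

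With this in hand, coherence between a middle-union triple $s$ and a first- or third-set triple $t$ follows from Lemma \ref{lemma:cohere} applied with $u=\la B^*_{h_\pi(\iota)},\psi^*_{h_\pi(\iota)},\ka\ra$: one has $s\cap t\seq s\seq B^*_{h_\pi(\iota)}$, all indices are $\leq\ka$, $s$ coheres with $u$ by the previous paragraph, and $t$ coheres with $u$ either because $\R^*$ is a partition product (if $t$ is first-set) or by the $\ka$-suitability of $\mathcal{S}^*$ (if $t$ is third-set). For two middle-union triples $s_0,s_1$ induced by $\pi_0,\pi_1\in\mathcal{F}$ from $\mathcal{B}\res\hat X$-triples sitting in $B_{\iota_0},B_{\iota_1}$ respectively, if $\iota_0\neq\iota_1$ then the disjointness hypothesis on $\mathcal{F}$ forces $s_0\cap s_1=\es$ and coherence is vacuous; if $\iota_0=\iota_1$ and $h_{\pi_0}(\iota_0)=h_{\pi_1}(\iota_0)$, a single application of Lemma \ref{lemma:cohere} with $u=\la B^*_{h_{\pi_0}(\iota_0)},\psi^*_{h_{\pi_0}(\iota_0)},\ka\ra$ suffices. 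The main obstacle is the remaining case $\iota_0=\iota_1=\iota$ with $h_{\pi_0}(\iota)\neq h_{\pi_1}(\iota)$, where $B^*_{h_{\pi_0}(\iota)}$ and $B^*_{h_{\pi_1}(\iota)}$ may overlap nontrivially; I handle it by applying Lemma \ref{lemma:cohere} twice, writing $u_j:=\la B^*_{h_{\pi_j}(\iota)},\psi^*_{h_{\pi_j}(\iota)},\ka\ra$: first to the triple $s_1,u_0,u_1$ (using coherence of $u_0,u_1$ from $\ka$-suitability of $\mathcal{S}^*$) to deduce that $s_1$ coheres with $u_0$, and then to $s_0,s_1,u_0$ to conclude coherence of $s_0$ and $s_1$.
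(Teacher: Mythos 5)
Your proof is correct, and it takes a genuinely different route from the paper's in the hardest part. For coherence of a middle-union triple with a first- or third-set triple, you and the paper both route through Lemma \ref{lemma:cohere} with pivot $u=\la B^*_{h_\pi(\iota)},\psi^*_{h_\pi(\iota)},\ka\ra$. But for coherence of \emph{two} middle-union triples $s_0=\pi_0(\la x,\pi_x,\al\ra)$, $s_1=\pi_1(\la y,\pi_y,\be\ra)$ with sources in the same $B_\iota$, the paper does \emph{not} use Lemma \ref{lemma:cohere}: it instead sets $a$ to be the largest initial segment of $\la x,\pi_x,\al\ra$ on which $\pi_0$ and $\pi_1$ agree, invokes Lemma \ref{lemma:meow}, and derives a contradiction with the maximality of $a$ via Corollary \ref{cor:sbsameindex} and the coherence of $\la x,\pi_x,\al\ra$ with $\la B_\iota,\psi_\iota,\ka\ra$. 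You instead apply Lemma \ref{lemma:cohere} twice, first to $(s_1,u_0,u_1)$ to conclude $s_1$ coheres with $u_0$, then to $(s_0,s_1,u_0)$; the inclusions $s_0\seq B^*_{h_{\pi_0}(\iota)}$, $s_1\seq B^*_{h_{\pi_1}(\iota)}$ make the hypothesis ``$x\cap y\seq z$'' automatic in both applications, and the index bound $\leq\ka$ comes from $\ka$-suitability of $\mathcal{S}$ with respect to $(\R,\mathcal{B})$. Your argument is more uniform — the entire coherence check reduces to iterated applications of Lemma \ref{lemma:cohere}, avoiding the maximality argument and making Lemma \ref{lemma:meow} and Corollary \ref{cor:sbsameindex} superfluous for this lemma. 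The trade-off is that the paper's route via Lemma \ref{lemma:meow} gives a slightly more explicit picture of \emph{where} the two images overlap (namely, exactly on the image of $a$), which could be informative elsewhere, but for the purpose of verifying coherence your two-step Lemma \ref{lemma:cohere} argument is shorter and cleaner.
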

\begin{proof} We will first show that $\bigcup_{\pi\in\mathcal{F}} \pi\left(\mathcal{B}\res\hat{X}\right)$ is a coherent collection of $\R^*$-shadow bases. Since each $\pi\in\mathcal{F}$ is an embedding of $\R\res\hat{X}$ into $\R^*$, Lemma \ref{lemma:mapsshadowbases} implies that this is a set of $\R^*$-shadow bases. Thus we check coherence. 

Fix $\pi_0,\pi_1\in\mathcal{F}$ and $\la x,\pi_x,\al\ra,\la y,\pi_y,\be\ra\in\mathcal{B}\res\hat{X}$, and assume, by relabeling if necessary, that $\al\leq\be$. We show that $\la x^*,\pi_{x^*},\al\ra$ and $\la y^*,\pi_{y^*},\be\ra$ cohere, where $x^*:=\pi_0[x]$ and $\pi_{x^*}:=\pi_0\circ\pi_x$, and where $y^*:=\pi_1[y]$, $\pi_{y^*}:=\pi_1\circ\pi_y$. By Lemma \ref{lemma:uhh}, and since $x$ and $y$ are subsets of $\hat{X}$, we may fix $\iota_0,\iota_1\in I$ such that $x=x\cap\hat{X}=x\cap B_{\iota_0}$ and $y=y\cap\hat{X}=y\cap B_{\iota_1}$. There are two cases. 

First suppose that $\iota_0\neq\iota_1$. Then we must have that $x^*\cap y^*=\es$. To see this, observe that
$$
x^*=\pi_0[x]=\pi_0[x\cap B_{\iota_0}]\seq B^*_{h_{\pi_0}(\iota_0)},
$$
and
$$
y^*=\pi_1[y]=\pi_1[y\cap B_{\iota_1}]\seq B^*_{h_{\pi_1}(\iota_1)}.
$$
Therefore $x^*\cap y^*=\es$, as $B^*_{h_{\pi_0}(\iota_0)}\cap B^*_{h_{\pi_1}(\iota_1)}=\es$, by assumption. We thus trivially have the coherence of $\la x^*,\pi_{x^*},\al\ra$ and $\la y^*,\pi_{y^*},\be\ra$ in this case. 

On the other hand, suppose that $\iota:=\iota_0=\iota_1$. Define $a\seq x$ to be the largest initial segment (see Definition \ref{def:extends}) of $\la x,\pi_x,\al\ra$ on which $\pi_0$ and $\pi_1$ agree, and set $a^*:=\pi_0[a]=\pi_1[a]$. In order to see that $\la x^*,\pi_{x^*},\al\ra$ and $\la y^*,\pi_{y^*},\be\ra$ cohere, it suffices, in light of Lemma \ref{lemma:meow}, to show that $\pi_0[x\bsl a]$ is disjoint from $\pi_1[y\bsl a]$. Towards this end, fix some $\zeta^*\in x^*\cap y^*$, and suppose for a contradiction that $\zeta^*\notin a^*$. Define $\mu_x:=\pi_{x^*}^{-1}(\zeta^*)$, and observe that $\mu_x$ is greater than the ordinal $\pi_x^{-1}[a]$, since $\zeta^*\notin a^*$. Using the abbreviation $\eta_i:=h_{\pi_i}(\iota)$, for $i\in\lb 0,1\rb$, we see that $\zeta^*\in B^*_{\eta_0}\cap B^*_{\eta_1}$, as $x^*=\pi_0[x\cap B_\iota]\seq B^*_{\eta_0}$, and as $y^*=\pi_1[y\cap B_\iota]\seq B^*_{\eta_1}$. Set $\zeta_0:=(\psi^*_{\eta_0})^{-1}(\zeta^*)$. Since the $\R^*$-shadow bases $\la B^*_{\eta_0},\psi^*_{\eta_0},\ka\ra$ and $\la B^*_{\eta_1},\psi^*_{\eta_1},\ka\ra$ cohere, Corollary \ref{cor:sbsameindex} implies that $\psi^*_{\eta_0}\res(\zeta_0+1)=\psi^*_{\eta_1}\res(\zeta_0+1)$.

Now we observe that
$$
\pi_{x^*}(\mu_x)=\pi_0(\pi_x(\mu_x))=\zeta^*=\psi_{\eta_0}^*(\zeta_0)=\pi_0(\psi_\iota(\zeta_0)),
$$
and therefore $\pi_x(\mu_x)=\psi_\iota(\zeta_0)$. Let us call this ordinal $\zeta$. Since $\zeta\in B_\iota\cap x$, the coherence of $\la x,\pi_x,\al\ra$ with $\la B_\iota,\psi_\iota,\ka\ra$ and the fact that $\al\leq\ka$ imply that 
$$
\pi_x[\mu_x+1]\seq\psi_\iota[\zeta_0+1].
$$
As noted above, $\psi^*_{\eta_0}\res(\zeta_0+1)=\psi^*_{\eta_1}\res(\zeta_0+1)$, 
and since $\psi^*_{\eta_i}=\pi_i\circ \psi_\iota$ by Definition \ref{def:suitablemap}, $\pi_0$ and $\pi_1$ agree on $\psi_\iota[\zeta_0+1]$. In particular, they agree on $\pi_x[\mu_x+1]$. Thus $\pi_x[\mu_x+1]$ is an initial segment of $\la x,\pi_x,\al\ra$ on which $\pi_0$ and $\pi_1$ agree. Since $\zeta=\pi_x(\mu_x)\notin a$, this contradicts the maximality of $a$.

At this point, we have shown that $\bigcup_{\pi\in\mathcal{F}}\pi\left(\mathcal{B}\res\hat{X}\right)$ is a coherent collection of $\R^*$-shadow bases. We introduce the abbreviation  
$$
\mathcal{B}^*_0:=\lb\la b^*(\xi),\pi^*_\xi,\ind^*(\xi)\ra:\xi\in X^*\rb\cup\lb\la B^*_\eta,\psi^*_\eta,\ka\ra:\eta\in I^*\rb.
$$ 
We know that $\mathcal{B}^*_0$ is a coherent set of $\R^*$-shadow bases, by the definition of $\ka$-suitability. Therefore, to finish showing that $\mathcal{B}^*$ is an enrichment of $\R^*$, we now check that if $\la y,\pi_y,\be\ra\in\mathcal{B}^*_0$, $\pi\in\mathcal{F}$, and $\la x,\pi_x,\al\ra\in\mathcal{B}\res\hat{X}$, then $\la y,\pi_y,\be\ra$ and $\la x^*,\pi_{x^*},\al\ra$ cohere, where $x^*:=\pi[x]$ and $\pi_{x^*}=\pi\circ\pi_x$. By Lemma \ref{lemma:uhh}, let $\iota\in I$ be such that $x=x\cap\hat{X}=x\cap B_\iota$. Then $x^*=\pi[x]=\pi[x\cap B_\iota]\seq B^*_{h_\pi(\iota)}$. Now $\la x,\pi_x,\al\ra$ and $\la B_\iota,\psi_\iota,\ka\ra$ cohere, and moreover,  $\pi$ isomorphs $\R\res B_\iota$ onto $\R^*\res B^*_{h_\pi(\iota)}$ and satisfies that $\psi^*_{h_\pi(\iota)}=\pi\circ\psi_\iota$. It is straightforward to see from this that $\la x^*,\pi_{x^*},\al\ra$ and $\la B^*_{h_\pi(\iota)},\psi^*_{h_\pi(\iota)},\ka\ra$ cohere. However, $\la y,\pi_y,\be\ra$ and $\la B^*_{h_\pi(\iota)},\psi^*_{h_\pi(\iota)},\ka\ra$ also cohere, by definition of $\ka$-suitability. Since $\al,\be\leq\ka$, Lemma \ref{lemma:cohere} therefore implies that $\la y,\pi_y,\be\ra$ and $\la x^*,\pi_{x^*},\al\ra$ cohere, which is what we wanted to show.
\end{proof}

We note that the final case in the proof of Lemma \ref{lemma:productcase} makes use of Lemma \ref{lemma:cohere}, and through this the first essential use of the Collapse condition (\ref{coherent-collapse-comp}) of Definition  \ref{coherent-collapse}.

\subsection{What Suffices: Finitely Generated Partition Products}

Given a (possibly partial) 2-coloring $\chi$ on $\om_1$ and a function $f$ from $\om_1$ into $\lb 0,1\rb$, we use $\Q(\chi,f)$ to denote the poset to decompose $\om_1$ into countably-many $\chi$-homogeneous sets which respect the function $f$. More precisely, a condition is a finite partial function $q$ with $\dom(q)\seq\om$ such that for each $n\in\dom(q)$, $q(n)$ is a finite subset of $\om_1$ on which $f$ is constant, say with value $i$, and $q(n)$ is $\chi$-homogeneous with color $i$, meaning that if $x,y\in q(n)$ and $\langle x,y\rangle\in \dom(\chi)$, then $\chi(x,y)=i$. The ordering is $q_1\leq q_0$ iff $\dom(q_0)\seq\dom(q_1)$, and for each $n\in\dom(q_0)$, $q_0(n)\seq q_1(n)$. This forcing was introduced and extensively analyzed in \cite{ARS}.

Following \cite{ARS}, we refer to any such $f$ as a \emph{preassignment of colors}. Our main goal in this section is to come up with a $\ps_\ka$-name $\dot{f}$ for a particularly nice preassignment of colors for $\dot{\chi}_\ka$, in the following sense:

\begin{proposition}\label{prop:name} There is a $\ps_\ka$-name $\dot{f}$ for a preassignment of colors so that for any partition product $\R$ based upon $\underline{\ps}\res\ka$ and $\underline{\dot{\Q}}\res\ka$, any generic $G$ for $\R$, and any finite collection $\lb\la B_\iota,\psi_\iota\ra:\iota\in I\rb$ which is $\ka$-suitable with respect to $\R$, the poset 
$$
\prod_{\iota\in I}\Q\left(\dot{\chi}_\ka\left[\psi_\iota^{-1}(G\res B_\iota)\right],\dot{f}\left[\psi_\iota^{-1}(G\res B_\iota)\right]\right)
$$ 
is c.c.c.\ in $V[G]$.
\end{proposition}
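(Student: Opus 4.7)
The plan is to build $\dot{f}$ by an $\om_1$-long transfinite recursion in $V$ (where $\ch$ holds by Assumption 4.1(a)), using an ARS-style diagonalization strengthened to handle all finite $\ka$-suitable products simultaneously. The guiding idea is that since $\dot{\chi}_\ka$ names an open coloring, any would-be uncountable antichain in the product
$$
\prod_{\iota\in I}\Q\left(\dot{\chi}_\ka\left[\psi_\iota^{-1}(G\res B_\iota)\right],\dot{f}\left[\psi_\iota^{-1}(G\res B_\iota)\right]\right)
$$
becomes, after a $\De$-system refinement, a sequence of incompatibilities witnessed by a closed set in a finite product of the second countable topology generated by $\dot{S}_\ka$. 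We build $\dot{f}$ so that the committed values eventually escape all such closed sets.

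First I would reduce the space of witnesses to be enumerated. Using the Grafting Lemma, given any $\R$ and $\ka$-suitable collection $\lb\la B_\iota,\psi_\iota\ra:\iota\in I\rb$, the product in the statement can be realized as a tail factor of a single enriched partition product $\R^*$ based on $\underline{\ps}\res(\ka+1)$ and $\underline{\dot{\Q}}\res(\ka+1)$ (with $\dot{\Q}_\ka$ being the name $\Q(\dot{\chi}_\ka,\dot{f})$ we are constructing), where the various copies of $\Q(\dot{\chi}_\ka,\dot{f})$ occur as iterands at new coordinates of $\ind$-value $\ka$. Under Assumption 4.1(a), together with the definability and countable-closure results for shadow bases in Corollary \ref{cor:definable} and Remark \ref{remark:definable}, any such finitely generated $\R^*$ has an isomorphic representative inside a fixed countably-closed $M\prec H(\om_3)$ of size $\aleph_1$, so the isomorphism types of the posets we must simultaneously make c.c.c. can be enumerated in order type $\om_1$.

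Next, fix a bookkeeping enumeration of order type $\om_1$ of all pairs consisting of such a representative $\R^*$ together with an $\R^*$-name for a candidate uncountable antichain in the associated product. At stage $\al$ of the recursion I would commit $\dot{f}(\al)$ to some $i_\al\in\lb 0,1\rb$ chosen so that the candidate antichain currently being processed must, after the commitment, contain two compatible elements. Following \cite{ARS}, after $\De$-system refinement and using the openness of $\dot{\chi}_\ka$, the obstruction is encoded by a closed subset of a finite product of $\om_1$ in the $\dot{S}_\ka$-topology; by second countability and Hausdorffness there is always a value $i_\al$ dodging all countably many such closed sets previously set aside, and this gives two conditions in the antichain that can be amalgamated.

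The main obstacle is handling the interaction between distinct copies $B_{\iota_0},B_{\iota_1}$ in the $\ka$-suitable collection whose ranges overlap. On the shared portion, which by Lemma \ref{lemma:initialsegment} is an initial segment of both copies, the preassignment must respect both copies simultaneously, while on the disjoint tails it can behave independently. A candidate antichain in the product can involve ordinals from the shared region together with their distinct images in the disjoint tails, and the diagonalization must kill antichains uniformly across the copies. What makes this feasible is precisely the countable closure in Definition \ref{coherent-collapse}(\ref{coherent-collapse-closure}) together with the uniform collapsing system $\vec{\vp}$: these bound the number of isomorphism types realizable in $M$ by $\aleph_1$ and thereby allow the ARS diagonalization, despite the extra bookkeeping demanded by \emph{arbitrary} finite $\ka$-suitable collections, to complete in $\om_1$ stages.
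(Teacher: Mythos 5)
Your counting-and-diagonalize framework agrees broadly with the paper's organization (the paper does fix a countably-closed $M\prec H(\om_3)$ of size $\aleph_1$ and enumerate finitely generated partition products via Lemma \ref{lemma:M}), but the heart of the argument is missing, and the step you take for granted is precisely the one that requires the machinery of Lemmas \ref{lemma:main}--\ref{lemma:alltogether}.

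The key gap is in the sentence ``At stage $\al$ of the recursion I would commit $\dot{f}(\al)$ to some $i_\al\in\lb 0,1\rb$ chosen so that the candidate antichain currently being processed must, after the commitment, contain two compatible elements.'' A fixed bit $i_\al$ cannot work. If $\dot{f}(\al)$ always evaluates to $i_\al$, then $\dot{f}$ is essentially a ground-model function, and the interpretations $\dot{f}\bigl[\psi_\iota^{-1}(G\res B_\iota)\bigr](\al)$ agree for every $\iota\in I$. But the whole point of a symmetric name is that these interpretations must be allowed to differ on the portions of $\om_1$ where the filters $\psi_\iota^{-1}(G\res B_\iota)$ diverge, and the diagonalization needs this freedom when two factors of the product contribute incompatibly to a candidate $\De$-system. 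This is why the paper constructs $\dot{f}(\nu_k)$ as a nontrivial $\ps_\ka$-name (a ``canonical color name'' mapping a maximal antichain of $\ps_\ka$ into $\lb 0,1\rb$), not as a bit. Moreover, even granting a name, the extension step is not a free choice: the same name is being evaluated simultaneously at every $\iota$, and the candidate antichain is a name over the entire ambient partition product, not over $\ps_\ka$. The paper's resolution is the dichotomy of Assumption \ref{ass2} and Corollary \ref{cor:dogs} --- if assigning $0$ fails for every extension $p^*\leq p$, then assigning $1$ to $p$ succeeds --- and proving that dichotomy is where the double induction on $\htt(\bar{\mathcal{S}})$ and $|\bar{\mathcal{S}}|$, the Grafting Lemma, and the manufactured ``color-$0$ counterexamples'' all come in. Your proposal invokes the counting results (which bound \emph{how many} obstructions there are) as if they also explain \emph{why} each obstruction can be dodged; they do not, and no mechanism is offered in their place.

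A secondary issue: you propose to realize the product as ``a tail factor of a single enriched partition product $\R^*$ based on $\underline{\ps}\res(\ka+1)$ and $\underline{\dot{\Q}}\res(\ka+1)$ (with $\dot{\Q}_\ka$ being the name $\Q(\dot{\chi}_\ka,\dot{f})$ we are constructing).'' This is circular: $\dot{\Q}_\ka$ presupposes $\dot{f}$. The paper sidesteps this by introducing finitely generated partition products based on $\underline{\ps}\res\ka$ (Definition \ref{def:fg}) together with $\ka$-suitable collections that record the \emph{would-be} coordinates where copies of $\dot{\Q}_\ka$ are to be placed, without actually forming the $(\ka{+}1)$-alphabet iteration. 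This is not merely notational; it is what makes the counting in Lemma \ref{lemma:M} available before $\dot{f}$ exists.

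Finally, you identify the overlap phenomenon between $B_{\iota_0}$ and $B_{\iota_1}$ as ``the main obstacle'' but offer as resolution only that coherence and $\vec{\vp}$ bound the isomorphism types by $\aleph_1$. That is orthogonal to the obstacle. The obstacle is handled in Lemma \ref{lemma:main} by the inductive descent on height: when two shadow bases coincide below $\de$ and then diverge, one drops $B_{\iota_0}$, applies the induction hypothesis to the remaining collection, and then grafts back copies of the tail $\psi_{\iota_0}[\rho_\ka\bsl\de]$ over each of the embeddings produced, lowering the height of the resulting $\ka$-suitable collection below $\de$. Nothing in your proposal plays the role of this descent.

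In short, the reductions you sketch (counting, slicing, closed-set reformulation) align with Section 4, but the central diagonalization --- why the one-condition extensions of the color names can always be completed across overlapping products --- is exactly Section 5 of the paper, and your proposal asserts rather than proves it.
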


\begin{remark}  Observe that in the previous proposition, the same name $\dot{f}$ is interpreted in a variety of ways, namely, by various generics for $\ps_\ka$ added by forcing with $\R$. Moreover, $\dot{f}$ is strong enough that the product of the induced homogeneous set posets is c.c.c. This is what we mean by referring to the name as ``symmetric."
\end{remark}

\begin{corollary}\label{cor:name} Let $\dot{f}_\ka$ be a name witnessing Proposition \ref{prop:name}, and set $\dot{\Q}_\ka$ to be the $\ps_\ka$-name $
\Q(\dot{\chi}_\ka,\dot{f}_\ka)$. Then any partition product based upon $\underline{\ps}\res(\ka+1)$ and $\underline{\dot{\Q}}\res(\ka+1)$ is c.c.c.
\end{corollary}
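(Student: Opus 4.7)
The plan is to reduce the corollary to Proposition \ref{prop:name} via the product decomposition provided by Lemma \ref{lemma:products}. Given any partition product $\R^*$ based upon $\underline{\ps}\res(\ka+1)$ and $\underline{\dot{\Q}}\res(\ka+1)$, with domain $X^*$, I would set $B:=\{\xi\in X^*:\ind(\xi)<\ka\}$ and $I:=\{\xi\in X^*:\ind(\xi)=\ka\}$. Lemma \ref{lemma:products} shows $B$ is base-closed and
$$
\R^*\ \cong\ (\R^*\res B)\ast\prod_{\xi\in I}\dot{\Q}_\ka\!\left[\pi_\xi^{-1}(\dot{G}_B\res b(\xi))\right],
$$
the product being taken with finite support in $V[\dot{G}_B]$. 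By Lemma \ref{lemma:baseclosedisgreat}, $\R^*\res B$ is a partition product based upon $\underline{\ps}\res\ka$ and $\underline{\dot{\Q}}\res\ka$, and hence is c.c.c.\ by Assumption~4.1(b). So it suffices to prove that the second-stage product is forced by $\R^*\res B$ to be c.c.c.

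Next I would verify that $\mathcal{S}:=\{\la b(\xi),\pi_\xi\ra:\xi\in I\}$ is $\ka$-suitable with respect to $\R^*\res B$. For each $\xi\in I$, Lemma \ref{lemma:basic} gives $b(\xi)\seq B$ and base-closedness, and $\pi_\xi$ embeds $\ps_\ka$ into $\R^*\res b(\xi)$ by Remark \ref{partition-product-reformulation}. The coherence of the shadow bases $\la b(\xi),\pi_\xi,\ka\ra$ for $\xi\in I$ together with the intrinsic shadow bases $\la b(\xi'),\pi_{\xi'},\ind(\xi')\ra$ for $\xi'\in B$ is simply a restatement of condition (\ref{def:support-3}) of Definition \ref{def:support} applied inside $\R^*$. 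Thus any finite subcollection $\mathcal{S}_0\seq\mathcal{S}$ is $\ka$-suitable with respect to $\R^*\res B$ in the sense of Definition \ref{def:suitable}, and Proposition \ref{prop:name} applies.

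Finally, let $G_B$ be $V$-generic for $\R^*\res B$ and, in $V[G_B]$, write $\Q_\xi:=\Q\bigl(\dot{\chi}_\ka[\pi_\xi^{-1}(G_B\res b(\xi))],\dot{f}_\ka[\pi_\xi^{-1}(G_B\res b(\xi))]\bigr)$ for $\xi\in I$. I would then show that the finite-support product $\prod_{\xi\in I}\Q_\xi$ is c.c.c.\ in $V[G_B]$ by a standard $\De$-system argument. Given an $\om_1$-sequence $\la p_\al:\al<\om_1\ra$ of conditions, their finite supports $\mathrm{supp}(p_\al)\seq I$ may be thinned, by the $\De$-system lemma, to an uncountable subfamily whose supports form a $\De$-system with a common finite root $F\seq I$. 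Any two $p_\al,p_\be$ in this subfamily which are compatible in the finite subproduct $\prod_{\xi\in F}\Q_\xi$ are compatible in the full product, since their supports agree precisely on $F$. Hence if the full product had an uncountable antichain, restriction to $F$ would produce an uncountable antichain in $\prod_{\xi\in F}\Q_\xi$, contradicting Proposition \ref{prop:name} applied to the finite $\ka$-suitable subcollection $\{\la b(\xi),\pi_\xi\ra:\xi\in F\}$. Therefore $\R^*$ is a two-step iteration of c.c.c.\ posets, hence c.c.c. I do not expect any serious obstacle beyond the $\De$-system bridge from finite subproducts to the full finite-support product; the substantive content is already packaged into Lemma \ref{lemma:products} and Proposition \ref{prop:name}.
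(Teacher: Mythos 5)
Your proposal is correct and follows essentially the same route as the paper: decompose $\R^*$ via Lemma \ref{lemma:products} into the first-stage restriction to $B=\{\xi:\ind(\xi)<\ka\}$ (c.c.c.\ by Assumption 4.1(b)) followed by a finite-support second-stage product over $I=\{\xi:\ind(\xi)=\ka\}$, observe that $\{\la b(\xi),\pi_\xi\ra:\xi\in I\}$ is $\ka$-suitable, and invoke Proposition \ref{prop:name} on finite subcollections. The only difference is that you spell out the standard $\De$-system bridge from c.c.c.\ of finite subproducts to c.c.c.\ of the full finite-support product, which the paper leaves implicit with the phrase ``hence the entire product is c.c.c.''
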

\begin{proof}[Proof of Corollary \ref{cor:name}] Let $\R$ be a partition product based upon $\underline{\ps}\res(\ka+1)$ and $\underline{\dot{\Q}}\res(\ka+1)$, and let $X$ be the domain of $\R$. Set $\hat{X}:=\lb\xi\in X:\ind(\xi)<\ka\rb$, and let $I:=\lb\xi\in X:\ind(\xi)=\ka\rb$. By Lemma \ref{lemma:products}, $\R$ is isomorphic to 
$$
(\R\res\hat{X})\ast\prod_{\xi\in I}\dot{\Q}_\ka\left[\pi_\xi^{-1}\left(\dot{G}\res b(\xi)\right)\right],
$$
and $\R\res \hat{X}$ is a partition product based upon $\underline{\ps}\res\ka$ and $\underline{\dot{\Q}}\res\ka$. By Assumption 4.1, $\R\res\hat{X}$ is c.c.c. It is also straightforward to check that $\lb\la b(\xi),\pi_\xi\ra:\xi\in I\rb$ is $\ka$-suitable, by the definition of $\R$ as a partition product based upon $\underline{\ps}\res(\ka+1)$ and $\underline{\dot{\Q}}\res(\ka+1)$. Finally, from Proposition \ref{prop:name}, we know each finitely-supported subproduct of
$$
\prod_{\xi\in I}\dot{\Q}_\ka\left[\pi_\xi^{-1}\left(G\res b(\xi)\right)\right]
$$ 
is c.c.c.\ in $V[G\res \hat{X}]$, and hence the entire product is c.c.c. Since $\R\res\hat{X}$ is c.c.c.\ in $V$, this finishes the proof.
\end{proof}

As mentioned in the introductory remarks for this section, we will prove Proposition \ref{prop:name} by working backwards through a series of reductions; the final proof of Proposition \ref{prop:name} occurs in Subsection 5.2. We first want to see what happens if a finite product $\prod_{l<m}\Q(\chi_l,f_l)$ is \emph{not} c.c.c., where each $\chi_l$ is an open coloring on $\om_1$ with respect to some second countable, Hausdorff topology $\tau_l$ on $\om_1$ and $f_l:\om_1\lra\lb 0,1\rb$ is an arbitrary preassignment. The specific reductions at this stage are very similar to those of \cite{ARS}.

Thus fix a sequence $\la \tau_l:l<m\ra$ of second countable, Hausdorff topologies on $\om_1$ with respective open colorings $\la\chi_l:l<m\ra$ and preassignments $\la f_l:l<m\ra$. Recalling the notational remarks at the end of the introduction, let us define $\tau:=\biguplus \tau_l$, a topology on $X:=\biguplus_{l<m}\om_1$, as well as  $f:=\biguplus f_l$ and $\chi:=\biguplus\chi_l$. So, for example, if $x\in X$, then $f(x)=f_l(x)$, where $l$ is unique s.t. $x$ is in the $l$th copy of $\om_1$, and if $x,y\in X$ then $\chi(x,y)$ is defined iff $x$ and $y$ are distinct and belong to the same copy of $\om_1$, say the $l$th, and in this case, $\chi(x,y)=\chi_l(x,y)$. With this notation, we may view a condition in the product $\prod_{l<m}\Q(\chi_l,f_l)$ as a condition in ${\mathbb Q}(\chi,f)$. Note that $\chi$ is partial, and this is the only reason we allowed partial colorings in the definition of ${\mathbb Q}(\chi,f)$. 

Now suppose that $\prod_{l<m}\Q(\chi_l,f_l)$ has an uncountable antichain. Then we claim that there exists an $n<\om$, an uncountable subset $A$ of $X^n$ and a closed (in $X^n$) set $F\supseteq A$ so that 
\begin{enumerate}
\item the function $\langle x(0),\dots,x(n-1)\rangle\mapsto \langle f(x(0)),\dots,f(x(n-1))\rangle$ is constant on $A$, say with value $d\in 2^n$. Abusing notion we also denote this function by $f$;
\item no two tuples in $A$ have any elements in common; 
\item for every distinct $x,y\in F$, there exists some $i<n$ so that $\chi(x(i),y(i))$ is defined and $\chi(x(i),y(i))\neq d(i)$.
\end{enumerate}
To see that this is true, take an antichain of size $\aleph_1$ in the product $\prod_{l<m}\Q(\chi_l,f_l)$, and first thin it to assume that for each $l,k$ all conditions contribute the same number of elements to the $k$th homogeneous set for $\chi_l$. Now viewing the elements in the antichain as sequences arranging the members according to the coloring and homogeneous set they contribute to, call the resulting set $A$. Let $n$ be the length of each sequence in $A$. We further thin $A$ to secure (1). Next, thin $A$ to become a $\De$-system, and note that by taking $n$ to be minimal, we secure (2). Now observe that, for each $x\in A$, if $i<j<n$ and $x(i)$ and $x(j)$ are part of the same homogeneous set for the same coloring $\chi_l,$ say with color $c$, then as $\chi_l$ is an open coloring, there exists a pair of open sets $U_{i,j}\times V_{i,j}$ in $\tau_i\times \tau_j$ such that 
$$
\la x(i),x(j)\ra\in U_{i,j}\times V_{i,j}\seq\chi^{-1}_l(\lb c\rb).
$$ 
With this $x$ still fixed, by intersecting at most finitely-many open sets around each $x(i)$, we may remove the dependence on coordinates $j\neq i$, and thereby obtain for each $i$, an open set $U_i$ around $x(i)$ witnessing the values of $\chi$. In particular, for any $i<j<n$ such that $x(i)$ and $x(j)$ are in the same homogeneous set for the same coloring, say $\chi_l$, we have 
$$
\la x(i),x(j)\ra\in U_i\times U_j\seq\chi_l^{-1}(\lb c\rb),
$$
where $c=\chi_l(x(i),x(j))$. By using basic open sets, of which there are only countably-many, we may thin $A$ to assume that the sequence of open sets $\la U_i:i<n\ra$ is the same for all $x\in A$. As a result of fixing these open sets, and since $A$ is an antichain, we see that (3) holds for the elements of $A$. Since $\chi$ is an open coloring, (3) also hold for $F$, the closure of $A$ in $X^n$.

\begin{remark}\label{remark:blerg} The conditions in the previous paragraph are equivalent to the existence of $n<\om$, $d\in 2^n$, and a closed set $F\seq X^n$ so that (i) for any distinct $x,y\in F$, $\chi(x(i),y(i))$ is defined for all $i<n$, and for some $i<n$, $\chi(x(i),y(i))\neq d(i)$; and (ii) for every countable 
$z\subseteq X$, there exists 
$x\in F\cap (X\bsl z)^n$ so that $f\circ x=d$. Indeed, it is immediate that (1)-(3) give (i) and (ii), and for the other direction, iterate (ii) to obtain the uncountable set $A$.
\end{remark}

Any $F$ as in Remark \ref{remark:blerg} is a closed subset of a second countable space, and so $F$ is coded by a real. Thus if $\R$ is a partition product as in the statement of Proposition \ref{prop:name}, then any $\R$-name $\dot{F}$ for such a closed set will only involve conditions intersecting countably-many support coordinates, since $\R$, by Assumption 4.1, is c.c.c. This motivates the following definition and subsequent remark.

\begin{definition}\label{def:fg} A partition product $\R$ with domain $X$, say, based upon $\underline{\ps}\res\ka$ and $\dot{\underline{\Q}}\res\ka$ is said to be \emph{finitely generated} if there is a finite, $\ka$-suitable collection $\lb\la B_\iota,\psi_\iota\ra:\iota\in I\rb$, and a countable $Z\seq X$ disjoint from $\bigcup_{\iota\in I}B_\iota$, such that 
$$
X=Z\cup\bigcup_{\xi\in Z} b(\xi)\cup\bigcup_{\iota\in I} B_\iota.
$$
In this case, we will refer to $Z$ as the \emph{auxiliary part}.
\end{definition}

Note that in the above definition, it poses no loss of generality to assume that $Z$ is disjoint from $\bigcup_\iota B_\iota$, since the $B_\iota$ are base-closed.

\begin{remark}\label{remark:fg} Assuming the conclusion of Proposition \ref{prop:name} fails for a partition product ${\mathbb R}$, the discussion above produces the objects in Remark \ref{remark:blerg}. In fact it produces these objects for a finitely generated partition product which is a regular suborder of $\R$. To see this, simply take the suborder generated by the finitely-many $B_\iota$'s and a countable auxiliary part large enough to give the real coding the closed set $F$. 
\end{remark}

We further remark that Definition \ref{def:fg} refers implicitly to the following objects: index$_\de$, $\base_\de$, and $\vp_{\de,\mu}$ for $\de<\ka$, as well as $\ps_\ka$, index$_\ka$, $\base_\ka$, and $\vp_{\ka,\mu}$, which are needed in order to define a suitable collection.

We now define the notion of the ``height" of the finite suitable collection, a natural measure of how much the images of $\ps_\ka$ agree in the partition product. As mentioned in the paragraph after Definition \ref{def:suitable}, the height will be one component in our later inductive construction of preassignments (see Lemma \ref{lemma:main}).

\begin{definition}\label{def:height} Let $(\R,\cal{B})$ be a finitely-generated partition product with $\ka$-suitable collection $\mathcal{S}=\lb\la B_\iota,\psi_\iota\ra:\iota\in I\rb$. For each $\iota_0\neq\iota_1$ both in $I$, we define the \emph{height} of $\la B_{\iota_0},B_{\iota_1}\ra$, denoted $\htt(B_{\iota_0},B_{\iota_1})$, to be equal to the ordinal $\psi_{\iota_0}^{-1}[B_{\iota_0}\cap B_{\iota_1}]$; this also equals $\psi_{\iota_1}^{-1}[B_{\iota_0}\cap B_{\iota_1}]$ by Remark \ref{remark:ord}. We define the height of $\cal{S}$, denoted $\htt(\cal{S})$, to be the ordinal
$$
\max\left\{\htt(B_{\iota_0},B_{\iota_1}):\iota_0,\iota_1\in I\we\iota_0\neq\iota_1\right\}.
$$
\end{definition}

We close this subsection with the following straightforward lemma.

\begin{lemma}\label{lemma:graftfg} Let $(\ps,\mathcal{B})$, $(\R,\mathcal{D})$, and $\si$ be as in Lemma \ref{lemma:graft}. Suppose that both $(\ps,\mathcal{B})$ and $(\R,\mathcal{D})$ are finitely generated and that $(\R^*,\mathcal{D}^*)$ is the extension of $(\R,\mathcal{D})$ by grafting $(\ps,\mathcal{B})$ over $\si$. Then $(\R^*,\mathcal{D}^*)$ is also finitely generated.
\end{lemma}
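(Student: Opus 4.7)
The plan is to assemble the witnesses for $\R^*$ by taking the union of the witnesses for $\R$ and the $\si^*$-images of the witnesses for $\ps$. Specifically, let $\{\la B^\R_\iota,\psi^\R_\iota\ra:\iota\in I^\R\}$ with countable auxiliary part $Z^\R\seq Y$ witness that $(\R,\mathcal{D})$ is finitely generated, and let $\{\la B^\ps_\iota,\psi^\ps_\iota\ra:\iota\in I^\ps\}$ with countable auxiliary part $Z^\ps\seq X$ witness the same for $(\ps,\mathcal{B})$. Define the candidate $\ka$-suitable collection for $\R^*$ to be
$$
\mathcal{S}^*:=\{\la B^\R_\iota,\psi^\R_\iota\ra:\iota\in I^\R\}\cup\{\la \si^*[B^\ps_\iota],\si^*\circ\psi^\ps_\iota\ra:\iota\in I^\ps\},
$$
and let the candidate auxiliary part be
$$
Z^*:=\bigl(Z^\R\cup\si^*[Z^\ps]\bigr)\setminus \bigcup\{B:\la B,\psi\ra\in \mathcal{S}^*\},
$$
which is countable and by construction disjoint from all the bases in $\mathcal{S}^*$.

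To verify that $\mathcal{S}^*$ is $\ka$-suitable with respect to $(\R^*,\mathcal{D}^*)$, I would first note that each triple $\la B^\R_\iota,\psi^\R_\iota,\ka\ra$ lies in $\mathcal{D}\seq\mathcal{D}^*$ by the $\ka$-suitability of the first collection in $(\R,\mathcal{D})$, while each triple $\la \si^*[B^\ps_\iota],\si^*\circ\psi^\ps_\iota,\ka\ra$ lies in $\si^*(\mathcal{B})\seq\mathcal{D}^*$ by the construction of $\mathcal{D}^*$ in the Grafting Lemma (Lemma \ref{lemma:graft}), using that $\si^*$ embeds $(\ps,\mathcal{B})$ into $(\R^*,\mathcal{D}^*)$. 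The remaining requirement that every $\la x,\pi_x,\al\ra\in\mathcal{D}^*$ satisfies $\al\leq\ka$ follows from the corresponding assertions for $\mathcal{D}$ and $\mathcal{B}$, since $\si^*$ preserves indices.

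For the covering condition, I would use that $(\R,\mathcal{D})$ finitely generated gives
$$
Y=Z^\R\cup\bigcup_{\xi\in Z^\R}b_\R(\xi)\cup\bigcup_{\iota\in I^\R}B^\R_\iota,
$$
and that $(\ps,\mathcal{B})$ finitely generated gives the analogous equality on $X$. Applying $\si^*$ to the latter and using that $\si^*$ embeds $(\ps,\mathcal{B})$ into $(\R^*,\mathcal{D}^*)$ (so $b^*(\si^*(\eta))=\si^*[b_\ps(\eta)]$ for every $\eta\in X$), we get
$$
\si^*[X]=\si^*[Z^\ps]\cup\bigcup_{\eta\in Z^\ps}b^*(\si^*(\eta))\cup\bigcup_{\iota\in I^\ps}\si^*[B^\ps_\iota].
$$
Since $Y^*=Y\cup\si^*[X\setminus\hat{X}]\seq Y\cup\si^*[X]$, combining the two displayed covers shows that $Y^*$ equals $Z^*$ together with the $b^*$-images of elements of $Z^*$ (note that $b^*(\xi)=b_\R(\xi)$ for $\xi\in Z^\R$) and the bases in $\mathcal{S}^*$; any coordinates of $Z^\R\cup\si^*[Z^\ps]$ that we removed to form $Z^*$ were already in one of the bases, so the cover is preserved.

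No step here is a real obstacle; the possible subtle point is keeping $Z^*$ disjoint from the bases while still covering $Y^*$, and this is handled automatically because the removed coordinates are absorbed into those bases. The rest is a direct bookkeeping consequence of the definitions of the Grafting Lemma and of $\ka$-suitability.
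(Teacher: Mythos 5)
Your proof is correct and is the natural argument; the paper labels this lemma ``straightforward'' and gives no proof, but the construction you describe --- taking $\mathcal{S}^*$ to be the union of the $\R$-witness collection with the $\si^*$-image of the $\ps$-witness collection, and $Z^*$ to be the union of the two auxiliary parts with the new bases removed --- is evidently what the authors intend. The two verifications you single out are exactly the ones that matter: that $\mathcal{S}^*$ is $\ka$-suitable with respect to $(\R^*,\mathcal{D}^*)$ (which follows because $\mathcal{D}^*=\mathcal{D}\cup\si^*(\mathcal{B})$ and $\si^*$ preserves indices), and that the covering identity survives the removal of coordinates from $\tilde{Z}:=Z^\R\cup\si^*[Z^\ps]$ (which works because the bases are base-closed in $\R^*$, so any $b^*(\xi)$ for a removed $\xi\in B$ is already contained in $B$; this is the same observation the paper makes immediately after Definition \ref{def:fg}).
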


\subsection{Counting Finitely Generated Partition Products}
\label{sec.counting}

The main reason we prefer to work with finitely generated partition products is that we are now able, using the machinery developed up to this point, to carry out counting arguments. More specifically, we can show that there are only $\aleph_1$-many isomorphism types of such partition products.

\begin{lemma}\label{lemma:M0} Let $M\prec H(\om_3)$ be countably closed with $\underline{\ps}\res(\ka+1)$, $\dot{\underline{\Q}}\res\ka\in M$. 
If $\R$ is a finitely generated partition product based upon $\underline{\ps}\res\ka$ and $\dot{\underline{\Q}}\res\ka$, then $\R$ is isomorphic to a partition product which has domain an ordinal $\rho$ below $M\cap\om_2$.
\end{lemma}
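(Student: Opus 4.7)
The plan is to exhibit an acceptable rearrangement $\si$ of $\R$ whose image is an ordinal $\rho^*\le\om_1\cdot\om$, and then to argue that $\om_1\cdot\om$ lies strictly below $M\cap\om_2$, using elementarity and countable closure of $M$. By Lemma \ref{lemma:RL} (together with Remark \ref{rearrangements-partition-products}, which transfers ``is a partition product'' across an acceptable rearrangement), $\si(\R)$ is then the required partition product with ordinal domain $\rho^*$. To set up, I verify $|X|\le\om_1$: since $\R$ is finitely generated (Definition \ref{def:fg}), $X=Z\cup\bigcup_{\xi\in Z}b(\xi)\cup\bigcup_{\iota\in I}B_\iota$ with $Z$ countable and $I$ finite, and for each $\xi\in X$ we have $\ind(\xi)<\ka$, so Assumption 4.1(a) gives $|b(\xi)|=|\rho_{\ind(\xi)}|\le\om_1$, while $|B_\iota|=|\rho_\ka|\le\om_1$ by the inductive bound on $\rho_\ka$.

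Next, enumerate $I=\{\iota_1,\dots,\iota_k\}$ and $Z=\{z_0,z_1,\dots\}$ and define a $\seq$-increasing chain of base-closed sets: $B_0:=\es$, $B_j:=\bigcup_{i\le j}B_{\iota_i}$ for $1\le j\le k$, and $B_{k+n+1}:=B_{k+n}\cup\{z_n\}\cup b(z_n)$ for $n\ge 0$; each $B_n$ is base-closed by Lemma \ref{lemma:basic}, $\bigcup_n B_n=X$, and $|B_{n+1}\bsl B_n|\le\om_1$. Apply Corollary \ref{cor:omegashuffle} to this chain, choosing each $\pi_n$ to be an acceptable rearrangement of $\R$ whose restriction to $B_{n+1}\bsl B_n$ has order type at most $\om_1$; this can be arranged concretely by composing with $\psi_\iota^{-1}$ on the suitable pieces and with $\pi_{z_m}^{-1}$ on the auxiliary pieces, using coherence to patch consistently on overlaps. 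The resulting rearrangement $\si:X\lra\rho^*$ has each $\si[B_n]$ an ordinal and $\si[B_{n+1}]\bsl\si[B_n]$ an ordinal interval of length at most $\om_1$, so by induction $\si[B_n]\le\om_1\cdot n$ and thus $\rho^*=\sup_n\si[B_n]\le\om_1\cdot\om$.

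Finally, $\om_1\seq M$ follows by transfinite induction from the countable closure of $M$ and $M\prec H(\om_3)$: for each $\al<\om_1$, by elementarity $M$ contains a bijection $f:\om\lra\al$, and then $\al=\{f(n):n<\om\}\seq M$ since $\om\seq M$. A similar argument gives $\om_1\cdot\om\seq M$ (writing each $\al<\om_1\cdot\om$ as $\om_1\cdot n+\be$ with $n<\om$ and $\be<\om_1$), and the ordinal $\om_1\cdot\om$ itself lies in $M$ as the supremum of the $\om$-sequence $\la\om_1\cdot n:n<\om\ra\in M$ (the sequence is in $M$ by countable closure, its sup in $M$ by elementarity). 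Since $\om_1\cdot\om<\om_2$, this gives $\om_1\cdot\om<M\cap\om_2$, whence $\rho^*\le\om_1\cdot\om<M\cap\om_2$, as required. The main obstacle I anticipate is the choice of the $\pi_n$'s in Corollary \ref{cor:omegashuffle}: using the identity would allow the order type of $B_{n+1}\bsl B_n$ (as a subset of ordinals) to be any ordinal of cardinality $\om_1$, potentially pushing $\rho^*$ above $M\cap\om_2$; controlling this via the natural rearrangements $\psi_\iota^{-1}$ and $\pi_{z_m}^{-1}$, patched consistently through coherence, is the delicate step, while the rest is routine cardinal arithmetic inside $M$.
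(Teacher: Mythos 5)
Your outline correctly identifies the right tools (the chain of base-closed sets $B_n$, Corollary \ref{cor:omegashuffle}, and the need to bound the resulting ordinal below $M\cap\om_2$), but there is a genuine gap in the central quantitative claim. You assert that one can choose the rearrangements $\pi_n$ so that $\pi_n[B_{n+1}\bsl B_n]$ has \emph{order type} at most $\om_1$, citing composition with $\psi_\iota^{-1}$ on the suitable pieces. But $\psi_\iota^{-1}$ carries $B_\iota$ bijectively onto $\rho_\ka$, so the resulting image has order type $\rho_\ka$, not $\om_1$. The only thing $\rho_\ka<\om_2$ gives you is $|\rho_\ka|\leq\om_1$ — a cardinality bound, not an order-type bound. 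Order type $\rho_\ka$ can exceed $\om_1\cdot\om$ by a lot (e.g.\ $\om_1\cdot\om_1$, $\om_1^{\om}$, or anything below $\om_2$), so your conclusion $\rho^*\leq\om_1\cdot\om$ does not follow, and the arithmetic in your final paragraph, while correct, is applied to an ordinal that has not been bounded. There is no evident way to force the order type down to $\om_1$ or $\om_1\cdot\om$: the linearization must respect the memory partial order $\zeta\in b_\R(\xi)\Rightarrow\si(\zeta)<\si(\xi)$, and in general this already forces order type at least $\rho_\ka$ on each $B_\iota$ block.

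The paper's proof sidesteps the need for a small absolute bound. It takes $\tau_m\res B_m=\psi_m^{-1}$ (for $m<n$) and $\tau_{n+k}\res b(\xi_k)=\pi_k^{-1}$ (for $k<\om$), and is content with the resulting estimate
$$
\rho\;\leq\;\rho_\ka\cdot n\;+\;\sum_{k<\om}(\rho_{\de_k}+1),
$$
which can be much larger than $\om_1\cdot\om$. The point is not that $\rho$ is small in absolute terms, but that this bound is an ordinal \emph{computable inside} $M$: $\rho_\ka\in M$ since $\underline{\ps}\res(\ka+1)\in M$; $n<\om\seq M$; each $\de_k\in C\cap\ka$ and hence $\rho_{\de_k}\in M$; and by countable closure the whole $\om$-sequence $\la\rho_{\de_k}:k<\om\ra$ belongs to $M$, so the ordinal sum is in $M$ by elementarity. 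Since $M\cap\om_2$ is an ordinal, any ordinal of $M$ that is $<\om_2$ is $<M\cap\om_2$. Your proof should be revised to compute the honest bound in terms of $\rho_\ka$ and $\rho_{\de_k}$ and then push it into $M$ this way; the attempt to get below $\om_1\cdot\om$ is both unnecessary and, as written, unjustified.
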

\begin{proof} Fix such an $M$, and let $\R$ be a finitely generated partition product based upon $\underline{\ps}\res\ka$ and $\dot{\underline{\Q}}\res\ka$, say with domain $X$. Let $\lb\la B_m,\psi_m\ra:m<n\rb$ be the $\ka$-suitable collection and $Z$ the auxiliary part, where we assume that $Z$ is disjoint from the union of the $B_m$. Let us enumerate $Z$ as $\la\xi_k:k<\om\ra$ and set $\de_k:=\ind(\xi_k)$ for each $k<\om$. Furthermore, we let $\pi_k$ be the rearrangement of $\ps_{\de_k}$ associated to $\base(\xi_k)$. 

We intend to apply Corollary \ref{cor:omegashuffle}, and so we define a sequence $\la\tau_m:m<\om\ra$ of rearrangements of $\R$ and base-closed subsets $\la D_m:m<\om\ra$ of $X$. For each $m<n$, set $\tau_m$ to be the rearrangement which first shifts the ordinals in $X\bsl B_m$ past $\sup(B_m)$ and then acts as $\psi_m^{-1}$ on $B_m$. For each $m\geq n$, say $m=n+k$, we set $\tau_m$ to be the rearrangement which first shifts the ordinals in $X\bsl (b(\xi_k)\cup\lb\xi_k\rb)$ past $\xi_k$ and then acts as $\pi_k^{-1}$ on $b(\xi_k)$ and sends $\xi_k$ to $\rho_{\de_k}$. We set $D_0:=\es$, $D_{m+1}:=\bigcup_{k\leq m} B_k$ for $m< n$, and $D_{n+1+k}:=D_n\cup\bigcup_{l\leq k}(b(\xi_l)\cup\lb\xi_l\rb)$ for $k<\om$.

By Corollary \ref{cor:omegashuffle}, let $\si$ be a rearrangement of $\R$ so that $\ran(\si)$ is an ordinal $\rho$ and so that for each $m<\om$, $\si[D_m]$ is an ordinal and $\tau_m\circ\si^{-1}$ is order-preserving on $\si[D_{m+1}\bsl D_m]$. We then see that $\rho$ equals $\sum_{m<\om}\text{ot}(\si[D_{m+1}\bsl D_m])$. However, if $m<n$, then $\text{ot}(\si[D_{m+1}\bsl D_m])$ is no larger than $\rho_\kappa$, and if $m\geq n$, then $\text{ot}(\si[D_{m+1}\bsl D_m])$ is no larger than $\rho_{\de_k}+1$, where $m=n+k$. Therefore
$$
\rho=\sum_{m<\om}\text{ot}(\si[D_{m+1}\bsl D_m])\leq\rho_\kappa\cdot n +\sum_{k<\om}(\rho_{\de_k}+1).
$$
By the elementarity and countable closure of $M$, the ordinal on the right hand side is an element of $M\cap\om_2$. Thus $\rho\in M\cap\om_2$ since $M\cap\om_2$ is an ordinal.  
\end{proof}

\begin{lemma}\label{lemma:M} Let $M\prec H(\om_3)$ be countably closed containing $\underline{\ps}\res(\ka+1)$, $\dot{\underline{\Q}}\res\ka$ and $\vec{\vp}$ as members. 
If $\R$ is a finitely generated partition product based upon $\underline{\ps}\res\ka$ and $\dot{\underline{\Q}}\res\ka$, then $\R$ is isomorphic to a partition product which belongs to $M$, as well as the transitive collapse of $M$.
\end{lemma}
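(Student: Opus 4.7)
My plan is to first apply Lemma \ref{lemma:M0} to reduce to a partition product with small ordinal domain, then exploit the canonical form produced there to inductively verify that the base and index functions are built from data already in $M$.

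By Lemma \ref{lemma:M0}, I may replace $\R$ by an isomorphic partition product $\R'$ with domain an ordinal $\rho \in M \cap \om_2$, so $\rho \seq M$. The rearrangement $\si$ producing $\R'$ (as constructed in the proof of Lemma \ref{lemma:M0}) additionally guarantees that each $\si[D_m]$ is an ordinal, so each $\si[B_m \bsl D_m]$ (for $m<n$) and each $\si[(b(\xi_k) \cup \{\xi_k\}) \bsl D_{n+k}]$ (for $k < \om$) is an interval in $\rho$. In particular $\si[B_0] = [0, \rho_\ka)$ and $\psi_0^* := \si \circ \psi_0$ is the identity on $\rho_\ka$, so $\R' \res [0, \rho_\ka) = \ps_\ka \in M$. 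Since $\rho_\ka \leq \rho \in M \cap \om_2$, we also have $\rho_\ka \seq M$.

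I show by induction on $m < n$ that $\psi_m^* := \si \circ \psi_m \in M$. Given $\psi_0^*, \ldots, \psi_{m-1}^* \in M$, applying Corollary \ref{cor:sbsameindex} to the rearranged shadow bases $\la \si[B_{m'}], \psi_{m'}^*, \ka\ra$ and $\la \si[B_m], \psi_m^*, \ka\ra$ (which remain coherent under rearrangement), $\psi_m^*$ must agree with $\psi_{m'}^*$ on the initial segment $\psi_m^{*-1}[\si[B_m \cap B_{m'}]]$, of length $\htt(B_{m'}, B_m) < \rho_\ka \seq M$. On the complement of $\bigcup_{m' < m} \psi_m^{*-1}[\si[B_m \cap B_{m'}]]$ in $\rho_\ka$, $\psi_m^*$ is forced to be the unique order-preserving bijection onto the interval $[\si[D_m], \si[D_{m+1}])$, whose endpoints lie in $\rho \seq M$. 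Hence $\psi_m^* \in M$. Analogously, by induction on $k < \om$, $\pi_k^* := \si \circ \pi_k \in M$: the overlap between $b(\xi_k)$ and each previously established shadow base is handled using Corollary \ref{cor:definable} together with Remark \ref{remark:definable} when indices differ, and Corollary \ref{cor:sbsameindex} when they coincide, invoking that $\vec\vp$, $\underline{\ps}\res(\ka+1)$, and $\underline{\dot{\Q}}\res\ka$ are in $M$ and that each $\de_k < \ka \seq M$.

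Once all $\psi_m^* \in M$ and the countable sequence $\la \pi_k^* : k < \om\ra$ lies in $M$ (by countable closure), the defining equations of Definition \ref{def:support}(2) recover $\base'$ and $\ind'$ from this data together with the alphabet, so $\R' \in M$. Applying the Mostowski collapse $j\colon M \to \bar M$, elementarity yields that $j(\R') \in \bar M$ is a partition product isomorphic to $\R'$, and hence to $\R$. The main subtlety is the induction step for $\pi_k^*$: one must show that the overlap data at each stage lies in $M$ even when $b(\xi_k)$ intersects earlier shadow bases of varying indices, and this is precisely the content of Corollary \ref{cor:definable}.
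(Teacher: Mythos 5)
Your proof is correct, but it takes a genuinely different route from the paper's. The paper's proof, after applying Lemma~\ref{lemma:M0}, does \emph{not} attempt to show that $\R$ (or its rearranged copy) is itself in $M$. Instead it shows only that the overlap \emph{relations} among the rearrangement maps --- e.g.\ the four-place relation ``$\pi_\xi(\mu)=\psi_k(\nu)$'' --- are members of $M$ (via Remark~\ref{remark:definable}, Lemma~\ref{lemma:sborder}, and countable closure), and then invokes elementarity of $M$ to \emph{find} a fresh partition product $\R^*\in M$ with the same domain and the same overlap pattern, constructing the isomorphism by hand. You instead peel more information out of the \emph{proof} of Lemma~\ref{lemma:M0} (the order-preservation conclusion of Corollary~\ref{cor:omegashuffle}, which is not part of Lemma~\ref{lemma:M0}'s statement) to argue that each rearranged bijection $\psi_m^* = \si\circ\psi_m$ and $\pi_k^* = \si\circ\pi_k$ already lies in $M$, whence $\R'=\si(\R)\in M$ directly, with no appeal to an existential use of elementarity. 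Both routes work, and the inductive book-keeping of your argument checks out: the endpoints $\si[D_m]$ and heights $\htt(B_m,B_{m'})$ are ordinals below $\rho$ or $\rho_\ka$, hence in $M$; the overlap sets $\psi_m^{-1}[b(\xi_k)\cap B_m]$ and their analogues are in $M$ by Remark~\ref{remark:definable}; and each new rearrangement map is determined, on its overlap with earlier pieces, by coherence and earlier maps, and on the remainder by order-preservation onto a known interval. Your approach buys a stronger conclusion (the specific rearranged copy is in $M$) at the cost of relying on the internals of an earlier proof; the paper's approach stays modular with respect to Lemma~\ref{lemma:M0}. One small imprecision at the end: rather than saying ``elementarity yields that $j(\R')$ is isomorphic to $\R'$,'' the cleaner statement --- and the one the paper uses --- is that $\R'$ has $M$-cardinality at most $\aleph_1$, so $\R'\seq M$ and $j(\R')=\R'$; no isomorphism is needed for the collapse step.
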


Lemma \ref{lemma:M} is the {\em counting argument} that we mentioned earlier in the paper, and it relies on the Hull and Closure conditions (\ref{coherent-collapse-critical}) and (\ref{coherent-collapse-closure}) of Definition \ref{coherent-collapse}. These conditions come in through the use of Remark \ref{remark:definable} and Lemma \ref{lemma:sborder}.

\begin{proof} Let $M$ be fixed as in the statement of the lemma, and let $\R$ be finitely generated. Let $\lb\la B_k,\psi_k\ra:k<n\rb$ be the $\ka$-suitable collection and $Z$ the auxiliary part associated to $\R$. By Lemma \ref{lemma:M0}, we may assume that $\R$ is a partition product on some ordinal $\rho$ and that $\rho\in M\cap\om_2$. Since $M\cap\om_2$ is an ordinal, $\rho\seq M$. Then $Z\seq M$, and so by the countable closure of $M$, $Z$ is a member of $M$. Hence by the elementarity and countable closure of $M$, setting $\de_\xi:=\ind(\xi)$ for each $\xi\in Z$, the sequence $\la\de_\xi:\xi\in Z\ra$ is in $M$.

Now fix $k<n$ and $\xi\in Z$, and note that by Remark \ref{remark:definable}, since $\de_\xi$ and $\ka$ are in $M$, $\psi_k^{-1}[B_k\cap b(\xi)]$ is in $M$. Next consider the relation in $\mu,\nu$ which holds iff $\pi_\xi(\mu)=\psi_k(\nu)$, and observe that by Lemma \ref{lemma:sborder}, this holds iff $\nu$ is the $\mu$th element of $\psi_k^{-1}[B_k\cap b(\xi)]$. Therefore, this relation is a member of $M$. By the countable closure of $M$, the relation in $\xi,k,\mu,\nu$ which holds iff $\pi_\xi(\mu)=\psi_k(\nu)$ is in $M$ too. Similarly, the relation (in $\xi,\zeta,\mu,\nu$) which holds iff $\pi_\xi(\mu)=\pi_\zeta(\nu)$ and the relation (in $k,l,\mu,\nu$) which holds iff $\psi_k(\mu)=\psi_l(\nu)$ are also in $M$.

We now apply the elementarity of $M$ to find a finitely generated partition product $\R^*$ with domain $\rho$ which has the following properties, where $\base^*$ and $\ind^*$ denote the functions supporting $\R^*$:
\begin{enumerate}
\item $\R^*$ has $\ka$-suitable collection $\lb\la B^*_k,\psi^*_k\ra:k<n\rb$ and auxiliary part $Z$; moreover, for each $\xi\in Z$, $\ind^*(\xi)=\de_\xi$;
\item for each $\mu,\nu<\rho$ and each $\xi,\zeta\in Z$, $\pi_\xi(\mu)=\pi_\zeta(\nu)$ iff $\pi^*_\xi(\mu)=\pi^*_\zeta(\nu)$, and similarly with one of the $\psi_k$ (resp. $\psi^*_k$) replacing one or both of the $\pi_i$ (resp. $\pi^*_i)$.
\end{enumerate}
It is also straightforward to see that $\R^*$ is a member of the transitive collapse of $M$, as it is an iteration of length below $M\cap\om_2$ of posets of size $\leq\aleph_1$, and hence is not moved by the transitive collapse map.

We now define a bijection $\si:\rho\lra\rho$ which will be the rearrangement witnessing that $\R$ and $\R^*$ are isomorphic. Set $\si(\al)=\be$ iff $\al=\be$ are both in $Z$; or for some $\xi\in Z$, $\al=\pi_\xi(\mu)$ and $\be=\pi^*_\xi(\mu)$; or for some $k<n$, $\al=\psi_k(\mu)$ and $\be=\psi^*_k(\mu)$. By (2), we see that $\si$ is well-defined, i.e., there is no conflict when some of these conditions overlap. It is also straightforward to see that $\si$ is an acceptable rearrangement of $\R$ and in fact, $\si\left(\base\right)=\base^*$ and $\si\left(\ind\right)=\ind^*$, so that $\si$ is an isomorphism from $\R$ onto $\R^*$.
\end{proof}

Recall that we are assuming the $\ch$ holds (Assumption 4.1). Thus for the rest of Section 4, we fix a countably closed $M\prec H(\om_3)$ satisfying the conclusion of Lemma \ref{lemma:M} such that $|M|=\aleph_1$. We write $M=\bigcup_{\ga<\om_1}M_\ga$, for a continuous, increasing sequence of elementary, countable $M_\ga$, such that the relevant parameters are in $M_0$. 

\begin{remark}\label{remark:slice} The crucial use of the $\ch$ in the paper is to fix the model $M$. We will use the decomposition $M=\bigcup_{\ga<\om_1}M_\ga$ to partition a tail of $\om_1$ into the slices $[M_\ga\cap\om_1,M_{\ga+1}\cap\om_1)$. We will show that it suffices to define the preassignment one slice at a time, with the values of the preassignment on one slice independent of the others. As Lemma \ref{lemma:suffices} below shows, the preassignment restricted to the slice $[M_\ga\cap\om_1,M_{\ga+1}\cap\om_1)$ only needs to anticipate ``partition product names" which are members of $M_\ga$. This idea that the preassignment need only work in the above slices goes back to Lemma 3.2 of \cite{ARS}. Furthermore, the proof of our Lemma \ref{lemma:annoying} is more or less the same as Lemma 3.2 of  \cite{ARS}; we are simply working in slightly greater generality in order to analyze products of posets rather than just a single poset.
\end{remark}

\subsection{Further Reductions}

In this subsection we make our final reduction in preparation for the basic step construction in Section 5. We formulate a concrete condition on the preassignment name $\dot{f}$ which implies that $\dot{f}$ satisfies Proposition \ref{prop:name}.

We recall that $\dot{S}_\ka$ names a countable basis for a second countable, Hausdorff topology on $\omega_1$ and that $\dot{\chi}_\ka$ names a coloring from $[\om_1]^2$ into $2$ which is continuous with respect to the topology generated by $\dot{S}_\ka$.

\begin{lemma}\label{lemma:annoying} Suppose that $\dot{f}$ is a $\ps_\ka$-name for a function from $\om_1$ into $\lb 0,1\rb$ which satisfies the following: for any finitely generated partition product $\R$, with $\ka$-suitable collection $\lb\la B_\iota,\psi_\iota\ra:\iota\in I\rb$ and auxiliary part $Z$, say, all of which are in $M$; for every $\ga$ sufficiently large so that $\R$, the $\ka$-suitable collection, and $Z$ are in $M_\ga$; for any $\R$-name $\dot{F}$ in $M_\ga$ for a set of $n$-tuples in $X:=\biguplus_{\iota\in I}\om_1$, which is closed in $\left(\biguplus_\iota\dot{S}_\ka[\psi_\iota^{-1}(\dot{G}\res B_\iota)]\right)^n$; for any generic $G$ for $\R$; and for any $x$ with
$$
x\in\dot{F}[G]\cap(M_{\ga+1}[G]\bsl M_\ga[G])^n,
$$
there exist pairwise distinct tuples $y,y'$ in $\dot{F}[G]\cap M_\ga[G]$ so that for every $i<n$ and $\iota\in I$, if $x(i)$ is in the $\iota$-th copy of $\om_1$, then so are $y(i)$ and $y'(i)$, and 
$$
\dot{\chi}_\ka[\psi_\iota^{-1}(G\res B_\iota)](y(i),y'(i))=\dot{f}[\psi_\iota^{-1}(G\res B_\iota)](x(i)).
$$
Then $\dot{f}$ satisfies Proposition \ref{prop:name}.
\end{lemma}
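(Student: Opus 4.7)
The plan is to argue by contradiction. Suppose $\dot{f}$ satisfies the hypothesis but fails Proposition \ref{prop:name}: there exist a partition product $\R$, a generic $G$ for $\R$, and a finite $\ka$-suitable collection $\mathcal{S}=\lb\la B_\iota,\psi_\iota\ra:\iota\in I\rb$ such that the product $\mathcal{Q}_I:=\prod_{\iota\in I}\Q\bigl(\dot{\chi}_\ka[\psi_\iota^{-1}(G\res B_\iota)],\dot{f}[\psi_\iota^{-1}(G\res B_\iota)]\bigr)$ has an uncountable antichain in $V[G]$. I would first execute the reductions spelled out before Remark \ref{remark:blerg}: a $\De$-system and pigeon-hole thinning of the antichain, combined with second countability and openness of the colorings, yield $n<\om$, $d\in 2^n$, an uncountable $A\seq X^n$ (where $X:=\biguplus_\iota\om_1$), and a closed set $F\supseteq A$ in $X^n$ satisfying (i) any two distinct members of $F$ differ from $d$ in color at some coordinate, and (ii) for every countable $z\seq F$ some $x\in F\bsl z$ has $f\circ x=d$. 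By Remark \ref{remark:fg} I may replace $\R$ by the regular suborder it generates over the $B_\iota$'s together with a countable auxiliary part rich enough to code $F$, yielding a finitely generated partition product. Lemma \ref{lemma:M} places this $\R$ together with $\mathcal{S}$ and the auxiliary part inside $M$, and since $\R$ is c.c.c.\ by Assumption 4.1(b) and $F$ is coded by a real, countable closure of $M$ puts an $\R$-name $\dot F$ for $F$ in $M$ as well.

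Next I would fix $\ga<\om_1$ so that $\R$, $\mathcal{S}$, the auxiliary part, and $\dot F$ all belong to $M_\ga$. The decisive step is then to produce some tuple
$$
x\in\dot F[G]\cap(M_{\ga+1}[G]\bsl M_\ga[G])^n\text{ with }f\circ x=d.
$$
Given such $x$, applying the hypothesis of the lemma to this data delivers distinct $y,y'\in\dot F[G]\cap M_\ga[G]$ so that, writing $\iota(i)$ for the unique $\iota\in I$ with $x(i)$ in the $\iota(i)$-th copy of $\om_1$,
$$
\dot{\chi}_\ka\bigl[\psi_{\iota(i)}^{-1}(G\res B_{\iota(i)})\bigr](y(i),y'(i))=\dot{f}\bigl[\psi_{\iota(i)}^{-1}(G\res B_{\iota(i)})\bigr](x(i))=d(i)
$$
for every $i<n$. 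Thus $y$ and $y'$ are distinct elements of $F$ agreeing with $d$ in color at every coordinate, directly contradicting property (i).

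The main obstacle is locating such an $x$. Since the sequence $\eta\mapsto M_\eta[G]\cap\om_1$ is continuous and increasing, the ``slices'' $[M_\eta[G]\cap\om_1,M_{\eta+1}[G]\cap\om_1)$ partition a tail of $\om_1$, and we need all $n$ coordinates of $x$ to live in the single slice indexed by $\ga$ while $f\circ x=d$ still holds. I would address this by supplementing the initial thinning with a further round of pigeon-hole applied to the ``slice-pattern'' function $x\mapsto\la s(x(i)):i<n\ra$ on members of $A$, exploiting the fact that property (2) of the reduction (disjoint ranges for distinct tuples in $A$) constrains each fixed slice-pattern to be realized by only countably many tuples. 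Combined with iterating property (ii) against countable sets of the form $\dot F[G]\cap(M_\eta[G])^n$ for cofinally many $\eta$, this produces an unbounded collection of $\ga$'s above the anchoring level of $\dot F$ for which $A\cap(M_{\ga+1}[G]\bsl M_\ga[G])^n$ is non-empty. Any tuple in this set lies in $A$, hence automatically satisfies $f\circ x=d$, completing the contradiction.
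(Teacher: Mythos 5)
Your approach has a genuine gap at the ``decisive step.'' You want to find a tuple $x\in\dot F[G]$ with \emph{all} $n$ coordinates lying in a single slice $[M_\ga[G]\cap\om_1,M_{\ga+1}[G]\cap\om_1)$, which is what the hypothesis of the lemma requires (it demands $x\in(M_{\ga+1}[G]\bsl M_\ga[G])^n$). But no amount of pigeon-hole thinning on slice-patterns can guarantee this, because such an $x$ may simply not exist: it is perfectly consistent that \emph{every} tuple in $A$ has its coordinates scattered across distinct slices (e.g.\ for $n=2$, an uncountable disjoint family $\{(\al,\be_\al):\al<\om_1\}$ where $\be_\al$ is chosen far above the slice of $\al$). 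Your observation that a fixed slice-pattern is realized by only countably many tuples is correct, but it cuts the wrong way: it shows the slice-patterns range over uncountably many values, not that any of them is the constant pattern. Iterating (ii) against $\dot F[G]\cap (M_\eta[G])^n$ only yields a tuple with at least one coordinate outside $M_\eta[G]$, which is far from having all of them in a single slice.

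The paper's proof deliberately avoids this issue. It takes an $x$ all of whose coordinates merely lie \emph{outside} $M_\ga[G]$ (which is achievable since the disjointness of $A$ makes the offending tuples countable), lets $\ga_0<\dots<\ga_{l-1}$ enumerate the finitely many levels whose slice contains some coordinate of $x$, splits $x$ into subsequences $x_k$ accordingly, and runs a downward recursion on $k$. At each stage the single-slice hypothesis is applied only to the subsequence $x_k$ (which \emph{does} live in one slice) together with a suitably projected closed set $F_k$, producing matching pairs and an existential formula $\vp_k$ whose parameters drop into $M_{\ga_k}[G]$. Working back up from $\vp_0$ then produces the two tuples $x_L,x_R\in F$ that contradict property (i). In short: the lemma's hypothesis is applied once per slice, not once globally, and this multi-slice recursion is precisely what your proposal is missing and what cannot be bypassed by thinning.
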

\begin{proof} Let $\dot{f}$ be as in the statement of the lemma, and suppose that $\dot{f}$ failed to satisfy Proposition \ref{prop:name}. By Remarks \ref{remark:blerg} and \ref{remark:fg} there exist a finitely generated partition product $\R$, a condition $p\in\R$, an integer $n<\om$, a sequence $d\in 2^n$, and an $\R$-name for a closed set $\dot{F}$ of $n$-tuples such that $p$ forces that these objects satisfy Remark \ref{remark:blerg}.  We may assume that $\R\in M$ by Lemma \ref{lemma:M}. Since $M$ is countably closed and contains $\R$, and since $\R$ is c.c.c.\ (by Assumption 4.1), we know that the name $\dot{F}$ belongs to $M$ too. Thus we may find some $\ga<\om_1$ such that $\dot{F}$ and all other relevant objects are in $M_{\ga}$. 

Now let $G$ be a generic for $\R$ containing the condition $ p$. Let $S:=\biguplus_\iota\dot{S}_\ka[\psi_\iota^{-1}(G\res B_\iota)]$, let $f:=\biguplus_\iota\dot{f}[\psi_\iota^{-1}(G\res B_\iota)]$, and let $\chi:=\biguplus_\iota\dot{\chi}_\ka[\psi_\iota^{-1}(G\res B_\iota)]$. By (ii) of Remark \ref{remark:blerg}, we may find some $x\in F\cap (X\bsl M_{\ga}[G])^n$, so that $f\circ x=d$, where $F:=\dot{F}[G]$. We now want to consider how the models $\la M_\be:\ga\leq\be<\om_1\ra$ separate the elements of $x$, and then we will apply the assumptions of the lemma to each $\be\in[\ga,\om_1)$ such that $M_{\be+1}[G]\bsl M_\be[G]$ contains an element of $x$. Indeed, consider the finite set $a$ of $\be\in[\ga,\om_1)$ such that $x$ contains at least one element in $M_{\be+1}[G]\bsl M_\be[G]$, and let $\la\ga_k:k<l\ra$ be the increasing enumeration of $a$. Further, let $x_k$, for each $k<l$, be the 
restriction of $x$ to the set $e_k$ of $i<n$ so that $x(i)$ is inside $M_{\ga_k+1}[G]\bsl M_{\ga_k}[G]$. 

We now work downwards from $l$ to define a sequence of formulas $\la\vp_k:k\leq l\ra$. We will maintain as recursion hypotheses that  if 
$0\leq k <l$, then (i) $\vp_{k+1}(x_0,\dots,x_k)$ is satisfied, and that (ii) the parameters of $\vp_{k+1}$ are in $M_{\ga_0}[G]$. 
Recall $e_k=\dom(x_k)$.
Let $\vp_l(u_0,\dots,u_{l-1})$ state that 
$u_0\cup\dots\cup u_{l-1}\in F\land \bigwedge_{k<l}\dom(u_k)=e_k$; then (i) and (ii) are satisfied. Now suppose that 
$0\leq k <l$ and that $\vp_{k+1}$ is defined. Let $F_k$ be the closure of the set of all tuples $u$ such that $\vp_{k+1}(x_0,\dots,x_{k-1},u)$ is satisfied. By (ii) and the fact that 
$x_0\cup\dots\cup x_{k-1}\in M_{\ga_k}[G]$, we see that $F_k$ is in $M_{\ga_k}[G]$. Furthermore, $x_k\in F_k$. Therefore, by the assumptions of the lemma, we may find pairwise distinct tuples $v_{k,L},v_{k,R}$ in $M_{\ga_k}[G]\cap F_k$, with the same domain as $x_k$, such that for every 
$i\in\dom(x_k)$ and $\iota\in I$, if $x_k(i)$ is in the $\iota$-th copy of $\om_1$, then so are $v_{k,L}(i)$ and $v_{k,R}(i)$, and 
$$
\dot{\chi}_\ka[\psi_\iota^{-1}(G\res B_\iota)](v_{k,L}(i),v_{k,R}(i))=\dot{f}[\psi_\iota^{-1}(G\res B_\iota)](x_k(i)).
$$ 
For each such $i$, fix a pair of disjoint, basic open sets $U_i,V_i$ from $\dot{S}_\ka[\psi_\iota^{-1}(G\res B_\iota)]$ witnessing this coloring statement. By definition of $F_k$, we may find two further tuples $u_{k,L},u_{k,R}$ such that for each $Z\in\lb L,R\rb$, $\vp_{k+1}(x_0,\dots,x_{k-1},u_{k,Z})$ is satisfied, and such that the pair $\la u_{k,L}(i),u_{k,R}(i)\ra$ is in $U_i\times V_i$. Now define $\vp_k(u_0,\dots,u_{k-1})$ to be the following formula:
$$
\exists w_{k,L},w_{k,R}\;\left(\bigwedge_{Z\in\lb L,R\rb} \vp_{k+1}(u_0,\dots,u_{k-1},w_{k,Z})\we \bigwedge_i\Big( \la w_{k,L}(i),w_{k,R}(i)\ra\in U_i\times V_i \Big)\right).
$$
Then (i) is satisfied, and since the only additional parameters are the basic open sets $U_i$ and $V_i$, (ii) is also satisfied.

This completes the construction of the sequence $\la\vp_k:k\leq l\ra$. Now using the fact that the sentence $\vp_0$ is true and has only parameters in $M_{\ga_0}$, we may work our way upwards through the sequence $\vp_0,\vp_1,\dots,\vp_l$ in order to find two tuples $x_L,x_R$ of the same length as $x$ such that $x_L,x_R\in F$, and such that for each $i<n$, $\la x_L(i),x_R(i)\ra\in U_i\times V_i$. In particular, for each $i<n$,
$$
\dot{\chi}_\ka[\psi_\iota^{-1}(G\res B_\iota)](x_L(i),x_R(i))=\dot{f}[\psi_\iota^{-1}(G\res B_\iota)](x(i)),
$$
where $\iota$ is such that $x(i)$ is in the $\iota$-th copy of $\om_1$. However, recalling Remark \ref{remark:blerg} and the assumptions about the condition $p$, this contradicts the fact that $f\circ x=d$, and that there is some $i<n$ so that $\chi(x_L(i),x_R(i))\neq d(i)$.
\end{proof}

The following lemma gives a nice streamlining of the previous one and 
uses any collection $\dot{U}$ of $n$-tuples in $\om_1$ in the hypothesis, not just collections $\dot{F}$ which are closed in the appropriate topology. The greater generality in the hypothesis here is only apparent, as we can always take closures and obtain, because the colorings are open, the 
general hypothesis from its restriction to closed sets. However, it is technically convenient 
to forget the topology in stating the lemma. Also, as a matter of notation, for each $\ga<\om_1$, we fix an enumeration $\la\nu_{\ga,n}:n<\om\ra$ of the slice $[M_\ga\cap\om_1,M_{\ga+1}\cap\om_1)$.

\begin{lemma}\label{lemma:suffices} Suppose that $\dot{f}$ is a $\ps_\ka$-name for a function from $\om_1$ into $\lb 0,1\rb$ satisfying the following: for any finitely generated partition product $\R$, say with $\ka$-suitable collection $\lb\la B_\iota,\psi_\iota\ra:\iota\in I\rb$ and auxiliary part $Z$, all of which are in $M$; for any $\ga$ sufficiently large such that $M_\ga$ contains $\R$, $\lb\la B_\iota,\psi_\iota\ra:\iota\in I\rb$, and $Z$; for any $l<\om$; for any $\R$-name $\dot{U}$ in $M_\ga$ for a set of $l$-tuples in $\om_1$; and for any generic $G$ for $\R$,  if $\la\nu_{\ga,0},\dots,\nu_{\ga, l-1}\ra\in\dot{U}[G]$, then there exist pairwise distinct $l$-tuples $\vec{\mu},\vec{\mu}'$ in $M_\ga[G]\cap\dot{U}[G]$  so that for all $k<l$ and all $\iota\in I$, 
$$
\dot{\chi}_\ka[\psi_\iota^{-1}(G\res B_\iota)](\mu_k,\mu'_k)=\dot{f}[\psi_\iota^{-1}(G\res B_\iota)](\nu_{\ga,k}).
$$
Then $\dot{f}$ satisfies the assumptions of Lemma \ref{lemma:annoying} and hence the conclusion of Proposition \ref{prop:name}.
\end{lemma}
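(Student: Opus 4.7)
The plan is to reduce the hypothesis of Lemma \ref{lemma:annoying} to that of Lemma \ref{lemma:suffices} by a coding argument that rewrites $n$-tuples in $X^n = (\biguplus_\iota \om_1)^n$ as $l$-tuples in $\om_1$.  Fix $\R$, $\ga$, and $\dot{F}\in M_\ga$ as in the hypothesis of Lemma \ref{lemma:annoying}, and fix $x\in\dot{F}[G]\cap(M_{\ga+1}[G]\bsl M_\ga[G])^n$. Write $x(i)=(\iota_i,\al_i)$, where $\iota_i\in I$ and $\al_i$ is the ordinal in $\om_1$ corresponding to $x(i)$ in the $\iota_i$-th copy. Since $\R$ is c.c.c. (Assumption 4.1(b)) and $M_\ga,M_{\ga+1}$ are elementary in $H(\om_3)$, the usual c.c.c.\ argument gives $M_\ga[G]\cap\om_1=M_\ga\cap\om_1$ and similarly for $M_{\ga+1}$. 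As each $\iota_i\in I\seq M_\ga$, the ordinal $\al_i$ must lie in $[M_\ga\cap\om_1,M_{\ga+1}\cap\om_1)$, so $\al_i=\nu_{\ga,m_i}$ for a unique $m_i<\om$.

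Set $l:=\max_i m_i + 1$. The parameters $l$, $\vec{\iota}=\la\iota_i\ra_{i<n}$, and $\vec{m}=\la m_i\ra_{i<n}$ all belong to $M_\ga$, since the $\ka$-suitable collection (hence $I$) is finite and belongs to $M_\ga$. I then define the $\R$-name
\[
\dot{U} \,:=\, \lb \la\mu_0,\dots,\mu_{l-1}\ra : \la(\iota_i,\mu_{m_i}):i<n\ra\in\dot{F} \rb,
\]
which lies in $M_\ga$. By construction, $\la\nu_{\ga,0},\dots,\nu_{\ga,l-1}\ra\in\dot{U}[G]$ is equivalent to $x\in\dot{F}[G]$, so I may apply the hypothesis of Lemma \ref{lemma:suffices} to obtain pairwise distinct $l$-tuples $\vec{\mu},\vec{\mu}'\in M_\ga[G]\cap\dot{U}[G]$ that satisfy the required coloring equation for every $k<l$ and every $\iota\in I$. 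Decoding, I set $y:=\la(\iota_i,\mu_{m_i}):i<n\ra$ and $y':=\la(\iota_i,\mu'_{m_i}):i<n\ra$; these lie in $\dot{F}[G]\cap M_\ga[G]$, and specializing the coloring equation to $k=m_i$ and $\iota=\iota_i$ yields, using $\al_i=\nu_{\ga,m_i}$, the exact equation demanded by Lemma \ref{lemma:annoying}.

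The main subtlety to address is that $y$ and $y'$ must be distinct. A priori, $\vec{\mu}$ and $\vec{\mu}'$ could agree on every coordinate in $\lb m_0,\dots,m_{n-1}\rb$ and differ only on extraneous coordinates, in which case $y=y'$. The saving grace is that the coloring equation $\dot{\chi}_\ka(\mu_k,\mu'_k)=\dot{f}(\nu_{\ga,k})$ is asserted at every $k<l$, and since $\dot{\chi}_\ka$ is a coloring of unordered pairs, this equation can only hold when $\mu_k\neq\mu'_k$. Hence $\mu_k\neq\mu'_k$ for every $k<l$, giving $y(i)\neq y'(i)$ for each $i<n$ and completing the verification of the hypothesis of Lemma \ref{lemma:annoying}.
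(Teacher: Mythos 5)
Your proof is correct, and it takes a genuinely different (and cleaner) route than the paper's. The paper first reduces Lemma \ref{lemma:annoying} to the special case where $x$ is a bijection from $n$ onto $\biguplus_\iota\{\nu_{\ga,k}:k<m\}$: they pad $x$ to a surjection (modifying $\dot{F}$ to a product $\dot{F}'$), then eliminate repetitions by shifts (modifying $\dot{F}'$ to $\dot{F}''$). With $x$ thus normalized, they define $\dot{U}$ to be the set of $m$-tuples $\vec{\xi}$ whose $|I|$-fold concatenation lies in $\dot{F}$, and form $y,y'$ by concatenating $\vec{\mu},\vec{\mu}'$ with themselves $|I|$ times. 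Your approach skips the normalization entirely: by recording the encoding $i\mapsto(\iota_i,m_i)$ directly (which is available in $M_\ga$ since $I$ is finite and in $M_\ga$ and the $m_i$ are natural numbers), you build $\dot{U}$ to fit $x$ as given, rather than reshaping $x$ to fit a fixed template for $\dot{U}$. This avoids the two-step modification of $\dot{F}$ and the slight bookkeeping awkwardness about the ordering of the concatenation. Both proofs rely on the same essential facts — the ccc of $\R$ forces $M_\ga[G]\cap\om_1 = M_\ga\cap\om_1$, so the coordinates of $x$ lie in the slice $[M_\ga\cap\om_1, M_{\ga+1}\cap\om_1)$ enumerated by $\la\nu_{\ga,k}:k<\om\ra$ — and both apply the hypothesis of the lemma once and decode. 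Your observation that $\mu_k\neq\mu'_k$ is forced by the coloring being defined on unordered pairs is a nice way to secure the pairwise distinctness of $y,y'$ without needing to parse the exact meaning the paper intends by ``pairwise distinct tuples.''
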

\begin{proof} We want to first observe that Lemma \ref{lemma:annoying} follows from its restriction to sequences $z$ which are bijections from some $n<\om$ onto $\biguplus_\iota\lb\nu_{\ga,l}:l<m\rb$, for some $m<\om$. Towards this end, fix $\dot{F}$, $G$, and a tuple $x\in\dot{F}[G]$ as in the statement of Lemma \ref{lemma:annoying}. First, if $x$ is not such a surjection, we may add additional coordinates to $x$ to form a sequence $x'$ which is a surjection onto $\biguplus_\iota\lb\nu_{\ga,l}:l<m\rb$, for some $m<\om$. Then we define the name $\dot{F}'$ as the product of $\dot{F}$ with the requisite, finite number of copies of $\om_1$, so that $x'$ is a member of $\dot{F}'[G]$. Second, if $x'$ contains repetitions, then we make the necessary shifts in $x'$ to eliminate the repetitions and call the resulting sequence $x''$. We then consider the name $\dot{F}''$ of all tuples from $\dot{F}'$ which have the same corresponding shifts in their tuples as $x''$. $\dot{F}''$ still names a closed set and is still an element of $M_\ga$. Thus $x''\in\dot{F}''[G]$, and $x''$ is a bijection from some integer onto $\biguplus_\iota\lb\nu_{\ga,l}:l<m\rb$, for some $m<\om$. By applying the restricted version of Lemma \ref{lemma:annoying} to $x''$ and $\dot{F}''$, we see that the desired result holds for $x$ and $\dot{F}$.

To verify Lemma \ref{lemma:annoying}, fix $\dot{F}$, a generic $G$, and a sequence $x\in\dot{F}[G]$ as in the statement thereof, where we assume that $x$ is a bijection from some $n$ onto $\biguplus_\iota\lb\nu_{\ga,l}:l<m\rb$, for some $m<\om$. Define $\dot{U}$ to be the $\R$-name for the set of all tuples $\vec{\xi}=\la\xi_0,\dots,\xi_{m-1}\ra$ in $\om_1$ such that $\vec{\xi}$ concatenated with itself $|I|$-many times is an element of $\dot{F}$, noting that $\dot{U}$ is still a member of $M_\ga$. Since $x$ is a bijection as described above, $\la\nu_{\ga,0},\dots,\nu_{\ga,m-1}\ra\in\dot{U}[G]$. Now apply the assumptions in the statement of the current lemma to find two pairwise distinct $m$-tuples $\vec{\mu},\vec{\mu}'$ in $M_\ga[G]\cap\dot{U}[G]$ so that for all $l<m$ and $\iota\in I$, 
$$
\dot{\chi}_\ka[\psi_\iota^{-1}(G\res B_\iota)](\mu_l,\mu'_l)=\dot{f}[\psi_\iota^{-1}(G\res B_\iota)](\nu_{\ga,l}).
$$ 
Let $y$ be the $|I|$-fold concatenation of $\vec{\mu}$ with itself, and let $y'$ be defined similarly with respect to $\vec{\mu}'$. Then as $\vec{\mu},\vec{\mu}'\in\dot{U}[G]$, we have $y,y'$ are in $\dot{F}[G]$. And since $\vec{\mu},\vec{\mu}'$ satisfy the appropriate coloring requirements, we have that $y$ and $y'$ satisfy the conclusion of Lemma \ref{lemma:annoying}.
\end{proof}

Lemma \ref{lemma:suffices} gives a sufficient condition for Proposition \ref{prop:name}, and it thus implies that any partition product based upon $\underline{\ps}\res(\ka+1)$ and $\dot{\underline{\Q}}\res(\ka+1)$ is c.c.c. In the next section, we consider how to obtain a $\ps_\ka$-name $\dot{f}$ as in Lemma \ref{lemma:suffices}. 

\section{Constructing Preassignments}

\label{sec.preassign}

In this section, which forms the technical heart of the paper, we show how to obtain a $\ps_\ka$-name $\dot{f}$ satisfying the assumptions of Lemma \ref{lemma:suffices}. In light of Remark \ref{remark:slice} and Lemma \ref{lemma:suffices}, it suffices to define the name $\dot{f}$ separately for each of its restrictions to the slices $[M_\ga\cap\om_1,M_{\ga+1}\cap\om_1)$, and so let $\ga<\om_1$ be fixed for the remainder of this section. To simplify notation, we drop the $\ga$-subscript from the enumeration $\la\nu_{\ga,n}:n<\om\ra$ of $[M_\ga\cap\om_1,M_{\ga+1}\cap\om_1)$, preferring instead to simply write $\la\nu_n:n<\om\ra$. We note that the values of $\dot{f}$ on the countable ordinal $M_0\cap\om_1$ are irrelevant, by Remark \ref{remark:blerg}.

\subsection{Canonical Color Names and the Partition Product Preassignment Property}

In order to define the name $\dot{f}$, we recursively specify the $\ps_\ka$-name equal to $\dot{f}(\nu_k)$, which we call $\dot{a}_k$. Each $\dot{a}_k$ will be a canonical name, which we view as a function from a maximal antichain in $\ps_\ka$ into $\lb 0,1\rb$. We refer to these more specifically as \emph{canonical color names}. By a \emph{partial canonical color name} we mean a function from an antichain in $\ps_\ka$, possibly not maximal, into $\lb 0,1\rb$. When viewing such functions as names $\dot{a}$, we say that $\dot{a}[G]$, where $G$ is generic for $\ps_\ka$, is defined and equal to $i$ if there is some $p\in G$ which belongs to the domain of the function $\dot{a}$ and gets mapped to $i$. The upcoming definition isolates exactly what we need.

\begin{definition}\label{def:p4} Suppose that $\dot{a}_0,\dots,\dot{a}_{l-1}$ are partial canonical color names. We say that they have the \emph{partition product preassignment property at} $\ga$ if for every finitely generated partition product $\R$ with $\ka$-suitable collection $\lb\la B_\iota,\psi_\iota\ra:\iota\in I\rb$, say, all of which are in $M_\ga$; for every $\R$-name $\dot{U}\in M_\ga$ for a collection of $l$-tuples in $\om_1$; and for every generic $G$ for $\R$, the following holds: if $\la\nu_0,\dots,\nu_{l-1}\ra\in\dot{U}[G]$, then there exist two pairwise distinct tuples $\vec{\mu},\vec{\mu}'\in\dot{U}[G]\cap M_\ga[G]$ so that for every $\iota\in I$ and $k<l$, if $\dot{a}_k[\psi_\iota^{-1}(G\res B_\iota)]$ is defined, then 
$$
\dot{\chi}_\ka[\psi_\iota^{-1}(G\res B_\iota)](\mu_k,\mu'_k)=\dot{a}_k[\psi_\iota^{-1}(G\res B_\iota)].
$$
\end{definition}

\begin{definition} In the context of Definition \ref{def:p4}, we say that two sequences $\vec{\mu}$ and $\vec{\mu}'$ of length $l$ \emph{match} $\dot{a}_0,\dots,\dot{a}_{l-1}$ \emph{at }$\iota$\emph{ with respect to }$G$, or \emph{match at $B_\iota$ with respect to }$G$ if for every $k<l$ such that $\dot{a}_k[ \psi_\iota^{-1}(G\res B_\iota)]$ is defined,
$$
\dot{\chi}_\ka[ \psi_\iota^{-1}(G\res B_\iota)](\mu_k,\mu'_k)=\dot{a}_k[ \psi_\iota^{-1}(G\res B_\iota)].
$$
We say that two sequences $\vec{\mu}$ and $\vec{\mu}'$ \emph{match} $\dot{a}_0,\dots,\dot{a}_{l-1}$ \emph{on} $I$ \emph{with respect to }$G$ if for every $\iota\in I$, $\vec{\mu}$ and $\vec{\mu}'$ match $\dot{a}_0,\dots,\dot{a}_{l-1}$ at $\iota$ with respect to $G$. If the filter $G$ is clear from context, we drop the phrase ``with respect to $G$." Furthermore, we will often want to avoid talking about the index set $I$ explicitly, and so we will also say that $\vec{\mu},\vec{\mu}'$ match $\dot{a}_0,\dots,\dot{a}_{l-1}$ on $\mathcal{S}:=\lb\la B_\iota,\psi_\iota\ra:\iota\in I\rb$, if for each $\la B,\psi\ra\in\mathcal{S}$, we have that $\vec{\mu},\vec{\mu}'$ match $\dot{a}_0,\dots,\dot{a}_{l-1}$ at $B$.
\end{definition}

To show that there exists a name $\dot{f}$ satisfying the assumptions of Lemma \ref{lemma:suffices}, and which thereby satisfies Proposition \ref{prop:name}, we recursively construct the sequence $\la\dot{a}_k:k<\om\ra$ in such a way that for each $l<\om$, $\dot{a}_0,\dots,\dot{a}_{l-1}$ have the partition product preassignment property at $\ga$. More precisely, we show that if $\dot{a}_0,\dots,\dot{a}_{l-1}$ are \emph{total} canonical color names with the partition product preassignment property at $\ga$,  then there is a total name $\dot{a}_l$ so that $\dot{a}_0,\dots,\dot{a}_l$ have the partition product preassignment property at $\ga$.

For this in turn it is enough to prove that if $\dot{a}_0,\dots,\dot{a}_{l-1}$ are total canonical color names, $\dot{a}_l$ is a partial canonical color name, $\dot{a}_0,\dots,\dot{a}_l$ have the partition product preassignment property at $\ga$, and $p\in\ps_\ka$ is incompatible with all conditions in the domain of $\dot{a}_l$, then there exist $p^*\leq_{\ps_\ka} p$ and $c\in\lb 0,1\rb$ so that $\dot{a}_0,\dots,\dot{a}_l\cup\lb p^*\mapsto c\rb$ have the partition product preassignment property at $\ga$. By a transfinite iteration of this process we can construct a sequence of names $\dot{a}_l^\xi$ with increasing domains, continuing until we reach a name whose domain is a maximal antichain. This final name is then total.

To prove the ``one condition" extension above, we assume that it fails with $c=0$ and prove that it then holds with $c=1$. Our assumption is the following:

\begin{assumption}\label{ass2} $\dot{a}_0,\dots,\dot{a}_{l-1}$ are total canonical color names, $\dot{a}_l$ is partial, $\dot{a}_0,\dots,\dot{a}_l$ have the partition product preassignment property at $\ga$, $p\in\ps_\ka$ is incompatible with all conditions in $\dom(\dot{a}_l)$, but for every $p^*\leq_{\ps_\ka}p$, $\dot{a}_0,\dots,\dot{a}_l\cup\lb p^*\mapsto 0\rb$ do \emph{not} have the partition product preassignment property at $\ga$.
\end{assumption}

Our goal is to show that $\dot{a}_0,\dots,\dot{a}_l\cup\lb p\mapsto 1\rb$ do have the partition product preassignment property at $\ga$.  The following lemma is the key technical result which allows us to prove that $p\mapsto 1$ works in this sense and thereby continue the construction of the name $\dot{a}_l$. We note that the lemma is stated in terms of enriched partition products; the enrichments are used to propagate the induction hypothesis needed for its proof. After the statement of the lemma, and before its proof, we make a few remarks about the structure of the proof.

\begin{lemma}\label{lemma:main} Let $(\R,\mathcal{B})$ be an enriched partition product with domain $X$ which is finitely generated by a $\ka$-suitable collection $\mathcal{S}=\lb\la  B_\iota,\psi_\iota\ra:\iota\in I\rb$ and auxiliary part $ Z$, all of which belong to $M_\ga$. Let $\bar{p}$ be a condition in $\R$, and let $\vec{\nu}:=\la\nu_0,\dots,\nu_l\ra$. Finally, let $\bar{\mathcal{S}}\seq\mathcal{S}$ be non-empty. Then there exist the following objects: 
\begin{enumerate}
\item[(a)] an enriched partition product $(\R^*,\mathcal{B}^*)$ with domain $X^*$, finitely generated by a $\ka$-suitable collection $\mathcal{S}^*$ and an auxiliary part $ Z^*$, all of which are in $M_\ga$;
\item[(b)] a condition $ p^*\in\R^*$;
\item[(c)] an $\R^*$-name $\dot{U}^*$ in $M_\ga$ for a collection of $l+1$-tuples in $\om_1$;
\item[(d)] a non-empty, finite collection $\mathcal{F}$ in $M_\ga$ of embeddings from $(\R,\mathcal{B})$ into $(\R^*,\mathcal{B}^*)$;
\end{enumerate}
satisfying that for each $\pi\in\mathcal{F}:$
\begin{enumerate}
\item $ p^*\leq_{\R^*}\pi(\bar{p})$;
\end{enumerate}
and also satisfying that $ p^*$ forces the following statements in $\R^*$:

\begin{enumerate}
\item[(2)] $\vec{\nu}\in\dot{U}^*$;
\item[(3)] for any pairwise distinct tuples $\vec{\mu},\vec{\mu}'$ in $\dot{U}^*\cap M_\ga[\dot{G}^*]$, if $\vec{\mu},\vec{\mu}'$ match $\dot{a}_0,\dots,\dot{a}_l$ on $\mathcal{S}^*$, then there is some $\pi\in\mathcal{F}$ such that $\vec{\mu},\vec{\mu}'$ match $\dot{a}_0,\dots,\dot{a}_l\cup\lb p\mapsto 1\rb$ on $\pi\left(\bar{\mathcal{S}}\right)$.
\end{enumerate}
\end{lemma}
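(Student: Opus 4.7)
The plan is to proceed by induction on $\htt(\bar{\mathcal{S}})$, with a secondary induction on $|\bar{\mathcal{S}}|$. The base case, $\htt(\bar{\mathcal{S}}) = 0$ (in which case the $B$'s appearing in $\bar{\mathcal{S}}$ are pairwise disjoint), is where Assumption \ref{ass2} enters. The inductive step reduces higher-height cases to smaller-height or smaller-size ones using the grafting machinery of Section 3.

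For the base case, I would argue by contradiction. Suppose no admissible choice of $(\R^*,\mathcal{B}^*),p^*,\dot{U}^*,\mathcal{F}$ satisfies the conclusion. Then for every way of grafting finitely many copies of $\R$ onto $\R$ itself (subject to the disjointness constraints enforced by Lemma \ref{lemma:productcase}) and every candidate $\dot{U}^*$, some generic produces a matching pair for $\dot{a}_0,\dots,\dot{a}_l$ on $\mathcal{S}^*$ that fails to match $\dot{a}_0,\dots,\dot{a}_l\cup\lb p\mapsto 1\rb$ at every embedded copy of $\bar{\mathcal{S}}$ --- i.e.\ achieves color $0$ on each. Varying the grafting pattern and the embeddings, one would extract a single finitely generated partition product $\R_{p^*}$ and an $\R_{p^*}$-name $\dot{U}_{p^*}$ witnessing that $\dot{a}_0,\dots,\dot{a}_l\cup\lb p^*\mapsto 0\rb$ has the partition product preassignment property at $\ga$ for some $p^*\leq_{\ps_\ka} p$, contradicting Assumption \ref{ass2}. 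Conversely, the positive construction of $\R^*$ then uses the failure witnesses supplied by Assumption \ref{ass2} as additional ``obstacles'' grafted onto $\R$: any matching pair on the extended product must, by design, agree with color $1$ on some embedded $\pi(\bar{\mathcal{S}})$.

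For the inductive step with $\htt(\bar{\mathcal{S}})>0$, select a pair $\la B_0,\psi_0\ra,\la B_1,\psi_1\ra\in\bar{\mathcal{S}}$ realizing the maximum height, so that $A:=B_0\cap B_1$ is a nonempty common initial segment by Corollary \ref{cor:sbsameindex}. Apply the induction hypothesis to $\bar{\mathcal{S}}':=\bar{\mathcal{S}}\setminus\lb\la B_1,\psi_1\ra\rb$ (smaller size, weakly smaller height) to obtain $(\R',\mathcal{B}'),p',\dot{U}',\mathcal{F}'$. Then use the Grafting Lemma \ref{lemma:graft} together with Lemma \ref{restoretrim} to attach to each $\pi'\in\mathcal{F}'$ a further copy of $\R$ corresponding to $\la B_1,\psi_1\ra$, with the new embeddings agreeing with $\pi'$ on the image of $A$ but independent elsewhere, producing the required $(\R^*,\mathcal{B}^*)$ and $\mathcal{F}$. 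Coherence of the resulting shadow bases follows from Lemma \ref{lemma:cohere}, and Lemmas \ref{lemma:M0}--\ref{lemma:M} ensure that the resulting finitely generated partition product is $M_\ga$-definable.

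The main obstacle will be the base case: translating the nonexistence of an admissible $\R^*$ into an actual witness to the partition product preassignment property for some $\lb p^*\mapsto 0\rb$ and thereby contradicting Assumption \ref{ass2}. This requires a delicate choice of $\dot{U}^*$ --- assembled from the failure witnesses for the various $p^*\leq p$ via the reductions of Lemmas \ref{lemma:annoying} and \ref{lemma:suffices} --- together with a grafting pattern that combines these witnesses while respecting the coherence of the enrichment. A secondary subtlety is that condition (3) quantifies over \emph{all} matching pairs simultaneously, so the existential choice of $\pi\in\mathcal{F}$ must be uniform enough to survive the countable approximation by $M_\ga$.
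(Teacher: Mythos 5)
Your high-level structure matches the paper: induction first on $\htt(\bar{\mathcal{S}})$ and then on $|\bar{\mathcal{S}}|$, with the base case built from the color-$0$ counterexamples furnished by Assumption~\ref{ass2} and the inductive step driven by the grafting machinery. The base case you sketch is essentially right once you reach the ``conversely'' sentence, but the initial framing as a proof by contradiction (extracting from the nonexistence of an admissible $\R^*$ a witness to the color-$0$ preassignment property for some $p^*\leq p$) is backwards and not what the paper does. The paper's base case is a direct construction: for each $\iota\in J_p$ one fixes a finitely generated partition product $\R^*_\iota$, condition $p^*_\iota$, and name $\dot{U}^*_\iota$ witnessing the failure of the color-$0$ property for $p_\iota\mapsto 0$; the product of these (enriched via Lemma~\ref{lemma:productcase}, using the pairwise disjointness that comes free when $\htt(\bar{\mathcal{S}})=0$) is the heart of $\R^*$, and the key observation (Claim~\ref{claim:woah}) is that any pair matching $\dot{a}_0,\dots,\dot{a}_l$ on all branches must, at each $\iota\in J_p$, fail to match $\{p_\iota\mapsto 0\}$ at some branch of $\R^*_\iota$, hence achieve color $1$ there; choosing the $\pi_k\in\mathcal{F}$ indexed by that branch selection then gives clause~(3). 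You should also note that for $\iota\in J\setminus J_p$ the ``obstacle'' is trivial (a bare copy of $\ps_\ka$), since decisiveness of $p_\iota$ makes the $p\mapsto 1$ requirement vacuous there.

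The genuine gap is in the inductive step. You apply the induction hypothesis once (to the smaller subcollection $\bar{\mathcal{S}}'$ obtained by deleting one member of a maximal-height pair), then graft copies of the removed branch back in with the new embeddings agreeing on the image of the common initial segment $A$ but disjoint beyond it, and declare this produces the required objects. But that grafting alone provides no mechanism for forcing a matching pair $\vec{\mu},\vec{\mu}'$ to achieve color $1$ at the restored copies of $B_1$. The crucial missing idea is that the restored copies $\{x^*_k\}$ form a $\ka$-suitable collection $\bar{\mathcal{S}}^{**}$ of height strictly less than $\de=\htt(\bar{\mathcal{S}})$ (because the agreement ordinals $\psi_{\iota_0}[\de]$ have been split across the distinct $x_k=\pi\circ\psi_{\iota_1}[\de]$), and the paper then applies the induction hypothesis a \emph{second} time, with $\bar{\mathcal{S}}^{**}$ as the matching core, over the grafted partition product $\R^{***}$. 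It is this second application that secures color-$1$ matching at the restored coordinate; the final embeddings $\tau(\pi,\si)=\si\circ\pi^{***}$ are compositions of one witness $\pi$ from the first IH call (giving matching on $\hat{\mathcal{S}}$) with one witness $\si$ from the second (giving matching on $\bar{\mathcal{S}}^{**}$), and the two are combined via the name $\dot{V}^*=\dot{U}^{****}\cap\bigcap_\si \dot{W}^*[\si^{-1}(\dot{G}^{****})\res X^*]$. Without the second IH application, your inductive step does not close.
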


\begin{remark} In the proof of Lemma \ref{lemma:main}, we will proceed by induction, first on the height (see Definition \ref{def:height}) of the suitable subcollection $\bar{\cal{S}}$ and second on the finite size of $\bar{\cal{S}}$.

Proving the lemma requires that we embed the initial partition product into a much larger one in a variety of ways in order to see that the starting condition in $\R$ does not force the negation of the desired conclusion.  This larger partition product will be created through appeals to induction, quite a bit of grafting, and, at the base, a use of ``color 0 counterexamples''; these are partition products witnessing, for many $p^*\leq_{\ps_\ka}p$, that $\dot{a}_0,\dots,\dot{a}_l\cup\lb p^*\mapsto 0\rb$ do not have the partition product preassignment property at $\ga$.

The use of color 0 counterexamples is most clearly seen in the base case of the induction, namely, where the height is 0. In this case the heart of ${\mathbb R}^*$ is essentially a product of color 0 counterexamples.  The inductive case then combines products of this kind in more elaborate ways. On a first reading, it may be helpful to think of its simplest instance, namely when  $\htt(\bar{\cal{S}})=1$ and when $\bar{\cal{S}}$ has exactly two elements.
\end{remark}

\begin{proof} For the remainder of the proof, fix the objects $(\R,\mathcal{B})$, $X$, $\mathcal{S}$, $Z$,  $\bar{p}$, and $\bar{\mathcal{S}}$ as in the statement of the lemma. We also set $J:=\lb\iota\in I:\la B_\iota,\psi_\iota\ra\in\bar{\mathcal{S}}\rb$. Before we continue, let us introduce the following ad hoc terminology: suppose that $p'\leq_{\ps_\ka}p$, $c\in\lb 0,1\rb$, and $\tilde{p}\in\ps_\ka$. We say that $\tilde{p}$ is \emph{decisive} about the sequence of names $\dot{a}_0,\dots,\dot{a}_l\cup\lb p'\mapsto c\rb$ if for each $k<l$, $\tilde{p}$ extends a unique element of $\dom(\dot{a}_k)$, and if $\tilde{p}$ either extends a unique element of $\dom(\dot{a}_l)\cup\lb p'\rb$ or is incompatible with all conditions therein. Note that any $\tilde{p}$ may be extended to a decisive condition, as $\dom(\dot{a}_k)$ is a maximal antichain in $\ps_\ka$, for each $k<l$.

For each $\iota\in I$ we set $p_\iota$ to be the condition $\psi_{\iota}^{-1}(\bar{p}\res B_\iota)$ in $\ps_\ka$. By extending $\bar{p}$ if necessary, we may assume that for each $\iota\in I$, $p_\iota$ is decisive about $\dot{a}_0,\dots,\dot{a}_l\cup\lb p\mapsto 1\rb$. Let us also define
$$
J_p:=\lb\iota\in J:p_\iota\leq_{\ps_\ka}p\rb,
$$
noting that for each $\iota\in J\bsl J_p$, $p_\iota$ is incompatible with $p$ in $\ps_\ka$, since $p_\iota$ is decisive.

 We will prove by induction that there exist objects as in (a)-(d) satisfying (1)-(3). The induction concerns properties of $\bar{\mathcal{S}}$, which we will refer to as the \emph{matching core} of $\mathcal{S}$, in light of the requirement in (3) that the desired matching occurs on the image of $\bar{\mathcal{S}}$ under some $\pi\in\mathcal{F}$. The induction will be first on the height of $\bar{\cal{S}}$, as defined in Definition \ref{def:height}, and then on the finite size of $\bar{\cal{S}}$. It is helpful to note here that if $B_{\iota_0}\neq B_{\iota_1}$, then the ordinal $\htt(B_{\iota_0},B_{\iota_1})$ is strictly below $\rho_\ka$ and furthermore that 
 $
 \htt(B_{\iota_0},B_{\iota_1})=\max\lb\al<\rho_\ka:\psi_{\iota_0}[\al]=\psi_{\iota_1}[\al]\rb=\sup\lb\xi+1:\psi_{\iota_0}(\xi)=\psi_{\iota_1}(\xi)\rb.
 $ \\

\textbf{Case 1:} $\text{ht}\left(\bar{\mathcal{S}}\right)=0$ (note that this includes as a subcase $|J|=1$). For each $\iota\in J_p$, $p_\iota$ extends $p$ in $\ps_\ka$, and so, by Assumption \ref{ass2}, $\dot{a}_0,\dots,\dot{a}_l\cup\lb p_\iota\mapsto 0\rb$ do not have the partition product preassignment property at $\ga$. For each $\iota\in J_p$, we fix the following objects as witnesses to this:
\begin{enumerate}
\item[$(1)_\iota$] a partition product $\R_\iota^*$, say with domain $X^*_\iota$, which is finitely generated by the $\ka$-suitable collection $\mathcal{S}^*_\iota=\lb\la B^*_{\iota,\eta},\psi^*_{\iota,\eta}\ra: \eta\in I^*(\iota)\rb$ and auxiliary part $Z^*_\iota$, all of which are in $M_\ga$;
\item[$(2)_\iota$] a condition $ p^*_\iota$ in $\R^*_\iota$; 
\item[$(3)_\iota$] an $\R^*_\iota$-name $\dot{U}^*_\iota$ in $M_\ga$ for a set of $l+1$-tuples in $\om_1$;
\end{enumerate}
such that $p^*_\iota$ forces in $\R_\iota^*$ that 
\begin{enumerate}
\item[$(4)_\iota$] $\vec{\nu}\in\dot{U}^*_\iota$, and for any pairwise distinct tuples $\vec{\mu},\vec{\mu}'$ in $\dot{U}^*_\iota\cap M_\ga[\dot{G}^*_\iota]$, $\vec{\mu}$ and $\vec{\mu}'$ do \emph{not} match $\dot{a}_0,\dots,\dot{a}_l\cup\lb p_\iota\mapsto 0\rb$ on $I^*(\iota)$.
\end{enumerate}
For each $\eta\in I^*(\iota)$, let $p_{\iota,\eta}$ denote the $\ps_\ka$-condition $(\psi^*_{\iota,\eta})^{-1}\left(p^*_\iota\res B^*_{\iota,\eta}\right)$, and note that by extending the condition $p^*_\iota$, we may assume that each $p_{\iota,\eta}$ is decisive about $\dot{a}_0,\dots,\dot{a}_l\cup\lb p_\iota\mapsto 0\rb$. It is straightforward to check that since each such $p_{\iota,\eta}$ is decisive and since, by Assumption \ref{ass2}, $\dot{a}_0,\dots,\dot{a}_l$ do have the partition product preassignment property at $\ga$, we must have that
$$
J^*(\iota):=\lb\eta\in I^*(\iota):p_{\iota,\eta}\leq_{\ps_\ka}p_\iota\rb\neq\es,
$$
as otherwise we contradict $(4)_\iota$. 

Let us introduce some further notation which will facilitate the exposition. For $\iota\in J\bsl J_p$, define $\R^*_\iota$ to be some isomorphic copy of $\ps_\ka$ with domain $X^*_\iota$, say with isomorphism $\psi^*_{\iota,\iota}$; we will denote $X^*_\iota$ additionally by $B^*_{\iota,\iota}$ in order to streamline the notation in later arguments.  For $\iota\in J\bsl J_p$, we set $\mathcal{S}^*_\iota:=\lb\la B^*_{ \iota,\iota },\psi^*_{ \iota,\iota }\ra\rb$ with index set $I^*(\iota)=\lb\iota\rb$ which we also denote by $J^*(\iota)$. Next, we define $p^*_\iota$ to be the image of $p_\iota$ under the isomorphism $\psi^*_{\iota,\iota}$ from $\ps_\ka$ onto $\R^*_\iota$, and we set $\dot{U}^*_\iota$ to be the $\R^*_\iota$-name for all $l+1$-tuples in $\om_1$. We remark here for later use that for each $\iota\in J$ and $\eta\in J^*(\iota)$, 
$$
(\psi^*_{\iota,\eta})^{-1}\left(p^*_\iota\res B^*_{\iota,\eta}\right)\leq_{\ps_\ka} (\psi_\iota)^{-1}(\bar{p}\res B_\iota).
$$ 

Our next step is to amalgamate all of the above into one much larger partition product. Without loss of generality, by shifting if necessary, we may assume that the domains $X^*_\iota$, for $\iota\in J$, are pairwise disjoint. Then, by Corollary \ref{productgraft}, the poset $\R^*(0):=\prod_{\iota\in J}\R^*_\iota$ is a partition product with domain $\bigcup_{\iota\in J}X^*_\iota$. It is also a member of $M_\ga$. Additionally, $\R^*(0)$ is finitely generated by the $\ka$-suitable collection $\mathcal{S}^*:=\bigcup_{\iota\in J}\mathcal{S}^*_\iota$ and auxiliary part $\bigcup_{\iota\in J_p} Z^*_\iota$.   Let us abbreviate $\bigcup_{\iota\in J}B_\iota$ by $X_0$ and $\bigcup_{\iota\in J}X^*_\iota$ by $X^*_0$. We also let $p^*(0)$ be the condition in $\R^*(0)$ whose restriction to $X^*_\iota$ equals $p^*_\iota$, and we let $\dot{U}^*$ be the $\R^*(0)$-name for the intersection of all the $\dot{U}^*_\iota$, for $\iota\in J$.

Now consider the product of indices 
$$
\hat{J}:=\prod_{\iota\in J}J^*(\iota);
$$ 
$\hat{J}$ is non-empty, finite, and an element of $M_\ga$, since $J$ and each $J^*(\iota)$ are. Let $\la h_k:k<n\ra$ enumerate $\hat{J}$. Each $h_k$ selects, for every $\iota\in J$, an image of the $\ps_\ka$-``branch" $B_\iota$ inside $\R^*_\iota$. For each $k<n$, we define the map $\pi_k:X_0\lra X^*_0$ corresponding to $h_k$ by taking $\pi_k\res B_\iota$ to be equal to $\psi^*_{\iota,h_k(\iota)}\circ\psi^{-1}_\iota$, for each $\iota\in J$. This is well-defined since, by our assumption that $\text{ht}\left(\bar{\mathcal{S}}\right)=0$, we know that the sets $B_\iota$, for $\iota\in J$, are pairwise disjoint. We also see that each $\pi_k$ embeds $\R\res X_0$ into $\R^*(0)$, since it isomorphs $\R\res B_\iota$ onto $\R^*(0)\res B_{\iota,h_k(\iota)}^*$, for each $\iota\in J$. In fact, each $\pi_k$ is $(\bar{\mathcal{S}},\mathcal{S}^*)$-suitable by construction, and $h_k$ is the associated 
map $h_{\pi_k}$ (see Definition \ref{def:suitablemap}). Finally, we want to see that $p^*(0)$ extends $\pi_k(\bar{p}\res X_0)$ for each $k<n;$ but this follows by definition of $\pi_k$ and our above observation that for each $\iota\in J$ and $\eta\in J^*(\iota)$,
$$
(\psi^*_{\iota,\eta})^{-1}\left(p^*_\iota\res B^*_{\iota,\eta}\right)\leq_{\ps_\ka} (\psi_\iota)^{-1}(\bar{p}\res B_\iota).
$$ 

Using Lemma \ref{lemma:productcase}, fix an enrichment $\mathcal{B}^*_0$ of $\R^*(0)$ such that $\mathcal{B}^*_0$ contains the image of $\mathcal{B}\res X_0$ under each $\pi_k$ and such that $\lb\la B^*_{\iota,\eta},\psi^*_{\iota,\eta}\ra:\iota\in J\we\eta\in J^*(\iota)\rb$ is $\ka$-suitable with respect to $(\R^*(0),\mathcal{B}^*_0)$. Note that the assumptions of Lemma 
\ref{lemma:productcase}
are satisfied because the sets $X^*_\iota$, for $\iota\in J$, are pairwise disjoint and $\lb\pi_k:k<n\rb$ is a collection of $(\bar{\mathcal{S}},\mathcal{S}^*)$-suitable maps.

We note that this part of the construction makes use of the Collapse condition (\ref{coherent-collapse-comp}) of Definition \ref{coherent-collapse}, through the appeal to Lemma \ref{lemma:productcase}.

Before continuing with the main argument, we want to consider an ``illustrative case" in which we make the simplifying assumption that the domain of $\R$ is just $X_0$. The key ideas of the matching argument are present in this illustrative case, and after working through the details, we will show how to extend the argument to work in the more general setting wherein the domain of $\R$ has elements beyond $X_0$.

Proceeding, then, under the assumption that the domain of $\R_0$ is exactly $X_0$, we specify the objects from (a)-(d) satisfying (1)-(3). Namely, the finitely generated partition product $(\R^*(0),\mathcal{B}^*_0)$, generated by $\mathcal{S}^*$ and $\bigcup_{\iota\in J_p}Z^*_\iota$; the condition $p^*(0)$; the $\R^*(0)$-name $\dot{U}^*$; and the collection $\lb\pi_k:k<n\rb$ of embeddings are the requisite objects. From the fact that $\bar{p}=\bar{p}\res X_0$ we have that $p^*(0)$ is below $\pi_k(\bar{p})$ for each $k<n$. Since $p^*_\iota$ forces that $\vec{\nu}\in\dot{U}^*_\iota$ for each $\iota\in J$, we see that $p^*(0)$ forces that $\vec{\nu}\in\dot{U}^*$. Thus (3) remains to be checked.

Towards this end, fix a generic $G^*$ for $\R^*(0)$ containing $p^*(0)$, and for each $\iota\in J$, set $G^*_\iota:=G^*\res\R^*_\iota$. Also set $U^*:=\dot{U}^*[G^*]$. Let us also fix two pairwise distinct tuples $\vec{\mu}$ and $\vec{\mu}'$ in $U^*\cap M_\ga[G^*]$ which match $\dot{a}_0,\dots,\dot{a}_l$ on $\mathcal{S}^*$. Our goal is to find some $k<n$ such that $\vec{\mu}$ and $\vec{\mu}'$ match $\dot{a}_0,\dots,\dot{a}_l\cup\lb p\mapsto 1\rb$ on $\pi_k\left(\bar{\mathcal{S}}\right)$. We will first show the following claim.\\

\begin{claim}\label{claim:woah} For each $\iota\in J_p$, there is some $\eta\in J^*(\iota)$ such that 
$$
\dot{\chi}_\ka\left[\left(\psi^*_{\iota,\eta}\right)^{-1}\left(G^*_\iota\res B^*_{\iota,\eta}\right)\right](\mu_l,\mu'_l)=1.
$$
\end{claim}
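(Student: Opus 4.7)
The plan is to argue by contradiction. Fix $\iota\in J_p$ and suppose that for every $\eta\in J^*(\iota)$ we have $\dot{\chi}_\ka[(\psi^*_{\iota,\eta})^{-1}(G^*_\iota\res B^*_{\iota,\eta})](\mu_l,\mu'_l)=0$. The goal is to combine this assumption with the hypothesis that $\vec{\mu},\vec{\mu}'$ match $\dot{a}_0,\ldots,\dot{a}_l$ on $\mathcal{S}^*$ to deduce that $\vec{\mu},\vec{\mu}'$ match the extended sequence $\dot{a}_0,\ldots,\dot{a}_l\cup\lb p_\iota\mapsto 0\rb$ on \emph{all} of $I^*(\iota)$, which will then directly contradict property $(4)_\iota$.

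The next step is to verify the extended matching at each $\eta\in I^*(\iota)$, splitting on whether $\eta\in J^*(\iota)$. For $\eta\in J^*(\iota)$, the condition $p_{\iota,\eta}:=(\psi^*_{\iota,\eta})^{-1}(p^*_\iota\res B^*_{\iota,\eta})$ belongs to the induced $\ps_\ka$-generic and satisfies $p_{\iota,\eta}\leq_{\ps_\ka}p_\iota$, so the name $\dot{a}_l\cup\lb p_\iota\mapsto 0\rb$ evaluates to $0$ under this generic; the contradictory hypothesis $\dot{\chi}_\ka=0$ then gives matching at coordinate $l$, while matching at coordinates $k<l$ follows automatically because $\dot{a}_0,\ldots,\dot{a}_{l-1}$ are unchanged and $\vec{\mu},\vec{\mu}'$ already match these at $\eta$ by the ambient assumption of matching on $\mathcal{S}^*$. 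For $\eta\in I^*(\iota)\bsl J^*(\iota)$, the decisiveness of $p_{\iota,\eta}$ together with $p_{\iota,\eta}\not\leq_{\ps_\ka}p_\iota$ forces $p_{\iota,\eta}$ to be incompatible with $p_\iota$ in $\ps_\ka$ (using that $p_\iota\leq_{\ps_\ka}p$ and that $p$ is incompatible with every member of $\dom(\dot{a}_l)$ by Assumption \ref{ass2}), so $p_\iota$ is absent from the induced generic and $\dot{a}_l\cup\lb p_\iota\mapsto 0\rb$ evaluates identically to $\dot{a}_l$; matching at $\eta$ for the extended sequence is then the same statement as the assumed matching on $\mathcal{S}^*$.

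Having established matching against $\dot{a}_0,\ldots,\dot{a}_l\cup\lb p_\iota\mapsto 0\rb$ on $I^*(\iota)$, I then derive the contradiction with $(4)_\iota$, which asserts that $p^*_\iota\Vdash_{\R^*_\iota}$ no pairwise distinct tuples in $\dot{U}^*_\iota\cap M_\ga[\dot{G}^*_\iota]$ match this extended sequence on $I^*(\iota)$. Since $\dot{U}^*=\bigcap_{\iota'\in J}\dot{U}^*_{\iota'}$, the pair $\vec{\mu},\vec{\mu}'$ lies in $\dot{U}^*[G^*]\seq\dot{U}^*_\iota[G^*_\iota]$, and the matching I have just verified depends only on $G^*_\iota$ since $\dot{\chi}_\ka$ and each $\dot{a}_k$, once composed with $(\psi^*_{\iota,\eta})^{-1}$, become $\R^*_\iota$-names. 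The main technical obstacle that I anticipate is that $\vec{\mu},\vec{\mu}'$ are given as elements of $M_\ga[G^*]$ rather than of $M_\ga[G^*_\iota]$; to bridge this I plan to exploit the product decomposition $\R^*(0)=\prod_{\iota'\in J}\R^*_{\iota'}$, the mutual genericity of the factor generics, the c.c.c. of each factor (granted by Assumption 4.1(b) below $\ka$), and the countability and elementarity of $M_\ga$, via a nice-name argument showing that any tuple of countable ordinals in $M_\ga[G^*]$ which is in fact already in $V[G^*_\iota]$ must lie in $M_\ga[G^*_\iota]$. Applying $(4)_\iota$ to the resulting pair then produces the required contradiction, completing the proof of the claim.
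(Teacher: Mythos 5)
Your argument is correct and relies on the same ingredients as the paper's proof: condition $(4)_\iota$, the decisiveness of the conditions $p_{\iota,\eta}$, and the ambient hypothesis that $\vec{\mu},\vec{\mu}'$ match $\dot{a}_0,\dots,\dot{a}_l$ on $\mathcal{S}^*$. The difference is purely organizational: the paper applies $(4)_\iota$ directly to produce an $\eta\in I^*(\iota)$ at which matching against the extended sequence $\dot{a}_0,\dots,\dot{a}_l\cup\lb p_\iota\mapsto 0\rb$ fails, then analyzes that failure to conclude it must involve the $\lb p_\iota\mapsto 0\rb$ part, forcing the coloring value at $(\mu_l,\mu'_l)$ to equal $1$ and, via decisiveness and compatibility with $p^*_\iota$, yielding $\eta\in J^*(\iota)$; you instead argue contrapositively, verifying full matching on $I^*(\iota)$ under the assumption that no such $\eta$ exists. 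These are logically equivalent, and your two cases ($\eta\in J^*(\iota)$ versus $\eta\in I^*(\iota)\setminus J^*(\iota)$) mirror the paper's case analysis exactly. As for the technical point you flag at the end, the paper dispatches it more cheaply than you anticipate: no mutual genericity or nice-name argument is needed, only the observation that $\vec{\mu},\vec{\mu}'$ are tuples of ordinals, the posets involved are c.c.c.\ by Assumption 4.1, and therefore $M_\ga[G^*]$ and $M_\ga[G^*_\iota]$ have the same ordinals, namely those of $M_\ga$.
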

\begin{proof}[Proof of Claim \ref{claim:woah}] Recall that for each $\iota\in J_p$, by $(4)_\iota$ above, we know that the condition $ p^*_\iota$ forces in $\R^*_\iota$ that for any two pairwise distinct tuples $\vec{\xi},\vec{\xi}'$ in $\dot{U}^*_\iota\cap M_\ga[\dot{G}^*_\iota]$, $\vec{\xi}$ and $\vec{\xi}'$ do \emph{not} match $\dot{a}_0,\dots,\dot{a}_l\cup\lb p_\iota\mapsto 0\rb$ on $I^*(\iota)$. Fix some $\iota\in J_p$, and let $U^*_\iota:=\dot{U}^*_\iota[G^*_\iota]$. Now observe that $\vec{\mu}$ and $\vec{\mu}'$ are in $U^*_\iota\cap M_\ga[G^*_\iota]$: first, $U^*\seq U^*_\iota$; second, all of the posets under consideration are c.c.c.\ by Assumption 4.1, and therefore $M_\ga[G^*]$ has the same ordinals as $M_\ga[G^*_\iota]$. Since $\vec{\mu},\vec{\mu}'\in U^*_\iota\cap M_\ga[G^*_\iota]$,  $\vec{\mu},\vec{\mu}'$ fail to match $\dot{a}_0,\dots,\dot{a}_l\cup\lb p_\iota\mapsto 0\rb$ at some $\eta\in I^*(\iota)$. That is to say, one of the following holds:
\begin{enumerate}
\item[(a)] there is some $k\leq l$ such that 
$$
\dot{\chi}_\ka\left[\left(\psi^*_{\iota,\eta}\right)^{-1}\left(G^*_\iota\res B^*_{\iota,\eta}\right)\right](\mu_k,\mu'_k)=1-\dot{a}_k\left[\left(\psi^*_{\iota,\eta}\right)^{-1}\left(G^*_\iota\res B^*_{\iota,\eta}\right)\right]
$$
(and in case $k=l$, $\dot{a}_k\left[\left(\psi^*_{\iota,\eta}\right)^{-1}\left(G^*_\iota\res B^*_{\iota,\eta}\right)\right]$ is defined);
\item[(b)] or $(\lb p_\iota\mapsto 0\rb)\left[\left(\psi^*_{\iota,\eta}\right)^{-1}\left(G^*_\iota\res B^*_{\iota,\eta}\right)\right]$ is defined and 
$$
\dot{\chi}_\ka\left[\left(\psi^*_{\iota,\eta}\right)^{-1}\left(G^*_\iota\res B^*_{\iota,\eta}\right)\right](\mu_l,\mu'_l)=1-(\lb p_\iota\mapsto 0\rb)\left[\left(\psi^*_{\iota,\eta}\right)^{-1}\left(G^*_\iota\res B^*_{\iota,\eta}\right)\right].
$$
\end{enumerate}
However, we assumed that $\vec{\mu}$ and $\vec{\mu}'$ match $\dot{a}_0,\dots,\dot{a}_l$ on  $\mathcal{S}^*$. Therefore (a) is false and (b) holds. This implies in particular that $\psi^*_{\iota,\eta}(p_\iota)\in G^*_\iota\res B^*_{\iota,\eta}$ and that $(\lb p_\iota\mapsto 0\rb)\left[\left(\psi^*_{\iota,\eta}\right)^{-1}\left(G^*_\iota\res B^*_{\iota,\eta}\right)\right]=0$. Thus
$$
\dot{\chi}_\ka\left[\left(\psi^*_{\iota,\eta}\right)^{-1}\left(G^*_\iota\res B^*_{\iota,\eta}\right)\right](\mu_l,\mu'_l)=1-(\dot{a}_l\cup\lb p_\iota\mapsto 0\rb)\left[\left(\psi^*_{\iota,\eta}\right)^{-1}\left(G^*_\iota\res B^*_{\iota,\eta}\right)\right]=1.
$$ 
Since $p^*_\iota\in G^*_\iota$, $p^*_\iota$ and $\psi^*_{\iota,\eta}(p_\iota)$ are compatible, and therefore $p^*_\iota$, being decisive, extends $\psi^*_{\iota,\eta}(p_\iota)$. Thus $\eta\in J^*(\iota)$.
\end{proof}

This completes the proof of the above claim. As a result, we fix some function $h$ on $J_p$ such that for each $\iota\in J_p$, $h(\iota)\in J^*(\iota)$ provides a witness to the claim for $\iota$. Let $k<n$ such that $h=h_k\res J_p$. We now check that $\vec{\mu},\vec{\mu}'$ match $\dot{a}_0,\dots,\dot{a}_l\cup\lb p\mapsto 1\rb$ on $\pi_k\left(\bar{\mathcal{S}}\right)$.

Observe that since $\vec{\mu}$ and $\vec{\mu}'$ match $\dot{a}_0,\dots,\dot{a}_{l}$ on $\mathcal{S}^*$, we only need to check that for each $\iota\in J$, if $p\in (\psi^*_{\iota,h_k(\iota)})^{-1}\left( G^*\res B^*_{\iota,h_k(\iota)}\right)$, then 
$$
\dot{\chi}_\ka\left[\left(\psi^*_{\iota,h_k(\iota)}\right)^{-1}\left(G^*_\iota\res B^*_{\iota,h_k(\iota)}\right)\right](\mu_l,\mu_l')=1.
$$ 
But this is clear: for $\iota\in J_p$, the conclusion of the implication holds, by the last claim and the choice of $h_k$. For $\iota\not\in J_p$ the hypothesis of the implication fails, since $(\psi^*_{\iota,h_k(\iota)})^{-1}(p^*(0))$ extends $p_\iota$ which, for $\iota\not\in J_p$, is incompatible with $p$.\\

We have now completed our discussion of the illustrative case when the domain of $\R$ consists entirely of $X_0$. We next work in full generality to finish with this case; we will proceed by grafting multiple copies of the part of $\R$ outside $X_0$ onto $\R^*(0)$. In more detail, recall that the maps $\pi_k$ each embed $(\R\res X_0,\mathcal{B}\res X_0)$ into $(\R^*(0),\mathcal{B}^*_0)$. Thus we may apply Lemma \ref{lemma:graft} in $M_\ga$, once for each $k<n$, to construct a sequence of enriched partition products $\la (\R^*(k+1),\mathcal{B}^*_{k+1}):k<n\ra$ such that for each $k<n$, letting $X^*_k$ denote the domain of $\R^*(k)$, $X^*_k\seq X^*_{k+1}$, $\R^*(k+1)\res X^*_k=\R^*(k)$, $\mathcal{B}^*_k\seq\mathcal{B}^*_{k+1}$, and such that $\pi_k$ extends to an embedding, which we call $\pi^*_k$, of $(\R,\mathcal{B})$ into $(\R^*(k+1),\mathcal{B}^*_{k+1})$. We remark that by the grafting construction, for each $k<n$, 
$$
\pi^*_k[X\bsl X_0]=X^*_{k+1}\bsl X^*_k.
$$ 
Let us now use $\R^*$ to denote $\R^*(n)$, $X^*$ to denote the domain of $\R^*$, and $\mathcal{B}^*$ to denote $\mathcal{B}^*_n$. Also, observe that $\pi^*_k$ embeds $(\R,\mathcal{B})$ into $(\R^*,\mathcal{B}^*)$, since it embeds $(\R,\mathcal{B})$ into $(\R^*(k+1),\mathcal{B}_{k+1})$ and since $\mathcal{B}_{k+1}\seq\mathcal{B}^*$ and $\R^*(k+1)=\R^*\res X^*_{k+1}$. We claim that $(\R^*,\mathcal{B}^*)$ witnesses the lemma in this case.

We first address item (a). Since $(\R^*(0),\mathcal{B}^*_0)$ and $(\R,\mathcal{B})$ are both finitely generated and since $(\R^*,\mathcal{B}^*)$ was constructed from them by finitely-many applications of the Grafting Lemma, $(\R^*,\mathcal{B}^*)$ is itself finitely generated by Lemma \ref{lemma:graftfg}. Moreover, as all of the partition products under consideration are in $M_\ga$, the suitable collection and auxiliary part for $(\R^*,\mathcal{B}^*)$ are also in $M_\ga$.

For (b), we define a sequence of conditions in $\R^*$ by recursion, beginning with $p^*(0)$. Suppose that we have constructed the condition $p^*(k)$ in $\R^*(k)$ such that if $k>0$, then $ p^*(k)\res\R^*(k-1)= p^*(k-1)$ and $ p^*(k)$ extends $\pi^*_{k-1}(\bar{p})$. To construct $ p^*(k+1)$, note that $ p^*(k)$ extends $\pi_k(\bar{p}\res X_0)$, since $ p^*(0)$ does, as observed before the illustrative case, and since $ p^*(k)\res\R^*(0)= p^*(0)$. Moreover, 
$$
\pi^*_k[X\bsl X_0]\cap\dom(p^*(k))=\es,
$$
as $\dom(p^*(k))\seq X^*_k$, and as $\pi^*_k[X\bsl X_0]\cap X^*_k=\es$. Thus we see that 
$$
p^*(k+1):= p^*(k)\cup\pi_k^*\left(\bar{p}\res \left(X\bsl X_0\right)\right)
$$ 
is a condition in $\R^*(k+1)$ which extends $\pi^*_k(\bar{p})$. This completes the construction of the sequence of conditions, and so we now let $p^*$ be the condition $ p^*(n)$ in $\R^*$.

We take the same $\R^*(0)$-name $\dot{U}^*$ for (c). To address (d), we let $\mathcal{F}:=\lb\pi^*_k:k<n\rb$. 
Each $\pi^*_k$, as noted above, is an embedding of $(\R,\mathcal{B})$ into $(\R^*,\mathcal{B}^*)$ and a member of $M_\ga$.

This now defines the objects from (a)-(d), and so we check that conditions (1)-(3) hold. By the construction of $p^*$ above, $p^*$ extends $\pi^*_k(\bar{p})$ for each $k<n$, so (1) is satisfied. Moreover, we already know that $p^*\Vdash_{\R^*}\vec{\nu}\in\dot{U}^*$, since $p^*(0)\Vdash_{\R^*(0)}\vec{\nu}\in\dot{U}^*$ and since $\R^*\res X^*_0=\R^*(0)$. And finally, the proof of condition (3) is the same as in the illustrative case, using the fact that each $\pi^*_k$ extends $\pi_k$. This completes the proof of the lemma in the case that ht$(\bar{\mathcal{S}})=0$.\\

\textbf{Case 2:} $\htt\left(\bar{\mathcal{S}}\right)>0$ (in particular, $\bar{\mathcal{S}}$ has at least 2 elements). We abbreviate $\htt\left(\bar{\mathcal{S}}\right)$ by $\de$ in what follows. Fix $\iota_0,\iota_1\in J$ which satisfy $\de=\operatorname{ht}(B_{\iota_0}, B_{\iota_1})$, and set $\hat{J}:=J\bsl\lb\iota_0\rb$. 

By Lemma \ref{lemma:suitablecohere}, $\hat{X}_0:=\bigcup_{\iota\in\hat{J}}B_\iota$ coheres with $(\R,\mathcal{B})$. Let $\hat{\R}$ be the partition product $\R\res \hat{X}_0$, and set $\hat{\mathcal{B}}:=\mathcal{B}\res \hat{X}_0$, which, by Lemma \ref{lemma:restrictenrichment}, is an enrichment of $\hat{\R}$. Furthermore, $\hat{\R}$ is finitely generated with an empty auxiliary part and with $\hat{\mathcal{S}}:=\lb \la B_\iota,\psi_\iota\ra:\iota\in\hat{J}\rb$ as $\ka$-suitable with respect to $(\hat{\R},\hat{\mathcal{B}})$. We also let $\hat{p}$ be the condition $\bar{p}\res \hat{X}_0\in\hat{\R}$. Finally, we let $\bar{\R}:=\R\res\bigcup_{\iota\in J}B_\iota$, and $\bar{\mathcal{B}}=\mathcal{B}\res\bigcup_{\iota\in J}B_\iota$, so that $(\bar{\R},\bar{\mathcal{B}})$ is also an enriched partition product.

Since $|\hat{\mathcal{S}}|<|\bar{\mathcal{S}}|$ and $\htt(\hat{\mathcal{S}})\leq\htt(\bar{\mathcal{S}})$, we may apply the induction hypothesis to $(\hat{\R},\hat{\mathcal{B}})$, the condition $\hat{p}$, the $\hat{\R}$-name for all $l+1$-tuples in $\om_1$, and with $\hat{\mathcal{S}}$ as the matching core. This produces the following objects:
\begin{enumerate}
\item[$(a)^*$] an enriched  partition product $(\R^*,\mathcal{B}^*)$ with domain $X^*$, say, finitely generated by a $\ka$-suitable collection $\mathcal{S}^*$ and an auxiliary part $Z^*$, all of which are in $M_\ga$;
\item[$(b)^*$] a condition $p^*\in\R^*$;
\item[$(c)^*$] an $\R^*$-name $\dot{W}^*$ in $M_\ga$ for a collection of $l+1$-tuples in $\om_1$;
\item[$(d)^*$] a nonempty, finite collection $\mathcal{F}$ in $M_\ga$ of embeddings of $(\hat{\R},\hat{\mathcal{B}})$ into $(\R^*,\mathcal{B}^*)$;
\end{enumerate} 
satisfying that for each $\pi\in\mathcal{F}:$
\begin{enumerate}
\item[$(1)^*$] $p^*$ extends $\pi(\hat{p})$ in $\R^*$;
\end{enumerate}
and also satisfying that $p^*$ forces the following statements in $\R^*:$
\begin{enumerate}
\item[$(2)^*$] $\vec{\nu}\in\dot{W}^*$;
\item[$(3)^*$] for any pairwise distinct $l+1$-tuples $\vec{\mu}$ and $\vec{\mu}'$ in $\dot{W}^{*}\cap M_\ga[\dot{G}^*]$, if $\vec{\mu}$ and $\vec{\mu}'$ match $\dot{a}_0,\dots,\dot{a}_l$ on $\mathcal{S}^*$, then there is some $\pi\in\mathcal{F}$ such that $\vec{\mu}$ and $\vec{\mu}'$ match $\dot{a}_0,\dots,\dot{a}_l\cup\lb p\mapsto 1\rb$ on $\pi\left(\hat{\mathcal{S}}\right)$.
\end{enumerate}

Our next step is to restore many copies of the segment $\psi_{\iota_0}[\rho_\ka\bsl\de]$ of the lost branch $ B_{\iota_0}$ in such a way that the restored copies form a $\ka$-suitable collection with smaller height than $\de$; this will allow another application of the induction hypothesis. Towards this end, define 
$$
\mathcal{R}:=\lb\pi\circ\psi_{\iota_1}[\de]:\pi\in\mathcal{F}\rb,
$$
and, recalling that $\mathcal{F}$ is finite, let $x_0,\dots,x_{d-1}$ enumerate $\mathcal{R}$. We choose, for each $k<d$, a map $\pi_k\in\mathcal{F}$ so that $\pi_k\circ\psi_{\iota_1}[\de]=x_k$.

We now work in $M_\ga$ to graft one copy of $\psi_{\iota_0}[\rho_\ka\bsl\de]$ onto $(\R^*,\mathcal{B}^*)$ over $\pi_k$, for each $k<n$. Indeed, since $\pi_k$ embeds $(\hat{\R},\hat{\mathcal{B}})$ into $(\R^*,\mathcal{B}^*)$, we may successively apply the Grafting Lemma to find an enriched partition product $(\R^{**},\mathcal{B}^{**})$ on a domain $X^{**}$ so that $\R^{**}\res X^*=\R^*$, $\mathcal{B}^*\seq\mathcal{B}^{**}$, and so that for each $k<d$, $\pi_k$ extends to an embedding $\pi_k^*$ of $(\bar{\R},\bar{\mathcal{B}})$ into $(\R^{**},\mathcal{B}^{**})$. Since $(\R^{**},\mathcal{B}^{**})$ is finitely generated, by Lemma \ref{lemma:graftfg}, we may let $\mathcal{S}^{**}$ denote the finite, $\ka$-suitable collection for $(\R^{**},\mathcal{B}^{**})$.

Let us make a number of observations about the above situation. First, we want to see that for each $\pi\in\mathcal{F}$, we may extend $\pi$ to embed $(\bar{\R},\bar{\mathcal{B}})$ into $(\R^{**},\mathcal{B}^{**})$. Thus fix $\pi\in\mathcal{F}$, and let $k<d$ such that $\pi\circ\psi_{\iota_1}[\de]=x_k$. We want to apply Lemma \ref{restoretrim}, 
with (following the notation of the lemma) $X_0=\bigcup_{\iota\in\hat{J}} B_\iota$ and $X_1=B_{\iota_0}$. For
this we need to see that 
$\pi[X_0\cap B_{\iota_0}]=\pi_k[X_0\cap B_{\iota_0}]$. 
To verify this, we first claim that $X_0\cap B_{\iota_0}=B_{\iota_1}\cap B_{\iota_0}$. Suppose that this is false, for a contradiction. Then there is some $\al\in X_0\cap B_{\iota_0}\bsl B_{\iota_1}$. Fix 
$\iota\in\hat{J}$ 
s.t. $\al\in B_\iota\cap B_{\iota_0}$. Then $\psi_{\iota_0}^{-1}[B_\iota\cap B_{\iota_0}]\leq\htt(\bar{\mathcal{S}})=\de$, and so $\al\in\psi_{\iota_0}[\de]$. But $\psi_{\iota_0}\res\de=\psi_{\iota_1}\res\de$, and therefore $\al\in B_{\iota_1}$, a contradiction. 

Thus $X_0\cap B_{\iota_0}=B_{\iota_1}\cap B_{\iota_0}$. But $B_{\iota_1}\cap B_{\iota_0}=\psi_{\iota_1}[\de]$, and therefore
$$
\pi[B_{\iota_1}\cap B_{\iota_0}]=\pi\circ\psi_{\iota_1}[\de]=x_k=\pi_k\circ\psi_{\iota_1}[\de]=\pi_k[B_{\iota_1}\cap B_{\iota_0}].
$$
Hence 
$\pi[X_0\cap B_{\iota_0}]=\pi_k[X_0\cap B_{\iota_0}]$.
By Lemma \ref{restoretrim}, the map 
$$
\pi^*:=\pi\cup\pi^*_k\res\left(\psi_{\iota_0}[\rho_\ka\bsl\de] \right)
$$ 
is an extension of $\pi$ which embeds $(\bar{\R},\bar{\mathcal{B}})$ into $(\R^{**},\mathcal{B}^{**})$. We make the observation that $\pi^*[B_{\iota_0}]=\pi^*_k[B_{\iota_0}]$, which will be useful later.

For each $k<d$, we use $x^*_k$ to denote the image of $B_{\iota_0}$ under the map $\pi^*_k$. Let $\bar{\mathcal S}^{**}:=\lb\la x^*_k,\pi^*_k\circ\psi_{\iota_0},\ka\ra:k<d\rb$. Then $\bar{\mathcal S}^{**}\subseteq {\mathcal S}^{**}$, and in particular, $\bar{\mathcal S}^{**}$ is $\kappa$-suitable. For 
$k\not=m$
we have 
$$
(\pi^*_k\circ \psi_{\iota_0})[\delta]=x_k\not= x_m=(\pi^*_m\circ \psi_{\iota_0})[\delta],
$$
and hence $\htt(x^*_k,x^*_m)<\delta$. Therefore $\htt(\bar{\mathcal{S}}^{**})<\de$, since $\bar{\mathcal{S}}^{**}$ is finite.

We now have a collection $\mathcal{F}^{*}:=\lb\pi^*:\pi\in\mathcal{F}\rb$ of embeddings of $(\bar{\R},\bar{\mathcal{B}})$ into $(\R^{**},\mathcal{B}^{**})$ and a finite, $\ka$-suitable subcollection $\bar{\mathcal{S}}^{**}$ of $\mathcal{S}^{**}$ such that the height of $\bar{\mathcal{S}}^{**}$ is less than $\de$. But before we apply the induction hypothesis, we need to extend $(\R^{**},\mathcal{B}^{**})$ to add generics for the full $\R$ and to also define a few more objects. Towards this end, we work in $M_\ga$ to successively apply the Grafting Lemma to each map $\pi^*$ in $\mathcal{F}^{*}$ to graft $(\R,\mathcal{B})$ onto $(\R^{**},\mathcal{B}^{**})$ over $\pi^*$. This results in a partition product $(\R^{***},\mathcal{B}^{***})$ in $M_\ga$ with domain $X^{***}$ so that $\R^{***}\res X^{**}=\R^{**}$, $\mathcal{B}^{**}\seq\mathcal{B}^{***}$, and so that each map $\pi^*\in\mathcal{F}^*$ extends to an embedding $\pi^{***}$ of $(\R,\mathcal{B})$ into $(\R^{***},\mathcal{B}^{***})$. By Lemma \ref{lemma:graftfg}, $(\R^{***},\mathcal{B}^{***})$ is still finitely generated, say with $\ka$-suitable collection $\mathcal{S}^{***}$.

We now want to define a condition $p^{***}$ in $\R^{***}$ by adding further coordinates to the condition $p^*\in\R^*\seq\R^{***}$ from $(a)^*$. By the grafting construction of $\R^{**}$, if 
$k<m<d$, 
then the images of $\psi_{\iota_0}[\rho_\ka\bsl\de]$ under $\pi^*_k$ and $\pi^*_m$ are disjoint. Thus
$$
p^{**}:=p^*\cup\bigcup_{k<d}\pi^*_k\left(\bar{p}\res\psi_{\iota_0}[\rho_{\ka}\bsl\de]\right)
$$
is a condition in $\R^{**}$. Since by $(1)^*$, $p^*$ extends $\pi(\hat{p})$ in $\R^*$ for each $\pi\in\mathcal{F}$, we conclude that $p^{**}$ extends $\pi^*_k(\bar{p}\res\bigcup_{\iota\in J}B_\iota)$ for each $k<d$. Furthermore, if $\pi^*\in\mathcal{F}^*$, then for some $k<d$, $\pi^*$ agrees with $\pi^*_k$ on $B_{\iota_0}$, as observed above. It is straightforward to see that this implies that $p^{**}$ in fact extends $\pi^*(\bar{p}\res\bigcup_{\iota\in J}B_\iota)$ for each $\pi^*\in\mathcal{F}^*$. And finally, by the grafting construction of $\R^{***}$, we know that if $\pi$ and $\si$ are distinct embeddings in $\mathcal{F}$, then the images of $X\bsl\bigcup_{\iota\in J}B_\iota$ under $\pi^{***}$ and $\si^{***}$ are disjoint. Consequently,
$$
p^{***}:=p^{**}\cup\bigcup_{\pi\in\mathcal{F}}\pi^{***}\left(\bar{p}\res\left(X\bsl\bigcup_{\iota\in J}B_\iota\right)\right)
$$
is a condition in $\R^{***}$ which extends $\pi^{***}(\bar{p})$ for each $\pi\in\mathcal{F}$.

 We are now ready to apply the induction hypothesis to the partition product $(\R^{***},\mathcal{B}^{***})$, the condition $p^{***}\in\R^{***}$, and the matching core $\bar{\mathcal{S}}^{**}$, which has height below $\de$. This results in the following objects:
\begin{enumerate}
\item[$(a)^{**}$] an enriched partition product $(\R^{****},\mathcal{B}^{****})$ on a set $X^{****}$ which is finitely generated, say with $\ka$-suitable collection $\mathcal{S}^{****}$  and auxiliary part $Z^{****}$, all of which are in $M_\ga$;
\item[$(b)^{**}$] a condition $ p^{****}$ in $\R^{****}$;
\item[$(c)^{**}$] an $\R^{****}$-name $\dot{U}^{****}$ in $M_\ga$ for a collection of $l+1$ tuples in $\om_1$;
\item[$(d)^{**}$] a nonempty, finite collection $\mathcal{G}$ in $M_\ga$ of embeddings of $(\R^{***},\mathcal{B}^{***})$ into $(\R^{****},\mathcal{B}^{****})$;
\end{enumerate}
satisfying that for each $\si\in\mathcal{G}$
\begin{enumerate}
\item[$(1)^{**}$]  $p^{****}$ extends $\si(p^{***})$ in $\R^{****}$;
\end{enumerate}
and such that $ p^{****}$ forces in $\R^{****}$ that
\begin{enumerate}
\item[$(2)^{**}$] $\vec{\nu}\in\dot{U}^{****}$;
\item[$(3)^{**}$] for any pairwise distinct tuples $\vec{\mu},\vec{\mu}'$ in $M_\ga[\dot{G}^{****}]\cap \dot{U}^{****}$ such that $\vec{\mu},\vec{\mu}'$ match $\dot{a}_0,\dots,\dot{a}_l$ on $\mathcal{S}^{****}$, there is some $\si\in\mathcal{G}$ such that $\vec{\mu},\vec{\mu}'$ match $\dot{a}_0,\dots,\dot{a}_l\cup\lb p\mapsto 1\rb$ on $\si\left(\bar{\mathcal{S}}^{**}\right)$.
\end{enumerate}

This completes the construction of our final partition product. To finish the proof, we will need to define a number of embeddings from our original partition product $(\R,\mathcal{B})$ into $(\R^{****},\mathcal{B}^{****})$ and check that the appropriate matching obtains. For $\si\in\mathcal{G}$ and $\pi\in\mathcal{F}$, we define the map $\tau(\pi,\si)$ to be the composition $\si\circ\pi^{***}$, which embeds $(\R,\mathcal{B})$ into $(\R^{****},\mathcal{B}^{****})$. We also observe that $p^{****}\leq\tau(\pi,\si)(\bar{p})$ for each such $\pi$ and $\si$ since $p^{****}$ extends $\si(p^{***})$ in $\R^{****}$, and since $p^{***}$ extends $\pi^{***}(\bar{p})$ in $\R^{***}$. Now define the $\R^{****}$-name $\dot{V}^*$ to be
$$
\dot{U}^{****}\cap\bigcap_{\si\in\mathcal{G}}\dot{W}^*\left[\si^{-1}\left(\dot{G}^{****}\right)\res X^*\right].
$$
We observe that this is well-defined, since for each $\si\in\mathcal{G}$ and generic $G^{****}$ for $\R^{****}$, $\si^{-1}\left(G^{****}\right)$ is generic for $\R^{***}$, and hence its restriction to $X^*$ is generic for $\R^*$. We also see that $p^{****}$ forces that $\vec{\nu}\in\dot{V}^*$ because $p^{****}$ forces $\vec{\nu}\in\dot{U}^{****}$,  $p^*$  is in $\si^{-1}\left(G^{****}\right)$ for any generic $G^{****}$ containing $p^{****}$, and $p^*$ forces in $\R^*$ that $\vec{\nu}\in\dot{W}^*$.

We finish the proof of Lemma \ref{lemma:main} in this case by showing that the partition product $(\R^{****},\mathcal{B}^{****})$, the condition $p^{****}\in\R^{****}$, the name $\dot{V}^*$, and the collection $\lb\tau(\pi,\si):\pi\in\mathcal{F}\we\si\in\mathcal{G}\rb$ of embeddings satisfy (1)-(3). We already know that $p^{****}$ extends $\tau(\pi,\si)(\bar{p})$ for each $\pi$ and $\si$ and that $p^{****}\Vdash\vec{\nu}\in\dot{V}^*$. So now we check the matching condition. Towards this end, fix a generic $H$ for $\R^{****}$ and two pairwise distinct tuples $\vec{\mu},\vec{\mu}'$ in $\dot{V}^*[H]\cap M_\ga[H]$ which match $\dot{a}_0,\dots,\dot{a}_l$ on $\mathcal{S}^{****}$. We need to find some $\pi$ and $\si$ such that $\vec{\mu},\vec{\mu}'$ match $\dot{a}_0,\dots,\dot{a}_l\cup\lb p\mapsto 1\rb$ on $\tau(\pi,\si)\left(\bar{\mathcal{S}}\right)$. 

By $(3)^{**}$, we know that we can find some $\si$ such that 
\begin{enumerate}
\item[(i)] $\vec{\mu}$ and $\vec{\mu}'$ match $\dot{a}_0,\dots,\dot{a}_l\cup\lb p\mapsto 1\rb$ on $\si\left(\bar{\mathcal{S}}^{**}\right)$.
\end{enumerate}
Let $t$ denote the triple $\la B_{\iota_0},\psi_{\iota_0},\ka\ra$. By construction of the maps $\pi^*$, for each $\pi\in {\mathcal F}$, there is some $k$ so that $\pi^{***}(t)=\pi^*(t)=\pi^*_k(t)\in \bar{\mathcal S}^{**}$. Using (i) it follows that: 
\begin{enumerate}
\item[(ii)] for every $\pi\in {\mathcal F}$, $\vec{\mu}$ and $\vec{\mu}'$ match $\dot{a}_0,\dots,\dot{a}_l\cup\{p\mapsto 1\}$ at $\sigma\circ \pi^{***}(t)=\tau(\pi,\sigma)(t)$. 
\end{enumerate}

 Now consider the filter $G^*_\si:=\si^{-1}\left(H\right)\res X^*$, which is generic for $\R^{*}$ and contains $p^{*}$. By Assumption 4.1, we know that all the posets under consideration are c.c.c., and therefore the models $M_\ga[H]$ and $M_\ga[G^*_\si]$ have the same ordinals, namely those of $M_\ga$. Thus $\vec{\mu},\vec{\mu}'\in M_\ga[G^*_\si]$. Furthermore, by definition of $\dot{V}^*[H]$, we have that $\vec{\mu},\vec{\mu}'\in\dot{W}^*[G^*_\si]$, and as a result $\vec{\mu},\vec{\mu}'\in M_\ga[G^*_\si]\cap\dot{W}^*[G^*_\si]$. Thus by $(3)^*$, we can find some $\pi\in\mathcal{F}$ so that $\vec{\mu},\vec{\mu}'$ match $\dot{a}_0,\dots,\dot{a}_l\cup\lb p\mapsto 1\rb$ on $\pi\left(\hat{\mathcal{S}}\right)$. Because $\pi^{***}$ extends $\pi$, we may rephrase this to say that $\vec{\mu},\vec{\mu}'$ match $\dot{a}_0,\dots,\dot{a}_l\cup\lb p\mapsto 1\rb$ on $\pi^{***}\left(\hat{\mathcal{S}}\right)$. Since $\si$ embeds $(\R^{***},\mathcal{B}^{***})$ into $(\R^{****},\mathcal{B}^{****})$, 
\begin{enumerate}
\item[(iii)]$\vec{\mu},\vec{\mu}'$ match $\dot{a}_0,\dots,\dot{a}_l\cup\lb p\mapsto 1\rb$ on $\tau(\pi,\si)\left(\hat{\mathcal{S}}\right)$. 
\end{enumerate}
Finally, (ii) and (iii) imply that $\vec{\mu},\vec{\mu}'$ match $\dot{a}_0,\dots,\dot{a}_l\cup\lb p\mapsto 1\rb$ on $\tau(\pi,\si)\left(\bar{\mathcal{S}}\right)$, as $\bar{\mathcal{S}}=\hat{\mathcal{S}}\cup\lb t\rb$. This completes the proof of Lemma \ref{lemma:main}.

\end{proof}

\begin{corollary}\label{cor:includename} Under the assumptions of Lemma \ref{lemma:main}, suppose that $\dot{U}$ is an $\R$-name in $M_\ga$ for a set of $l+1$-tuples in $\om_1$ such that $\bar{p}\Vdash_\R\vec{\nu}\in\dot{U}$. Then the conclusion of Lemma \ref{lemma:main} may be strengthened to say that $p^*\Vdash_{\R^*}\dot{U}^*\seq\bigcap_{\pi\in\mathcal{F}}\dot{U}\left[\pi^{-1}\left(\dot{G}^*\right)\right]$.
\end{corollary}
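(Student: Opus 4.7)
The plan is to apply Lemma \ref{lemma:main} as stated to obtain all the objects $(\R^*,\mathcal{B}^*)$, $ p^*$, a provisional name $\dot{U}^*_0$, and $\mathcal{F}$, and then simply shrink $\dot{U}^*_0$ by intersecting it with the pullbacks of $\dot{U}$. More precisely, I will set
$$
\dot{U}^* := \dot{U}^*_0 \,\cap\, \bigcap_{\pi\in\mathcal{F}} \pi(\dot{U}),
$$
where $\pi(\dot{U})$ denotes the $\pi$-rearrangement of $\dot{U}$, viewed as an $\R^*$-name via the embedding $\pi$ (so $\pi(\dot{U})[G^*]=\dot{U}[\pi^{-1}(G^*)]$ for any generic $G^*$). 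Since $\mathcal{F}$ and $\dot{U}$ are in $M_\ga$ and $\mathcal{F}$ is finite, $\dot{U}^*$ is an $\R^*$-name in $M_\ga$ for a collection of $l+1$-tuples in $\om_1$, so condition (c) of Lemma \ref{lemma:main} is preserved.

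For condition (2), I need to check that $ p^*$ still forces $\vec{\nu}\in\dot{U}^*$. The contribution from $\dot{U}^*_0$ is immediate. For the intersection, fix $\pi\in\mathcal{F}$ and any generic $G^*\ni p^*$. By (1) of Lemma \ref{lemma:main}, $ p^*\leq_{\R^*}\pi(\bar{p})$, so $\pi(\bar{p})\in G^*$, and hence $\bar{p}\in\pi^{-1}(G^*)$. Because $\pi^{-1}(G^*)$ is $\R$-generic (since $\pi$ embeds $\R$ as a complete subposet into $\R^*$ via Lemma \ref{lemma:baseclosedisgreat}), and because $\bar{p}\Vdash_\R \vec{\nu}\in\dot{U}$ by hypothesis, we get $\vec{\nu}\in\dot{U}[\pi^{-1}(G^*)]=\pi(\dot{U})[G^*]$.

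For condition (3), the matching property is inherited for free: any pairwise distinct $\vec{\mu},\vec{\mu}'\in\dot{U}^*[G^*]\cap M_\ga[G^*]$ matching $\dot{a}_0,\dots,\dot{a}_l$ on $\mathcal{S}^*$ also lie in $\dot{U}^*_0[G^*]\cap M_\ga[G^*]$, since $\dot{U}^*\seq\dot{U}^*_0$, so the conclusion of (3) for $\dot{U}^*_0$ applies verbatim. The new clause $ p^*\Vdash_{\R^*}\dot{U}^*\seq\bigcap_{\pi\in\mathcal{F}}\dot{U}[\pi^{-1}(\dot{G}^*)]$ is built into the definition of $\dot{U}^*$.

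No genuine obstacle arises: the only mild subtlety is confirming that $\pi(\dot{U})$ is a well-defined $\R^*$-name, which is exactly the content of Lemma \ref{lemma:RL} (applied to the embedding $\pi\colon \R\lra \R^*\res\ran(\pi)$ followed by inclusion). Thus the corollary requires no new construction beyond Lemma \ref{lemma:main} itself.
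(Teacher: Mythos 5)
Your proof is correct and follows essentially the same approach as the paper: apply Lemma \ref{lemma:main} to get a provisional name, then intersect it with the pullbacks $\dot{U}[\pi^{-1}(\dot{G}^*)]$ for $\pi\in\mathcal{F}$, verifying that conditions (2) and (3) survive the shrinkage using $p^*\leq\pi(\bar{p})$ and monotonicity of (3) under subsets respectively.
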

\begin{proof} Let $\dot{U}$ be fixed, and let $\dot{U}^*$ be as in the conclusion of Lemma \ref{lemma:main}. Define $\dot{U}^{**}$ to be the name $\dot{U}^*\cap\bigcap_{\pi\in\mathcal{F}}\dot{U}\left[\pi^{-1}\left(\dot{G}^*\right)\right]$, and observe that this name is still in $M_\ga$. By condition (1) of Lemma \ref{lemma:main}, we know that $p^*$ forces that $\bar{p}$ is in $\pi^{-1}\left(\dot{G}^*\right)$, for each $\pi\in\mathcal{F}$. Since each such $\pi^{-1}\left(\dot{G}^*\right)$ is forced to be $V$-generic for $\R$ and since $\bar{p}\Vdash_\R\vec{\nu}\in\dot{U}$, this implies that $p^*$ forces that $\vec{\nu}$ is a member of $\dot{U}^{**}$. Finally, condition (3) of Lemma \ref{lemma:main} still holds, since $\dot{U}^{**}$ is forced to be a subset of $\dot{U}^*$.
\end{proof}

\begin{corollary}\label{cor:dogs}\emph{(Under Assumption \ref{ass2})} $\dot{a}_0,\dots,\dot{a}_l\cup\lb p\mapsto 1\rb$ have the partition product preassignment property at $\ga$. 
\end{corollary}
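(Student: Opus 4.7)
The plan is to argue by contradiction, using Lemma \ref{lemma:main} (in its strengthened form, Corollary \ref{cor:includename}) to reduce the question of matching with the extended preassignment $\dot{a}_0,\dots,\dot{a}_l\cup\lb p\mapsto 1\rb$ to matching with just $\dot{a}_0,\dots,\dot{a}_l$, which is guaranteed by Assumption \ref{ass2}.

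First I would assume, toward a contradiction, that the conclusion fails. Unpacking the definition, this yields a finitely generated partition product $\R$ with $\ka$-suitable collection $\mathcal{S}=\lb\la B_\iota,\psi_\iota\ra:\iota\in I\rb$ and auxiliary part all in $M_\ga$, an $\R$-name $\dot{U}\in M_\ga$ for a set of $(l+1)$-tuples in $\om_1$, and a condition $\bar{p}\in\R$ forcing that $\vec{\nu}=\la\nu_0,\dots,\nu_l\ra\in\dot{U}$ but that no pair of distinct tuples in $\dot{U}\cap M_\ga[\dot{G}]$ matches $\dot{a}_0,\dots,\dot{a}_l\cup\lb p\mapsto 1\rb$ on $\mathcal{S}$. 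Fix any enrichment $\mathcal{B}$ of $\R$ making $(\R,\mathcal{B})$ an enriched partition product (which exists, e.g.\ by taking the base shadow bases themselves).

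Next I would apply Lemma \ref{lemma:main} with matching core $\bar{\mathcal{S}}:=\mathcal{S}$, enhanced by Corollary \ref{cor:includename} applied to the name $\dot{U}$. This produces an enriched, finitely generated partition product $(\R^*,\mathcal{B}^*)$ (all data in $M_\ga$), a condition $p^*\in\R^*$, a finite collection $\mathcal{F}\in M_\ga$ of embeddings $\pi:(\R,\mathcal{B})\lra(\R^*,\mathcal{B}^*)$ with $p^*\leq\pi(\bar{p})$, and an $\R^*$-name $\dot{U}^*\in M_\ga$ such that $p^*$ forces $\vec{\nu}\in\dot{U}^*\seq\bigcap_{\pi\in\mathcal{F}}\dot{U}[\pi^{-1}(\dot{G}^*)]$, together with the matching property (3) of the lemma.

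Now I would invoke the inductive hypothesis that $\dot{a}_0,\dots,\dot{a}_l$ have the partition product preassignment property at $\ga$ (part of Assumption \ref{ass2}). Applied to $(\R^*,\mathcal{B}^*)$, $\mathcal{S}^*$, the name $\dot{U}^*$, and any generic $G^*$ for $\R^*$ containing $p^*$, this yields pairwise distinct tuples $\vec{\mu},\vec{\mu}'$ in $\dot{U}^*[G^*]\cap M_\ga[G^*]$ that match $\dot{a}_0,\dots,\dot{a}_l$ on $\mathcal{S}^*$. By clause (3) of Lemma \ref{lemma:main}, some $\pi\in\mathcal{F}$ witnesses that $\vec{\mu},\vec{\mu}'$ actually match $\dot{a}_0,\dots,\dot{a}_l\cup\lb p\mapsto 1\rb$ on $\pi(\mathcal{S})$.

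Finally I would pull back along this $\pi$ to derive the contradiction. Set $G:=\pi^{-1}(G^*)$, which is $V$-generic for $\R$ and contains $\bar{p}$ since $p^*\leq\pi(\bar{p})$. Because $\pi$ maps each $\la B_\iota,\psi_\iota,\ka\ra\in\mathcal{S}$ to $\la\pi[B_\iota],\pi\circ\psi_\iota,\ka\ra\in\pi(\mathcal{S})$, and since $(\pi\circ\psi_\iota)^{-1}(G^*\res\pi[B_\iota])=\psi_\iota^{-1}(G\res B_\iota)$, matching on $\pi(\mathcal{S})$ with respect to $G^*$ is exactly matching on $\mathcal{S}$ with respect to $G$. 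Moreover, the inclusion $\dot{U}^*[G^*]\seq\dot{U}[\pi^{-1}(G^*)]=\dot{U}[G]$ forced by $p^*$ places $\vec{\mu},\vec{\mu}'$ inside $\dot{U}[G]\cap M_\ga[G]$ (the models agreeing on ordinals by the c.c.c.). Thus $\vec{\mu},\vec{\mu}'$ is a pair of pairwise distinct tuples in $\dot{U}[G]\cap M_\ga[G]$ matching $\dot{a}_0,\dots,\dot{a}_l\cup\lb p\mapsto 1\rb$ on $\mathcal{S}$, directly contradicting the assumed failure and completing the proof. There is no real obstacle here beyond bookkeeping, since Lemma \ref{lemma:main} has been set up precisely to enable this pullback argument.
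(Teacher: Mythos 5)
Your argument is correct and follows essentially the same path as the paper's proof of Corollary \ref{cor:dogs}: negate the conclusion to extract $\R$, $\mathcal{S}$, $\dot{U}$, $\bar{p}$; apply Lemma \ref{lemma:main} together with Corollary \ref{cor:includename} with $\bar{\mathcal{S}}=\mathcal{S}$; use the hypothesis from Assumption \ref{ass2} that $\dot{a}_0,\dots,\dot{a}_l$ already have the partition product preassignment property at $\ga$ to produce $\vec{\mu},\vec{\mu}'$ matching on $\mathcal{S}^*$; invoke clause (3) to find $\pi\in\mathcal{F}$ upgrading this to a match with $\dot{a}_0,\dots,\dot{a}_l\cup\lb p\mapsto 1\rb$ on $\pi(\mathcal{S})$; and pull back along $G=\pi^{-1}(G^*)$ to contradict the choice of $\bar{p}$. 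One small imprecision: the enrichment cannot be ``just the base shadow bases,'' because Lemma \ref{lemma:main} requires $\mathcal{S}$ to be $\ka$-suitable with respect to $(\R,\mathcal{B})$, which by Definition \ref{def:suitable} forces the triples $\la B_\iota,\psi_\iota,\ka\ra$ to already lie in $\mathcal{B}$; you should take $\mathcal{B}:=\lb\la b(\xi),\pi_\xi,\ind(\xi)\ra:\xi\in X\rb\cup\lb\la B_\iota,\psi_\iota,\ka\ra:\iota\in I\rb$, exactly as the paper does, with coherence coming from the assumed $\ka$-suitability of $\mathcal{S}$ with respect to $\R$.
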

\begin{proof} Suppose otherwise, for a contradiction. Then there exists a partition product $\R$, say with domain $X$, finitely generated by $\mathcal{S}=\lb\la B_\iota,\psi_\iota\ra:\iota\in I\rb$ and an auxiliary part $Z$, all of which are in $M_\ga$; an $\R$-name $\dot{U}$ in $M_\ga$; and a condition $\bar{p}\in\R$ (not necessarily in $M_\ga$), such that $\bar{p}$ forces that $\vec{\nu}\in\dot{U}$, but also that for any pairwise distinct tuples $\vec{\mu},\vec{\mu}'$ in $\dot{U}\cap M_\ga[\dot{G}]$, there exists some $\iota_0\in I$ such that $\vec{\mu},\vec{\mu}'$ fail to match $\dot{a}_0,\dots,\dot{a}_l\cup\lb p\mapsto 1\rb$ at $\iota_0$. Apply Lemma \ref{lemma:main} and Corollary \ref{cor:includename} to these objects, with $\bar{\mathcal{S}}:=\mathcal{S}$ and with the enrichment 
$$
\mathcal{B}:=\lb\la b(\xi),\pi_\xi,\ind(\xi)\ra:\xi\in X\rb\cup\mathcal{S},
$$
to construct the objects as in the conclusions of Lemma \ref{lemma:main} and Corollary \ref{cor:includename}. Also, fix a generic $G^*$ for $\R^*$ which contains the condition $p^*$. 

We now apply the fact that $\dot{a}_0,\dots,\dot{a}_l$ have the partition product preassignment property at $\ga$ to the objects in the conclusion of Lemma \ref{lemma:main}: since $\vec{\nu}\in U^*:=\dot{U}^*[G^*]$, we can find two pairwise distinct tuples $\vec{\mu},\vec{\mu}'$ in $U^*\cap M_\ga[G^*]$ which match $\dot{a}_0,\dots,\dot{a}_l$ on 
${\cal S}^*$. 
Thus by (3) of Lemma \ref{lemma:main}, there is some embedding $\pi$ of $(\R,\mathcal{B})$ into $(\R^*,\mathcal{B}^*)$ so that $\vec{\mu},\vec{\mu}'$ match $\dot{a}_0,\dots,\dot{a}_l\cup\lb p\mapsto 1\rb$ on $\pi\left(\mathcal{S}\right)$. Now consider $G:=\pi^{-1}\left(G^*\right)$, which is generic for $\R$ and contains the condition $\bar{p}$, since $p^*\leq_{\R^*}\pi(\bar{p})$. Since $\vec{\mu},\vec{\mu}'$ match $\dot{a}_0,\dots,\dot{a}_l\cup\lb p\mapsto 1\rb$ on $\pi\left(\mathcal{S}\right)$ and $\pi$ is an embedding, $\vec{\mu},\vec{\mu}'$ match $\dot{a}_0,\dots,\dot{a}_l\cup\lb p\mapsto 1\rb$ on $\mathcal{S}$ with respect to the filter $G$. Finally, observe that $\vec{\mu}$ and $\vec{\mu}'$ are both in $\dot{U}[G]\cap M_\ga[G]$: they are in $\dot{U}[G]$ by Corollary \ref{cor:includename}, since $U^*$ is a subset of $\dot{U}[G]$. They are both in $M_\ga$, hence in $M_\ga[G]$, since by Assumption 4.1, $\R^*$ is c.c.c. However, this contradicts what we assumed about $\bar{p}$.
\end{proof}

\subsection{Putting it together} 

Let us now put together the results so far.

\begin{lemma}\label{lemma:alltogether} Suppose that $\dot{a}_0,\dots,\dot{a}_{l-1}$ are total canonical color names which have the partition product preassignment property at $\gamma$. Then there is a total canonical color name $\dot{a}_l$ so that $\dot{a}_0,\dots,\dot{a}_l$ have the partition product preassignment property at $\ga$.
\end{lemma}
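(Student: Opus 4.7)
The plan is to construct $\dot{a}_l$ by a transfinite recursion of length at most $\omega_1$, producing a chain of partial canonical color names $\dot{a}_l^\xi$ with strictly increasing domains, each preserving the partition product preassignment property, and terminating when the domain reaches a maximal antichain. The successor step will appeal to Corollary \ref{cor:dogs} together with its color-symmetric version; the delicate verification is the preservation of the property at limit stages.

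Start the recursion with $\dot{a}_l^0:=\emptyset$. Then $\dot{a}_0,\dots,\dot{a}_{l-1},\dot{a}_l^0$ has the partition product preassignment property at $\ga$ vacuously, since the $k=l$ clause in the matching condition is never triggered, reducing to the assumed property of $\dot{a}_0,\dots,\dot{a}_{l-1}$. At a successor step $\xi\mapsto \xi+1$, suppose $\dot{a}_l^\xi$ has been built with the property. If $\dom(\dot{a}_l^\xi)$ is a maximal antichain we are done. Otherwise pick $p\in\ps_\ka$ incompatible with every condition in $\dom(\dot{a}_l^\xi)$. If Assumption \ref{ass2} fails for $p$, there exist $p^*\leq_{\ps_\ka}p$ and $c=0$ so that $\dot{a}_0,\dots,\dot{a}_l^\xi\cup\{p^*\mapsto 0\}$ has the property. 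If Assumption \ref{ass2} holds for $p$, Corollary \ref{cor:dogs} gives $p^*:=p$ and $c:=1$. (Technically Corollary \ref{cor:dogs} is stated under a fixed Assumption \ref{ass2}, but its proof refers only to $\dot{a}_0,\dots,\dot{a}_{l-1},\dot{a}_l^\xi$ in place of $\dot{a}_0,\dots,\dot{a}_l$; the argument goes through verbatim.) Define $\dot{a}_l^{\xi+1}:=\dot{a}_l^\xi\cup\{p^*\mapsto c\}$; since $p^*$ is incompatible with $\dom(\dot{a}_l^\xi)$, this is still a partial canonical color name.

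At a limit stage $\xi$, set $\dot{a}_l^\xi:=\bigcup_{\eta<\xi}\dot{a}_l^\eta$. Its domain is the union of an increasing chain of antichains, hence itself an antichain, so $\dot{a}_l^\xi$ is a partial canonical color name. To verify the preassignment property, fix a finitely generated partition product $\R$ in $M_\ga$ with $\ka$-suitable collection indexed by a \emph{finite} $I$, a name $\dot{U}\in M_\ga$, a generic $G$, and suppose $\vec{\nu}\in\dot{U}[G]$. For each $\iota\in I$ at which $\dot{a}_l^\xi[\psi_\iota^{-1}(G\res B_\iota)]$ is defined, the condition forcing this definition lies in some $\dom(\dot{a}_l^{\eta_\iota})$ with $\eta_\iota<\xi$; the value it forces is then identical in $\dot{a}_l^\xi$ and in $\dot{a}_l^{\eta_\iota}$. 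Since $I$ is finite, set $\eta^*:=\max_{\iota\in I}\eta_\iota<\xi$. Applying the property to $\dot{a}_0,\dots,\dot{a}_l^{\eta^*}$ yields pairwise distinct $\vec{\mu},\vec{\mu}'\in M_\ga[G]\cap\dot{U}[G]$ matching $\dot{a}_0,\dots,\dot{a}_l^{\eta^*}$ on $I$. At every $\iota$ where $\dot{a}_l^\xi$ is defined, $\dot{a}_l^{\eta^*}$ is defined there with the same value, so $\vec{\mu},\vec{\mu}'$ also witness matching for $\dot{a}_0,\dots,\dot{a}_l^\xi$.

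For termination, observe that $\ps_\ka$ is c.c.c.\ (Assumption 4.1(b)), so every antichain has size at most $\aleph_1$. The domains $\dom(\dot{a}_l^\xi)$ strictly increase at every successor stage where the construction has not halted, so the process must terminate at some $\xi^*<\om_1$ with $\dom(\dot{a}_l^{\xi^*})$ a maximal antichain. Set $\dot{a}_l:=\dot{a}_l^{\xi^*}$; this is the desired total canonical color name. The main conceptual obstacle was already overcome in Lemma \ref{lemma:main} and its corollary; here the only nontrivial residual point is the limit-stage argument above, which is handled cleanly by the finiteness of $I$ in Definition \ref{def:p4}.
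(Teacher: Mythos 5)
Your proof is correct and follows the same recursion scheme as the paper: take unions at limits, at successors either add $p^*\mapsto 0$ if possible or invoke Corollary \ref{cor:dogs} to add $p\mapsto 1$, and halt once the domain is a maximal antichain; the paper leaves the limit-stage preservation implicit, which you correctly fill in by using finiteness of $I$ to pick a single $\eta^*<\xi$ that already decides $\dot a_l^\xi$ at every relevant $\iota$. One small inaccuracy: the c.c.c.\ of $\ps_\ka$ gives that antichains are \emph{countable} (not merely of size at most $\aleph_1$), and that countable bound is what yields termination at some $\xi^*<\om_1$.
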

\begin{proof} We recursively construct a sequence $\dot{a}^\xi_l$ of names, taking unions at limit stages,
and starting with the empty name $\dot{a}^0_l=\emptyset$. 
If $\dot{a}^\xi_l$ has been constructed and $\dom(\dot{a}^\xi_l)$ is a maximal antichain in $\ps_\ka$, we set $\dot{a}_l=\dot{a}^\xi_l$. Otherwise, we pick some condition $p\in\ps_\ka$ incompatible with all conditions therein. If there is some extension $p^*\leq_{\ps_\ka}p$ so that $\dot{a}_0,\dots,\dot{a}_{l-1},\dot{a}^\xi_l\cup\lb p^*\mapsto 0\rb$ have the partition product preassignment property at $\ga$, we pick some such $p^*$ and set $\dot{a}^{\xi+1}_l:=\dot{a}^\xi_l\cup\lb p^*\mapsto 0\rb$. Otherwise, Assumption \ref{ass2} is satisfied, and hence by Corollary \ref{cor:dogs}, $\dot{a}_0,\dots,\dot{a}_{l-1},\dot{a}^\xi_l\cup\lb p\mapsto 1\rb$ have the partition product preassignment property at $\ga$. In this case we set $\dot{a}^{\xi+1}_l:=\dot{a}^\xi_l\cup\lb p\mapsto 1\rb$. Note that the construction of the sequence $\dot{a}^\zeta_l$ halts at some countable stage, since $\ps_\ka$ is c.c.c., by Assumption 4.1.
\end{proof}

We now prove Proposition \ref{prop:name}:
\begin{proof}[Proof of Proposition \ref{prop:name}] Recall that for each $\ga<\om_1$, $\la\nu_{\ga,l}:l<\om\ra$ enumerates the slice $[M_\ga\cap\om_1,M_{\ga+1}\cap\om_1)$. By Lemma \ref{lemma:alltogether}, we may construct, for each $\ga<\om_1$, a sequence of $\ps_\ka$-names $\la\dot{a}_{\ga,l}:l<\om\ra$ such that for each $l<\om$, $\dot{a}_{\ga,0},\dots,\dot{a}_{\ga,l}$ have the partition product preassignment property at $\ga$. We now define a function $\dot{f}$ by taking $\dot{f}(\nu_{\ga,l})=\dot{a}_{\ga,l}$, for each $\ga<\om_1$ and $l<\om$. The values of $\dot{f}$ on ordinals $\nu<M_0\cap\om_1$ are irrelevant, so we simply set $\dot{f}(\nu)$ to name 0 for each such $\nu$. Then $\dot{f}$ satisfies the assumptions of Lemma \ref{lemma:suffices} and hence satisfies Proposition \ref{prop:name}.
\end{proof}

\section{Constructing Partition Products in $L$}
\label{sec.constructing}

In this section, we show how to construct the desired partition products in $L$. In particular, we will construct a partition product $\ps_{\om_2}$, which will have domain $\omega_3$. Forcing with $\ps_{\om_2}$ will provide the model which witnesses our theorem. We assume for this section that $V=L$. 

Before we introduce some more definitions, let us fix a finite fragment $F$ of $\zfc-\mathsf{Powerset}$ (hence satisfied in $H(\omega_3)$) large enough to prove the existence and bijectability with $X$ of elementary Skolem hulls of $X$ in levels of $L$, and to construct the partition product $\bb{P}_\ka\res\ga$ of Subsection \ref{subsection-building-the-partition-products} for $\gamma<\gamma(\kappa)$, from $\gamma$ and the sequences $\la\de_i(\kappa):i<\gamma\ra$ and ${\mathbb A}\res\kappa$. We will spell out exactly what $F$ needs to prove in Remark \ref{what-is-F}, after we establish the relevant notation. It is not hard to check that these constructions (including the rearrangements that go into the construction in Subsection \ref{subsection-building-the-partition-products}) use only $\zfc-\mathsf{Powerset}$. 

As a matter of notation, by the G{\"o}del pairing function, we view each ordinal $\ga$ as coding a pair of ordinals $(\ga)_0$ and $(\ga)_1$. We will use this for bookkeeping arguments later, where $(\ga)_0$ will select elements under $<_L$ and where $(\ga)_1$ will select various prior stages in an iteration.

\subsection{Local $\om_2$'s and Witnesses}

\begin{definition} Let $\om_1<\ka\leq\om_2$, and let $\bb{A}$ be a sequence of elements of $L_\ka$ so that $\dom(\bb{A})\seq\ka$. We say that $\ka$ is a \emph{local} $\om_2$ \emph{with respect to} $\bb{A}$ if there is some $\de>\ka$ such that $L_\de$ is closed under $\om$-sequences, contains $\bb{A}$ as an element, and such that
$$
L_\de\models\ka=\aleph_2\we F\we\ka \text{ is the largest cardinal}.
$$

If $\ka$ is a local $\om_2$ with respect to $\bb{A}$, we will refer to any such $\de$ as above as  a \emph{witness for }$\ka$ \emph{with respect to $\bb{A}$} or simply as a \emph{witness for} $\ka$ if $\bb{A}$ is clear from context. 
\end{definition}

The symbol ``$\bb{A}$" in the above definition stands for ``alphabet," and it will later represent some initial segment of the sequence of alphabetical partition products. 

Fix some $k\geq 2$, large enough that all statements in $F$ are $\Sigma_k$. We will use $\Sigma_k$ hulls and $\Sigma_k$ elementarity throughout the section. The fact that $k\geq 2$ allows reflecting basic statements into the hulls, such as being a largest cardinal, and the fact that all statements in $F$ are $\Sigma_k$ allows reflecting $F$.

We begin our discussion with the following 
lemma. 
The proof is straightforward using the closure of $L_{\omega_1}$ under countable sequences and the fact that $\Sigma_k$ admits a universal formula.

\begin{lemma}\label{lemma:omsequences} Suppose that $L_\de$ is closed under $\om$-sequences, and let $p\in L_\de$. Then $\emph{Hull}_{k}^{L_\de}(\om_1\cup\lb p\rb)$ is also closed under $\om$-sequences.
\end{lemma}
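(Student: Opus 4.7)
The plan is to show that $H := \text{Hull}^{L_\de}(\om_1\cup\lb p\rb)$ contains $L_{\om_1}$ as a subset, and to exploit this, together with $V = L$, in order to find a countable ``code'' for any $\om$-sequence of elements of $H$.

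First, I would verify that $L_{\om_1}\seq H$. For this, fix $\be<\om_1$. Since $\be<\om_1$, $L_\be$ is countable in $L_\de$, and the canonical $<_L$-enumeration $f_\be\colon\om\lra L_\be$ is definable in $L_\de$ from the parameter $\be$ alone. Since $\om_1\seq H$, we have $\be\in H$, and since $H$ is a Skolem hull in $L_\de$, it is closed under $L_\de$-definable functions with parameters from $H$; thus $f_\be\in H$, and then $f_\be(n)\in H$ for every $n<\om$. Consequently $L_\be\seq H$, and taking the union over $\be<\om_1$ gives $L_{\om_1}\seq H$.

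Next, given an $\om$-sequence $\vec{x}=\la x_n:n<\om\ra$ of elements of $H$, I would code it by a countable object. For each $n$, pick (using $V=L$) a Skolem term $t_n$ and a countable ordinal $\al_n<\om_1$ (combining any finite tuple of countable ordinal parameters into a single one via a fixed pairing) so that $x_n=t_n^{L_\de}(\al_n,p)$. Let $c:=\la(t_n,\al_n):n<\om\ra$. Since $V=L$, $c$ is a countable sequence and hence lies in $L_\be$ for some $\be<\om_1$; by the preceding paragraph, $c\in L_{\om_1}\seq H$.

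Finally, I would recover $\vec{x}$ definably from $c$ and $p$ inside $L_\de$. Specifically, consider the sequence $s$ defined by $s(n)=t_n^{L_\de}(\al_n,p)$ for $n<\om$, where $(t_n,\al_n)$ is read off from $c(n)$; this sequence exists as an object of $L_\de$ precisely because $L_\de$ is closed under $\om$-sequences, and $s=\vec{x}$ by construction. Since $s$ is definable in $L_\de$ from the parameters $c,p\in H$, elementarity gives $\vec{x}=s\in H$, as required. The key step, and the only one requiring the $\om$-closure hypothesis on $L_\de$, is the final definability move: without that closure we could not assert that the sequence $s$ exists in $L_\de$ and hence could not apply elementarity to place it in $H$.
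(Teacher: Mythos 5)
Your first three steps are correct and isolate exactly the right ingredients: the fact that $L_{\om_1}\seq H$ (so $H$ swallows every genuinely countable object), the $\om$-closure of $L_\de$ (so that the given sequence $\vec{x}$ actually lives in $L_\de$), and the coding of $\vec{x}$ by a countable object $c\in L_{\om_1}\seq H$. The paper gives no proof of this lemma, so I cannot compare directly against the authors' intended argument, but this is certainly the natural line of attack.

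The gap is in your final step. You assert that the sequence $s$ given by $s(n)=t_n^{L_\de}(\al_n,p)$ ``is definable in $L_\de$ from the parameters $c,p$.'' To define $s$ from $c,p$ by that recipe, $L_\de$ would need a single first-order formula expressing the relation $y=t_n^{L_\de}(\al_n,p)$ uniformly in $n$ — that is, a uniform evaluation of arbitrary Skolem terms over $L_\de$. But the terms $t_n$ read off from $c$ may have quantifier complexity growing unboundedly with $n$, and a formula over $L_\de$ correctly evaluating all of them simultaneously would be a truth predicate for $L_\de$, which Tarski's theorem rules out. So ``$s=\vec{x}$ by construction'' does not yield an $L_\de$-definition of $s$ from $c,p$, and the appeal to elementarity at the end is unjustified. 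Note also that $\om$-closure of $L_\de$ is only used to place $s$ in $L_\de$ as a set; it does not by itself make $s$ definable from the code.

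To close the gap one must control the complexity of the Skolem terms. A natural repair is to replace arbitrary Skolem terms with iterates of the canonical $\Sigma_1$-Skolem function $h$ of $L_\de$: the function $h^*$ evaluating finite compositions of $h$ \emph{is} $\Sigma_1$-definable over $L_\de$ by a single formula (the statement ``there is a finite computation sequence witnessing $h^*(\sigma,\vec{q})=y$'' is $\Sigma_1$), so the recovery step becomes a genuine $L_\de$-definition from $c,p$ and the resulting $\Sigma_1$-Skolem hull $H_1$ is $\om$-closed exactly by your argument. But one must then separately justify that $H_1$ coincides with the full Skolem hull $H$ — equivalently, that the $\Sigma_n$-Skolem hulls of $\om_1\cup\lb p\rb$ in $L_\de$ stabilize at a finite stage, or that the anticollapse of $H_1$ is fully (not merely $\Sigma_1$-) elementary. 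That is an additional condensation-type argument and is not supplied by your proposal. Without it, the most your method proves is $\om$-closure of the $\Sigma_1$-hull, which is not automatically the hull referenced in the lemma.
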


The next lemma shows how a local $\om_2$ with respect to one parameter can project to another.

\begin{lemma}\label{lemma:project} Suppose that $\de$ is a witness for $\ka$ with respect to $\bb{A}$, and define $H:=\emph{Hull}_{k}^{L_\de}(\om_1\cup\lb\bb{A}\rb)$. Suppose further that $H\cap\ka=\bar{\ka}<\ka$. Then $\bar{\ka}$ is a local $\om_2$ with respect to $\bb{A}\res\bar{\ka}$, and $\text{ot}(H\cap\de)$ is a witness for $\bar{\ka}$ with respect to $\bb{A}\res\bar{\ka}$.
\end{lemma}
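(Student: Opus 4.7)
The plan is to exploit $V=L$ via condensation to collapse $H$ to a level of $L$. First I would form the Mostowski collapse $j\colon H\lra L_{\bar\de}$, which is well-defined and onto a level of $L$ by standard condensation (using that $H\prec L_\de$ and $L_\de$ satisfies enough of $\zfc-\mathsf{Powerset}$), with $\bar\de=\ot(H\cap\text{Ord})$. Since $H\seq L_\de$ consists entirely of sets of rank below $\de$, the ordinals in $H$ are exactly $H\cap\de$, so $\bar\de=\ot(H\cap\de)$, which will give the ordertype claim in the lemma.

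Next I would establish the basic collapse facts. Because $\om_1\seq H$, $j$ is the identity on $\om_1$. Because $H\cap\ka=\bar\ka$, the ordinal $\bar\ka$ is contained in $H$, and so $j\res\bar\ka=\text{id}$. Since $\ka$ is definable in $L_\de$ as the largest cardinal, $\ka\in H$, and $j(\ka)=\ot(H\cap\ka)=\bar\ka$. I would then compute that $j(\bb{A})$ has domain $\dom(\bb{A})\cap\bar\ka$ (using $j\res\bar\ka=\text{id}$ together with elementarity, which places $\bb{A}(\xi)\in H$ whenever $\xi\in\dom(\bb{A})\cap H=\dom(\bb{A})\cap\bar\ka$) and that $j(\bb{A})=\bb{A}\res\bar\ka$, using that in this context the values $\bb{A}(\xi)$ for $\xi<\bar\ka$ have transitive closure inside $H$, hence are fixed by $j$.

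Then, applying the elementarity of $j^{-1}\colon L_{\bar\de}\lra H\prec L_\de$ to the statements ``$\ka=\aleph_2$'', ``$F$'', ``$\ka$ is the largest cardinal'', and ``$\bb{A}$ is present'', I would conclude that $L_{\bar\de}\models\bar\ka=\aleph_2\we F\we\bar\ka$ is the largest cardinal and that $\bb{A}\res\bar\ka\in L_{\bar\de}$. Finally, for closure of $L_{\bar\de}$ under $\om$-sequences, I would invoke Lemma \ref{lemma:omsequences} applied to $L_\de$ and $\bb{A}$ to conclude that $H$ is closed under $\om$-sequences, then transfer this closure through $j$: given any $\om$-sequence $\la y_n:n<\om\ra$ in $L_{\bar\de}$, pull back pointwise to $\la j^{-1}(y_n):n<\om\ra\in H$, invoke closure of $H$, then apply $j$ (using that $j$ fixes $\om$).

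The main obstacle I anticipate is the verification that $j(\bb{A})=\bb{A}\res\bar\ka$ on the nose, as opposed to merely an isomorphic collapsed copy. This requires that each $\bb{A}(\xi)$ for $\xi<\bar\ka$ not be moved by $j$, which in turn depends on the coherence of the alphabet sequence $\bb{A}$, namely that each $\bb{A}(\xi)$ (in the intended application, a partition product iterand or its name built according to Subsection \ref{subsection-building-the-partition-products}) lives inside a level of $L$ small enough to be contained in $H$ once $\xi\in H$.
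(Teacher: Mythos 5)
Your proof is correct and follows essentially the same condensation argument as the paper: form the Mostowski collapse $j\colon H\lra L_{\bar\de}$, observe $\bar\de=\ot(H\cap\de)$, transfer closure under $\om$-sequences via Lemma \ref{lemma:omsequences}, and reduce to showing $j(\bb{A})=\bb{A}\res\bar\ka$. The step you flag as the main obstacle does not in fact depend on the specific construction of the alphabet sequence in Subsection \ref{subsection-building-the-partition-products}: by Definition 6.1, $\bb{A}$ is required to be a sequence of elements of $L_\ka$, and since $L_\de\models\ka=\aleph_2$, each $\bb{A}(i)$ has $L_\de$-cardinality $\leq\aleph_1$ and hence transitive closure contained in $H$ (using $\om_1\seq H$ and elementarity), so $j$ fixes it outright.
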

\begin{proof} Let $\pi:H\lra L_{\bar{\de}}$ be the transitive collapse, so that $\pi(\ka)=\bar{\ka}$ and $\bar{\de}=\text{ot}(H\cap\de)$. Since $H$ is closed under $\om$-sequences, by Lemma \ref{lemma:omsequences}, $L_{\bar{\de}}$ is too. Since $\pi$ is 
$\Sigma_k$
elementary, we will be done once we verify that $\pi(\bb{A})=\bb{A}\res\bar{\ka}$. Indeed, by the elementarity of $\pi$, $\pi(\bb{A})$ is a sequence with domain $\dom(\bb{A})\cap\bar{\ka}$. Furthermore, for each $i\in\dom(\bb{A})$, since $\bb{A}(i)\in L_\ka$ and since $L_\de$ satisfies that $\ka=\aleph_2$, we have that $\bb{A}(i)$ has size $\leq\aleph_1$ in $L_\de$. Thus for each $i\in\dom(\bb{A})\cap\bar{\ka}$, $\bb{A}(i)$ is not moved by $\pi$, and consequently $\pi(\bb{A})=\bb{A}\res\bar{\ka}$. 
\end{proof}

If $\ka$ is a local $\om_2$ with respect to $\bb{A}$, we define the \emph{canonical sequence of witnesses for} $\ka$ \emph{with respect to} $\bb{A}$, denoted $\la\de_i(\ka,\bb{A}):i<\ga(\ka,\bb{A})\ra$. We set $\de_0(\ka,\bb{A})$ to be the least witness for $\ka$. Suppose that $\la\de_i(\ka,\bb{A}):i<\ga\ra$ is defined, for some $\ga$. If there exists a witness $\tilde{\de}$ for $\ka$ such that $\tilde{\de}>\sup_{i<\ga}\de_i(\ka,\bb{A})$, then we set $\de_\ga(\ka,\bb{A})$ to be the least such. Otherwise, we halt the construction and set $\ga(\ka,\bb{A}):=\ga$. If we have $\ga<\ga(\ka,\bb{A})$, then we also define $H(\ka,\ga,\bb{A})$ to be 
$$
H(\ka,\ga,\bb{A}):=\text{Hull}_{k}^{L_{\de_\ga(\ka,\bb{A})}}(\om_1\cup\lb\bb{A}\rb).
$$

\begin{remark}\label{remark:csdefinable} It is straightforward to check that if $\ka$ is a local $\om_2$ with respect to $\bb{A}$ and $\ga<\ga(\ka,\bb{A})$, then because $L_{\de_\ga(\ka,\bb{A})}$ is countably closed, being a witness for $\ka$ with respect to $\bb{A}$ is absolute between $L_{\de_\ga(\ka,\bb{A})}$ and $V$. 
Indeed, the requirements on $L_\delta$ for a witness $\delta$, other than countable closure, are $\Delta_0$ in $L_\delta$.
Thus the sequence $\la\de_i(\ka,\bb{A}):i<\ga\ra$ 
is definable in $L_{\de_\ga(\ka,\bb{A})}$ as the longest sequence of witnesses for $\ka$ with respect to $\bb{A}$. 
The defining formula is $\Pi_2$, so the sequence is absolute to $H(\kappa,\gamma,\bb{A})$. Consequently both the sequence and $\gamma$ belong to this hull.
Furthermore, in the case that $\ka=\om_2$, we see that $\ga(\om_2,\bb{A})=\om_3$.
\end{remark}

For the rest of the subsection, we fix $\ka$ and $\bb{A}$; for the sake of readability, we will often drop explicit mention of the parameter $\bb{A}$ in notation of the from $\de_\ga(\ka,\bb{A})$ and $H(\ka,\ga,\bb{A})$, preferring instead to write, respectively, $\de_\ga(\ka)$ and $H(\ka,\ga)$.

Suppose that $\ka$ is such that $\ga(\ka)$ is a successor, say $\ga+1$, and further suppose that $H(\ka,\ga)$ contains $\ka$ as a subset. Then we refer to $\de_\ga(\ka)$, the final element on the canonical sequence of witnesses for $\ka$ with respect to $\bb{A}$, as the \emph{stable witness for} $\ka$ \emph{with respect to} $\bb{A}$. It is stable in the sense that we cannot condense the hull further.

\begin{lemma}\label{lemma:below} Suppose that $\ga+1<\ga(\ka)$. Then $H(\ka,\ga)\cap\ka\in\ka$.
\end{lemma}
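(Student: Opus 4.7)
The plan is to show that $H := H(\kappa,\gamma,\mathbb{A})$ meets $\kappa$ in a bounded, transitive set of ordinals, hence in an element of $\kappa$. The hypothesis $\gamma+1 < \gamma(\kappa,\mathbb{A})$ will be used immediately to gain access to a strictly larger level of $L$.

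First I will invoke the next witness $\tilde{\de} := \de_{\gamma+1}(\kappa,\mathbb{A})$ on the canonical sequence, which exists precisely because $\gamma+1 < \gamma(\kappa,\mathbb{A})$. Then $L_{\tilde{\de}}$ is countably closed, contains $\mathbb{A}$, satisfies $F$, and satisfies that $\kappa = \aleph_2$ and is the largest cardinal; crucially, $\de_\gamma(\kappa,\mathbb{A})$ is an element of $L_{\tilde{\de}}$. Inside $L_{\tilde{\de}}$, the Skolem hull $H$ of $\omega_1\cup\{\mathbb{A}\}$ in $L_{\de_\gamma(\kappa,\mathbb{A})}$ is an actual element, and since $F$ guarantees that such hulls are in bijection with their generator, $|H|^{L_{\tilde{\de}}} = \aleph_1 < \kappa$. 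Because $\kappa = \aleph_2$ is regular and uncountable in $L_{\tilde{\de}}$, the set $H\cap\kappa$ must be bounded in $\kappa$ there, say by some $\alpha_0 < \kappa$. The statement ``$H\cap\kappa\subseteq\alpha_0$'' is absolute between $L_{\tilde{\de}}$ and $V$ as a containment of sets of ordinals, so this boundedness transfers to $V$.

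Next I will show $H\cap\kappa$ is transitive as a set of ordinals. The inputs are that $\omega_1\subseteq H$ by definition of the hull, and $\omega_1\in H$ because $F$ proves that $\aleph_1$ exists and is below the largest cardinal, so $\omega_1$ is definable inside $L_{\de_\gamma(\kappa,\mathbb{A})}$. Fix $\alpha\in H\cap\kappa$ and $\beta<\alpha$. If $\alpha<\omega_1$, then $\beta\in\omega_1\subseteq H$ and we are done. Otherwise $\omega_1\leq\alpha<\kappa=\aleph_2$ inside $L_{\de_\gamma(\kappa,\mathbb{A})}$, so $\alpha$ has cardinality $\aleph_1$ there, and the $<_L$-least bijection $f\colon\omega_1\to\alpha$ is definable in $L_{\de_\gamma(\kappa,\mathbb{A})}$ from the parameters $\alpha$ and $\omega_1$, both of which lie in $H$. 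Hence $f\in H$. Picking $\eta<\omega_1$ with $f(\eta)=\beta$, we get $\eta\in\omega_1\subseteq H$ and thus $\beta=f(\eta)\in H$.

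The main obstacle is the boundedness step: the hypothesis $\gamma+1<\gamma(\kappa,\mathbb{A})$ is used precisely to step outside $L_{\de_\gamma(\kappa,\mathbb{A})}$ into a model $L_{\tilde{\de}}$ where $H$ is an element and where $\kappa$ is still regular, so that the standard ``hull of size $\aleph_1$ is bounded in $\aleph_2$'' argument is available. Without a larger witness, one could not apply regularity of $\kappa$ inside a model that actually contains $H$ as a set. The transitivity step, by contrast, is a routine consequence of $\omega_1\subseteq H$ together with the existence of $<_L$-definable canonical bijections in $L$.
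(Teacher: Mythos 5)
Your proof is correct and follows essentially the same route as the paper's: both invoke the next witness $\de_{\ga+1}(\ka,\mathbb{A})$, place $H(\ka,\ga)$ as an element of $L_{\de_{\ga+1}(\ka,\mathbb{A})}$ using the finite fragment $F$, and exploit the tension between $|H|=\aleph_1$ and $\ka=\aleph_2$ inside that larger level. The paper argues by contradiction (if $\ka\subseteq H$ one gets a surjection $\om_1\to\ka$ in $L_{\de_{\ga+1}(\ka,\mathbb{A})}$, violating $\ka=\aleph_2$ there), whereas you argue directly via regularity of $\ka$ to bound $H\cap\ka$; these are minor reformulations of the same cardinality obstruction. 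You also spell out the transitivity of $H\cap\ka$ via $<_L$-least bijections, a step the paper leaves implicit in the phrase ``Then $\ka\subseteq H(\ka,\ga)$.'' That extra care is welcome but does not change the underlying argument.
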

\begin{proof} Suppose otherwise. Then $\ka\seq H(\ka,\ga)$. Since $\ga+1<\ga(\ka)$, we know that $\hat{\de}:=\de_{\ga+1}(\ka)$ exists, and in particular, $\de_\ga(\ka)<\hat{\de}$. Observe that $H(\ka,\ga)$ is a member of $L_{\hat{\de}}$; this follows from the choice of the finite fragment $F$ and the facts that $\delta(\gamma,\kappa),{\mathbb A}\in L_{\hat{\de}}$ and $L_{\hat{\de}}\models F$. Therefore, again using the fact that $L_{\hat{\de}}\models F$, we may find a surjection from $\om_1$ onto $H(\ka,\ga)$ in $L_{\hat{\de}}$. Since $\ka\seq H(\ka,\ga)$, this contradicts our assumption that $L_{\hat{\de}}$ satisfies that $\ka$ is $\aleph_2$.
\end{proof}

If $\ga+1<\ga(\ka)$, then the collapse of $H(\ka,\ga)$ moves $\ka$. The level to which $H(\ka,\ga)$ collapses is then the stable witness for the images of $\ka$ and $\bb{A}$, as shown in the following lemma.

\begin{lemma}\label{lemma:collapsestable} Suppose that $\ga+1<\ga(\ka)$, and set $\bar{\ka}:=H(\ka,\ga)\cap\ka$. Let $j$ denote the collapse map of $H(\ka,\ga)$ and $\tau$ the level to which $H(\ka,\ga)$ collapses. Finally, set $\bar{\ga}:=j(\ga)$. Then $\bar{\ga}+1=\ga(\bar{\ka})$ and $\tau=\de_{\bar{\ga}}(\bar{\ka})$ is the stable witness for $\bar{\ka}$ and $\bb{A}\res\bar{\ka}$.
\end{lemma}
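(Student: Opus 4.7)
The plan is to establish four things: (i) $\tau$ is a witness for $\bar\kappa$ with respect to $\mathbb{A}\res\bar\kappa$; (ii) the collapse map sends $\langle \delta_i(\kappa):i<\gamma\rangle$ onto $\langle \delta_i(\bar\kappa):i<\bar\gamma\rangle$; (iii) $\tau=\delta_{\bar\gamma}(\bar\kappa)$; and (iv) $\bar\gamma+1=\gamma(\bar\kappa)$, i.e., no witness for $\bar\kappa$ with respect to $\mathbb{A}\res\bar\kappa$ lies strictly above $\tau$.

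First, item (i) is immediate from Lemma \ref{lemma:project} applied to $\delta=\delta_\gamma(\kappa)$. Moreover, by Lemma \ref{lemma:omsequences} the hull $H(\kappa,\gamma)$ is closed under $\omega$-sequences, and hence so is its transitive collapse $L_\tau$; this will let us invoke the absoluteness from Remark \ref{remark:csdefinable} inside $L_\tau$.

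Next I would note that $\gamma\in H(\kappa,\gamma)$. Indeed, by Remark \ref{remark:csdefinable} the sequence $\langle\delta_i(\kappa):i<\gamma\rangle$ is definable in $L_{\delta_\gamma(\kappa)}$ from $\mathbb{A}$ (with $\kappa$ recovered as the largest cardinal of $L_{\delta_\gamma(\kappa)}$), so its length $\gamma$ is also definable from $\mathbb{A}$ there, and hence $\gamma\in H(\kappa,\gamma)$; the whole sequence and each of its values therefore lies in $H(\kappa,\gamma)$. By elementarity of $j$, the image $\langle j(\delta_i(\kappa)):i<\bar\gamma\rangle$ is the canonical sequence of witnesses for $\bar\kappa$ with respect to $\mathbb{A}\res\bar\kappa$ as computed inside $L_\tau$. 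Since $L_\tau$ is countably closed and contains all ordinals below $\tau$, the absoluteness of the witness predicate yields that this sequence enumerates exactly those witnesses for $\bar\kappa$ (in $V$) that are less than $\tau$. Combined with (i), this gives $\tau>\sup_{i<\bar\gamma}\delta_i(\bar\kappa)$ and $\tau$ is itself a witness, so $\tau=\delta_{\bar\gamma}(\bar\kappa)$, establishing (ii) and (iii).

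For (iv), the key observation is that by elementarity of $j$,
$$
H(\bar\kappa,\bar\gamma)=\operatorname{Hull}^{L_\tau}(\omega_1\cup\{\mathbb{A}\res\bar\kappa\})=j[H(\kappa,\gamma)]=L_\tau,
$$
and in particular $\bar\kappa\subseteq H(\bar\kappa,\bar\gamma)$. Now invoke the contrapositive of Lemma \ref{lemma:below} applied to $(\bar\kappa,\bar\gamma)$: if we had $\bar\gamma+1<\gamma(\bar\kappa)$, then $H(\bar\kappa,\bar\gamma)\cap\bar\kappa$ would be an element of $\bar\kappa$, contradicting $\bar\kappa\subseteq H(\bar\kappa,\bar\gamma)$. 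Hence $\bar\gamma+1=\gamma(\bar\kappa)$, and $\tau$ is the stable witness.

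The only subtle point is the absoluteness step used in (iii)—that the ``witness for $\bar\kappa$ with respect to $\mathbb{A}\res\bar\kappa$'' predicate agrees between $L_\tau$ and $V$ on ordinals below $\tau$. This follows from the countable closure of $L_\tau$ (since ``$L_\sigma\models\bar\kappa=\aleph_2\wedge F\wedge\bar\kappa$ is the largest cardinal'' is absolute once $L_\sigma$ is available, and countable closure is itself absolute downward to $L_\tau$ by Lemma \ref{lemma:omsequences}). Every remaining step is a direct use of Lemmas \ref{lemma:omsequences}, \ref{lemma:project}, and \ref{lemma:below} together with elementarity of the collapse.
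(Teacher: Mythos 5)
Your proof is correct and takes essentially the same approach as the paper's: $\tau$ is a witness by Lemma \ref{lemma:project}, the $j$-image of the canonical witness sequence is identified via Remark \ref{remark:csdefinable} together with countable closure of $L_\tau$, and the stable-witness conclusion exploits $\operatorname{Hull}^{L_\tau}(\om_1\cup\lb\bb{A}\res\bar{\ka}\rb)=L_\tau$. The only cosmetic differences are that you argue minimality of $\tau$ by direct absoluteness of the canonical-sequence computation in $L_\tau$ rather than the paper's contradiction argument (which pulls back a hypothetical intermediate witness via $j^{-1}$ and uses countable closure of $L_{\de_\ga(\ka)}$), and that you make the appeal to Lemma \ref{lemma:below} explicit whereas the paper leaves it implicit.
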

\begin{proof} Let us abbreviate $H(\ka,\ga)$ by $H$. By Remark \ref{remark:csdefinable}, we have that $\la\de_i(\ka):i<\ga\ra\in H(\ka,\ga)$; let $\la \de_i:i<\bar{\ga}\ra$ denote the image of this sequence under $j$. By the elementarity of $j$ and the absoluteness of Remark \ref{remark:csdefinable}, $\la\de_i:i<\bar{\ga}\ra$ is exactly equal to $\la\de_i(\bar{\ka}):i<\bar{\ga}\ra$, the canonical sequence of witnesses for $\bar{\ka}$ with respect to $\bb{A}\res\bar{\ka}$. 

We next verify that $\tau=\de_{\bar{\ga}}(\bar{\ka})$. By Lemma \ref{lemma:project}, we know that $\tau$ is a witness for $\bar{\ka}$ with respect to $\bb{A}\res\bar{\ka}$. Furthermore, $\tau$ is the least witness for $\bar{\ka}$ above $\sup_{i<\bar{\ga}}\de_i(\bar{\ka})$: suppose that there were a witness $\bar{\de}$ for $\bar{\ka}$ between $\sup_{i<\bar{\ga}}\de_i(\bar{\ka})$ and $\tau$. Then $L_\tau$ satisfies that $\bar{\de}$ is a witness for $\bar{\ka}$. By the elementarity of $j^{-1}$, setting $\de:=j^{-1}(\bar{\de})$, we see that $L_{\de_\ga(\ka)}$ satisfies that $\de$ is a witness for $\ka$. Since $L_{\de_\ga(\ka)}$ is closed under $\om$-sequences, $\de$ is in fact a witness for $\ka$ (with respect to $\bb{A}$). As $\de$ is between $\sup_{i<\ga}\de_i(\ka)$ and $\de_\ga(\ka)$, this is a contradiction. Therefore $\tau$ is the least witness for $\bar{\ka}$ with respect to $\bb{A}\res\bar{\ka}$ which is above $\sup_{i<\bar{\ga}}\de_i(\bar{\ka})$. However, because $L_\tau$ is the collapse of $H$, we see that $\text{Hull}_{k}^{L_\tau}(\om_1\cup\lb\bb{A}\res\bar{\ka}\rb)$ is all of $L_\tau$. Therefore $\tau$ is the stable witness for $\bar{\ka}$ with respect to $\bb{A}\res\bar{\ka}$.
\end{proof}

\subsection{Building the Partition Products}
\label{subsection-building-the-partition-products}

In this subsection, we construct the set $C$, the alphabet $\underline{\ps}=\la\ps_\de:\de\in C\ra$, $\underline{\dot{\Q}}=\la\dot{\Q}_\de:\de\in C\ra$, and the collapsing system $\vec{\varphi}$, that we will use to prove Theorem \ref{theorem:main}. At the same time we will construct ${\ps_{\om_2}}$, a partition product based upon $\underline{\ps},\underline{\dot{\Q}}$. $\ps_{\om_2}$ will force $\mathsf{OCA}_{ARS}$ and $2^{\aleph_0}=\aleph_3$. We will also show how to adapt our construction so that our model additionally satisfies $\mathsf{FA}(\aleph_2,\text{Knaster}(\aleph_1))$; recall that this axiom asserts that we can meet any $\aleph_2$-many dense subsets of an $\leq\aleph_1$-sized poset with the Knaster property.

The collapsing system $\vec{\varphi}$ can be specified right away: for each $\kappa\in C$ and $\gamma<\rho_\kappa$, we take $\varphi_{\kappa,\gamma}$ to be the $<_L$-least surjection of $\kappa$ onto $\gamma$. 

The remaining objects are defined by recursion. Suppose that we've defined the set $C$ up to an ordinal $\ka\leq\om_2$ as well as $\underline{\ps}\res\ka$ and $\underline{\dot{\Q}}\res\ka$ in such a way that the following recursive assumptions are satisfied, where $\bb{A}\res\ka$ denotes the alphabet sequence $\la\la\ps_{\bar{\ka}},\dot{\Q}_{\bar{\ka}}\ra:\bar{\ka}\in C\cap\ka\ra$:
\begin{enumerate}
\item[(a)]  for each $\bar{\ka}\in C\cap\ka$, $\bar{\ka}$ is a local $\om_2$ with respect to $\bb{A}\res\bar{\ka}$, $\ps_{\bar{\ka}}$ is a partition product based upon $\underline{\ps}\res\bar{\ka}$ and $\underline{\dot{\Q}}\res\bar{\ka}$, and $\dot{\Q}_{\bar{\ka}}$ is a $\ps_{\bar{\ka}}$-name;
\item[(b)] every partition product based upon $\underline{\ps}\res\ka$ and $\underline{\dot{\Q}}\res\ka$ is c.c.c.
\end{enumerate}

At limit points $\kappa$, condition (a) trivially follows from the same condition below $\kappa$, and condition (b) follows by Lemma \ref{lemma.limit-ccc}. Thus we need only work on the successor case.

If $\kappa$ is not a local $\om_2$ with respect to $\bb{A}\res\ka$, then we do not place $\kappa$ in $C$, thus leaving $\ps_\ka$ and $\dot{\Q}_\ka$ undefined. Observe that as a result, $\underline{\ps}\res(\ka+1)=\underline{\ps}\res\ka$, and similarly for $\underline{\dot{\Q}}\res\ka$ and $\bb{A}\res\ka$. Suppose, on the other hand, that $\ka$ is a local $\om_2$ with respect to $\bb{A}\res\ka$. We aim to define the partition product $\ps_\ka$ and, in the case that $\ka<\om_2$, to place $\ka$ in $C$ and define the $\ps_\ka$-name $\dot{\Q}_\ka$; defining $\dot{\Q}_\ka$ will involve selecting a $\ps_\ka$-name $\dot{\chi}_\ka$ for a coloring and, by appealing to the results of the previous two sections, constructing the name $\dot{f}_\ka$ for a preassignment. 
We then need to prove (b), namely that every partition product based on $\underline{\ps}\res\ka+1$ and $\underline{\dot{\Q}}\res\ka+1$ is c.c.c.

We begin by defining $\ps_\kappa$. Fix $\gamma$ and assume inductively that $\ps_\ka\res\ga$ has been defined, as well as the base and index functions $\base_\ka\res\ga$ and $\ind_\ka\res\ga$. We divide the definition at $\gamma$ into two cases.\\

Case 1: $\ga+1=\ga(\ka)$, or $\ga=\ga(\ka)$ is a limit.\\

If Case 1 obtains, then we halt the construction, setting $\rho_\ka=\ga$ and $\ps_\ka=\ps_\ka\res\ga$. If $\ka<\om_2$, then we need to define the name $\dot{\Q}_\ka$. Recall from the beginning of this section that for an ordinal $\xi$, $(\xi)_0$ and $(\xi)_1$ are the two ordinals coded by $\xi$ under the G{\"o}del pairing function. Suppose that the $(\ga)_0$-th element under $<_L$ is a pair $\la\dot{S}_\ka,\dot{\chi}_\ka\ra$ of $\ps_\ka$-names, where $\dot{S}_\ka$ names a countable basis for a second countable, Hausdorff topology on $\om_1$ and $\dot{\chi}_\ka$ names a coloring on $\om_1$ which is open with respect to the topology generated by $\dot{S}_\ka$. Then let $\dot{f}_\ka$ be the $<_L$-least $\ps_\ka$-name satisfying Proposition \ref{prop:name}, and set $\dot{\Q}_\ka:=\Q(\dot{\chi}_\ka,\dot{f}_\ka)$, so that by Corollary \ref{cor:name}, any partition product based upon $\underline{\ps}\res(\ka+1)$ and $\underline{\dot{\Q}}\res(\ka+1)$ is c.c.c.  If $(\ga)_0$ does not code such a pair, then we simply let $\dot{\Q}_\ka$ name Cohen forcing for adding a single real. It is clear in this case also, by Lemma \ref{lemma:products}, that any partition product based upon $\underline{\ps}\res(\ka+1)$ and $\underline{\dot{\Q}}\res(\ka+1)$ is c.c.c.

On the other hand, if $\ka=\om_2$, then the partition product $\ps_{\om_2}$ is defined. After completing the rest of the construction, we show that forcing with $\ps_{\om_2}$ provides the desired model witnessing our theorem.\\

Case 2: $\ga+1<\ga(\ka)$.\\

In this case, we continue the construction 
of ${\ps}_\kappa$, extending from ${\ps}_\kappa\res\gamma$, which inductively we already know, to ${\ps}_\kappa\res\gamma+1$.
Let $\bar{\ka}:=H(\ka,\ga)\cap\ka$, which is below $\ka$ by Lemma \ref{lemma:below}; recall that we are suppressing explicit mention of the parameter $\bb{A}\res\ka$. We also let $j$ be the transitive collapse map of $H(\ka,\ga)$ and set $\bar{\ga}:=j(\ga)$. We halt the construction if either $\ps_\ka\res\ga$ is not a member of $H(\ka,\ga)$, or if it is a member of $H(\ka,\ga)$ and either $\bar{\ka}\notin C$ or $\ps_\ka\res\ga$  is not mapped to $\ps_{\bar{\ka}}\res\bar{\ga}$ by $j$ (we will later show that this does not in fact occur).

Suppose, on the other hand, that $\bar{\ka}\in C$ and that $\ps_\ka\res\ga$ is a member of $H(\ka,\ga)$ which is mapped by $j$ to $\ps_{\bar{\ka}}\res\bar{\ga}$. We shall specify the next name $\dot{\bb{U}}_\ga$, 
which will be the $\gamma$th iterand in ${\ps}_\kappa\res\gamma+1$,
as well as the values $\base_\ka(\ga)$ and $\ind_\ka(\ga)$. By Lemma \ref{lemma:collapsestable}, we have that $\bar{\ga}+1=\ga(\bar{\ka},\bb{A}\res\bar{\ka})$. By recursion, this means that $\bar{\ga}=\rho_{\bar{\ka}}$, i.e., that $\ps_{\bar{\ka}}=\ps_{\bar{\ka}}\res\bar{\ga}$. We now pull these objects back along $j^{-1}$.

In more detail, we observe that, setting $\pi_\ga:=j^{-1}$, $\pi_\ga\res\rho_{\bar{\ka}}$ provides an acceptable rearrangement of $\ps_{\bar{\ka}}$, since $\pi_\ga$ is order-preserving. In fact, the $\pi_\ga$-rearrangement of $\ps_{\bar{\ka}}$ is exactly equal to $(\ps_\ka\res\ga)\res\pi_\ga[\rho_{\bar{\ka}}]$, by Lemma \ref{lemma:ambiguous}; this Lemma applies since for each $\delta\in C\cap\bar{\ka}$, $\pi_\ga$ is the identity on $\ps_\de\ast\dot{\Q}_\de\cup\lb\ps_\de,\dot{\Q}_\de\rb$. Let $\dot{\bb{U}}_\ga$ be the $\pi_\ga$-rearrangement of $\dot{\Q}_{\bar{\ka}}$ (see Lemma \ref{lemma:RL} or Definition \ref{rearrangement-restricted-memory}). Note that this rearrangement need not be an element of $L_{\de_\ga(\ka)}$. We now set $\base_\ka(\ga):=(\pi_\ga[\rho_{\bar{\ka}}],\pi_\ga\res\rho_{\bar{\ka}})$ and set $\ind_\ka(\ga):=\bar{\ka}$. In particular, we observe that 
$$
b_\ka(\ga)=H(\ka,\ga)\cap\ga
$$
is an initial segment of the ordinals of $H(\ka,\ga)$.\\

\begin{claim}\label{claim:needsaname} $\base_\ka\res(\ga+1)$ and $\ind_\ka\res(\ga+1)$ support a partition product based upon $\underline{\ps}\res\ka$ and $\underline{\dot{\Q}}\res\ka$.
\end{claim}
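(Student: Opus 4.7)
The plan is to verify clauses (1)--(3) of Definition \ref{def:support} for the enlarged data $\base_\ka\res(\ga+1)$ and $\ind_\ka\res(\ga+1)$. The restrictions of these clauses to coordinates $\xi<\ga$ are immediate from the inductive hypothesis that $\ps_\ka\res\ga$ is a partition product based upon $\underline{\ps}\res\ka$ and $\underline{\dot{\Q}}\res\ka$, so the substantive content concerns the new coordinate $\ga$.

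Clauses (1) and (2) at $\ga$ follow by transferring the structure of $\ps_{\bar\ka}$ across the rearrangement $\pi_\ga\res\rho_{\bar\ka}$, where $\pi_\ga=j^{-1}$. Clause (1) is direct: $\ind_\ka(\ga)=\bar\ka\in C$ by the Case 2 assumption, and $\pi_\ga\res\rho_{\bar\ka}$ is an acceptable rearrangement of $\ps_{\bar\ka}$ because $j$ is an order-preserving elementary isomorphism. For clause (2), I apply Lemma \ref{lemma:ambiguous} to the embedding $j^{-1}\colon L_\tau\lra L_{\de_\ga(\ka)}$, where $\tau$ is the ordinal height of the transitive collapse of $H(\ka,\ga)$, in the role of $\sigma$, with $\ps_{\bar\ka}$ in the role of $\R$. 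The hypothesis of that lemma, that $j^{-1}$ fixes every element of $\ps_\de\ast\dot{\Q}_\de$ and the pair $\lb\ps_\de,\dot{\Q}_\de\rb$ for each $\de\in C\cap\bar\ka$, is secured because every such $\de$ lies below $\bar\ka$ (the least ordinal moved by $j$) and because $\bb{A}\res\bar\ka$ is itself fixed by $j$, by Lemma \ref{lemma:project}. The second part of Lemma \ref{lemma:ambiguous}, together with the Case 2 stipulation $j(\ps_\ka\res\ga)=\ps_{\bar\ka}$, then yields that $\ps_\ka\res b_\ka(\ga)$ is exactly the $\pi_\ga$-rearrangement of $\ps_{\bar\ka}$; unpacking this equality delivers clause (2) at every $\zeta=\pi_\ga(\mu)\in b_\ka(\ga)$.

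Clause (3) is the main new content. The only instances not already subsumed by the inductive hypothesis are those with $\xi_2=\ga$ and $\xi_1<\ga$ (the reverse case is symmetric). Fix $\zeta\in b_\ka(\xi_1)\cap b_\ka(\ga)$, write $\de_1=\ind_\ka(\xi_1)$, $\mu_1=\pi_{\xi_1}^{-1}(\zeta)$, $\mu_2=\pi_\ga^{-1}(\zeta)$, $A_1=\pi_{\xi_1}[\mu_1]$, and $A_2=\pi_\ga[\mu_2]=H(\ka,\ga)\cap\zeta$, and assume $\de_1\le\bar\ka$. The inclusion $A_1\seq A_2$ will come from two facts: $A_1\seq\zeta$ because $\pi_{\xi_1}$ is order-preserving with $\pi_{\xi_1}(\mu_1)=\zeta$, and $b_\ka(\xi_1)\seq H(\ka,\ga)$ because the inductive definition of the base sets from Case 2 gives $b_\ka(\xi_1)=H(\ka,\xi_1)\cap\xi_1$, and the canonical sequence of hulls is increasing below $\de_\ga(\ka)$. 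For the coherent-collapse identity $\pi_\ga^{-1}[A_1]=\vp_{\bar\ka,\mu_2}[\de_1]$, I will exploit that each $\vp_{\de,\mu}$ is the $<_L$-least surjection, hence preserved under the elementary collapse $j$: applying $j$ to the relevant instance of $\vp_{\ka,\zeta}$ and tracing through the rearrangements gives the desired identity, while the countable-closure and factorization clauses of Definition \ref{coherent-collapse} follow from the same uniformity.

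The main obstacle will be making the inclusion $b_\ka(\xi_1)\seq H(\ka,\ga)$ rigorous in the case where $\xi_1$ itself does not belong to $H(\ka,\ga)$, since elementarity is not directly available with $\xi_1$ as a parameter. The resolution will route through the uniform recipe defining the Skolem hull $H(\ka,\xi_1)$, together with the definability of the canonical witness sequence from $\bb{A}$ alone: each element of $b_\ka(\xi_1)$ remains definable from parameters already in $H(\ka,\ga)$ even when $\xi_1$ does not, which is what allows the coherent-collapse identity to be transferred.
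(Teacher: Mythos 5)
Your overall strategy is the same as the paper's: check clauses (1)--(3) of Definition~\ref{def:support} at the new coordinate $\ga$, handle (1) and (2) via Lemma~\ref{lemma:ambiguous} applied to the anticollapse $\pi_\ga=j^{-1}$ of $H(\ka,\ga)$, and reduce (3) to a coherent-collapse computation driven by the $<_L$-least surjections $\vp_{\ka,\cdot}$. Clauses (1) and (2) are handled essentially as the paper does them.

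The gap is in your argument for the inclusion $A_1\seq A_2$. You route it through the claim $b_\ka(\xi_1)\seq H(\ka,\ga)$, and you yourself flag this as the ``main obstacle'' when $\xi_1\notin H(\ka,\ga)$. The proposed repair (that each element of $b_\ka(\xi_1)$ is ``definable from parameters already in $H(\ka,\ga)$'' via the uniform hull recipe) does not go through: that recipe requires $\de_{\xi_1}(\ka)\in H(\ka,\ga)$, and when $\xi_1\notin H(\ka,\ga)$ one cannot conclude $\de_{\xi_1}(\ka)\in H(\ka,\ga)$ (indeed, if $\de_{\xi_1}(\ka)$ were in the hull, then $\xi_1$ --- the length of the canonical witness sequence below it --- would be too, by Remark~\ref{remark:csdefinable}). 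So the hull-nesting claim is not available in exactly the cases where you need it.

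The cleaner route, and the one the paper takes, is to not attempt any global hull nesting. Since $\ka$ is the largest cardinal of both $H(\ka,\xi_1)$ and $H(\ka,\ga)$ and both are closed under $\zeta\mapsto\vp_{\ka,\zeta}$, one computes directly
\[
A_1=b_\ka(\xi_1)\cap\zeta=H(\ka,\xi_1)\cap\zeta=\vp_{\ka,\zeta}[\de_1],
\qquad
A_2=b_\ka(\ga)\cap\zeta=\vp_{\ka,\zeta}[\bar\ka],
\]
where $\de_1=H(\ka,\xi_1)\cap\ka$ and $\bar\ka=H(\ka,\ga)\cap\ka$. Then $A_1\seq A_2$ is immediate from $\de_1\leq\bar\ka$, with no hull comparison needed. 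This same identity is also the starting point for the coherent-collapse verification: one pushes $\vp_{\ka,\zeta}[\de_1]$ through $j$ using the elementarity of $j$ (valid because, when $\de_1<\bar\ka$, the parameters $\ka,\zeta,\de_1$ all lie in $H(\ka,\ga)$), which is what you allude to at the end. You should also be explicit that you are only treating $\de_1\leq\bar\ka$ and that the reverse ordering must be handled; your phrase ``the reverse case is symmetric'' refers to swapping $\xi_1$ and $\xi_2$, which is a different (trivial) symmetry, not the case $\de_1>\bar\ka$.

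Finally, your treatment of Definition~\ref{coherent-collapse}(\ref{coherent-collapse-closure})--(\ref{coherent-collapse-comp}) is only gestured at (``follow from the same uniformity''). The countable-closure clause in particular does not follow just from uniformity of $\vp$: the paper obtains it from the $\om$-closure of $H(\ka,\xi_1)$, which makes $b_\ka(\xi_1)\cap\zeta$ closed under its $\om$-limit points, and then transfers this through $j$ using that $L_\tau$ is $\om$-closed.
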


\begin{proof}[Proof of Claim \ref{claim:needsaname}] Condition (1) of Definition \ref{def:support} follows from the comments in the above paragraph. Condition (2) holds at $\ga$ by the elementarity of $\pi_\ga$ and at all smaller ordinals by recursion. So we need to check condition (3), where it suffices to verify the coherent collapse condition for $\ga$ and some $\be<\ga$. So suppose that there is some $\xi\in b_\ka(\be)\cap b_\ka(\ga)$. We define $\bar{\ka}^*$ to be $H(\ka,\be)\cap\ka$, so that $\bar{\ka}^*=\ind_\ka(\be)$. We also let $j_{\ka,\be}$ denote the transitive collapse map of $H(\ka,\be)$ and $j_{\ka,\ga}$ the transitive collapse map of $H(\ka,\ga)$. Finally, let $\pi_\be$ denote $j^{-1}_{\ka,\be}$. 

In both of the models $H(\ka,\be)$ and $H(\ka,\ga)$, $\ka$ is the largest cardinal. Moreover, each of them is closed under the map which takes an ordinal $\zeta$ to $\vp_{\ka,\zeta}$, the $<_L$-least surjection from $\ka$ onto $\zeta$. As a result,
$$
H(\ka,\ga)\cap\xi=\vp_{\ka,\xi}[H(\ka,\ga)\cap\ka]=\vp_{\ka,\xi}[\bar{\ka}],
$$
and therefore $b_\ka(\ga)\cap\xi=\vp_{\ka,\xi}[\bar{\ka}]$. Similarly, $b_\ka(\be)\cap\xi=\vp_{\ka,\xi}[\bar{\ka}^*].$

With this observation in mind, we now verify that item (3) of Definition \ref{def:support}  holds for $\gamma$ and $\beta$. Suppose that $\bar{\ka}^*\leq\bar{\ka}$; the proof in case $\bar{\ka}\leq\bar{\ka}^*$ is similar. Let $\zeta_0:=\pi_\be^{-1}(\xi)$ and $\zeta_1:=\pi_\ga^{-1}(\xi)$. If $\bar{\ka}^*=\bar{\ka}$, then by the calculations in the previous paragraph, (3) holds trivially, since the models $H(\ka,\be)$ and $H(\ka,\ga)$ have the same intersection with $\xi+1$. Thus we proceed under the assumption that $\bar{\ka}^*<\bar{\ka}$. Since the above paragraph shows that $\pi_\be[\zeta_0]\seq\pi_\ga[\zeta_1]$, we need to check that $A:=\pi_\ga^{-1}[\pi_\be[\zeta_0]]$ coherently collapses $\la\bar{\ka},\zeta_1\ra$ to $\la\bar{\ka}^*,\zeta_0\ra$.

Now $\pi_\be[\zeta_0]=b_\ka(\be)\cap\xi$ has the form $\vp_{\ka,\xi}[\bar{\ka}^*]$. Since $\bar{\ka}^*<\bar{\ka}$, we have that $\ka$, $\xi$, and $\bar{\ka}^*$ are all in $H(\ka,\ga)$. Thus so is $\pi_\be[\zeta_0]$. Applying the elementarity of $j_{\ka,\ga}=\pi_\ga^{-1}$, we see that $\pi_\ga^{-1}\circ\vp_{\ka,\xi}\res\bar{\ka}^*=\vp_{\bar{\ka},\zeta_1}\res\bar{\ka}^*$, which shows that $A$ has the form $\vp_{\bar{\ka},\zeta_1}[\bar{\ka}^*]$. Therefore condition (\ref{coherent-collapse-critical}) in Definition \ref{coherent-collapse} holds. Additionally, if we let $\si$ denote the transitive collapse of $A$, then we see that $\si\circ\pi_\ga^{-1}$ is the transitive collapse of $\pi_\be[\zeta_0]=\vp_{\ka,\xi}[\bar{\ka}^*]$, which is just $\pi_\be^{-1}=j_{\ka,\be}$. However, the elementarity of $\pi_\be^{-1}$ implies that $\pi_\be^{-1}\circ\vp_{\ka,\xi}\res\bar{\ka}^*=\vp_{\bar{\ka}^*,\zeta_0}$, and therefore $\si\circ\vp_{\bar{\ka},\zeta_1}\res\bar{\ka}^*=\vp_{\bar{\ka}^*,\zeta_0}$. This gives condition (\ref{coherent-collapse-comp}) of Definition \ref{coherent-collapse}. And finally, to see that (\ref{coherent-collapse-closure}) of the definition holds, we first observe that $b_\ka(\be)\cap\xi$ is closed under limit points of cofinality $\om$ below its supremum, because $H(\ka,\be)$ is closed under $\om$-sequences. Since $b_\ka(\be)\cap\xi$ is in $H(\ka,\ga)$, by applying $j_{\ka,\ga}$, we conclude that the collapse of $H(\ka,\ga)$, denoted $L_{\tau}$, satisfies that $A$ is closed under limit points of cofinality $\om$ below its supremum. However, $L_{\tau}$ is closed under $\om$-sequences, and therefore $A$ is in fact closed under limit points of cofinality $\om$ below its supremum. Thus (\ref{coherent-collapse-closure}) is satisfied. This completes the proof of the claim.
\end{proof}

We have now completed the construction of the desired sequence of partition products. Before we prove our main theorem, we need to verify that for each $\ka\in C\cup\lb\om_2\rb$, we obtain a partition product of the appropriate length, i.e., that the construction does not halt prematurely, as described at the beginning of Case 2.

\begin{lemma}\label{lemma:doesnothalt} For each $\ka\in C\cup\lb\om_2\rb$, $\rho_\ka=\ga(\ka)$ if $\ga(\ka)$ is a limit or equals $\ga(\ka)-1$ if $\ga(\ka)$ is a successor.
\end{lemma}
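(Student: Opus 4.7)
The plan is to induct on $\ka\in C\cup\lb\om_2\rb$. Assuming the lemma for every $\bar\ka\in C$ with $\bar\ka<\ka$, we must check that the construction at $\ka$ never halts via the premature-halting clause of Case 2. In other words, for every $\ga$ with $\ga+1<\ga(\ka)$, we must verify (i) $\ps_\ka\res\ga\in H(\ka,\ga)$, (ii) $\bar{\ka}:=H(\ka,\ga)\cap\ka\in C$, and (iii) the transitive collapse $j$ of $H(\ka,\ga)$ sends $\ps_\ka\res\ga$ to $\ps_{\bar\ka}\res\bar\ga$, where $\bar\ga:=j(\ga)$. If these three conditions hold for every such $\ga$, then $\rho_\ka$ can only be $\ga(\ka)$ or $\ga(\ka)-1$ depending on whether $\ga(\ka)$ is a limit or a successor, as asserted.

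For (i), I would note that $\bb{A}\res\ka$ is in the hull by definition, and by Remark~\ref{remark:csdefinable} the canonical witness sequence $\la\de_i(\ka):i<\ga\ra$ is definable in $L_{\de_\ga(\ka)}$ from $\ka$ and $\bb{A}\res\ka$; hence the sequence, and its length $\ga$, both lie in $H(\ka,\ga)$. Since $F$ was picked large enough to construct $\ps_\ka\res\ga$ inside $L_{\de_\ga(\ka)}$ from $\ga$, $\bb{A}\res\ka$, and the definable collapsing system $\vec{\varphi}\res\ka$, condition (i) follows. For (ii), Lemma~\ref{lemma:project} applied to the witness $\de_\ga(\ka)$ gives that $\bar\ka$ is a local $\om_2$ with respect to $\bb{A}\res\bar\ka$, and this is precisely what it takes to have $\bar\ka\in C$.

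The main work lies in (iii), which is where the inductive hypothesis enters. By Lemma~\ref{lemma:collapsestable}, the collapsed level $L_\tau$ of $H(\ka,\ga)$ is exactly $L_{\de_{\bar\ga}(\bar\ka)}$, the stable witness for $\bar\ka$, and $\ga(\bar\ka)=\bar\ga+1$ is a successor. The inductive hypothesis for $\bar\ka$ then yields $\rho_{\bar\ka}=\bar\ga$, so that $\ps_{\bar\ka}=\ps_{\bar\ka}\res\bar\ga$. By elementarity of $j^{-1}$, the image $j(\ps_\ka\res\ga)$ is precisely the output of running the same $\bar\ga$-step recipe inside $L_\tau$ with parameters $\bar\ka$, $\bar\ga$, $\bb{A}\res\bar\ka$. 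Combining this with the inductive hypothesis identifies $j(\ps_\ka\res\ga)$ with $\ps_{\bar\ka}\res\bar\ga$, closing the induction.

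The main obstacle is the absoluteness step inside (iii): one must know that the $\bar\ga$-step construction of $\ps_{\bar\ka}\res\bar\ga$ carried out inside $L_\tau$ produces the same object as the one carried out globally in $V$. This reduces to absoluteness of the canonical witness sequences $\la\de_i(\bar\ka):i<\bar\ga\ra$ between $L_\tau$ and $V$, which in turn follows from the $\om$-closure of $L_\tau$ together with Remark~\ref{remark:csdefinable}, together with the fact that the construction recipe itself is uniformly definable (using only $F$) in terms of the witness sequence, $\bb{A}\res\bar\ka$, and $\vec{\varphi}\res\bar\ka$. Once this absoluteness is in hand, the proof is essentially bookkeeping with the definitions and the two lemmas \ref{lemma:project} and \ref{lemma:collapsestable}.
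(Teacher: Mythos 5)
Your proof is correct and follows essentially the same route as the paper's: you identify exactly the three conditions (membership of $\ps_\ka\res\ga$ in $H(\ka,\ga)$, membership of $\bar\ka$ in $C$, and the condensation $j(\ps_\ka\res\ga)=\ps_{\bar\ka}\res\bar\ga$) that rule out premature halting in Case 2, and you discharge them using the choice of $F$, Lemma \ref{lemma:project}, Lemma \ref{lemma:collapsestable}, and the uniform definability of the construction recipe. The one place where your write-up is a bit more careful than the paper's is that you make the outer induction on $\ka$ explicit, invoking the induction hypothesis for $\bar\ka$ to know that $\rho_{\bar\ka}=\bar\ga$ so that $\ps_{\bar\ka}\res\bar\ga$ is well-defined and equal to $\ps_{\bar\ka}$; the paper leaves this implicit in the phrase "by recursion" in Case 2 and in the uniformity argument. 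That explicitness is a modest clarification rather than a different method — the substantive content (the absoluteness of the canonical witness sequence, the uniformity of the recipe, and condensation) is identical.
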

\begin{proof} Suppose that $\ka\in C\cup\lb\om_2\rb$ and that $\ga+1<\ga(\ka)$. We need to show that $\ps_\ka\res\ga$ is a member of $H(\ka,\ga)$, that $\bar{\ka}\in C$, and that $\ps_\ka\res\ga$ gets mapped by $j,$ the collapse map of $H(\ka,\ga)$, to $\ps_{\bar{\ka}}\res\bar{\ga}$, where $\bar{\ka}=j(\ka)$ and $\bar{\ga}=j(\ga)$. By choice of $F$, since $L_{\de_\ga(\kappa)}\models F$, and since $\la\de_i(\ka):i<\ga\ra$ and $\bb{A}\res\ka$ belong to $L_{\de_\ga(\ka)}$, we have $\ps_\ka\res\ga\in L_{\de_\ga(\ka)}$. Since $j(\bb{A}\res\ka)=\bb{A}\res\bar{\ka}$, it is also straightforward to verify that $\bar{\ka}$ is a local $\om_2$ with respect to the sequence $\bb{A}\res\bar{\ka}$, and hence $\bar{\ka}\in C$. Finally, Case 2 of the construction of partition products is uniform, in the sense that $\ps_\ka\res\ga$ is definable in $L_{\de_\ga(\ka)}$ from $\la\de_i(\ka):i<\ga\ra$ and $\bb{A}\res\ka$ by the same definition which defines $\ps_{\bar{\ka}}\res\bar{\ga}$ in $L_{\de_{\bar{\ga}}(\bar{\ka})}$ from $\la\de_i(\bar{\ka}):i<\bar{\ga}\ra$, and $\bb{A}\res\bar{\ka}$. Thus $\ps_\ka\res\ga$ is a member of $H(\ka,\ga)$ and gets mapped to $\ps_{\bar{\ka}}\res\bar{\ga}$ by $j$.
\end{proof}

\begin{remark}
\label{what-is-F}
We now have the notation and context to specify exactly what the finite fragment $F$ of $\zfc-\mathsf{Powerset}$, that we fixed at the start of the section, needs to prove. We fix $F$ that proves that for every collapsing system $\vec{\varphi}$ and alphabet ${\mathbb A}=\langle \ps_\kappa,\dot{\Q}_\kappa \mid\kappa\in C\subseteq \omega_2\rangle$ with respect to $\vec{\varphi}$, both in $L$, 
for every $n<\omega$,
for every ordinal $\gamma$, and for every sequence of ordinals $\langle \delta_i\mid i<\gamma\rangle$ with $\omega_1,{\mathbb A}\in L_{\delta_i}$:
\begin{enumerate}
\item 
\label{what-is-F-1}
Each of the hulls $H(\omega_2,i)=\text{Hull}_{n}^{L_{\de_i}}(\om_1\cup\lb\bb{A}\rb)$ exists, a bijection between the hull and $\omega_1$ exists, and the sequence $\langle H(\omega_2,i)\mid i<\gamma\rangle$ exists. 
\item
The transitive collapse $M_i$ of $H(\omega_2,i)$, the collapse embedding $j_i$, and its inverse $\pi_i$ all exist, as do the corresponding sequences over $i<\gamma$. 
\item
Let $\kappa_i=j_i(\omega_2)$ and suppose that $\kappa_i\in C$ and the domain $\rho_{\kappa_i}$ of $\ps_{\kappa_i}$ is contained in $M_i$, for each $i$. Then the rearranged poset name $\pi_i(\dot{\Q}_{\kappa_i})$ exists, and so does the sequence $\langle \pi_i(\dot{\Q}_{\kappa_i}) \mid i<\gamma\rangle$. 
\item
The function $i\mapsto \kappa_i$ and the sequence $\langle \langle \pi_i[\rho_{\kappa_i}],\pi_i\res \rho_{\kappa_i}\rangle \mid i<\gamma\rangle$ exist. 
\item
If $i\mapsto \kappa_i$ as index function, $\langle \langle \pi_i[\rho_{\kappa_i}],\pi_i\res \rho_{\kappa_i}\rangle \mid i<\gamma\rangle$ as base function, and $\langle \pi_i(\dot{\Q}_{\kappa_i}) \mid i<\gamma\rangle$ as the sequence of iterands satisfy the requirements for determining a partition product based on ${\mathbb A}$ with respect to $\vec{\varphi}$, then this partition product exists. 
\end{enumerate}
Note that the complexity of these statements is independent of $n$; indeed, condition (\ref{what-is-F-1}) is expressed by a $\Sigma_1$ formula with $n$ among its variables.
\end{remark}

We finish by proving Theorem \ref{theorem:main}.

\begin{proof}[Proof of Theorem \ref{theorem:main}] We force over $L$ with $\ps_{\om_2}$. By Lemma \ref{lemma:doesnothalt}, $\ps_{\om_2}$ is a partition product with domain $\ga(\om_2)$, and by Remark \ref{remark:csdefinable}, $\ga(\om_2)=\om_3$ (we suppress  mention of the parameter $\bb{A}$). Let us denote the sequence of names used to form $\ps_{\om_2}$ by $\la\dot{\bb{U}}_\ga:\ga<\om_3\ra$. Since $\ps_{\om_2}$ is a partition product based upon $\underline{\ps}\res\om_2$ and $\underline{\dot{\Q}}$, it is c.c.c. Hence all cardinals are preserved. Since $\ps_{\om_2}$ has size $\aleph_3$ and is c.c.c., it forces that the continuum has size no more than $\aleph_3$. However, $\ps_{\om_2}$ adds $\aleph_3$-many reals, and to see this, we first recall that by Remark \ref{remark:densesubset}, $\ps_{\om_2}$ is a dense subset of the finite support iteration of the names $\la\dot{\bb{U}}_\ga:\ga<\om_3\ra$. Next, each $\dot{\bb{U}}_\ga$ either names Cohen forcing or one of the homogeneous set posets, and each of the latter adds a real. Thus $\ps_{\om_2}$ forces that the continuum has size exactly $\aleph_3$. We now want to see that $\ps_{\om_2}$ forces that $\mathsf{OCA}_{ARS}$ holds.

Towards this end, let $\la\dot{S},\dot{\chi}\ra$ be a pair of $\ps_{\om_2}$-names, where $\dot{S}$ names a countable basis for a second countable, Hausdorff topology on $\om_1$ and $\dot{\chi}$ names a coloring which is open with respect to the topology generated by $\dot{S}$. Let $\ga<\om_3$ so that $\la\dot{S},\dot{\chi}\ra$ is the $(\ga)_0$-th element under $<_L$ and so that $\la\dot{S},\dot{\chi}\ra$ is a $\ps_{\om_2}\res(\ga)_1$-name. Note that $\la\dot{S},\dot{\chi}\ra$ is an element of $H(\om_2,\ga)$ since, by Remark \ref{remark:csdefinable}, $\ga$ is, and also notice that $H(\om_2,\ga)$ satisfies that $\la\dot{S},\dot{\chi}\ra$ is a $\ps_{\om_2}\res\ga$-name. Let $j$ denote the transitive collapse map of $H(\om_2,\ga)$ and let $\pi:=j^{-1}$ denote the anticollapse map. Set $\bar{\ga}:=j(\ga)$ and $\ka:=j(\om_2)$, and observe that by Lemma \ref{lemma:collapsestable}, $j$ collapses $H(\om_2,\ga)$ onto $L_{\de_{\bar{\ga}}(\ka)}$, and $\gamma(\kappa)=\bar{\gamma}+1$. The latter implies that ${\ps}_\kappa=\ps_\kappa\res\bar{\gamma}$.

We will be done if we can show that $G$ adds a partition of $\om_1$ into countably-many $\dot{\chi}[G]$-homogeneous sets, and towards this end, let $G$ be $V$-generic over $\ps_{\om_2}$. We use $G_\ga$ to denote the generic $G$ adds for $\dot{\bb{U}}_\ga[G\res\ga]$ over $V[G\res\ga]$. Set $\bar{G}$ to be $j\left[(G\res\ga)\cap H(\om_2,\ga)\right]$, and observe that $\bar{G}$ is generic for the poset $j(\ps_{\om_2}\res\ga)=\ps_\kappa\res\bar{\ga}=\ps_\ka$ over $L_{\de_{\bar{\ga}}(\ka)}$. Since $\ps_\ka$ is c.c.c.\ and $L_{\de_{\bar{\ga}}(\ka)}$ is countably closed, $\bar{G}$ is also $V$-generic over $\ps_\ka$. In particular, $\pi$ extends to a
$\Sigma_k$
elementary embedding 
$$
\pi^*:L_{\de_{\bar{\ga}}(\ka)}[\bar{G}]\lra L_{\de_\ga(\om_2)}[G],
$$
and since $\operatorname{crit}(\pi^*)>\om_1$, we see that $\dot{S}[G]=j(\dot{S})[\bar{G}]$ and $\dot{\chi}[G]=j(\dot{\chi})[\bar{G}]$.

By the elementarity of $\pi^*$ and absoluteness, $\la j(\dot{S}),j(\dot{\chi})\ra$ is the $(\bar{\ga})_0$-th 
element under $<_L$ and is a
pair of $\ps_\ka$-names where the first coordinate names a countable basis for a second countable, Hausdorff topology on $\om_1$ and the second names a coloring which is open with respect to the topology generated by that basis. By the construction of $\dot{\Q}_\ka$, this means that $\dot{\Q}_\ka$ names the poset to decompose $\om_1$ into countably-many $j(\dot{\chi})$-homogeneous sets with respect to the preassignment $\dot{f}_\ka$. Thus forcing with $\dot{\Q}_\ka[\bar{G}]$ adds a decomposition of $\om_1$ into countably-many $j(\dot{\chi})[\bar{G}]=\dot{\chi}[G]$-homogeneous sets. We will be done if we can show that $G$ adds a generic for $\dot{\Q}_\ka[\bar{G}]$.

To see this, we recall from Case 2 of the construction that $\dot{\bb{U}}_\ga$ is the $\pi\res\rho_\ka$-rearrangement of $\dot{\Q}_\ka$. Moreover, as also described in Case 2, Lemma \ref{lemma:ambiguous} applies. Thus $\dot{\Q}_\ka[\bar{G}]=\dot{\bb{U}}_\ga[G]$.  $G_\ga$ is therefore $V[G\res\ga]$-generic for $\dot{\Q}_\ka[\bar{G}]$, which finishes the proof.
\end{proof}

We wrap up by sketching a proof of Theorem \ref{theorem:FA}.

\begin{proof}[Proof Sketch of Theorem \ref{theorem:FA}] We first describe how to build the names on the sequence $\underline{\dot{\Q}}$. The only modification to the construction for the previous theorem is that if, in Case 1 above,
the $(\gamma)_0$-th element under $<_L$
names a Knaster poset of size $\aleph_1$, then we set $\dot{\Q}_\ka$ to be this Knaster poset. With this modification to the sequence $\underline{\dot{\Q}}$, we still maintain the recursive assumption that for each $\ka\in C$, any partition product based upon $\underline{\ps}\res\ka$ and $\underline{\dot{\Q}}\res\ka$ is c.c.c.; this follows by Lemma \ref{lemma:products}, Lemma \ref{lemma:RL}, and since the product of Knaster and c.c.c.\ posets is still c.c.c.
 
Now we want to see that forcing with this modified $\ps_{\om_2}$ gives the desired model. The proof that the extension satisfies $\mathsf{OCA}_{ARS}$ and $2^{\aleph_0}=\aleph_3$ is the same as before. To prove that it satisfies $\mathsf{FA}(\aleph_2,\text{Knaster}(\aleph_1))$, suppose that $\dot{\mathbb K}$ is forced in ${\mathbb P}_{\omega_2}$ to be a Knaster poset of size $\aleph_1$. We may assume without loss of generality that $\dot{\mathbb K}$ is forced to be a subset of $\omega_1$. Fix $\gamma$ so that $(\gamma)_0$ codes $\dot{\mathbb K}$, making $\gamma$ large enough so that $\dot{\bb{K}}$ is a $(\ps_{\om_2}\res\ga)$-name and so that all the dense sets we need to meet belong to $V[G\res\gamma]$.  Next, arguing as in the proof of Theorem \ref{theorem:main}, we have $\kappa<\omega_2$, $j\colon H(\omega_2,\gamma)\lra L_{\de_{\bar{\ga}}(\ka)}$, and an extension 
$$
\pi^*\colon L_{\de_{\bar{\ga}}(\ka)}[\bar{G}]\lra L_{\de_\ga(\om_2)}[G\res\gamma]
$$
of the inverse $\pi$ of $j$. By the modified Case 1 construction we have that $\dot{\mathbb Q}_\kappa=j(\dot{\mathbb K})$. By Case 2 in the construction of ${\mathbb P}_{\omega_2}$, $\dot{\mathbb U}_\gamma$ is the rearrangement of $\dot{\mathbb Q}_\kappa$ by $\pi\res \rho_\kappa$. However, by the final clause in Lemma \ref{lemma:ambiguous}, and since $\dot{\mathbb Q}_\ka$ names a poset contained in $\omega_1<\ka=\operatorname{crit}(\pi)$, this rearrangement is exactly $\pi(\dot{\mathbb Q}_\ka)=\dot{\mathbb K}$. So $G_\gamma$ is generic for $\dot{\mathbb K}[G\res\gamma]$ over $V[G\res\ga]$, and hence $G_\gamma$ is a filter in $V[G]$ for $\dot{\bb{K}}[G\res\gamma]$ which meets the desired dense sets.
\end{proof}

\section*{Acknowledgments}
This material is based upon work supported by the National Science Foundation under grant No. DMS-1764029.

\end{document}